\newcommand\ttilde[2][1]{%
 \def\useanchorwidth{T}%
  \ifnum#1>1%
    \stackon[0pt]{\ttilde[\numexpr#1-1\relax]{#2}}{\hspace{0.2em}\scriptscriptstyle\thicksim}%
  \else%
    \stackon[1pt]{#2}{\hspace{0.2em}\scriptscriptstyle\thicksim}%
  \fi%
}
\theoremstyle{plain} 
\newtheorem{theorem}{Theorem}[section]
\newtheorem*{theorem*}{Theorem}
\newtheorem{lemma}[theorem]{Lemma}
\newtheorem{corollary}[theorem]{Corollary}
\newtheorem*{corollary*}{Corollary}
\newtheorem{proposition}[theorem]{Proposition}
\newtheorem*{proposition*}{Proposition}
\newtheorem{definition}[theorem]{Definition}
\newtheorem*{definition*}{Definition}
\newtheorem{assumption}[theorem]{Assumption}
\theoremstyle{definition} 
\newtheorem*{example*}{Example}
\newtheorem{remark}[theorem]{Remark}
\newtheorem*{remark*}{Remark}
\newtheorem*{remarks*}{Remarks}
\newcommand{\deq}{\mathrel{\mathop:}=}
\newcommand{\N} {\mathbb{N}}
\newcommand{\E} {\mathbb{E}}
\newcommand{\lefnu}{\widetilde{\nu}_N^{\lambda}}
\DeclareMathOperator{\diag}{diag}
\DeclareMathOperator{\Tr}{Tr}
\DeclareMathOperator{\supp}{supp}
\newcommand{\caC}{{\mathcal C}}
\newcommand{\caD}{{\mathcal D}}
\newcommand{\caE}{{\mathcal E}}
\newcommand{\caP}{{\mathcal P}}
\newcommand{\caR}{{\mathcal R}}
\newcommand{\caU}{{\mathcal U}}
\newcommand{\caX}{{\mathcal X}}
\newcommand{\caZ}{{\mathcal Z}}
\newcommand{\bbC}{{\mathbb C}}
\newcommand{\bbN}{{\mathbb N}}
\newcommand{\bbP}{{\mathbb P}}
\newcommand{\bbR}{{\mathbb R}}
\newcommand{\bbT}{{\mathbb T}}
\newcommand{\bbZ}{{\mathbb Z}}
\newcommand{\opunit}{\text{1}\kern-0.22em\text{l}}
\newcommand{\fra}{{\mathfrak a}}
\newcommand{\frb}{{\mathfrak b}}
\newcommand{\frf}{{\mathfrak f}}
\newcommand{\frg}{{\mathfrak g}}
\newcommand{\frh}{{\mathfrak h}}
\newcommand{\frn}{{\mathfrak n}}
\newcommand{\pd}{\partial}
\newcommand{\pair}[1]{\langle{#1}\rangle}
\newcommand{\wt}{\widetilde}
\newcommand{\ol}{\overline}
\newcommand{\wh}{\widehat}
\newcommand{\bv}{\breve}
\newcommand{\mfc}{m_\mathrm{fc}}
\newcommand{\fc}{\mathrm{fc}}
\newcommand{\wg}{\widetilde{\gamma}}
\newcommand{\opr}{O_{\prec}}
\renewcommand{\d}{{\mathrm d}}
\newcommand{\bee}{ \begin{equation} }
\newcommand{\eeq	}{ \end{equation} }
\newcommand{\bigO}[1]{{O}{\left( #1 \right)}}
\newcommand{\norm}{ \|}
\newcommand{\ben}{\begin{arabicenumerate}}
\newcommand{\een}{\end{arabicenumerate}}
\newcommand{\ii}{\mathrm{i}}
\newcommand{\G} {\mathcal{G}}
\newcommand{\MM}{\mathsf{M}}
\newcommand{\FF}{\mathsf{F}}
\newcommand{\RRR}[1]{\wt{\mathsf{R}}^{({#1})}}
\newcommand{\sfP}{\mathsf{P}}
\renewcommand{\P}{\mathbb{P}}
\newcommand{\vk}{\varkappa}
\newcommand{\vt}{\vartheta}
\newcommand{\vs}{\varsigma}
\renewcommand{\Re}{\mathrm{Re\,}}
\renewcommand{\Im}{\mathrm{Im\,}}
\newcommand{\Psii}{\Psi_2}
\newcommand{\mbf}[1]{\mathbf{#1}}
\newcommand{\mc}[1]{\mathcal{#1}}
\newcommand{\Tvpi}{\Theta_{\varpi}}
\newcommand{\Tsc}{\Theta_{s^{(4)}}}
\numberwithin{equation}{section} 
\numberwithin{theorem}{section}
\newcommand\blfootnote[1]{%
  \begingroup
  \renewcommand\thefootnote{}\footnote{#1}%
  \addtocounter{footnote}{-1}%
  \endgroup
}
\begin{document}

\renewcommand{\thefootnote}{\fnsymbol{footnote}}	

\begin{minipage}{0.85\textwidth}
 \vspace{2.2cm}
    \end{minipage}
\begin{center}
\Large\bf
Local laws and spectral properties of deformed sparse random matrices \blfootnote{
{{\phantom{\textsuperscript{$\dagger$}}\textit{Keywords}:  Local law, sparse random matrices, deformed semicircle law}\\
{\phantom{\textsuperscript{$\dagger$}}\textit{AMS Subject Classification (2020)}: 15B52; 60B20}\\
{\phantom{\textsuperscript{$\dagger$}}\textit{Date}: \today}\\
}}
\end{center}

\vspace{1.4cm}
\begin{center}
 \begin{minipage}{0.45\textwidth}
\begin{center}
Ji Oon Lee  \\
\footnotesize { KAIST }\\
{\it jioon.lee@kaist.edu}
\end{center}
\end{minipage}
\begin{minipage}{0.45\textwidth}
 \begin{center}
Inyoung Yeo\\
\footnotesize 
{KAIST}\\
{\it iy.yeo@kaist.ac.kr}
\end{center}
\end{minipage}

\end{center}

\vspace{1.1cm}

\begin{center}
 \begin{minipage}{0.9\textwidth}
\small
\hspace{10pt}

We consider deformed sparse random matrices of the form $H= W+ \lambda V$, where $W$ is a real symmetric sparse random matrix, $V$ is a random or deterministic, real, diagonal matrix whose entries are independent of $W$, and $\lambda = O(1) $ is a coupling constant. Under mild assumptions on the matrix entries of $W$ and $V$, we prove local laws for $H$ that compare the empirical spectral measure of it with a refined version of the deformed semicircle law. By applying the local laws, we also prove several spectral properties of $H$, including the rigidity of the eigenvalues and the asymptotic normality of the extremal eigenvalues.
 \end{minipage}
\end{center}
 \date{\today}
 \vspace{7mm}

\thispagestyle{headings}

\section{Introduction}

In this paper, we consider the class of \emph{deformed sparse random matrices}, the sum of two large $N \times N$ matrices
\begin{align}\label{eq:1}
	H = W + \lambda V, 
\end{align}
where $W$ is an $N \times N$ symmetric sparse random matrix, $V$ is a random or deterministic diagonal matrix, and $\lambda = O(1)$ is a coupling constant. The matrices are normalized so that the eigenvalues of $V$ and $W$ are of order one. If the entries, $(V_i)$, of $V$ are random, we may think of $V$ as a ``random potential", and if the entries of $V$ are deterministic, $V$ can be considered as an ``external source". We assume that the empirical eigenvalue distribution of $V$ converges weakly (in probability) to a nonrandom measure.

In the limit $\lambda \lesssim N^{-1/2}$, $H$ belongs to the class of \emph{sparse random matrices}. The spectral properties of sparse random matrices have attracted significant interest as a natural generalization of the (centered) adjacency matrix of Erdős-Rényi graphs. A closely related perspective comes from \textit{diluted Wigner matrices}, a symmetric matrix defined as follows: for $i \leq j$, let $D_{ij} =  B_{ij}  M_{ij}$, where $M_{ij}$ are i.i.d. centered random variables with unit variance and all the moments finite, and $B_{ij}$ are Bernoulli-type random variables, independent of $M$, satisfying 
\begin{align}\P (B_{ij} = (Np)^{-1/2} ) = p \quad \text{and} \quad \P (B_{ij} = 0) =1-p.
\end{align} For $p \ll 1$, most of $D_{ij}$ vanish, and the model can be interpreted to characterize sparse interaction among the lattice sites. It is often convenient to introduce the \textit{sparsity parameter} $q \deq \sqrt{Np}$. (See Definition \ref{def:sparse} for the precise definition of the sparsity parameter.)
In terms of global statistics, the limit of the empirical spectral distribution of sparse random matrices is given by Wigner's semicircle law for $q^2 = pN \gg 1$. Furthermore, when $q^2 = pN > b_* \log N $ (for $b_* \approx 2.59$), the spectral norm converges to $2$, and all eigenvectors are delocalized \cite{ADK1, ADK2}.

When the sparsity parameter satisfies $q \gtrsim N^{1/2}$, $W$ recovers the class of Wigner matrices, and $H$ belongs to the class of \emph{deformed Wigner matrices}. The global properties of deformed Wigner matrices are understood through the so-called deformed semicircle law, proved by Pastur in \cite{Pas}. Free probability provides a natural foundation for the sum of random matrices \cite{V} and, in particular, the deformed semicircle law. Free convolution measure is defined as the distribution of the sum of the distributions of two freely independent non-commutative random variables. It is convenient to comprehend the free convolution of probability measures as the unique solution of a system of equations expressed in terms of their Stieltjes transforms \cite{BB}. (See Proposition \ref{thm:fc}.) The deformed semicircle law turns out to be the scaled free convolution between the semicircle law and law of $V$. In this context, it is natural to conjecture that the deformed semicircle law approximates the spectral properties of deformed sparse random matrices. To the best of our knowledge, the only known rigorous result concerns a special case involving Erdős-Rényi graph Laplacians, where the empirical eigenvalue distribution converges weakly to the free convolution of the semicircle law and standard Gaussian \cite{HL}.

Turning to the local eigenvalue statistics, the first step is proving a local law comparing the resolvent (Green function) of a random matrix and the Stieltjes transform of the equilibrium measure. While the local laws depend on the model, versatile methods have been developed, especially for mean-field models \cite{BEKYY,EY,KY,HKR}. These local laws play a crucial role in controlling the strength of eigenvalue estimates. Notably, the recovery of edge universality for both deformed Wigner matrices and sparse random matrices is achieved through a common idea: extremal eigenvalues were compared against an $N$-dependent (and possibly random) endpoint of an $N$-dependent equilibrium measure, instead of its limit at $N \to \infty$. Accordingly, it is necessary to establish a local law comparing the resolvent with respect to a possibly $N$-dependent (and possibly random) equilibrium measure. 

Our first result is a local law for sparse deformed matrices up to the spectral edges. More precisely, we prove an averaged local law (Theorem \ref{thm:refine}) and an entrywise local law (Theorem \ref{thm:strong}), which compares the empirical spectral measure not with the deformed semicircle law but with a refined version of it. The refined deformed semicircle law is defined as the free convolution of the semicircle law with a shifted version of the empirical spectral distribution of $(V_i)$, where the shift size depends on the sparsity. In the large $N$ limit, the refined measure converges to the limit of the deformed semicircle law. An important feature of our construction is that, under a mild condition ensuring square-root decay of the standard deformed semicircle law, the refinement preserves this behavior. Moreover, expansion shows that when $\lambda \lesssim {N}^{-1/2}$, the deterministic shift coincides asymptotically with the correction introduced in \cite{LS18}. See Assumption \ref{assm:stableint} and Proposition \ref{prop:refv} for details.

For deformed Wigner matrices, the shifts of the eigenvalues originate from $\lambda V$, independent of the Wigner matrix. Thus, by working on a probability space that fixes the configuration of $V$, it is possible to decouple the shifts and fluctuations induced from $V$ with the fluctuations of the Wigner component \cite{LS14}. However, the task is more complicated for sparse random matrices. The shifts in the spectral edges arise from the slowly decaying moments of the sparse random matrix, and so the local law depends on the sparsity \cite{EKYY1}. The edge universality for sparse random matrices was also established in \cite{EKYY2} for $q \gg N^{1/3}$, and it was extended to $q\geq N^{\delta}$ for $\delta >1/6$ in \cite{LS18} by introducing a deterministic correction of the semicircle law. The main technique in \cite{LS18} is the so-called recursive moment estimate relying on expansion through integration by parts, or cumulant expansion. 

While we employ the recursive moment estimates to obtain the refined local law, the anisotropic nature of the resolvent entries in the deformed model prevents a direct application of the previous techniques developed for sparse random matrices. In particular, it is not possible to replace $\frac{1}{N}\sum_{i} G_{ii}^n$ by $(\frac{1}{N} \Tr G)^n$ (where $G$ denotes the resolvent) up to negligible error, a key idea used in the estimates for sparse random matrices. Furthermore, the function whose high moments must be controlled, denoted $\caP(G)$, is a specified function depending on $V$ and the entries of $G$, not just the trace of $G$. As a result, the refined deformed semicircle law differs significantly from the refined measures used in sparse random matrices. The distinction also stems from the functional equation-driven nature of the deformed semicircle law.

We apply the refined local law to derive several results on the eigenvalue locations of deformed sparse random matrices, following the methods developed in \cite{EY, EKYY1, LS}. We establish bounds on the extremal eigenvalue and the density of states, and also the eigenvalue rigidity (Theorem \ref{thm:rigidity}), where these results are optimal given $q \gtrsim N^{2/9}$. Additionally, for random $V$, we prove that when $\lambda \gg \max \{q^{-1} , N^{-1/6}, N^{1/2}q^{-3}\}$, the leading fluctuation of the extremal eigenvalues is asymptotically Gaussian, dictated by the randomness of $V$ (Theorem \ref{thm:gauss}). We would emphasize that, without the refinements in this work, estimates at the spectral edge would remain limited to the regime $q \gg N^{1/3}$.

The bulk of the paper is devoted to constructing higher-order self-consistent equations for $|\caP(G)|$. Inspired by the approach for deformed Wigner matrices in \cite{HKR}, we identify an auxiliary quantity $Q$ arising from the derivative of $\caP(G)$. By applying a second cumulant expansion iteratively on $Q$, we obtain a bound that appears to be close to optimal. See Section \ref{sec:Q} for details. The bound is incorporated into the recursive moment estimate of $\caP(G)$, leading to an improved self-consistent equation. The refined local law can then be obtained through a standard continuity argument. However, in the vicinity of $\lambda \lesssim N^{-1/2}$, the resulting bound is weaker than the corresponding result for sparse random matrices in \cite{LS18}. We believe that this discrepancy can be resolved by refining the analysis and discuss the subtle terms responsible for the weaker estimate in Section \ref{sec:I21}.

\subsection{Related works}

The deformed semicircle law was first proved by Pastur \cite{Pas} and its properties were studied in \cite{B, Sh11, ShTi, LS, BES20a}, including sufficient conditions ensuring that the deformed semicircle law is supported on a single interval with square root type decay at the spectral edges. (The assumption is often referred to as stability condition, and these edge behavior is referred to as regularity.) The local law for the deformed Wigner matrices was proved in \cite{LS}, and the edge universality in \cite{LS14}, where the fluctuation of the eigenvalue, relative to the deterministic endpoint obtained by averaging over the randomness of $V$, asymptotically follows a distribution given by the (classical) convolution of a Gaussian of size $N^{-1/2} \lambda$, and the Tracy-Widom distribution, of scale $N^{-2/3}$ \cite{LS14}. Other types of universality including eigenvectors \cite{LS, LS16, Be}, bulk universality \cite{LSSY, OV}, and CLT for linear spectral statistics \cite{JL, LSX, LL} have also been studied. These ideas were extended further to study the free sum of random matrices, of the form $A + UBU^*$, where $A, B$ are deterministic matrices, and $U$ is Haar distributed on the unitary group. For more details, we refer to \cite{BES15, BES20b, JP} and the references therein. In physics literature, the deformed Wigner matrices are known as the Rosenzweig-Porter model. The model serves as an example describing interpolation between integrability and chaos \cite{vSW_rp}.

The local law for sparse random matrices and the edge universality for $q \gg N^{1/3}$ were proved in \cite{EKYY1,EKYY2}. The edge universality was extended to $q\geq N^{\delta}$ for $\delta >1/6$ in \cite{LS18}. Later, it was discovered in \cite{HLY20,HK} that the extremal eigenvalues behave asymptotically as the (classical) convolution of a Gaussian distribution of size $N^{-1/2} q^{-1}$ and Tracy-Widom distribution of scale $N^{-2/3}$. The Gaussian fluctuation is captured by a so called ``random fluctuation term", encoded in the random equilibrium measure. Recently, edge universality was established in the works \cite{Le, HY}, by subtracting all higher order fluctuations. The bulk universality \cite{HLY15} and CLT for linear spectral statistics \cite{ShTi, He} were established up to $q \geq N^{\delta}$ for $\delta>0$.

The cumulant expansion method has been instrumental in analyzing a wide variety of random matrix ensembles. It has enabled the proof of local laws for deformed Wigner matrices without the diagonal assumption on $V$ \cite{HKR, CEHK}, as well as for models such as the random block matrix model, introduced in connection with the quantum chaos transition conjecture \cite{SYY} and viewed as a deformed generalized Wigner matrix. Moreover, local laws of isotropic type were also established for sparse random matrices \cite{HHW} and sparse stochastic block models \cite{HLY25}, using these ideas. The methodology was further developed to treat more structured ensembles such as non-Hermitian models \cite{He1, CCEJ}, correlated matrices \cite{AEKS, EKS}, and random regular graphs \cite{BHKY, HY24, He2}.

\subsection{Organization of the paper}
The rest of the paper is organized as follows: In Sect. \ref{sec:def}, we define the model, present the main results, with emphasis on determining the refined deformed semicircle law up to the edges. In Sect. \ref{sec:prelim}, we collect and extend several known results, and explain the strategy of the proof. In Sect. \ref{sec:primary}, we obtain the weak local law, which serves as an a priori estimate. We also define and prove estimates on an auxiliary quantity $Q$, a central object in the recursive moment estimates. In Sect. \ref{sec:rme}, we present the recursive moment estimate, which is the main technical ingredient in proving the local laws. In Sect. \ref{sec:locallaw}, we prove the strong entrywise local law, and the refined averaged local law. In Sect. \ref{sec:behavior}, we apply the local law to prove the results on the eigenvalues. Several technical parts of the proofs can be found in the Appendix. 

\subsection{Notations}

\begin{remark}[Notational conventions] We use the symbols $O(\cdot)$ and $o(\cdot)$ for the standard big-O and little-o notation. The notation $O, o, \ll, \gg$, refer to the limit $N \to \infty$ unless otherwise stated. Here $a \ll b$ means $a = o(b)$. We use $c$ and $C$ to denote positive constants that do not depend on $N$, usually with the convention $c \leq C$. Their value may change from line to line. We write $a \sim b$, if there is $C \geq 1$ such that $C^{-1} |b| \leq |a| \leq C|b|$. Throughout the paper we denote for $z \in \bbC^+$ the real part by $E = \Re z$ and the imaginary part by $\eta = \Im z$. We use double brackets to denote index sets, i.e. $\llbracket n_1 , n_2 \rrbracket \deq [n_1 ,n_2] \cap \bbZ$ for $n_1, n_2 \in \bbR$.
	
	For matrices $A, B\in \bbC^{N \times N}$, we write $A \odot B$ for the entrywise (Hadamard) product of the matrices $A, B$. The usual operator norm (of $A$) induced by the standard Euclidean vector norm $\norm \cdot \norm$ (on $\bbC^N)$ will be denoted by $\norm A \norm$. We also write $\pair{A} \deq N^{-1} \Tr A$.
\end{remark}

\begin{definition}[High probability event] We say that an $N$-dependent event $\Omega \equiv \Omega^{(N)}$ holds with high probability if, for any large $D > 0$,
	\begin{align*}
		\bbP (\Omega^{(N)}) \geq 1 - N^{-D},
	\end{align*}
	for sufficiently large $N \geq N_0 (D)$.
\end{definition}

\begin{definition}[Stochastic domination]
	Let
	\begin{equation*}
		X = \left( X^{(N)}(u) \;:\; N \in \N, u \in U^{(N)} \right) \,, \qquad
		Y = \left( Y^{(N)}(u) \;:\; N \in \N, u \in U^{(N)} \right),
	\end{equation*}
	be two families of random variables, where $U^{(N)}$ is a possibly $N$-dependent parameter set. 
	We say that $X$ is \emph{stochastically dominated by $Y$} (uniformly in $u$) if for all (small) $\epsilon > 0$ and (large) $D > 0$
	\begin{equation*}
		\sup_{u \in U^{(N)}} \P \left({|X ^{(N)}(u)| > N^\epsilon Y^{(N)}(u)}\right) \;\leq\; N^{-D},
	\end{equation*}
	for any sufficiently large $N\geq N_0(\epsilon, D)$. 
	
	We write $X \prec Y$ or $X = O_{\prec}(Y)$, if $X$ is stochastically dominated by $Y$. We further extend the definition of $\opr(\cdot)$ to matrices. Let $X$ be a family of complex $N \times N$ random matrices, and $Y$ a family of nonnegative random variables. Then we write $X= O_{\prec}(Y)$ to mean $\max_{ij} |X_{ij}|\prec Y$. 
\end{definition}

\section{Definition and main results}\label{sec:def}
\subsection{Definition of the model}\label{sec:defandmod} In this section, we list the definitions and assumptions of our model.
\begin{definition}[Sparse random matrices] \label{def:sparse}
	Fix any small $\delta > 0$. We assume that $W = (W_{ij})$ is a real symmetric $N \times N$ matrix whose entries are independent, up to the symmetry constraint $W_{ij} = W_{ji}$, centered random variables. We further assume that $(W_{ij})$ satisfy the moment conditions
	\begin{align}
		\E W_{ij} = 0, \quad \E (W_{ij})^2 = \frac{1 + \bigO{\delta_{ij}}}{N}, \quad \E |W_{ij}|^{k} \leq \frac{(Ck)^{ck}}{Nq^{k-2}}, \quad (k \geq 3),
	\end{align}
	with sparsity parameter $q$ satisfying
	\begin{align}
		N^{\delta} \leq q \leq N^{1/2}.
	\end{align}
	We denote by $\mc{C}_k$ the $k$-th cumulant of the i.i.d random variables $(W_{ij}: i<j)$, where we have $\mc{C}_1 =0$, $\mc{C}_2 = {1}/{N}$, and 
	\begin{align}
		|\mc{C}_{k}| \leq \frac{(2 Ck)^{2(c+1)k}}{Nq^{k-2}}, \quad (k \geq 3).
	\end{align} We further introduce the normalized cumulants, $s^{(k)}$, by $s^{(1)}\deq 0$, $s^{(2)} \deq 1$, and $s^{(k)} \deq Nq^{k-2} \mc{C}_k$ for $k \geq 3$, where $s^{(k)}$ are all of order $O(1)$. We further assume that
	\begin{align}
		|s^{(k)}| \leq C_k, \quad s^{(4)} \geq c_4,
	\end{align}
	for some $C_k, c_4 >0$. 
\end{definition}
\begin{remark}
	The fourth cumulant is especially important in our analysis, and so we will denote $s^{(4)} = s$ hereafter. The lower bound $s \geq c_4$ ensures that the scaling by $q$ for the ensemble $W$ is ``correct". Otherwise, we can always rescale $q$ to make $s \sim 1$. We denote $\Tsc \deq [c_4, C_4]$, so that $s \in \Tsc$. 
\end{remark}
For the sake of convenience, we will fix the sparsity parameter as $q = N^{\phi}$ for some fixed $\phi \in [\delta, 1/2]$. By doing so, the order of $q$ and $\lambda$ can be compared conveniently, while it is believed that the assumptions can be removed with some care. For more details, refer to Proposition \ref{prop:refv} and Remark \ref{rem:classify}.

We also remove the diagonal of $W$ by replacing $W_{ij}$ by $W_{ij} - \delta_{ij} W_{ii}$, for simplicity. The shift in extremal eigenvalues of $H$ due to the replacement is negligible. We refer to \cite[Lemma 4.1]{LS14} with trivial modifications.

Next, we turn to the potential $V$. For $V$, we consider random or deterministic real diagonal matrix, with each entries compactly supported with order $O(1)$.

\begin{assumption}\label{assm:esd}
	Let $V = V^{(N)} = \diag(V_1^{(N)}, \dotsc, V_N^{(N)})$ be an $N \times N$ real diagonal, random or deterministic matrix, with empirical spectral distribution (ESD) $\wh{\nu}$, i.e.
	\begin{align}
		\wh{\nu} \equiv \wh{\nu}_N = \frac{1}{N} \sum_{i=1}^{N} \delta_{V_i^{(N)}}
	\end{align}
	Furthermore, we assume that the ESD of $V^{(N)}$ converges weakly (weakly in probability if $V^{(N)}$ is random) to a deterministic distribution $\nu$. We also assume $\nu$ to be a deterministic, centered, compactly supported probability measure on $\mathbb{R}$.
\end{assumption}

Since $\wh{\nu}_N$ and $\nu$ are compactly supported, we define
\begin{align}
	\wh{r}_N^{-} \deq \inf \supp \wh{\nu}_N , \quad \wh{r}_N^{+} = \sup \supp \wh{\nu}_N, \quad r^- = \inf \supp \nu,\quad r^+ = \sup \supp \nu,
\end{align}

The following assumption formalizes the notion of weak convergence of $\wh{\nu}$ to $\nu$ in the limit $N \to  \infty$. 

\begin{assumption}\label{assm:V}
	We assume that the entries of $V^{(N)}$ satisfy the following:
	\begin{enumerate}
		\item If $V^{(N)}$ is a \textbf{random} matrix, let $\{ V_i^{(N)}: 1 \leq i \leq N \}$ be a collection of i.i.d. random variables with law $\nu$, independent of $W$. 
		\item If $V^{(N)}$ is a \textbf{deterministic} matrix, we assume
		\begin{align}
			\max_{z \in \caD}\left| \int \frac{1}{x-z} \d \wh{\nu}_N(x) - \int \frac{1}{x-z} \d \nu(x) \right|  = O(N^{-\fra_0}),\label{eq:alpha0}
		\end{align}
		for any fixed compact set $\caD \subset \mathbb{C}^+$ with $\mathcal{D} \cap \supp \nu = \emptyset$, for some $\fra_0 >0$.
	\end{enumerate}
\end{assumption}

We define deformed sparse random matrix of size $N$ as an $N \times N$ symmetric random matrix $H$ that can be decomposed into
\begin{align}
	H = (H_{ij}) \deq W+ \lambda V.
\end{align}
Here, $W$ is a real symmetric sparse random matrix of size $N$ and $V$ is a random or deterministic real diagonal matrix, introduced in Assumption \ref{assm:V}. The coupling constant $\{\lambda \equiv \lambda_N\}$ is a finite, non-negative constant that converges sufficiently fast in the limit of large $N$. For convenience, we denote the distribution $\nu$ and the empirical spectral distribution $\wh{\nu}_N$ scaled by the factor $\lambda $ as $\nu^{\lambda}$ and $\wh{\nu}^{\lambda}_N$, respectively. Similarly, let $m_{\nu}^{\lambda}$ and $m_{\wh{\nu}}^{\lambda}$ be the Stieltjes transforms of $\nu^{\lambda}$ and $\wh{\nu}_N^{\lambda}$, respectively.

In this study, we focus on the regime where square-root decay behavior is exhibited at the spectral edges of $H$. As studied in the previous works on deformed Wigner matrices, we impose several assumptions on $\lambda$ and $V$, often referred to as stability. Given that the eigenvalue distribution is expected to converge weakly to the deterministic deformed semicircle law as $N \to \infty$, it is natural to impose the same condition assumed for deformed Wigner matrices. The following assumptions, previously introduced in \cite{LS14, LSSY, JL}, serve as a sufficient condition for stability, also ruling out the possibility that the matrix $H$ has ``outliers" in the limit of large $N$.

\begin{assumption}\label{assm:stableint} Let $I_{\nu}$ be the smallest interval such that $\mathrm{supp}\, \nu \subseteq I_{\nu}$. Then, there exist $\varpi > 0$, independent of $N$, such that
	\begin{align}\label{eq:s1}
		\inf_{x \in  I_{\nu}} \int \frac{\d \nu (v)}{ (v - x)^2} \geq (1+ \varpi)\lambda^2 ,
	\end{align}
	for $N$ sufficiently large. Moreover, let $I_{\wh{\nu}_N}$ be the smallest interval such that $\supp \wh{\nu} \subseteq I_{\wh{\nu}_N}$. Then, we assume the following condition to $\wh{\nu}_N$:
	\begin{enumerate}
		\item For \textbf{random} $\{V_i \}$, we assume that there exist a constant $\mathfrak{t}$, such that
		\begin{align}\label{eq:s2}
			\bbP \left[ \inf_{x \in  I_{\wh{\nu}_N} } \int \frac{1}{ ( v - x)^2 } \d \wh{\nu}_N (v) \geq (1 + \varpi)\lambda^2 \right] \geq 1 - N^{- \mathfrak{t}} ,
		\end{align}
		for $N$ sufficiently large.
		\item For \textbf{deterministic} $(V_i)$, we assume
		\begin{align}\label{eq:s3}
			\inf_{x \in   I_{\wh{\nu}_N}} \int \frac{1}{ ( v - x)^2 }\d \wh{\nu}_N(v) \geq (1+ \varpi)\lambda^2,
		\end{align}
		for $N$ sufficiently large.
	\end{enumerate}
\end{assumption}
\begin{remark}
	The left-hand side of the inequalities \eqref{eq:s1}, \eqref{eq:s2}, and $\eqref{eq:s3}$ may be infinite. In this case the inequalities should be understood in the sense that $\lambda$ can be chosen as any positive number that stays bounded and converges sufficiently fast in the limit of large $N$. To simplify the exposition, we only consider the case $\lambda = \lambda_0 N^{-\beta}$ for some constants $\beta \geq 0$ and $\lambda_0 \geq 0$. Upon defining $\Tvpi \deq [0, \lambda']$, for an appropriate $\lambda' \geq 0$ ($\lambda ' = \lambda_0 $ if $\beta = 0$), the conditions \eqref{eq:s1} and \eqref{eq:s2} can be interpreted so that the relations hold uniformly in $\lambda \in \Tvpi$ for $N$ sufficiently large. Note that we typically choose $\beta < {1/2}$ since when $\beta \geq {1/2}$, the matrix reduces into a sparse random matrix.
\end{remark}

As studied in \cite{LS}, when $\lambda \leq 1$, Assumption \ref{assm:stableint} is satisfied for any $\nu$ with absolutely continuous density, supported on $[-1, 1]$. However, when $\lambda= \lambda_0 \geq 1$, square-root decay may fail for $\lambda_0$ above a certain threshold, especially when the density of $\nu$ has convex decay at the end of the support. We examine two prime examples satisfying the assumption.
\begin{enumerate}
	\item Choosing $\nu = \frac{1}{2} ( \delta_{-1} + \delta_1) $, $\lambda_0 \geq 0$, we have $I_{\nu} = [-1, 1]$. For $\lambda < 1$, one checks that there exist $\varpi$ and $\mathfrak{t}$ such that Assumption \ref{assm:stableint} is satisfied and that the deformed semicircle law is supported on a single interval with a square-root type behavior at the edges. However, in case $\lambda>1$, the deformed semicircle law is supported on two disjoint intervals.
	\item Let $\nu$ be a centered Jacobi measure of the form
	\begin{align}
		\nu(v) = Z^{-1} (1+v)^{\mathrm{a}} (1-v)^{\mathrm{b}} d(v) \mathbbm{1}_{[-1, 1]}(v), 
	\end{align}
	where $d \in C^{1} ([-1, 1])$, with $d(v) >0$, $-1 < \mathrm{a}, \mathrm{b} < \infty$, and $Z$ a normalization constant. When $\mathrm{a}, \mathrm{b} \leq 1$, there is, for any $\lambda \geq 0$, $\varpi \equiv \varpi(\lambda)>0$ and $\mathfrak{t}$ such that Assumption \ref{assm:stableint} is satisfied. However, if $\mathrm{a} >1$ or $\mathrm{b} >1$, the condition may fail for $\lambda = \lambda_0 $ above $\lambda_{+}$ or $\lambda_{-}$, where
	\begin{align}
		\lambda_{+} \deq \left( \int \frac{\d \nu(v)}{(1-v)^2} \right)^{1/2}, \quad \lambda_{-} \deq \left(\int \frac{\d \nu(v)}{(1+v)^2} \right)^{1/2}.
	\end{align} In this setting, the deformed semicircle law is still supported on a single interval, but the square root behavior at the edge may fail. For more details, refer to \cite{LS, LS14, LL}. 
\end{enumerate}

To conveniently cope with the cases when $(V_i)$ are random, respectively deterministic, we introduce an event $\Xi$ on which the random variables $(V_i)$ exhibit ``typical" behavior. 

\begin{definition} 
	Let $\Xi \equiv \Xi_N(\mathfrak{a})$ be the event on which the following holds:
	\begin{enumerate}
		\item We have
		\begin{align}
			\inf_{x \in I_{\wh{\nu}_N}} \frac{1}{\lambda^2}  \int \frac{1}{(v- x)^2} \d \wh{\nu}_N(v) \geq (1 + \varpi).
		\end{align}
		\item For any fixed compact set $\caD \subset \mathbb{C}^+$ with $\caD \cap \supp \nu = \emptyset$, there exists a constant $C>0$ such that for any sufficiently large $N$, 
		\begin{align} \label{eq:bound}
			\sup_{z \in \caD} |m_{\wh{\nu}_N}(z) - m_{\nu}(z)| \leq CN^{-\mathfrak{a}}.
		\end{align}
		\item If $V$ is \textbf{random}, we impose another condition:
		For any fixed compact set $\Theta \times \caD \in \Theta_{\varpi} \times \bbC^+$ satisfying
		\begin{align}
			\inf_{\substack{(\lambda, z) \in \Theta \times \caD, \\ v \in I_{\nu}} }|\lambda v - z| >0,
		\end{align}
		there exist a constant $C>0$ such that for any sufficiently large $N$,
        	\begin{align}
		\sup_{\substack{(\lambda, z) \in \Theta \times \caD, \\ \lambda \not= 0} } \lambda^{-1} |m_{\wh{\nu}}(z) - m_{\nu}(z)| \leq CN^{-\fra}.
	\end{align}
	\end{enumerate}
\end{definition}

The event was previously introduced in \cite{LS14, LSSY,  JL} to study the spectral properties of deformed Wigner matrices. In case $V$ is deterministic, $\Xi(\fra_0)$ has full probability for $N$ sufficiently large, for $\fra_0$ given in \eqref{eq:alpha0}. In case $V$ is random, the following result was proved in \cite{JL}.
\begin{lemma}
	For \textbf{random} $V$ and any fixed $\epsilon_0 >0$, there exists $c>0$ such that 
	\begin{align}
		\mathbb{P} \left[\Xi_N \left( \frac{1}{2} - \epsilon_0 \right) \right] \geq 1 -c N^{-\mathfrak{t}},
	\end{align}
	where $\mathfrak{t}$ was given in Assumption \ref{assm:V}.
\end{lemma}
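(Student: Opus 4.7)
The plan is to decompose the event $\Xi_N(1/2 - \epsilon_0)$ into its three defining conditions and establish each with sufficient probability, then combine via a union bound. The dominant contribution to the probability cost will come from Condition~(1), since the other two conditions admit super-polynomial concentration. Condition~(1) is exactly the content of \eqref{eq:s2} in Assumption~\ref{assm:V} for random $V$, so it already holds with probability at least $1 - N^{-\mathfrak{t}}$. The remaining task is therefore to show that Conditions~(2) and~(3) hold with probability at least $1 - N^{-D}$ for any $D > 0$.

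For Condition~(2), I would apply Hoeffding's inequality pointwise in $z$ and extend to uniformity via a net argument. Since $\caD$ is a fixed compact set with $\caD \cap \supp \nu = \emptyset$, the function $v \mapsto (v - z)^{-1}$ is uniformly bounded on $\supp \nu$ by some $M = M(\caD) < \infty$, and has $O(1)$ Lipschitz constant in $z$ on $\caD$. Writing
\[
m_{\wh{\nu}_N}(z) - m_{\nu}(z) \;=\; \frac{1}{N}\sum_{i=1}^N \Bigl( \tfrac{1}{V_i - z} - \E \tfrac{1}{V_1 - z}\Bigr)
\]
as a normalized sum of i.i.d.\ centered bounded complex random variables, Hoeffding gives the pointwise bound $\P(|m_{\wh{\nu}_N}(z) - m_{\nu}(z)| > N^{-1/2 + \epsilon_0}) \leq \exp(-c N^{2\epsilon_0})$, which is super-polynomially small. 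A union bound over a polynomial-size $\epsilon$-net of $\caD$ (with $\epsilon = N^{-K}$ for large fixed $K$), combined with Lipschitz interpolation, then yields the required uniform bound.

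Condition~(3) is handled by the same scheme, with the joint compact parameter set $\Theta \times \caD$ in place of $\caD$. The positive-distance hypothesis $\inf_{(\lambda,z,v)} |\lambda v - z| > 0$ ensures that the relevant integrand is uniformly bounded and jointly Lipschitz on the enlarged parameter set, so a two-dimensional net argument together with Hoeffding yields the same $N^{-1/2 + \epsilon_0}$ rate with super-polynomial probability. Combining the three contributions via a union bound then gives $\P[\Xi_N(1/2 - \epsilon_0)] \geq 1 - c N^{-\mathfrak{t}}$, since the exceptional probabilities from~(2) and~(3) are negligible against $N^{-\mathfrak{t}}$.

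The main obstacle is not conceptual but one of care: I need to verify that the $\lambda^{-1}$ prefactor in Condition~(3) does not disrupt the subgaussian scaling in Hoeffding's inequality. This reduces to checking that $\Theta$ is effectively bounded away from $0$ in the sense that one can factor the $\lambda^{-1}$ into a rescaling of the integrand that remains bounded, using the hypothesis $\inf|\lambda v - z| > 0$. Once this uniform boundedness is established, the $O(1)$ subgaussian constant is preserved and the net/Hoeffding scheme applies verbatim.
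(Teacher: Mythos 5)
Your overall strategy is the right one, and in fact it is essentially the argument behind the result the paper is quoting: the paper does not prove this lemma itself but cites \cite{JL}, where the event is handled exactly by splitting $\Xi_N(\tfrac12-\epsilon_0)$ into its three defining conditions, taking condition (1) directly from the stability hypothesis \eqref{eq:s2} (note that \eqref{eq:s2} sits in Assumption \ref{assm:stableint}, not Assumption \ref{assm:V} -- the lemma statement itself carries this mis-reference), and establishing conditions (2) and (3) by concentration for sums of bounded i.i.d.\ random variables together with a polynomial net and Lipschitz interpolation, so that their exceptional probabilities are super-polynomially small and the union bound leaves the $N^{-\mathfrak{t}}$ cost from \eqref{eq:s2} as the dominant term. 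Your treatment of condition (2) is correct as written (splitting the complex summands into real and imaginary parts for Hoeffding is immediate), and the target rate $N^{-1/2+\epsilon_0}$ matches $\mathfrak{a}=\tfrac12-\epsilon_0$.

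The one genuine soft spot is your proposed resolution of the $\lambda^{-1}$ prefactor in condition (3). Arguing that ``$\Theta$ is effectively bounded away from $0$'' is not available: $\Theta$ is an arbitrary compact subset of $\Theta_{\varpi}=[0,\lambda']$, and the separation hypothesis $\inf|\lambda v - z|>0$ is perfectly compatible with $\lambda$ ranging down to $0$ (for instance $\lambda=0$ and $z\in\caD\subset\bbC^+$), so the factor $\lambda^{-1}$ cannot simply be absorbed into a constant. The correct mechanism is a cancellation in the centered summand. Reading $m_{\wh{\nu}}$ and $m_{\nu}$ in condition (3) as the Stieltjes transforms of the $\lambda$-scaled measures (which is what the prefactor forces, and is the convention of \cite{LS14, JL}), and using $\E\,(\lambda V_1-z)^{-1}=\int(\lambda v-z)^{-1}\d\nu(v)$, one has
\[
\lambda^{-1}\Bigl(\frac{1}{\lambda V_i - z} - \E\,\frac{1}{\lambda V_i - z}\Bigr)
= -\int \frac{V_i - v}{(\lambda V_i - z)(\lambda v - z)}\,\d\nu(v),
\]
which, by the separation hypothesis and the compactness of $I_{\nu}$, is a centered, i.i.d., uniformly bounded random variable for every $(\lambda,z)\in\Theta\times\caD$, including as $\lambda\to 0$, and is jointly Lipschitz in $(\lambda,z)$ with an $N$-independent constant. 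With this rewriting your Hoeffding-plus-net scheme applies verbatim with an $O(1)$ subgaussian constant, and the union bound gives the claim. (A cosmetic point you may wish to address: as stated the event quantifies over all compact sets $\caD$, which is uncountable; as in the cited references one should fix the relevant compact sets in advance, or reduce to a countable exhaustion, before applying the union bound.)
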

In the rest of the paper, we write $\Xi_N \equiv \Xi_N(\fra_0)$ if $V$ is deterministic, and $\Xi_N \equiv \Xi_N(1/2 - \epsilon_0)$ if $V$ is random, where $\epsilon_0 \geq 0$ is small enough but fixed.

We mention that all our proofs are expected to hold for complex Hermitian deformed sparse random matrices, under minor modifications. Moreover, we believe that following the analysis in \cite{HKR} more closely, our result will hold for Hermitian $V$, that is not necessarily diagonal, with suitable assumptions, while we do not pursue in this direction in the current paper.

\subsection{Deformed semicircle law and free convolution} \label{sec:res1} 
The deformed semicircle law is conveniently defined through its Stieltjes transform: For a (probability) measure $\mu$ on $\bbR$, we define its Stieltjes transform $m_{\mu}$, by
\begin{align}
	m_{\mu}(z) \deq \int \frac{\d \mu(v)}{v -z}, \quad (z \in \bbC^+). \end{align}
Note that $m_{\mu}$ is an analytic function in the upper half plane and that $\Im m_{\mu} (z) \geq 0$, $\Im z \geq 0$. Assuming that $\mu$ is absolutely continuous with respect to Lebesgue measure, we recover the density of $\mu$ from $m_{\mu}$ by the inversion formula
\begin{align}
	\mu(E) = \lim_{\eta \searrow 0} \frac{1}{\pi} \Im m_{\mu} (E+ \ii \eta), \quad (E \in \bbR).
\end{align}
We use the same symbols to denote measures and their densities.

The standard semicircle law asserts that the limiting eigenvalue distribution of $W$ converges weakly to standard semicircle law $\rho_{\mathrm{sc}}$. One can easily check that its Stieltjes transform $m_{\mathrm{sc}} \equiv m_{\rho_{\mathrm{sc}}}$ satisfies the relation
\begin{align}
	m_{\mathrm{sc}} (z)= - \frac{1}{m_{\mathrm{sc}}(z) +z}, \quad \Im m_{\mathrm{sc}}(z) \geq 0, \quad (z \in \bbC^+).
\end{align}
In \cite{Pas}, it was proved that the eigenvalue distribution of the deformed Wigner matrices can be described using the functional equation
\begin{align}\label{eq:mfc}
	\mfc(z) = \int \frac{\d \nu(v) }{\lambda v - z - \mfc(z)},\quad \Im \mfc(z) \geq 0, \quad (z \in \bbC^+),
\end{align}
which has a unique solution satisfying $\limsup_{\eta \searrow 0} \Im \mfc (E+ \ii \eta) < \infty$ for all $E \in \bbR$. The deformed semicircle law, denoted by $\rho_{\fc}$ is then defined through its density
\begin{align}
	\rho_{\fc}(E) \deq \lim_{\eta \searrow 0} \frac{1}{\pi} \Im m_{\fc} (E+ \ii \eta), \quad (E \in \bbR),
\end{align}
Likewise, we can define $\wh{m}_{\mathrm{fc}}$, $\wh{\rho}_{\mathrm{fc}}$ by replacing $\nu$ with $\wh{\nu}_N$ in \eqref{eq:mfc}. The measure $\rho_{\mathrm{fc}}$ has been studied in detail in \cite{B, Sh11, LS}. The following result is relevant for our analysis.
\begin{lemma} \label{lem:singlesupport}
	Let $\nu$, and $\lambda$ satisfy Assumption \ref{assm:stableint}. Then there exist $E^{\nu}_{-}, E^{\nu}_{+} \in \bbR$, such that $\supp \rho_{\mathrm{fc}} = [E^{\nu}_{-}, E^{\nu}_{+}]$. Moreover $\rho_{\mathrm{fc}}$ has a strictly positive density on $(E^{\nu}_{-}, E^{\nu}_{+})$. 
\end{lemma}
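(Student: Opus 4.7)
My plan is to derive the lemma by the standard subordination framework of Biane for free convolution with the semicircle law, specialized to $\rho_{\fc} = \nu^\lambda \boxplus \rho_{\mathrm{sc}}$ as in \cite{B, LS}. The first step is to rewrite the defining equation \eqref{eq:mfc} via the subordination variable $u(z) := z + \mfc(z)$, under which the equation becomes $\mfc(z) = m_{\nu^\lambda}(u(z))$ with $m_{\nu^\lambda}(u) = \int (\lambda v - u)^{-1}\,\d\nu(v)$. Inverting this relation, I obtain
\begin{align*}
z = \Phi(u(z)), \qquad \Phi(u) := u - m_{\nu^\lambda}(u),
\end{align*}
which turns the problem into a real-variable analysis of $\Phi$ on $\bbR \setminus \lambda\supp\nu$.

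Next I would locate the critical points of $\Phi$ using Assumption \ref{assm:stableint}. Differentiating,
\begin{align*}
\Phi'(u) = 1 - \int \frac{\d\nu(v)}{(\lambda v - u)^2}.
\end{align*}
Substituting $u = \lambda x$, the stability bound gives $\Phi'(u) \leq -\varpi$ on $\lambda I_\nu \setminus \lambda\supp\nu$, while $\Phi'(u) \to 1$ as $|u| \to \infty$. Since $u \mapsto \int (\lambda v - u)^{-2}\,\d\nu(v)$ is strictly monotone on each unbounded connected component of $\bbR \setminus \lambda\supp\nu$, the intermediate value theorem yields unique points $u_- < \lambda r^- \leq \lambda r^+ < u_+$ with $\Phi'(u_\pm) = 0$ and $\Phi' > 0$ on $(-\infty, u_-) \cup (u_+, \infty)$. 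I would then set $E^{\nu}_\pm := \Phi(u_\pm)$.

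The third step is to read off the support and density of $\rho_{\fc}$ via Stieltjes inversion. A point $E \in \bbR$ lies in the complement of $\supp \rho_{\fc}$ exactly when $\mfc$ extends as a real-analytic, real-valued function through $E$, which (by the implicit function theorem applied to the identity $z = \Phi(u(z))$) happens precisely when $E = \Phi(u_0)$ for some real $u_0 \in \bbR \setminus \lambda\supp\nu$ with $\Phi'(u_0) > 0$. The previous step identifies this set as $(-\infty, E^{\nu}_-) \cup (E^{\nu}_+, \infty)$, so $\supp \rho_{\fc} = [E^{\nu}_-, E^{\nu}_+]$. For $E \in (E^{\nu}_-, E^{\nu}_+)$, no such real preimage exists, forcing $u(E+\ii 0) \in \bbC^+$, which in turn yields $\rho_{\fc}(E) = \pi^{-1} \Im m_{\nu^\lambda}(u(E+\ii 0)) > 0$.

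The main obstacle I expect is controlling the boundary behavior of $\mfc$ on the real axis, especially near the edges of $\lambda \supp \nu$: I would need finite boundary values $\mfc(E+\ii 0)$ and continuity of $u$ up to $\bbR$ to justify the implicit-function-theorem step above. The strict gap $(1+\varpi) > 1$ in Assumption \ref{assm:stableint} is precisely what prevents the critical points $u_\pm$ from collapsing into $\lambda\supp\nu$, so that $\Phi$ stays real-analytic in neighborhoods of $u_\pm$ and $\mfc$ admits a continuous extension to $\overline{\bbC^+}$ with uniformly bounded imaginary part, inputs which are needed both here and for the square-root edge behavior used elsewhere in the paper.
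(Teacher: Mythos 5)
The paper itself does not prove this lemma but imports it from \cite{B, Sh11, LS}, and your subordination argument---inverting \eqref{eq:mfc} through $\Phi(u)=u-\int(\lambda v-u)^{-1}\,\d\nu(v)$ and locating the two critical points $u_{\pm}$ outside $\lambda I_{\nu}$ via the stability bound \eqref{eq:s1}---is essentially the proof given in those references, so the approach matches. The one step you leave genuinely loose is interior positivity: the absence of a real preimage with $\Phi'>0$ does not by itself force $\Im u(E+\ii 0)>0$, since a real boundary value $u_0$ with $\Phi'(u_0)\le 0$ is not excluded a priori; the standard way to close this (as in \cite{LS}) is to take imaginary parts in \eqref{eq:mfc} and use Fatou to get $\int(\lambda v-u_0)^{-2}\,\d\nu(v)\le 1$ at any point of vanishing density, which together with \eqref{eq:s1} and the monotonicity of $\Phi$ on $\{\Phi'\ge 0\}$ forces $E\notin(E^{\nu}_{-},E^{\nu}_{+})$---this resolves exactly the boundary-value obstacle you flag, without needing the implicit function theorem at the edge.
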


Free convolution turned out to be a useful concept in interpreting the sum or product of random matrices. Denoting the free convolution of two probability measures $\nu_1,\nu_2$ as $\nu_1 \boxplus \nu_2$, it was proved in \cite{BB} that its Stieltjes transform can be understood as a solution of a system of equations. 

\begin{proposition}[Theorem 4.1, \cite{BB}]\label{thm:fc} Given Borel probability measures $\nu_1, \nu_2$ on $\mathbb{R}$, there exist unique functions $\omega_1, \omega_2: \mathbb{C}^+ \rightarrow \mathbb{C}^+$, satisfying $\lim_{y \to \infty} \omega_{i} (\ii y)/\ii y =1 $, ($i=1, 2$) such that
	\begin{align}
		m_{\nu_1 \boxplus \nu_2 }(z) &= m_{\nu_1} (\omega_1(z)) = m_{\nu_2}(\omega_2(z)),\\
		\omega_1(z) + \omega_2(z) &= z - \frac{1}{m_{\nu_1 \boxplus \nu_2}(z)} ,
	\end{align}
	for all $z \in \mathbb{C}^+$.
\end{proposition}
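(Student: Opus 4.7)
The plan is to follow the Belinschi--Bercovici strategy based on the Denjoy--Wolff theorem, working with the reciprocal Cauchy transforms rather than directly with the Stieltjes transforms. For a Borel probability measure $\mu$ on $\mathbb{R}$, define $F_\mu(z) \deq -1/m_\mu(z)$. Standard Nevanlinna theory gives that $F_\mu: \mathbb{C}^+ \to \mathbb{C}^+$, that $\Im F_\mu(z) \geq \Im z$, and that $F_\mu(z)/z \to 1$ as $y = \Im z \to \infty$ nontangentially. In terms of $F_i \deq F_{\nu_i}$, the desired subordination equations are equivalent to the single system
\begin{align}
F_1(\omega_1(z)) \;=\; F_2(\omega_2(z)) \;=\; \omega_1(z) + \omega_2(z) - z,
\end{align}
together with $\omega_i(iy)/iy \to 1$. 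Thus the task reduces to constructing a pair $(\omega_1, \omega_2)$ of analytic self-maps of $\mathbb{C}^+$ with the required asymptotics solving this fixed-point system, and showing uniqueness.

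The key step is to set $H_i(z) \deq F_i(z) - z$, so $H_i: \mathbb{C}^+ \to \overline{\mathbb{C}^+}$ (nonnegative imaginary part by Nevanlinna). For each fixed $z \in \mathbb{C}^+$, I would introduce the auxiliary map
\begin{align}
f_z(\omega) \;\deq\; H_1\bigl(H_2(\omega) + z\bigr) + z, \qquad \omega \in \mathbb{C}^+.
\end{align}
A direct computation shows $\Im f_z(\omega) \geq \Im z > 0$, so $f_z$ sends $\mathbb{C}^+$ strictly into $\{w : \Im w \geq \Im z\}$, hence into a subset that is relatively compact in $\mathbb{C}^+$ with respect to the hyperbolic metric. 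The Denjoy--Wolff theorem (in the form for self-maps of $\mathbb{C}^+$ that are strict contractions of the hyperbolic metric) then provides a unique attracting fixed point $\omega_1(z) \in \mathbb{C}^+$ of $f_z$, namely $\omega_1(z) = \lim_{n \to \infty} f_z^{\circ n}(\omega_0)$ for any $\omega_0 \in \mathbb{C}^+$. Setting $\omega_2(z) \deq H_2(\omega_1(z)) + z$ and unwinding definitions yields the desired identities $F_1(\omega_1(z)) = F_2(\omega_2(z)) = \omega_1(z) + \omega_2(z) - z$.

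Next I would verify that $\omega_1, \omega_2$ are analytic in $z$. Analyticity follows from the Schwarz--Pick / Earle--Hamilton fixed-point theorem applied on the holomorphic family $(z,\omega) \mapsto f_z(\omega)$, or equivalently from the implicit function theorem applied to the holomorphic equation $\omega - f_z(\omega) = 0$, for which the linearization is invertible because $f_z$ is a strict hyperbolic contraction. To check the normalization $\omega_i(iy)/iy \to 1$, I would combine the asymptotics $H_i(z) = o(z)$ nontangentially (a consequence of the Nevanlinna representation of $F_i$) with the defining relations and a direct estimate on $f_{iy}$. Finally, defining $m_{\nu_1 \boxplus \nu_2}(z) \deq m_{\nu_1}(\omega_1(z)) = m_{\nu_2}(\omega_2(z))$, the Nevanlinna representation and the asymptotics $y \cdot m_{\nu_1 \boxplus \nu_2}(iy) \to -1$ identify it as the Stieltjes transform of a unique probability measure on $\mathbb{R}$, which coincides with the free convolution (a fact that could also be taken as the definition of $\nu_1 \boxplus \nu_2$).

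Uniqueness of $(\omega_1, \omega_2)$ is immediate from the Denjoy--Wolff construction: any candidate solution is a fixed point of $f_z$ in $\mathbb{C}^+$, and strict hyperbolic contraction allows at most one such fixed point. The main obstacle is establishing that $f_z$ is indeed a strict contraction, and in particular that $\omega_1(z)$ lies in the open upper half-plane rather than on $\mathbb{R}$; this degenerate case is ruled out precisely by the strict inequality $\Im f_z(\omega) \geq \Im z > 0$, but handling the boundary regularity of $H_i$ (to exclude the pathological situations where $\nu_i = \delta_a$ for some $a$) requires a small separate argument treating those cases by direct computation.
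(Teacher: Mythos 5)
The paper offers no proof of this proposition: it is quoted verbatim as Theorem 4.1 of [BB], so there is nothing internal to compare against. Your sketch is, in substance, the standard Belinschi--Bercovici (and Chistyakov--G\"otze) argument via reciprocal Cauchy transforms and iteration of the map $f_z(\omega) = H_1(H_2(\omega)+z)+z$, which is exactly how the cited result is proved in the literature; so the overall strategy is the right one.

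There is, however, one genuine soft spot in the way you justify the fixed point. The set $\{w : \Im w \geq \Im z\}$ is \emph{not} relatively compact in the hyperbolic metric of $\mathbb{C}^+$ (it is bounded away from $\mathbb{R}$ but hyperbolically unbounded towards $\infty$), so Earle--Hamilton does not apply and $f_z$ need not be a uniform strict contraction; as stated, this step does not deliver existence or uniqueness. The correct route is the genuine Denjoy--Wolff theorem: unless $f_z$ is a conformal automorphism, its iterates converge locally uniformly to a single point of $\overline{\mathbb{C}^+}\cup\{\infty\}$; the bound $\Im f_z(\omega)\geq \Im z>0$ excludes a finite real Denjoy--Wolff point, and the nontangential asymptotics $F_i(w)=w(1+o(1))$, i.e.\ $H_i(w)=o(w)$, exclude the point at infinity via the Julia--Carath\'eodory angular-derivative criterion; the automorphism/degenerate case occurs precisely when some $\nu_i$ is a Dirac mass and is handled by direct computation, as you note. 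Uniqueness then follows because any pair $(\omega_1,\omega_2)$ satisfying the system with the stated normalization produces a fixed point of $f_z$ in $\mathbb{C}^+$. Analyticity of $\omega_1$ in $z$ is most cleanly obtained from local uniform convergence of the iterates (Montel/Vitali) rather than an implicit-function argument. Finally, to prove the proposition \emph{as stated} you must still identify $m_{\nu_1}(\omega_1(z))$ with the Stieltjes transform of the actual free convolution $\nu_1\boxplus\nu_2$ (e.g.\ via $R$-transforms for compactly supported measures and approximation); declaring it to be the definition would change the statement being proved.
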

Using Proposition~\ref{thm:fc}, the deformed semicircle law can be interpreted as the free convolution of the semicircle law~$\rho_{\mathrm{sc}}$ with the law of~$V$ (possibly scaled by a parameter~$\lambda \in \mathbb{R}$). For the connection between free additive convolution and the sum of random matrices, we refer to~\cite{V, PV, Ka, BES20b}

The deformed semicircle law was further extended in an entrywise sense to study the eigenvalue behaviors of more comprehensive models, including Wigner-type matrices and random matrices with correlations among the entries. Considering the entries of $V$ to be fixed, we consider the quadratic vector equation, or more generally, the Matrix Dyson equations, which is the system
\begin{align}
	\frac{1}{\wh{M}_i(z)} = \lambda V_i -z -  \pair{\wh{M}(z)}, \quad i \in \llbracket 1, N \rrbracket,
\end{align}
where we have $\pair{\wh{M}(z)} = \wh{m}_{\mathrm{fc}}(z)$. Refer to \cite{AEK1, AEK, AEKS, EKS} for more details.

\subsection{Main results: Local laws}
Given a real symmetric matrix $H$, we define its resolvent $G \equiv G^H $ by setting
\begin{align}
	G(z)\equiv G^H (z) = \frac{1}{H-z \mathrm{I}} = \frac{1}{W + \lambda V -z \mathrm{I}}, \quad z \in \bbC^+. 
\end{align}
The matrix entries of $G(z)$ are denoted by $G_{ij}(z)$. In the following we often drop the explicit $z$-dependence from the notation for $G(z)$ and its normalized trace $\pair{G(z)}$. In some occasions, we denote $g(z) = \pair{G(z)} $.

Denoting by $\mu_1 \leq \mu_2 \leq \dotsc \leq \mu_N$ the ordered eigenvalues of $H$, we note that $\pair{G}$ is the Stieltjes transform of the empirical eigenvalue distributions $\mu^H$ of $H$, given by 
\begin{align}
	\mu^H \deq \frac{1}{N} \sum_{i=1}^{N} \delta_{\mu_i}.
\end{align}

For $\tau > 0$, define the spectral domain
\begin{align}
	\mc{D}_{\tau} \deq \{E + \ii \eta : |E| < E_0, N^{-1+\tau} \leq \eta \leq \tau^{-1} \},
\end{align}
where $\tau$ is typically chosen small, satisfying $N^{\tau} < N^{\frac{20}{9} \delta} \leq q^{20/9}$, and $E_0$  is a fixed constant such that $E_0> 3+ \lambda$. Notice that we know that $\norm W \norm \leq 3$, with high probability, and so from spectral perturbation theory, $\norm H \norm \leq 3+ \lambda$ with high probability.

The main result of the paper is the local law for the normalized trace of the resolvent $\pair{G}$, with refinement in the spectral edges. The refined deformed semicircle law is defined in the following proposition, along with several important properties of the refined measure.

\begin{proposition}\label{prop:refv}
	Assume that $\wh{\nu}_N$ satisfy Assumption \ref{assm:esd}, \ref{assm:V}, and \ref{assm:stableint}. The following holds on the event $\Xi_N$.
	\begin{enumerate}
		\item \label{item:prop1} Define a probability measure $\wt{\nu}_N^{\lambda}$ by 
		\begin{align}\label{eq:p1}
			\wt{\nu}_N^{\lambda}(v) = \frac{1}{2 \lambda } \left[ \wh{\nu}_N \left(\frac{1}{\lambda} \left( v + \frac{\sqrt{s}}{q} \right) \right) +  \wh{\nu}_N \left(\frac{1}{\lambda} \left( v - \frac{\sqrt{s}}{q} \right) \right)\right].
		\end{align}
		The measure $\wt{\nu}_N^{\lambda}$ is supported on a compact interval $I_{\wt{\nu}}^{\lambda}$, and there exist $\varpi>0$ such that
		\begin{align}
			\inf_{x \in I_{\wt{\nu}}^{\lambda}} \int \frac{1}{(v - x)^2} \d \wt{\nu}^{\lambda}_N(v) \geq 1+ \varpi, \label{eq:stabref}
		\end{align}
		for $N$ sufficiently large, in case $V$ is deterministic. In case $V$ is random, \eqref{eq:stabref} holds with high probability.  The first moment (mean) is zero, $\wh{\nu}$ and $\nu$ being centered. The second, third, and fourth moments of $\wt{\nu}_N$ are
		\begin{align}
			\begin{split}\label{eq:moment}
				m^{(2)}(\wt{\nu}_N^{\lambda}) &=  \lambda^2 m^{(2)}(\wh{\nu}_N) + \frac{s}{q^2} , \quad m^{(3)}(\wt{\nu}_N^{\lambda})  = \lambda^3 m^{(3)}(\wh{\nu}_N), \\
				m^{(4)} (\wt{\nu}_N^{\lambda})& = \lambda^4 m^{(4)} (\wh{\nu}_N)  + \frac{ 6s \lambda^2 }{q^2} m^{(2)}(\wh{\nu}_N)  + \frac{s^2}{q^4}.
			\end{split}
		\end{align}
		\item  Define $\wt{\rho}_{\fc}$ as the free additive convolution of the standard semicircle law and $\wt{\nu}_{N}^{\lambda}$. Then the Stieltjes transform of $\wt{\rho}_{\fc}$, denoted $\wt{m}_{\fc}: \bbC^+ \to \bbC^+$ satisfies the functional equation
		\begin{align}\label{eq:reftrace}
			\wt{m}_{\fc}(z) = \int \left(\lambda v - z -  \wt{m}_{\fc}(z)  - \frac{s}{q^2}  \frac{1}{\lambda v - z -  \wt{m}_{\fc}(z)} \right)^{-1} \d \wh{\nu}_N(v). 
		\end{align}
		There exist $\wt{L}_{-}, \wt{L}_+ \in \bbR$, such that $\supp \wt{\rho}_{\fc} = [\wt{L}_{-} , \wt{L}_+]$, with strictly positive density on $(\wt{L}_{-} , \wt{L}_+)$. In particular, when $\lambda \ll 1$, $\wt{L}_+$ admits the asymptotic expansion
		\begin{align}
			\wt{L}_+ = & 2 +  m^{(1)}(\lefnu) +  m^{(2)}(\lefnu) +  m^{(3) }(\lefnu) +  \left( m^{(4)}(\lefnu) - \frac{9 (m^{(2)} (\lefnu))^2 }{4}  \right) + O\left( {\lambda^5}, \frac{1}{q^6} \right),
		\end{align}
		where $m^{(k)}(\lefnu)$ is the $k$-th moment of $\lefnu$. Analogous result holds at the lower edge.
	\end{enumerate}
	Likewise, consider a deterministic probability measure $\bv{\nu}^{\lambda}$ defined through
	\begin{align}
		\bv{\nu}^{\lambda} = \frac{1}{2 \lambda } \left[ {\nu} \left(\frac{1}{\lambda} \left( v + \frac{\sqrt{s}}{q} \right) \right) +  {\nu} \left(\frac{1}{\lambda} \left( v - \frac{\sqrt{s}}{q} \right) \right)\right].
	\end{align}
	The measure $\bv{\nu}^{\lambda}$ satisfies the properties stated in \eqref{eq:p1} and \eqref{eq:stabref} analogously, and the moments of $\bv{\nu}$ are obtained by simply replacing $m^{(k)}(\wh{\nu}_N)$ by $m^{(k)}(\nu)$ in \eqref{eq:moment}. Define $\breve{\rho}_{\fc}$ as the free additive convolution of the standard semicircle law and $\bv{\nu}_{N}^{\lambda}$. There exist $\bv{L}_-, \bv{L}_+ \in \bbR$ such that $\supp \bv{\rho}_{\fc} = [\bv{L}_-, \bv{L}_+]$, with strictly positive density on $(\bv{L}_-, \bv{L}_+)$. The Stieltjes transform of $\bv{\rho}_{\fc}$, denoted $\bv{m}_{\fc}$, satisfies the trivial analogue of \eqref{eq:reftrace}.
\end{proposition}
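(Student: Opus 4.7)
I would regard $\wt{\nu}_N^{\lambda}$ as the distribution of $\lambda V_{\wh{\nu}} + \xi$, where $V_{\wh{\nu}} \sim \wh{\nu}_N$ and $\xi$ is an independent symmetric Bernoulli on $\{\pm \sqrt{s}/q\}$, and treat $\wt{\rho}_{\fc}$ through the subordination formulation of free convolution provided by Proposition \ref{thm:fc}. The deterministic version $\bv{\rho}_{\fc}$ will then be handled by replacing $\wh{\nu}_N$ with $\nu$ throughout.

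\textbf{Part (1): support, moments, and stability.} The support of $\wt{\nu}_N^{\lambda}$ equals $(\lambda\,\supp\wh{\nu}_N)\cup(\lambda\,\supp\wh{\nu}_N\pm\sqrt{s}/q)$ and is therefore compact; the moment identities \eqref{eq:moment} follow by expanding $(\lambda u\pm\sqrt{s}/q)^{k}$, integrating against $\wh{\nu}_N$, and noting that odd-in-$\xi$ terms cancel after averaging the two shifts. For the stability bound \eqref{eq:stabref}, the change of variables $y_{\pm}\deq(x\pm\sqrt{s}/q)/\lambda$ gives
\begin{equation*}
\int\frac{\d\wt{\nu}_N^{\lambda}(v)}{(v-x)^{2}}
=\frac{1}{2\lambda^{2}}\left[\int\frac{\d\wh{\nu}_N(u)}{(u-y_{+})^{2}}+\int\frac{\d\wh{\nu}_N(u)}{(u-y_{-})^{2}}\right].
\end{equation*}
When $x\in I_{\wt\nu}^{\lambda}$ lies at distance $\gg\sqrt{s}/(\lambda q)$ from the endpoints, both $y_{\pm}\in I_{\wh{\nu}_N}$ and Assumption \ref{assm:stableint} yields a total of at least $1+\varpi$. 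When $x$ lies near an endpoint (say the lower one), at least the $y_{+}$ term still obeys $y_{+}\in I_{\wh\nu_N}$, giving $\tfrac{1+\varpi}{2}$; the other contribution is continuous in $y_{-}$ on a compact neighbourhood of $I_{\wh\nu_N}$ and differs from the in-support value by $o_{N}(1)$ (since $\sqrt{s}/(\lambda q)\to 0$ as long as $q\lambda\to\infty$; the boundary case is absorbed by shrinking $\varpi$). This shows \eqref{eq:stabref} holds with a possibly smaller but strictly positive $\varpi$, uniformly on $I_{\wt\nu}^{\lambda}$.

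\textbf{Part (2): functional equation.} Applying Proposition \ref{thm:fc} to $\rho_{\mathrm{sc}}\boxplus\wt\nu_N^{\lambda}$ produces subordination functions with $\wt m_{\fc}=m_{\mathrm{sc}}(\omega_{1})=m_{\wt\nu_N^{\lambda}}(\omega_{2})$ and $\omega_{1}+\omega_{2}=z-1/\wt m_{\fc}$. Solving the semicircle identity $w=-m_{\mathrm{sc}}(w)-1/m_{\mathrm{sc}}(w)$ yields $\omega_{1}=-\wt m_{\fc}-1/\wt m_{\fc}$ and therefore $\omega_{2}=z+\wt m_{\fc}$. Inserting the explicit symmetrized-shift form of $\wt\nu_N^{\lambda}$ into $\wt m_{\fc}=m_{\wt\nu_N^{\lambda}}(z+\wt m_{\fc})$, and applying the partial-fraction identity $\tfrac{1}{2}\bigl(\tfrac{1}{a+b}+\tfrac{1}{a-b}\bigr)=\bigl(a-b^{2}/a\bigr)^{-1}$ with $a=\lambda v-z-\wt m_{\fc}$ and $b=\sqrt{s}/q$, reproduces exactly the functional equation \eqref{eq:reftrace}.

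\textbf{Support, edge expansion, and deterministic analogue.} The stability \eqref{eq:stabref} from Step~1 is precisely the hypothesis under which $\rho_{\mathrm{sc}}\boxplus\wt\nu_N^{\lambda}$ is supported on a single compact interval with strictly positive density and square-root decay at both edges (cf.\ Lemma \ref{lem:singlesupport} and \cite{B,Sh11,LS}); applying this to $\wt\rho_{\fc}$ furnishes $\wt L_{\pm}$. For the edge expansion I use that $z=\omega_{2}-m_{\wt\nu_N^{\lambda}}(\omega_{2})$, so the edge $\omega_{2}^{+}>\sup I_{\wt\nu}^{\lambda}$ is characterized by $dz/d\omega_{2}=0$, i.e.\ $m'_{\wt\nu_N^{\lambda}}(\omega_{2}^{+})=1$, and then $\wt L_{+}=\omega_{2}^{+}-m_{\wt\nu_N^{\lambda}}(\omega_{2}^{+})$. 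Expanding $m_{\wt\nu_N^{\lambda}}$ and $m'_{\wt\nu_N^{\lambda}}$ as convergent power series in $1/\omega_{2}$ valid for $\omega_{2}>\sup I_{\wt\nu}^{\lambda}$, substituting the moments \eqref{eq:moment} (which are $O(\lambda^{2}+q^{-2})$, $O(\lambda^{3})$, $O(\lambda^{4}+\lambda^{2}q^{-2}+q^{-4})$ for $k=2,3,4$), and writing $\omega_{2}^{+}=1+\delta$ with $\delta=\tfrac{3}{2}m^{(2)}+O(\lambda^{3})$, a bookkeeping calculation gives $\wt L_{+}=2+m^{(2)}+m^{(3)}+m^{(4)}-3m^{(2)}\delta+\delta^{2}+O(\lambda^{5},q^{-6})=2+m^{(2)}+m^{(3)}+m^{(4)}-\tfrac{9}{4}(m^{(2)})^{2}+O(\lambda^{5},q^{-6})$, matching the stated expansion; the lower edge is identical. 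For $\bv\nu^{\lambda}$ every step above is parallel with $\nu$ in place of $\wh\nu_N$; the stability input is the deterministic bound \eqref{eq:s1}, so no high-probability qualifier appears.

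\textbf{Main obstacle.} The only nontrivial technical point is to make Step~1 quantitative at the boundary of $I_{\wt\nu}^{\lambda}$, where one of $y_{\pm}$ may fall just outside $I_{\wh\nu_N}$; a short continuity/Taylor argument for the map $y\mapsto\int(u-y)^{-2}\d\wh\nu_N(u)$ on a compact neighbourhood of $\supp\wh\nu_N$, combined with the $o(1)$ size of the shift $\sqrt{s}/(\lambda q)$ (in the regime where the construction is used), salvages a positive $\varpi$ in \eqref{eq:stabref}. Everything else is a routine consequence of Proposition \ref{thm:fc} and the standard edge analysis for free additive convolutions with stable laws.
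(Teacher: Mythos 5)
Your moment computations, the derivation of the functional equation \eqref{eq:reftrace} from Proposition \ref{thm:fc} together with the partial-fraction identity, and the edge expansion are all consistent with the paper's route. The genuine gap is in your Step 1, the stability bound \eqref{eq:stabref}, on two counts. First, your near-endpoint argument hinges on the shift $\sqrt{s}/(\lambda q)$ being $o(1)$, i.e.\ on $\lambda q\to\infty$; but the result must cover the regime $\lambda\lesssim q^{-1}$ (indeed down to $\lambda\sim N^{-1/2}$ with $q=N^{\delta}$), where the shift is comparable to or larger than the width of $\supp\wh{\nu}_N$. There the two shifted copies are disjoint and, for $x$ in the gap between them --- which still lies in the interval $I_{\wt{\nu}}^{\lambda}$ over which the infimum is taken --- \emph{neither} $y_{+}$ nor $y_{-}$ belongs to $I_{\wh{\nu}_N}$, so your dichotomy (``at least the $y_{+}$ term is in-support'') fails outright; ``the boundary case is absorbed by shrinking $\varpi$'' is not an argument. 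The paper treats this regime as a separate case (the class $S^{(2)}$ in Appendix \ref{appendix1}): since the shifted supports are disjoint and necessarily $\lambda\to0$ there, the crude Jensen bound $\int(v-x)^{-2}\,\d\wt{\nu}^{\lambda}_N(v)\geq \tfrac{1}{2}\lambda^{-2}\bigl(\wh{r}_N^{+}-\wh{r}_N^{-}+2\sqrt{s}q^{-1}\lambda^{-1}\bigr)^{-2}$ already diverges, which settles it.

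Second, even in the regime $\lambda q\to\infty$, your claim that the out-of-support contribution ``differs from the in-support value by $o_N(1)$ by continuity'' is not justified for the $N$-dependent empirical measure: the map $y\mapsto\int(u-y)^{-2}\,\d\wh{\nu}_N(u)$ is monotone in the unfavorable direction (its value at $y_{-}$ just outside the support is \emph{smaller} than at the endpoint), it is not uniformly continuous near the moving, atomic endpoint, and no modulus uniform in $N$ is available, so Assumption \ref{assm:stableint} (which only controls points of $I_{\wh{\nu}_N}$) does not transfer to $y_{-}$ by a soft continuity argument. The paper avoids this entirely: near the outer edges it bounds the dominant shifted copy from below via Jensen's inequality by $\lambda^{-2}(\wh{r}_N^{+}+2\sqrt{s}q^{-1}\lambda^{-1})^{-2}$, and in the only delicate case ($\lambda\geq1$, where this is not enough) it steps back a \emph{fixed} distance $\varepsilon$ from $r^{+}$, uses the strict inequality \eqref{eq:cont} for the limiting measure $\nu$ (valid by monotone convergence and Assumption \ref{assm:stableint}), and transfers it to $\wh{\nu}_N$ by weak convergence of the fixed bounded continuous integrand $v\mapsto\lambda^{-2}(r^{+}+\varepsilon-v)^{-2}$. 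Your Step 1 needs to be restructured along these lines (case split on $\sqrt{s}q^{-1}\lambda^{-1}$ versus the support width, Jensen for the edge and gap regions, and a fixed-$\varepsilon$ offset plus weak convergence) before the remainder of the proposal, which is otherwise sound, can stand.
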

The proof of the proposition will be given in Section \ref{sec:refine}.  Most notably, assuming that $\wh{\nu}_N$ satisfies the stability assumption, its refinement also inherits the behavior. Also, observe that $\wt{\nu}^{\lambda}_N$ (respectively $\bv{\nu}^{\lambda}_N$) can also be understood as the \emph{classical additive convolution} of $\wh{\nu}_N^{\lambda} $ (respectively $\nu^{\lambda}$) and the measure $\frac{1}{2} (\delta_{\sqrt{s}q^{-1}} + \delta_{-\sqrt{s}q^{-1}})$. An interpretation of the observation is provided in the end of the section.

In some occasions, it is easier to work with $\wt{\nu}_N$ and $\bv{\nu}_N$, which are $\wt{\nu}^{\lambda}_N $ and $\bv{\nu}^{\lambda}_N$ scaled by $1/\lambda$. We formally define these measures through
\begin{align} \label{eq:lambda-1}
	\wt{\nu}_N(v) \deq \lambda \wt{\nu}_N^{\lambda} (\lambda v), \quad \bv{\nu}_N(v) \deq \lambda \bv{\nu}_N^{\lambda} (\lambda v), \quad v\in \bbR.
\end{align}

We also define the refined quadratic vector equations where for each $i \in \llbracket 1, N \rrbracket$, we define $\wt{\MM}_i : \bbC^+ \to \bbC^+ $ to satisfy
\begin{align}
	{\wt{\MM}_i (z)} \deq  \left(\lambda V_i - z - \wt{m}_{\fc}(z) - \frac{s}{q^2} \frac{1}{\lambda V_i - z - \wt{m}_{\fc}(z)} \right)^{-1} , \quad i \in \llbracket 1, N \rrbracket.\label{eq:wtdef}
\end{align}
We will define an $N \times N$ matrix $\wt{\MM}$, where we omit the $N$-dependence for it is clear, $(\wt{\MM}_{ij}) \deq \delta_{ij} \wt{\MM}_{i}$. Clearly $\pair{\wt{\MM}(z)} = \wt{m}_{\fc}(z)$, and we will interchangeably use these notations when convenient.

In what follows, every notation with tilde stands for those that depend on $\wh{\nu}_N$, and in most cases also $q$, and thus on $N$. Every notation with breve stands for those that depend on $q$, and thus on $N$. Since the notations with breve are averaged over the law of $V$, these values do not depend on $\wh{\nu}_N$, while it can obviously depend on $\nu$.

Now, we are ready to state the averaged local law up to the edge.

\begin{theorem}[Averaged local law up to the edge] \label{thm:refine} For $H = W +\lambda V$, $\lambda \in \Theta_{\varpi}$, suppose that $W$ is a real symmetric sparse random matrix satisfying the assumptions in Definition \ref{def:sparse}, and $V$ is a deterministic or random real diagonal matrix satisfying Assumptions \ref{assm:esd}, \ref{assm:V}, \ref{assm:stableint}. Fix appropriate $\tau$. On the event $\Xi$, we have
	\begin{align} \label{eq:stronglocal}
		|g(z)-\wt{m}_{\fc}(z) |=  \left|\pair{G(z)} - \pair{\wt{\MM}(z)} \right| \prec \min \left\{ \frac{1}{q^{3/2}} + \frac{1}{N^{1/4} q^{1/2}} , \frac{1}{q^2 \sqrt{\vk_E + \eta}} \right\}+ \frac{1}{N\eta},
	\end{align}
	uniformly for $z=E+\ii \eta \in \mathcal{D}_{\tau}$, where $\vk_E \deq \min \{ | E- \wt{L}_+ |, | E- \wt{L}_- | \} $.
\end{theorem}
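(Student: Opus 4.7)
The plan is to prove Theorem~\ref{thm:refine} by a recursive-moment-estimate strategy in the spirit of \cite{LS18,HKR}, combined with the refined self-consistent equation \eqref{eq:reftrace} and the auxiliary quantity $Q$. First I would introduce a polynomial $\caP(G,z)$ in the resolvent entries and $\wt{\MM}$ that is designed so that $\caP(\wt{\MM},z)=0$ identically and so that $\caP(G,z)=0$ would formally reproduce the refined QVE \eqref{eq:wtdef}. Concretely, using $(H-z)G=\mathrm{I}$ and the definition \eqref{eq:wtdef}, one can arrange
\begin{align*}
\caP(G,z) \;=\; \pair{G}-\pair{\wt{\MM}} \;+\; (\text{explicit }s/q^2\text{ fourth-cumulant correction in the }V_i\text{'s})\;+\;(\text{error}),
\end{align*}
where the $s/q^2$ term is exactly what distinguishes $\wt{m}_{\mathrm{fc}}$ from $\mfc$. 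On the event $\Xi$, the weak local law obtained in Section~\ref{sec:primary} would serve as the a priori bound needed to justify Taylor-expansions in $g-\wt{m}_{\fc}$.

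Next I would carry out the recursive moment estimate by computing $\E\bigl[|\caP(G)|^{2D}\bigr]$ and replacing one factor of $\caP(G)$ by its definition. The key algebraic identity is $\sum_b W_{ab}G_{bi}=(zG+\lambda VG-\mathrm{I})_{ai}$, to which one applies the cumulant expansion
\begin{align*}
\E\bigl[W_{ab}F(W)\bigr] \;=\; \sum_{k\ge 1}\frac{\caC_{k+1}(W_{ab})}{k!}\,\E\bigl[\partial_{W_{ab}}^{k}F(W)\bigr].
\end{align*}
The $k=1$ (Gaussian) contribution reproduces the leading deformed-semicircle structure; the $k=3$ contribution, governed by $s^{(4)}=s$ and normalised by $1/(Nq^2)$, is precisely what generates the $s/q^2$ shift built into $\wt{\MM}$; and the remaining $k\ge 4$ terms are controlled by the moment bounds of Definition~\ref{def:sparse}. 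Summing over $a,b$ and using that $\pair{G}$ is $N^{-1}$-normalised gives a self-consistent inequality of the schematic form
\begin{align*}
\E\bigl[|\caP(G)|^{2D}\bigr] \;\leq\; \sum_{s=1}^{2D}\bigl(\Psi_s\bigr)^{s}\,\E\bigl[|\caP(G)|^{2D-s}\bigr],
\end{align*}
with $\Psi_s$ involving $1/(N\eta)$, $1/q^2$, $1/q^3$ and cross-terms of the type $\frac{1}{N}\sum_i V_i^{\alpha}\wt{\MM}_i^{\beta}$.

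The genuinely new difficulty, and what I expect to be the main obstacle, is controlling the anisotropic averages $Q=\frac{1}{N}\sum_i A_i\,G_{ii}^k$ for various $A_i$ depending on $V_i$. Unlike in the non-deformed sparse case \cite{LS18}, one cannot replace these by $(\pair{G})^k$; the deformation makes $G_{ii}$ genuinely $i$-dependent through $\wt{\MM}_i$. Following the strategy suggested by Section~\ref{sec:Q}, I would apply a second cumulant expansion iteratively to $Q$ itself, producing a separate self-consistent inequality for $|Q|^{2D}$. Each iteration trades a factor of the a priori control parameter against a factor of $1/q$, and after finitely many iterations yields a bound on $Q$ that is close to optimal. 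This bound then feeds back into $\Psi_s$.

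Finally, applying Young's inequality to absorb $\E[|\caP(G)|^{2D}]$ on the right-hand side, and converting moment bounds to high-probability bounds via Markov, yields $|\caP(G)|\prec \Psi$ with $\Psi=\min\{q^{-3/2}+N^{-1/4}q^{-1/2},\,q^{-2}(\vk_E+\eta)^{-1/2}\}+(N\eta)^{-1}$. The passage from $|\caP(G)|\prec\Psi$ to $|g-\wt{m}_{\fc}|\prec\Psi$ uses the stability of the refined QVE \eqref{eq:reftrace}, which is guaranteed by \eqref{eq:stabref} in Proposition~\ref{prop:refv}; the stability operator inherits the square-root edge behaviour, producing the $(\vk_E+\eta)^{-1/2}$ improvement near the edge. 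The whole argument is then promoted from a fixed $z$ to the spectral domain $\caD_\tau$ by a standard continuity/bootstrap argument: start at large $\eta\sim\tau^{-1}$ where the bound is trivial, and propagate to $\eta\geq N^{-1+\tau}$ along a sufficiently fine lattice, using Lipschitz continuity of $G(z)$ and $\wt{m}_{\fc}(z)$ to cover all $z\in\caD_\tau$.
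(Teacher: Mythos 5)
Your overall architecture --- cumulant-expansion recursive moment estimates for a quantity built to vanish at $\wt{\MM}$, an auxiliary quantity $Q$ improved by iterated expansions, stability of the refined equation, and a continuity/lattice bootstrap --- is indeed the route the paper takes. But as written the proposal has a genuine gap in how the second branch of the min, $q^{-2}(\vk_E+\eta)^{-1/2}$, is actually produced. The paper's proof is a two-stage bootstrap: the first recursive moment estimate (Proposition \ref{prop:m1}, control parameter $\Psi_1$) only yields the entrywise strong local law (Theorem \ref{thm:strong}); this entrywise law is then fed back into the cumulant expansion (in the term $I_{1,1}$, Lemma \ref{lem:I11}, and in comparing $\sfP$ with $\wt{\mathsf{P}}$, Lemma \ref{lem:62}) to control $|\pair{G\odot G}-\pair{\wt{\MM}\odot\wt{\MM}}|$, giving the refined bound $|\wt{\caP}|\prec\Psi_{3/2}^2+q^{-2}\,|1-\pair{\wt{\MM}\odot\wt{\MM}}|$ (Proposition \ref{prop:main}). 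Your single-pass scheme never generates the term proportional to $|1-\pair{\wt{\MM}\odot\wt{\MM}}|\sim|1-\wt{\mathsf{R}}^{(2)}|\sim\sqrt{\vk_E+\eta}$, and the moment estimate by itself is blind to $\vk_E$; so the claim that Young plus Markov already gives $|\caP|\prec\min\{\cdot,\cdot\}+(N\eta)^{-1}$ conflates the stochastic and deterministic steps. In the paper the min emerges only from the dichotomy in the deterministic stability analysis (Lemma \ref{lem:det} and the final step of Section \ref{sec:locallaw}): when $\alpha=|1-\wt{\mathsf{R}}^{(2)}|$ exceeds a threshold one divides the quadratically small right-hand side (which contains the $\alpha q^{-2}$ term) by $\alpha$ to get $\Lambda\prec(q^2\alpha)^{-1}+(N\eta)^{-1}$, while for small $\alpha$ one solves the quadratic and keeps the flat bound $q^{-3/2}+N^{-1/4}q^{-1/2}+(N\eta)^{-1}$; note also that the $\vk_E$-dependent branch is the better bound away from the edge, not near it, contrary to the phrasing in your last paragraph.

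Two smaller omissions: the paper's test quantity is $\caP(B,G)=\pair{B\sfP}$ with an \emph{arbitrary deterministic} diagonal $B$ of bounded norm, and only after the moment bounds are in hand is the random choice $B=\wt{R}(\pair{G})$ justified through an $N^{-3}$-net argument (Lemma \ref{lem:Lem4.6}); your construction implicitly uses $G$-dependent weights from the start, which needs exactly this kind of justification. Also, the object $Q$ in the paper is the matrix $Q_{ik}$ arising from $\pd_{ik}\caP$ (containing $(\mathcal{G}WG)_{ik}$, $\pair{G}(\mathcal{G}G)_{ik}$, and the $s/q^2$ terms), not an average of the form $N^{-1}\sum_i A_iG_{ii}^k$; the iterated self-improvement you describe is applied to that matrix, and its bound $Q=O_\prec(N^{-1}q^{-1}+q^{-1}\Psi_2^2+\Psi_2^3)$ is what makes the terms $I_{1,1}$, $I_{2,0}$, $I_{2,1}$ manageable. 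These are fixable, but as it stands the proposal would deliver at most the flat bound and not the stated two-regime estimate.
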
 

Theorem \ref{thm:refine} is proved by first using recursive moment estimates via cumulant expansion method (Section \ref{sec:rme}), then applying the standard stability argument (Section \ref{sec:locallaw}). For detailed outline of the strategy, we refer to Section \ref{outline}. 

We also obtain an entrywise strong local law, proved concurrently with, yet preceding Theorem \ref{thm:refine}.

\begin{theorem}[Entrywise strong local law] \label{thm:strong} Let $H = W+\lambda V$ satisfy the assumptions in Theorem~\ref{thm:refine}. Fix appropriate $\tau$. On the event $\Xi$, we have
	\begin{align}
		\max_{ij} |G_{ij}(z) - \delta_{ij} \wt{\MM}_i(z)| \prec \frac{1}{q} + \sqrt{\frac{\Im \pair{\wt{\MM} (z)}}{N\eta}} + \frac{1}{N\eta},
	\end{align}
	uniformly for $z = E+ \ii \eta \in \mc{D}_{\tau}$.
\end{theorem}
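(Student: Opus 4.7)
The plan is to combine a Schur complement expansion with cumulant expansion of the quadratic form in the sparse entries, invert through the stability of the refined quadratic vector equation \eqref{eq:wtdef}, and close via a continuity (bootstrap) descent in $\eta$.

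First, I would take as input the weak local law established in Section \ref{sec:primary}, which provides an a priori bound of the form $\max_{ij}|G_{ij}-\delta_{ij}\wt{\MM}_i|\prec (N\eta)^{-c}$ for some $c>0$, uniformly on $\mc{D}_\tau$. This sets the initial scale from which the bootstrap departs. Using the Schur complement formula
\begin{equation*}
\frac{1}{G_{ii}} = \lambda V_i - z - \sum_{k,l\neq i} W_{ik} G^{(i)}_{kl} W_{li},
\end{equation*}
I would decompose the quadratic form into its conditional expectation (given $\{W_{kl}:k,l\neq i\}$) plus a fluctuation. A cumulant expansion of the expectation produces, at second order, the familiar contribution $\frac{1}{N}\sum_k G^{(i)}_{kk}\approx \wt{m}_\fc$, and at fourth order the sparse correction $\frac{s}{Nq^2}\sum_k(G^{(i)}_{kk})^2$. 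In the deformed model, the anisotropy induced by $V$ forbids the collapse $(G^{(i)}_{kk})^2\approx\pair{G}^2$ available in the undeformed sparse setting; instead one uses the a priori estimate $G^{(i)}_{kk}\approx\wt{\MM}_k$ and the defining relation \eqref{eq:wtdef} to identify this contribution with $\frac{s}{q^2}(\lambda V_i-z-\wt{m}_\fc)^{-1}$ up to negligible error, matching the extra term already encoded in $\wt{\MM}_i$. Higher cumulants decay in $q$ and are absorbed into the $q^{-1}$ error. The fluctuation piece is bounded via the standard large deviation estimate for quadratic forms in independent centered variables, which together with the Ward identity $\sum_k |G^{(i)}_{kj}|^2 = \eta^{-1}\Im G^{(i)}_{jj}$ produces the $\sqrt{\Im\pair{\wt{\MM}}/(N\eta)}$ contribution.

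For the off-diagonal entries $i\neq j$, the identity $G_{ij} = -G_{ii}\sum_{k\neq i}W_{ik} G^{(i)}_{kj}$ is treated in parallel, and an analogous cumulant expansion confirms that the deterministic piece vanishes, yielding directly the bound $|G_{ij}|\prec q^{-1}+\sqrt{\Im\pair{\wt{\MM}}/(N\eta)}+(N\eta)^{-1}$. Combining everything, one obtains a perturbed entrywise equation whose leading part is precisely \eqref{eq:wtdef} with an error term of the claimed size. Proposition \ref{prop:refv} and Assumption \ref{assm:stableint} then guarantee, on $\Xi_N$, a uniform bound on the stability operator of the refined QVE, which inverts the perturbation and transfers the error bound to $G_{ii}-\wt{\MM}_i$.

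Finally, a standard continuity argument on an $N^{-10}$-net in $\mc{D}_\tau$ propagates the bound from the trivial scale $\eta\sim\tau^{-1}$ down to $\eta=N^{-1+\tau}$. The main obstacle, in my view, is the handling of the fourth cumulant term on the diagonal: without the trace-collapse identity available in the undeformed sparse case of \cite{LS18}, the $i$-dependence inherited from $V$ must be carried through the entire self-consistent equation. It is precisely this phenomenon which forces the refinement to be expressed entrywise through $\wt{\MM}_i$ in \eqref{eq:wtdef}, and not merely through its trace $\wt{m}_\fc$. This is also the point where the analysis departs from the deformed Wigner case of \cite{HKR}, since the additional $s/q^2$ correction must be tracked to sub-leading order rather than being absorbed into the error.
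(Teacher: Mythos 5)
Your architecture (Schur complement for $G_{ii}$, sparse large deviation bounds plus the Ward identity for the fluctuation, stability of the refined vector equation, and a continuity descent in $\eta$) is the classical minor-based route, not the paper's: the paper never passes through minors, but runs recursive moment estimates via cumulant expansion on $P_{ij}=(WG)_{ij}+\pair{G}G_{ij}$ and on the average $\caP=\pair{B\sfP}$ (Propositions \ref{prop:weakrme} and \ref{prop:m1}, using the auxiliary matrix $Q$), then a scalar dichotomy for $\pair{G}-\pair{\wt{\MM}}$ (Lemmas \ref{lem:self1}, \ref{lem:det}), and finally reads off the entrywise bound from $G_{ii}\approx\wt{R}_i(\pair{G})$. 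A different route would be fine, but as written yours has a gap that is fatal precisely near the spectral edges, which lie inside $\caD_\tau$. The stability operator of the (refined) equation is \emph{not} uniformly invertible: by Lemma \ref{lem:stabbound}, $|1-\wt{\mathsf{R}}^{(2)}(z)|\sim\sqrt{\kappa_E+\eta}$, and Assumption \ref{assm:stableint} only guarantees square-root edge behaviour, not a uniform lower bound on $\caD_\tau$. Consequently, feeding an entrywise self-consistent error of size $\epsilon\asymp q^{-1}+\sqrt{\Im\pair{\wt{\MM}}/(N\eta)}+(N\eta)^{-1}$ into the quadratic self-consistent equation gives, by the standard dichotomy, only
\begin{align*}
|\pair{G}-\wt{m}_{\fc}|\;\lesssim\;\min\Bigl\{\tfrac{\epsilon}{\sqrt{\kappa_E+\eta}},\,\sqrt{\epsilon}\Bigr\},
\end{align*}
which at the edge ($\kappa_E=0$, $\eta\sim N^{-2/3}$) is of order $\sqrt{q^{-1}}$ and exceeds the claimed bound $q^{-1}+N^{-1/3}$ for \emph{every} admissible $q\le N^{1/2}$. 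Since $G_{ii}-\wt{\MM}_i$ contains the contribution $\wt{R}_i(\pair{G})-\wt{R}_i(\pair{\wt{\MM}})\approx \wt{\MM}_i^2(\pair{G}-\wt{m}_{\fc})$, your scheme cannot close to the stated entrywise bound. What is missing is a \emph{quadratically small} bound on the averaged self-consistent error — the analogue of $|\pair{B\wt{\sfP}}|\prec\Psi_1^2$ in Lemma \ref{lem:61}, which the paper obtains from the recursive moment estimate and which is exactly what makes the dichotomy give $\Lambda\prec q^{-1}+N^{-1/4}q^{-1/2}+(N\eta)^{-1}$ rather than $\sqrt{q^{-1}}$. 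In your framework this would require a fluctuation-averaging argument for $\frac1N\sum_i Z_i$ (as in the sparse literature), which the proposal never supplies; the bootstrap alone does not self-improve past $\sqrt{\epsilon}$ at the edge.

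A secondary point: your mechanism for the $s/q^2$ term is not correct. The conditional expectation of $\sum_{k,l}^{(i)}W_{ik}G^{(i)}_{kl}W_{li}$ given the minor is exactly $\sum_{k}^{(i)}\E[W_{ik}^2]\,G^{(i)}_{kk}$; no fourth-cumulant contribution of the form $\frac{s}{Nq^2}\sum_k (G^{(i)}_{kk})^2$ arises there. The sparsity correction only becomes visible through higher-order terms in moment/cumulant estimates of the fluctuation (in the paper, through the $r=3$ terms in the expansion of $\caP$), and it matters only at the level of the averaged law: for the entrywise statement the discrepancy $|\wt{\MM}_i-\wh{M}_i|=O(q^{-2})$ sits below the $q^{-1}$ error, which is why the paper remarks that Theorem \ref{thm:strong} holds equally with $\wh{M}$ in place of $\wt{\MM}$. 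So this misidentification is not what breaks the argument, but it should be corrected; the genuine obstruction is the missing averaged (fluctuation-averaged) estimate described above.
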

We can obtain the same entrywise estimate after replacing $\wt{\MM}$ by $\widehat{M}$ in Theorem \ref{thm:strong}, and it is believed to be optimal. The use of $\wt{\MM}$ in Theorem \ref{thm:strong} is only for technical convenience in the proof of the refined averaged local law; see Section \ref{sec:locallaw} for details. Another technical ingredient is the weak local law, which establishes that $\max_{ij} |G_{ij}| \prec 1$ on $\Xi$. Its proof is outlined in Section \ref{sec:weak}. 

A consequence of the local laws is the delocalization of eigenvectors. 
\begin{corollary}[Delocalization of eigenvectors] \label{cor:deloc}
	Denote by $(\mbf{u}_i^H)$ the $\ell^2$-normalized eigenvectors of $H$. On the event $\Xi$, we have
	\begin{align} 
		\max_{i \in \llbracket 1, N \rrbracket }\norm \mbf{u}_i^H \norm_{\infty} \prec \frac{1}{\sqrt{N}}.
	\end{align}
\end{corollary}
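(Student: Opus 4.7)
The plan is to derive delocalization from the entrywise local law (Theorem \ref{thm:strong}) via the standard spectral-decomposition identity. Writing $(\mbf{u}_k^H)$ for the $\ell^2$-normalized eigenvectors of $H$ with eigenvalues $(\mu_k)$, the diagonal resolvent entries satisfy
\begin{align*}
    \Im G_{ii}(E + \ii \eta) \;=\; \eta \sum_{k=1}^{N} \frac{|\mbf{u}_k^H(i)|^2}{(\mu_k - E)^2 + \eta^2},
\end{align*}
so keeping only the $k$-th summand and specializing to $E = \mu_k$ yields the pointwise bound $|\mbf{u}_k^H(i)|^2 \leq \eta \, \Im G_{ii}(\mu_k + \ii \eta)$. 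This reduces the delocalization estimate to an upper bound on $\Im G_{ii}$ at the threshold scale $\eta = N^{-1+\tau}$.

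Next, I would evaluate Theorem \ref{thm:strong} at $z = \mu_k + \ii \eta$ with $\eta = N^{-1+\tau}$. Since $\norm W \norm \leq 3$ with high probability, standard perturbation then gives $\norm H \norm \leq 3 + \lambda < E_0$ with high probability, so $\mu_k + \ii \eta \in \mc{D}_\tau$ with high probability and the local law is applicable. Combined with the uniform bound $|\wt{\MM}_i(z)| = O(1)$, which follows from the stability built into the refined quadratic vector equation \eqref{eq:wtdef} established in Proposition \ref{prop:refv}, the error terms in Theorem \ref{thm:strong} are each $\prec 1$, and hence $\Im G_{ii}(\mu_k + \ii \eta) \prec 1$. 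Substituting into the pointwise identity above gives
\begin{align*}
    |\mbf{u}_k^H(i)|^2 \;\prec\; N^{-1+\tau},
\end{align*}
uniformly in $i,k \in \llbracket 1, N \rrbracket$.

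Since $\tau > 0$ may be taken arbitrarily small and the $\prec$ notation absorbs factors of $N^\epsilon$, a union bound over the $N^2$ pairs $(i,k)$ — which costs at most a further $N^{\epsilon}$ by the definition of stochastic domination — concludes the proof of $\max_i \norm \mbf{u}_i^H \norm_{\infty} \prec N^{-1/2}$ on $\Xi$. There is no genuine obstacle beyond verifying that $\mu_k + \ii \eta$ lies in the spectral domain and that $\Im \wt{\MM}_i$ is uniformly controlled; all substantive content is carried by Theorem \ref{thm:strong}, and the step that requires the most care is merely keeping track that the $N^\tau$-loss from the scale $\eta = N^{-1+\tau}$ remains compatible with $\prec$.
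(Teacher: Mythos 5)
Your mechanism is the standard one (spectral decomposition, $|\mbf{u}_k^H(i)|^2 \le \eta\, \Im G_{ii}(\mu_k+\ii\eta)$ at $\eta=N^{-1+\tau}$, then an $O(1)$ bound on $\Im G_{ii}$ and $\tau$ arbitrary), but your choice of input creates a circularity within this paper's architecture. You invoke Theorem \ref{thm:strong}, yet in the paper Theorem \ref{thm:strong} is proved \emph{downstream} of Corollary \ref{cor:deloc}: the corollary is used to establish Lemma \ref{lem:wardd} ($\max_i \Im G_{ii} \prec \Im\pair{G}$), which enters the proof of Proposition \ref{prop:weakrme}, the $Q$-estimates of Section \ref{sec:Q}, and the recursive moment estimates of Section \ref{sec:rme} that yield Proposition \ref{prop:m1}, Lemma \ref{lem:61}, and finally Theorem \ref{thm:strong} in Section \ref{sec:locallaw}. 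So, as written, your proof assumes a result whose proof already uses the statement being proved.

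The repair is immediate and is exactly what the paper does: the only input your argument needs is $\Im G_{ii}(z) \prec 1$ uniformly on $\caD_\tau$, and this follows from the \emph{weak} local law, Lemma \ref{thm:weak}, since $|G_{ii}-\wh{M}_i| \prec q^{-1/3} + (N\eta)^{-1/6} \le 1$ on $\caD_\tau$ and $|\wh{M}_i|=O(1)$ by the stability bounds of Lemma \ref{lem:stabbound}; the weak local law is established in Section \ref{sec:weak} independently of the corollary, and the paper deduces delocalization from it, citing \cite[Theorem 2.16]{EKYY1}. With that single substitution the rest of your argument goes through: you still need $|\mu_k|<E_0$ with high probability (which you note), and evaluating the local law at the random point $z=\mu_k+\ii\eta$ requires the routine lattice-plus-Lipschitz upgrade, since uniform stochastic domination places the supremum over $z$ outside the probability; neither point is a substantive obstacle.
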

The assumption ensuring square-root type decay at the spectral edges, namely Assumption \ref{assm:stableint}, is crucial for the complete delocalization. For deformed Wigner matrices, when there is ``convex decay" at the spectral edges, the eigenvectors associated to the largest (or smallest) eigenvalues are partially localized \cite{LS16}.

We finish the section with a few remarks on the interpretation of the results. We first recall the averaged local law for sparse random matrices established in \cite{LS18}, which is optimal up to deterministic shift of order $O(q^{-2})$ at the spectral edges. 
\begin{proposition}[Theorem 2.4, \cite{LS18}]  \label{prop:LS}
	There exists an algebraic function $\wt{m}_{\mathrm{sc}}:\bbC^+ \to \bbC^+$ satisfying the polynomial equation
	\begin{align}
		1 + z  \wt{m}_{\mathrm{sc}}(z)+ \wt{m}_{\mathrm{sc}}^2(z) + \frac{s}{q^2}  \wt{m}_{\mathrm{sc}}^4(z) = 0, \label{eq:sparseref} 
	\end{align}
	where $\wt{m}_{\mathrm{sc}}$ is the Stieltjes transform of a deterministic symmetric compactly supported probability measure $\wt{\rho}_{\mathrm{sc}}$. Then, the normalized trace $g^{W}$ of the resolvent of $W$ satisfies
	\begin{align}\label{eq:sparsels}
		|g^{W}(z) - \wt{m}_{\mathrm{sc}}(z)| \prec \frac{1}{q^2} + \frac{1}{N\eta},
	\end{align}
	uniformly on the domain $\caE \deq \{z = E+ \ii \eta \in \bbC^+ : |E|<3, \eta \in (0,3] \}$, $z=E+\ii \eta$.
\end{proposition}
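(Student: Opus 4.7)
The plan has two main stages: construction of the deterministic measure $\wt{\rho}_{\mathrm{sc}}$ as the Stieltjes transform of the prescribed algebraic function, followed by a recursive moment estimate via the cumulant expansion method, closed by a stability/continuity argument. For the first stage, I write $P(m, z) := 1 + z m + m^2 + \varepsilon m^4$ with $\varepsilon := s / q^2$, viewed as a perturbation of the semicircle polynomial $P_0(m, z) := 1 + z m + m^2$. The implicit function theorem at $(m, \varepsilon) = (m_{\mathrm{sc}}(z), 0)$ (valid since $\partial_m P_0|_{(m_{\mathrm{sc}}(z), z)} = z + 2 m_{\mathrm{sc}}(z)$ is nonzero away from $z = \pm 2$) provides an analytic branch $\wt{m}_{\mathrm{sc}}(z, \varepsilon)$ on a neighborhood of $\{|z| > R\} \cap \bbC^+$, which I extend by analytic continuation through $\bbC^+$; the discriminant of the quartic in $m$ remains nonzero for $q \geq N^{\delta}$ sufficiently large, and positivity of $\Im \wt{m}_{\mathrm{sc}}$ is preserved along the continuation since the four roots of $P(\cdot, z)$ cannot coalesce on $\bbC^+$ for $\varepsilon$ small. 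The large-$z$ asymptotic $\wt{m}_{\mathrm{sc}}(z) = -z^{-1} - z^{-3} + O(z^{-5})$ read off from the equation, combined with the Nevanlinna representation, identifies $\wt{m}_{\mathrm{sc}}$ as the Stieltjes transform of a unique probability measure $\wt{\rho}_{\mathrm{sc}}$; the symmetry $(z, m) \mapsto (-\bar z, -\bar m)$ of $P$ yields evenness, and compactness of $\supp \wt{\rho}_{\mathrm{sc}}$ follows from the structure of the real branch points of $P(\cdot, z)$ as $z$ approaches $\bbR$.

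For the local law, I follow the recursive moment method. Starting from the normalized trace of the identity $W G = I + z G$, namely $1 + z g = \langle W G \rangle = N^{-1} \sum_{i, j} W_{ij} G_{ji}$, for each fixed $p \in 2 \bbN$ consider
\begin{align*}
\E |P(g, z)|^p = \E \bigl[ P(g, z) \, \overline{P(g, z)}^{p/2} \, P(g, z)^{p/2 - 1} \bigr],
\end{align*}
and on the term proportional to $1 + z g$ apply the cumulant expansion
\begin{align*}
\E [W_{ij} F(W)] = \sum_{k = 1}^{\ell} \frac{\mc{C}_{k + 1}(W_{ij})}{k!} \, \E [\partial_{W_{ij}}^{k} F(W)] + R_{\ell}
\end{align*}
with truncation order $\ell$ sufficiently large. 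Using $\partial_{W_{ij}} G_{ab} = -(G_{ai} G_{jb} + G_{aj} G_{ib})$ and the cumulant bounds $|\mc{C}_{k + 1}| \lesssim (N q^{k - 1})^{-1}$ from Definition \ref{def:sparse}, the $k = 1$ term contracts to the Gaussian contribution $-g^2$ (up to fluctuations controlled by the Ward identity $\sum_{j} |G_{ij}|^2 = \eta^{-1} \Im G_{ii}$ and the weak local law $\max_{ij} |G_{ij}| \prec 1$); the $k = 3$ term, weighted by $\mc{C}_4 = s / (N q^2)$, reproduces precisely the $-(s / q^2) g^4$ correction; the $k = 2$ contribution vanishes at leading order by diagonal symmetry and off-diagonal smallness; and $k \geq 4$ contributions are smaller by an additional factor $q^{-(k - 3)}$. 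Reorganizing the expansion so that the leading self-consistent structure matches the polynomial $P(g, z)$, and absorbing self-dependent terms by Young's inequality, yields
\begin{align*}
\E |P(g, z)|^p \leq \Bigl( N^{c \epsilon} \bigl( q^{-2} + (N \eta)^{-1} \bigr) \Bigr)^p
\end{align*}
for every small $\epsilon > 0$, hence $|P(g, z)| \prec q^{-2} + (N \eta)^{-1}$.

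The passage from $|P(g, z)|$ to $|g - \wt{m}_{\mathrm{sc}}|$ is carried out via stability of $P$. Away from the spectral edges, $|\partial_m P(\wt{m}_{\mathrm{sc}}, z)| \gtrsim 1$ and a Taylor expansion gives $|g - \wt{m}_{\mathrm{sc}}| \lesssim |P(g, z)|$ directly. Near the edges, $|\partial_m P| \sim \sqrt{\vk_E + \eta}$ and stability becomes quadratic; one combines the relation $|P(g, z)| \gtrsim \sqrt{\vk_E + \eta}\, |g - \wt{m}_{\mathrm{sc}}| + |g - \wt{m}_{\mathrm{sc}}|^2$ with a continuity argument in $\eta$, descending from $\eta = \tau^{-1}$ (where the weak local law gives the base case) down to $\eta \sim N^{-1 + \tau}$, with the ``small'' branch selected along the path. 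The main obstacle is the combinatorial bookkeeping of the cumulant expansion: each derivative $\partial_{W_{ij}}^{k}$ acts not only on the explicit factor $G_{ji}$ but also on every copy of $g$ hidden inside $P(g, z)^{p - 1}$, producing a proliferation of terms that must be parsed into the self-consistent polynomial contribution and a remainder of size $q^{-2} + (N \eta)^{-1}$. Keeping this bookkeeping compatible with the edge-ready bound, and verifying that the $k = 2$ and higher-$k$ terms contribute only to the error at the stated order, is the technical crux and is what forces the lower bound $q \geq N^{\delta}$.
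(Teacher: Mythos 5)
First, a point of orientation: the paper does not prove this proposition at all --- it is imported verbatim as Theorem 2.4 of \cite{LS18}, so the ``paper's own proof'' is a citation. Your sketch does reconstruct the route actually taken in \cite{LS18}: define $\wt{m}_{\mathrm{sc}}$ through the quartic self-consistent polynomial \eqref{eq:sparseref}, show it is the Stieltjes transform of a symmetric compactly supported measure with square-root edges, prove a recursive moment estimate $\E|P(g,z)|^{p}$ via cumulant expansion, and close with a stability/dichotomy argument in $\eta$. At that level of outline there is nothing to object to.

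The genuine gap is in your treatment of the odd-cumulant terms, where you assert that ``the $k=2$ contribution vanishes at leading order by diagonal symmetry and off-diagonal smallness.'' It does not vanish, and it is exactly the step that separates the precision $q^{-2}+(N\eta)^{-1}$ of \eqref{eq:sparsels} from the older bound $q^{-1}+(N\eta)^{-1}$ of \cite{EKYY1}. The second-derivative terms weighted by $\mc{C}_3\sim s^{(3)}/(Nq)$ contain contributions such as $\frac{1}{N^2}\sum_{ik}G_{ii}G_{kk}G_{ik}\,(\cdots)$ and, worse, the terms in which one derivative hits a copy of $P(g,z)$ inside the power $P^{p-1}$ (the analogue of $I_{2,0}$ and $I_{2,1}$ in the present paper, Lemmas \ref{lem:I20} and \ref{lem:I21}): their naive size after Ward/Cauchy--Schwarz is only $O_\prec(q^{-1}\Psi)$-type, which is too large to close the recursion at the $q^{-2}$ level. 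In \cite{LS18} these terms are handled by performing an \emph{additional} cumulant expansion on an off-diagonal resolvent entry generated by the derivative (e.g.\ expanding $G_{ki}=-G_{kk}\sum_j W_{kj}G_{ji}^{(k)}$ and expanding again in $W_{kj}$), which produces the extra factor $q^{-1}$; this ``unmatched-index'' re-expansion, together with the replacement of $\frac{1}{N}\sum_i G_{ii}^n$ by $g^n$ needed to make the fourth-cumulant term produce exactly $\frac{s}{q^2}g^4$, is the technical heart of the proof --- the present paper discusses precisely this mechanism (and why it is harder in the deformed setting) in Section \ref{sec:I21} and Remark \ref{rem:import}. You flag the bookkeeping as ``the technical crux'' but offer a symmetry argument in its place that would fail; as written, your recursion would only yield the weaker $q^{-1}$ bound, not \eqref{eq:sparsels}. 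The measure-construction half of your sketch (implicit function theorem off the semicircle branch, Nevanlinna representation, evenness and compact support from the branch points) is fine and consistent with \cite{LS18}.
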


While $H$ reduces to a sparse random matrix in the limit $\lambda\sim N^{-1/2}$, Theorem~\ref{thm:refine} is weaker than Proposition \ref{prop:LS} at the spectral edges. The bulk fluctuations can be improved by choosing an equilibrium measure that accounts for the random fluctuation term, which is expected to be of size $N^{-1/2}q^{-1}$. The bound of $N^{-1/4}q^{-1/2}$ in our averaged local law is believed to reflect this fluctuation.

To analyze the local laws at the spectral edges, we first compare the refined measures to those introduced when studying sparse random matrices (see \cite{LS18, HLY20}) and $\wt{\rho}_{\fc}$ in the vicinity of $\lambda \sim 0$. Although the measure $\wt{\rho}_{\mathrm{sc}}$ obtained through \eqref{eq:sparseref} and $\rho_{\mathrm{sc}} \boxplus \frac{1}{2}(\delta_{\sqrt{s}q^{-1}} +\delta_{-\sqrt{s}q^{-1}})$ are different equilibrium measures, they share important properties: (1) they approach to $\rho_{\mathrm{sc}}$ as $N\to \infty$; (2) they exhibit square-root decay type behavior at the spectral edges; (3) their upper endpoint admits the asymptotic expansion $2+sq^{-2} + O(q^{-4})$ (analogously at the lower endpoint). In the limit $ \lambda \lesssim N^{-1/2} $, the asymptotic expansion of $\wt{L}_+$ becomes $2 + sq^{-2} + O(N^{-1} +q^{-4})$.


In fact, the refinement coincides with the picture of the eigenvalue behavior of sparse random matrices, established in the series of works \cite{LS18, HLY20, HK, Le, HLY25}. In \cite{HK}, He and Knowles suggested that the fluctuation of any single eigenvalue of sparse random matrices consists of two independent components: \textit{a random matrix} component, and a \textit{sparsity} component. In our refinement, the sparsity is encoded by the component $\frac{1}{2}(\delta_{\sqrt{s}q^{-1}} +\delta_{-\sqrt{s}q^{-1}})$, which is indeed independent of $\wh{\nu}^{\lambda}_N$. Assessing these measures as independent components that induce the shifts of the eigenvalues, we take the classical convolution of the sparsity component and the potential (or external source) $V$, then take the free convolution with the semicircle law, a random matrix component. 

From this perspective, we conjecture that the eigenvalues of sparse random matrices decompose into three components: (1) a random matrix component, (2) a sparsity component, and (3) contribution arising from the deformation $\lambda V$. At the level of the local law, the influence of $V$ is decoupled by conditioning on the event $\Xi$. On this event, we expect that the leading fluctuations of extremal eigenvalues are described by the classical convolution of GOE Tracy–Widom distribution of size $N^{-2/3}$ and a Gaussian distribution of order $N^{-1/2}q^{-1}$ shifted by $O(q^{-2})$, asymptotically. With a suitably chosen equilibrium measure, we expect optimality, with fluctuation bounds matching \eqref{eq:sparsels}. Since our equilibrium measure does capture the deterministic shift up to $O(q^{-2})$, we believe that the main difficulty lies in controlling the fluctuations in the cumulant expansion. The issues are discussed further in Section~\ref{outline} and Remark~\ref{rem:import}. Finally, as the extremal eigenvalue analysis concerns $\eta \sim N^{-2/3-o(1)}$, we expect that Theorem \ref{thm:refine} yields optimal results of eigenvalues up to sparsity $q = N^{\phi}$ for any $\phi \geq 2/9$, as supported by the results in the next section.

\subsection{Main results: Eigenvalue behaviors} Applying the local laws, we prove an estimate on the eigenvalue locations and the rigidity phenomena. Define the classical location $\wt{\gamma}_i$ of the eigenvalues $\mu_i$ with respect to the reference density $\wt{\rho}_{\fc}$ by
\begin{align}
	\int_{-\infty}^{\wt{\gamma}_i} \wt{\rho}_{\fc} (x) \d x = \frac{i}{N},\quad i \in \llbracket 1, N \rrbracket .
\end{align}
Note that in case $(V_i)$ are random, $(\wt{\gamma}_i)$ are random as well. Adapting the methods in \cite{EKYY1,LS}, we obtain the following.

\begin{theorem}[Eigenvalue location and rigidity] \label{thm:rigidity}
	Let $H=W +\lambda V$ satisfy the assumptions in Theorem \ref{thm:refine}. Assume the event $\Xi$. Consider $E$ satisfying $|E|<E_0 <3+\lambda$. Denote $\vk_{E} \deq \min\{|E - \wt{L}_-|, |E -\wt{L}_+| \}$, where $\wt{L}{_{\pm}}$ is the endpoint of $\wt{\rho}_{\fc}$, defined in Proposition \ref{prop:refv}. For each $i \in \llbracket 1, N \rrbracket$, let $\mu_i$ be the eigenvalue of $H$, and $\wt{\gamma}_i$ the corresponding classical location with respect to $\wt{\rho}_{\fc}$, in ascending order. Fix arbitrary (small) $\epsilon>0$ and (large) $D\geq 1$. Then, the following statement holds for all $j \in \llbracket 1, N \rrbracket$ with probability at least $1- N^{-D}$, for sufficiently large $N \geq N_0(\epsilon, D)$:
	\begin{enumerate}
		\item If $\max\{ \vk_{\mu_{j}}, \vk_{\wg_{j}} \} \leq N^{\epsilon} (q^{-3} +N^{-2/3})$, then
		\begin{align}\label{eq:each0}
			|\mu_{j}- \wt{\gamma}_j | \leq N^{\epsilon} \left( \frac{1}{q^3} + \frac{1}{N^{2/3}} \right).
		\end{align}
		\item  If $N^{\epsilon} (q^{-3} +N^{-2/3}) \leq  \max\{ \vk_{\mu_{j}}, \vk_{\wg_{j}} \} \leq N^{\epsilon} (q^{-4} (q^{-3} + N^{-2/3})^{-1} + q^{-2} + N^{-2/3})$, then
		\begin{align}\label{eq:each1}
			|\mu_j - \wg_j| \leq N^{{\epsilon}} \left[ \frac{1}{\underline{j}^{1/3}} \left(\frac{1}{ N^{2/3}} +\frac{N^{1/3}}{q^{9/2}} \right)+ \left(\frac{\underline{j}}{N}\right)^{1/3} \left(\frac{1}{q^{3/2}} + \frac{1}{N^{1/3}} \right) \right],
		\end{align}
		where we abbreviated $\underline{j} \deq \min \{j, N-j \}$.
		
		\item  If $ \max\{ \vk_{\mu_{j}}, \vk_{\gamma_{j}} \} \geq N^{\epsilon} (q^{-4} (q^{-3} + N^{-2/3})^{-1} + q^{-2} + N^{-2/3})$, then
		\begin{align}\label{eq:each2}
			|\mu_j - \wg_j| \leq N^{\epsilon} \left[ \frac{1}{\underline{j}^{1/3}}  \frac{1}{N^{2/3}} + \frac{1}{\underline{j}^{2/3}} \frac{N^{2/3}}{q^{5}}  + \frac{1}{q^2} \right],
		\end{align}
		where we abbreviated $\underline{j} \deq \min \{j, N-j \}$.
	\end{enumerate}
	In particular, under the assumption $q =N^{\phi}$ with $\phi \geq 2/9$, the estimate simplifies to the following assertion. On the event $\Xi$, for any (small) $\epsilon>0$ and (large) $D>0$, the event
	\begin{align}
		\begin{split}
			 \Bigg\{  \exists j :  |\mu_j - \wg_j|  > N^{\epsilon} \Bigg(\frac{1}{\underline{j}^{1/3}}\frac{1}{N^{2/3}} + \frac{\underline{j}^{1/3}}{N^{2/3}} \mathbbm{1}_{\left\{ \underline{j} \leq N^{\epsilon }(1+ Nq^{-3})^2 \right\}} + \frac{1}{q^2} \mathbbm{1}_{\left\{ \underline{j} \geq N^{\epsilon } (1+ Nq^{-3})^2  \right\}} \Bigg) \Bigg\} , \label{eq:rigidlong}
		\end{split}
	\end{align}
	holds with probability at most $N^{-D}$, for all sufficiently large $N \ge N_0(\epsilon, D)$.
\end{theorem}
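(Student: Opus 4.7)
The plan is to convert the averaged local law of Theorem~\ref{thm:refine} into a rigidity estimate by following the standard Helffer--Sjöstrand scheme from \cite{EKYY1, LS}, applied with the refined equilibrium measure $\wt{\rho}_{\fc}$ whose square-root edge behaviour is supplied by Proposition~\ref{prop:refv}. Writing $\caN(E) \deq \#\{i : \mu_i \leq E\}$ and $\wt{n}_{\fc}(E) \deq \int_{-\infty}^{E}\wt{\rho}_{\fc}(x)\,\d x$, the first task is to translate the bound on $|\pair{G(z)} - \wt{m}_{\fc}(z)|$ into a bound on $|\caN(E) - N\wt{n}_{\fc}(E)|$. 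For a smooth approximation $\chi_E$ of the indicator of $(-\infty, E]$ on a scale $\eta_0$ and a smooth cut-off $\varrho$ of the imaginary part, the Helffer--Sjöstrand identity
\begin{align*}
\caN(E) - N\wt{n}_{\fc}(E) = \frac{N}{\pi}\int_{\bbR}\int_{\bbR} \partial_{\bar z}\bigl(\chi_E(x)\varrho(y)\bigr)\,\bigl[\pair{G(z)} - \wt{m}_{\fc}(z)\bigr]\,\d x\,\d y + O(N^{-D})
\end{align*}
reduces the problem to a double integral in $z = x + \ii y$. I would split the integral into the region $|y| \geq \eta_0$, handled directly by Theorem~\ref{thm:refine}, and $|y| \leq \eta_0$, handled via the standard monotonicity of $\eta \mapsto \eta\,\Im \pair{G(E+\ii\eta)}$ together with the entrywise bound of Theorem~\ref{thm:strong}.

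Inserting the two branches of the minimum in \eqref{eq:stronglocal} and optimizing $\eta_0$ as a function of $\vk_E$ yields a counting-function estimate $|\caN(E) - N\wt{n}_{\fc}(E)| \prec \caE(E)$, whose shape depends on which branch dominates. The edge branch $q^{-2}/\sqrt{\vk_E + \eta}$ integrates (up to $N^{\epsilon}$ factors) to a contribution of order $q^{-2}$, while the bulk branch $q^{-3/2} + N^{-1/4}q^{-1/2}$ together with $(N\eta)^{-1}$ integrates to a contribution of order $q^{-3} + N^{-1/2}q^{-1}$ near the edge scale $\eta \sim N^{-2/3}$. One then inverts via $\caN(\mu_j) = j$ and $\wt{n}_{\fc}(\wg_j) = j/N$, using the lower bound $\wt{\rho}_{\fc}(E) \gtrsim \sqrt{\vk_E}$ from Proposition~\ref{prop:refv} in the edge region and $\wt{\rho}_{\fc}(E) \sim 1$ in the bulk, to convert the counting-function bound into an eigenvalue bound. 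The three regimes in \eqref{eq:each0}, \eqref{eq:each1}, \eqref{eq:each2} correspond exactly to the thresholds at which either the branch of the local law switches, or the $(N\eta)^{-1}$ term begins to dominate the edge contribution.

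The main obstacle I anticipate is the bookkeeping in the regime analysis: the quotient $\caE(E)/\sqrt{\vk_E}$ must produce the precise transition scales $N^{\epsilon}(q^{-3} + N^{-2/3})$ and $N^{\epsilon}(q^{-4}(q^{-3}+N^{-2/3})^{-1}+q^{-2}+N^{-2/3})$ stated in the theorem, which requires carefully tracking how $\vk_{\mu_j}$ and $\vk_{\wg_j}$ compare to $\eta_0$ and a standard perturbative argument exchanging $\vk_{\mu_j}$ with $\vk_{\wg_j}$. A secondary subtlety is that when $V$ is random, $\wg_j$ is itself random, so the estimate must hold uniformly on the event $\Xi$; since Theorem~\ref{thm:refine} is already stated on $\Xi$, this uniformity is inherited. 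Finally, to pass from the three case-by-case bounds to the consolidated statement \eqref{eq:rigidlong}, the hypothesis $q = N^{\phi}$ with $\phi \geq 2/9$ enforces $q^{-3} \leq N^{-2/3}$ and $N^{1/3}q^{-9/2} \leq N^{-2/3}$, which absorbs the auxiliary $\underline{j}^{-1/3}N^{1/3}q^{-9/2}$ and $\underline{j}^{-2/3}N^{2/3}q^{-5}$ contributions from \eqref{eq:each1}--\eqref{eq:each2} and leaves only the clean edge scale $\underline{j}^{1/3}N^{-2/3}$ and the bulk shift $q^{-2}$, delineated by the threshold $\underline{j} \sim N^{\epsilon}(1 + Nq^{-3})^2$ at which the two contributions balance.
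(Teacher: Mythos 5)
Your proposal is correct and tracks the paper's own argument closely: the paper likewise converts Theorem~\ref{thm:refine} into a Helffer--Sj\"{o}strand counting-function estimate (Lemma~\ref{lem:lem7}, Propositions~\ref{prop:loc}--\ref{prop:int}), then inverts via the square-root edge behaviour to obtain the three regimes and the consolidated bound for $\phi\geq 2/9$. One ingredient you gloss over is that before the inversion step the paper first proves an extremal eigenvalue bound (Lemma~\ref{thm:opnorm}, itself a consequence of the refined local law at scale $\eta\sim N^{\epsilon}/(N\sqrt{\vk_E})$) so that the integrated density of states can be anchored at $E_1=\wt{L}_--N^{\epsilon}(q^{-3}+N^{-2/3})$ with $\frn(E_1)=\wt{n}_{\fc}(E_1)=0$; this is also what gives Case~(1) directly, and it is needed to close the self-improving absorption of $|\mu_j-\wg_j|$ that hides in your ``perturbative argument exchanging $\vk_{\mu_j}$ with $\vk_{\wg_j}$.''
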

\begin{remark}
	Firstly, we remark that $q^{-4} (q^{-3} + N^{-2/3})^{-1}  \sim q^{-1} $ when $q \lesssim N^{2/9}$, while $q^{-4} (q^{-3} + N^{-2/3})^{-1} \sim N^{2/3}q^{-4}$ when $q \gtrsim N^{2/9}$. Case (2) should be understood so that it interpolates between cases (1) and (3). Also note that case (2) becomes vacuous when $q\gtrsim N^{1/3}$.
	
	Theorem \ref{thm:rigidity} shows that, on the event $\Xi$, when $\phi \geq 2/9$, the fluctuations of the extremal eigenvalues are of order $N^{-2/3}$, consistent with the GOE Tracy–Widom fluctuation scale. Under the assumption $\phi \geq 1/3$, the estimate \eqref{eq:rigidlong} simplifies to
	\begin{align}
		\max_j |\mu_j - \wt{\gamma}_j | \prec \frac{1}{N^{2/3}} \frac{1}{{\underline{j}}^{1/3}} + \frac{1}{q^2 },
	\end{align}
	which coincides with the rigidity result for sparse random matrices \cite{EKYY1}. 
	
	In a different limit $\phi =1/2$, where the model recovers the deformed Wigner matrices, the result reduces to the rigidity result in \cite[Corollary 3.4] {LSSY}.
\end{remark}
Finally, in the random $V$ case, we prove that the leading fluctuation of the extremal eigenvalue converges to a Gaussian, given by the central limit theorem. The limiting fluctuation has the same variance as in the deformed Wigner matrix case \cite{LS14, LS16}, whereas the mean is shifted by the effect of the sparsity. The proof closely parallels that in the existing literature.
\begin{theorem}\label{thm:gauss} 
	Let $L_+$ and $\bv{L}_+$ be the upper endpoints of $\rho_{\fc}$ and $\bv{\rho}_{\fc}$, respectively, and $\mu_N$ be the largest eigenvalue of $H = W + \lambda V$. When $\lambda \gg \max\{q^{-1}, N^{-1/6} , \sqrt{N}q^{-3}\}$, the rescaled fluctuation $\sqrt{N} \lambda^{-1} (\mu_N - \bv{L}_+)$ converges in distribution to a Gaussian random variable with mean $0$ and variance \begin{align}
		\lim_{N \to \infty }\lambda^{-2} (1 - \mfc(L_+)^2).
	\end{align}
	In particular, recalling $\lambda = \lambda_0 N^{-\beta}$, 
	when $\beta >0$, the variance is $m^{(2)}(\nu)$. Analogous statement holds for the smallest eigenvalue.
\end{theorem}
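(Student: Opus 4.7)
The plan is to decompose
\[
\mu_N - \bv{L}_+ = (\mu_N - \wt{L}_+) + (\wt{L}_+ - \bv{L}_+),
\]
where $\wt{L}_+$ is the upper edge of the $V$-dependent refined measure $\wt{\rho}_{\fc}$. The first summand is controlled by the rigidity result, while the second, which depends only on the empirical measure $\wh{\nu}_N$ of the i.i.d.\ entries $(V_i)$, produces the claimed Gaussian fluctuation through a classical central limit theorem.

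For the rigidity step, Proposition \ref{prop:refv} guarantees square-root decay of $\wt{\rho}_{\fc}$ at $\wt{L}_+$, so that $\wg_N = \wt{L}_+$ and $\vk_{\wg_N} \lesssim N^{-2/3}$. Case (1) of Theorem \ref{thm:rigidity} then yields $|\mu_N - \wt{L}_+| \prec q^{-3} + N^{-2/3}$ on $\Xi$. The two standing hypotheses $\lambda \gg N^{-1/6}$ and $\lambda \gg \sqrt{N}\, q^{-3}$ are designed precisely so that both error terms are $o(\lambda N^{-1/2})$, strictly below the target Gaussian scale.

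For the edge-comparison step, I apply the implicit function theorem to the coupled edge equations for $\wt{L}_+$ and $\bv{L}_+$ coming from the functional equation \eqref{eq:reftrace} (and its breved analog). Writing $m_0 := \bv{m}_{\fc}(\bv{L}_+)$ and $\zeta_0 := \bv{L}_+ + m_0$, the upper edge of $\bv{\rho}_{\fc}$ is simultaneously characterized by the fixed-point relation $m_0 = \int d\nu(v)/(\lambda v - \zeta_0) + O(q^{-2})$ and the criticality condition $\int d\nu(v)/(\lambda v - \zeta_0)^2 = 1 + O(q^{-2})$. Since the sparsity shift $\tfrac{1}{2}(\delta_{\sqrt{s}/q}+\delta_{-\sqrt{s}/q})$ enters $\wt{\nu}^{\lambda}$ and $\bv{\nu}^{\lambda}$ identically, it cancels to leading order in the difference of edges, and a direct linearization in $\wh{\nu}_N - \nu$ yields
\[
\wt{L}_+ - \bv{L}_+ = -\frac{1}{N}\sum_{i=1}^N \left(\frac{1}{\lambda V_i - \zeta_0} - m_0\right) + \caE_N,
\]
with an error $\caE_N = O_{\prec}(N^{-1} + q^{-2} N^{-1/2})$ that is $o(\lambda N^{-1/2})$ under $\lambda \gg q^{-1}$.

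The random variables $X_i := (\lambda V_i - \zeta_0)^{-1} - m_0$ are i.i.d., centered (by the fixed-point equation), and bounded on $\Xi$ (since $\zeta_0$ lies strictly outside $\lambda \supp \nu$ by the stability of Assumption \ref{assm:stableint}). Using the leading-order criticality condition and passing $\bv{L}_+ \to L_+$, one computes
\[
\mathrm{Var}(X_i) = \int \frac{d\nu(v)}{(\lambda v - \zeta_0)^2} - m_0^2 = 1 - \mfc(L_+)^2.
\]
The classical Lindeberg CLT applied to $\frac{1}{\sqrt{N}}\sum_i X_i$, combined with Step 1, then gives the claimed weak convergence with variance $\lambda^{-2}(1 - \mfc(L_+)^2)$; for $\beta > 0$ the expansion $\mfc(L_+) = -1 + \tfrac{1}{2}\lambda^2 m^{(2)}(\nu) + O(\lambda^3)$ reduces this to $m^{(2)}(\nu)$. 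The main technical obstacle lies in Step 2: one must verify invertibility of the linearized edge system (which relies on the stability property of $\wt{\nu}^{\lambda}$ established in Proposition \ref{prop:refv}) and, most delicately, confirm that fluctuations of the higher moments $m^{(k)}(\wh{\nu}_N) - m^{(k)}(\nu)$ for $k \geq 2$---which are also of size $N^{-1/2}$---do not contribute to the leading variance after linearization. This is the same cancellation mechanism as in the deformed Wigner analysis of \cite{LS14, JL}, but here one must additionally track the order-$q^{-2}$ sparsity corrections throughout and ensure they cancel between $\wt{L}_+$ and $\bv{L}_+$ with a residual below the Gaussian scale $\lambda N^{-1/2}$.
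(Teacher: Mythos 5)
Your proposal is correct and follows essentially the same route as the paper: the paper likewise splits $\mu_N-\bv{L}_+$ into $(\mu_N-\wt{L}_+)$, controlled by the extremal eigenvalue bound of order $N^{-2/3}+q^{-3}$ (hence the hypotheses $\lambda\gg N^{-1/6}$, $\lambda\gg\sqrt{N}q^{-3}$), plus $(\wt{L}_+-\bv{L}_+)$, which Lemma \ref{lem:gaussian} identifies, via linearization of the edge equations at $\bv{\zeta}_+$ using the criticality condition $\int(\lambda v-\bv{\zeta}_+)^{-2}\d\bv{\nu}=1$, with a centered i.i.d.\ average $-\caX$ up to $O_{\prec}(\lambda^2 N^{-1})$, and then applies the CLT with variance $1-\bv{m}_{\fc}(\bv{L}_+)^2+O(q^{-2})\to 1-\mfc(L_+)^2$. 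The only cosmetic differences are that you phrase the linearization through the implicit function theorem and drop the $\pm\sqrt{s}q^{-1}$ shifts in the summands (harmless since their effect on the variance is $O(q^{-2})\ll\lambda^2$), whereas the paper expands directly and keeps the shifted kernels.
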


\begin{remark} When $\phi \geq 2/9$, the technical restriction in Theorem~\ref{thm:gauss} becomes vacuous, and the condition reduces to $\lambda \gg \{ q^{-1}, N^{-1/6} \}$. This reduced condition is believed to describe the natural regime where the Gaussian fluctuation, of size $N^{-1/2} \lambda$, dominates over both the expected GOE Tracy-Widom fluctuation of size $N^{-2/3}$ and the Gaussian fluctuation from the random fluctuation term in sparse random matrices, which is of size $N^{-1/2}q^{-1}$. We expect that the restriction on $\phi$ can be removed entirely upon further improvements of the local law.
\end{remark}

Proofs of Theorem \ref{thm:rigidity} and Theorem \ref{thm:gauss} are provided in Section \ref{sec:behavior}. In proving these results, we will also prove a bound on the extremal eigenvalues, Lemma \ref{thm:opnorm}, and estimates on the density of states, Proposition \ref{prop:loc} and Proposition \ref{prop:int}.

\section{Preliminaries}\label{sec:prelim}

\subsection{Properties of the refined measure and stability bounds}\label{sec:refine}

In this section, we recall and extend important properties of $\wt{m}_{\mathrm{fc}} = \pair{\wt{\MM}}$ and $ \mfc$ and their inversions $\wt{\rho}_{\mathrm{fc}}$ and $ \rho_{\mathrm{fc}}$, introduced in Section \ref{sec:res1}. Recall that $\wt{m}_{\mathrm{fc}} = \pair{\wt{\MM}}$ (and $\breve{m}_{\fc}$) is the Stieltjes transform of the free convolution between the semicircle law $\rho_{\mathrm{sc}}$ and $\wt{\nu}_N^{\lambda}$ (and $\breve{\nu}^{\lambda}$, respectively), where $\wt{\nu}_N^{\lambda}$ and $\breve{\nu}^{\lambda}$ respect the stability conditions. 

\begin{remark}\label{rem:classify}
	Before discussing the square-root behavior of the free convolution measures, it is helpful to understand $\wt{\nu}_N^{\lambda}$, where to utilize the known results it is beneficial to identify the ($N$-dependent) scaling factor that gives rise to a ($N$-dependent) measure with $O(1)$ variance, supported compactly on a interval of scale $O(1)$. With the parameters $q^{-1}$ and $\lambda$ competing, we consider two different scaling with the following measures
		\begin{align}\wt{\nu}_N (v) &= \frac{1}{2} (\wh{\nu}_N (v + \sqrt{s} q^{-1} \lambda^{-1}) + \wh{\nu}_N (v - \sqrt{s}q^{-1} \lambda^{-1} )), \\	
			\ttilde[2]{\nu}_N (v) &= \frac{\sqrt{s}}{q \lambda } \frac{1}{2}\left[\wh{\nu}_N \left(\frac{\sqrt{s}}{q\lambda} (v+1) \right) + \wh{\nu}_N \left(\frac{\sqrt{s}}{q\lambda } (v-1) \right) \right].
	\end{align}
		Note that $\wt{m}_{\wh{\nu}_N}$ is the Stieltjes transform of $\lambda^{-1} \wt{\nu}_N (\lambda^{-1} v)$, or $(\sqrt{s}q^{-1})^{-1} \ttilde[2]{\nu}_N ((\sqrt{s}q^{-1})^{-1} v)$ in all regimes. 
	
	When $q^{-1} \lambda^{-1} \ll 1$, we identify $\wt{\nu}_N^{\lambda}$ as $\wt{\nu}_N$ scaled by $\lambda$, as in this regime the support of $\wt{\nu}_N$ is of order $O(1)$ and compact, and its variance is $\sigma^2 + sq^{-2}\lambda^{-2}$. Furthermore $\wt{\nu}_N$ converges weakly to $\nu$ as $N \to \infty$.
	
	When $q^{-1}\lambda^{-1}\gg 1$, we identify $\wt{\nu}_N^{\lambda}$ as $\ttilde[2]{\nu}_N$ scaled by $\sqrt{s} q^{-1}$, as in this regime the support of $\ttilde[2]{\nu}_N$ is of order $O(1)$ and its variance is $1+ s^{-1} \lambda^2 q^2 \sigma^2$. By using the assumption $s >c_4 $, $\ttilde[2]{\nu}_N$ converges weakly to  $\frac{1}{2} (\delta_{-1} + \delta_1)$ as $N \rightarrow \infty$.
	
	In the borderline case where $q^{-1} \lambda^{-1} \sim 1$ both $\wt{\nu}_N$ and $\ttilde[2]{\nu}_N$ are compactly supported probability measures of order $O(1)$. Both $\wt{\nu}_N$ and $\ttilde[2]{\nu}_N$ converges weakly to a compactly supported measure of order $O(1)$ when $s$ converges as $N \to \infty$; else, the convergence is not guaranteed. Assuming that $\sqrt{s}q^{-1} \lambda^{-1}  \to \mathsf{s}$ as $N \to \infty$, then $\wt{\nu}_N$ converges weakly to $\frac{1}{2} (\nu(\cdot +\mathsf{s}) + \nu(\cdot-\mathsf{s}) )$ as $N \to \infty$, and $\ttilde[2]{\nu}_N$ converges weakly to $\frac{\mathsf{s}}{2} (\nu(\mathsf{s} (\cdot+1)) + \nu(\mathsf{s}(\cdot-1)) )$.

	
	This indicates that $\wt{\nu}$ and $\ttilde[2]{\nu}$ may behave differently compared with $\wh{\nu}$. These issues are not significant due to the following reasons. In the regimes of (2) and (3), we have $\lambda = o(1)$, and so the deformed semicircle law will converge to the semicircle law in the limit $N \to \infty$. More importantly, the stability criteria in Proposition \ref{prop:refv} hold regardless of the regimes divided by the size of $q^{-1}\lambda^{-1}$, so the stability bounds in Lemma \ref{lem:stabbound} hold accordingly for $N$ large enough. 
\end{remark}

The following lemma asserts the square root behavior of $\rho_{\fc}$, $\wh{\rho}_{\fc}$, $\breve{\rho}_{\fc}$, and $\wt{\rho}_{\fc}$, and properties of their Stieltjes transform $\mfc, \wh{m}_{\fc}, \breve{m}_{\fc}, \wt{m}_{\fc}$, provided that $\bv{\nu}^{\lambda}_N$ and $\wt{\nu}^{\lambda}_N$ satisfy the stability condition. Moreover, we prove that $\wh{m}_{\fc}$ behaves qualitatively in the same way as $m_{\fc}$, and likewise that $\wt{m}_{\fc}$ behaves qualitatively in the same way as $\bv{m}_{\fc}$.

\begin{lemma}[Square-root behavior]\label{lem:stabbound}
	Let $\nu$ and $\wh{\nu}$ satisfy Assumption \ref{assm:stableint}.  Then $\rho_{\fc}$ and $\mfc$ have the following properties: 
	\begin{enumerate}
		\item  There exist ${L}_{-}, L_{+} \in \mathbb{R}$, with $L_{-} < 0 < L_{+}$, such that $\rho_{\fc}$ has support $[L_{-}, L_{+}]$, and there exists a constant $C>1$ such that, for all $\lambda \in \Theta_{\varpi}$, 
		\begin{align}\label{eq:sqrt}
			C^{-1} \sqrt{\kappa_E} \leq \rho_{\fc} (E) \leq C \sqrt{\kappa_E}, \quad E \in [L_- , L_+],
		\end{align}
		where $\kappa_E \deq \min \{|E- L_{-}|, |E- L_+|  \}$.
		\item For any $z \in E + \ii \eta \in \caD_{\tau}$,
		\begin{align}\label{eq:stab1}
			\Im \mfc  \sim \begin{cases} 
				\sqrt{\kappa_E + \eta}, & E \in [L_{-}, L_{+}] , \\
				\frac{\eta}{\sqrt{\kappa_E + \eta}}, & E \in [L_{-}, L_{+}]^c .
			\end{cases}
		\end{align} 
		\item There exist a constant $C >1$ such that for all $z \in \mc{D}_{0}$ and $v \in \supp \nu$, such that 
		\begin{align}\label{eq:stab2}
			C^{-1} \leq | \lambda v - z - \mfc(z) | \leq C.
		\end{align}
		\item There exist a constant $C >1$ such that for any $z = E+ \ii \eta \in \caD_{0}$,  we have
		\begin{align}\label{eq:stab3}
			C^{-1} \sqrt{\kappa_E + \eta} \leq \left| 1 - \int \frac{1}{(\lambda v - z - \mfc(z))^2 } \d \nu(v) \right| \leq C \sqrt{\kappa_E + \eta} ,
		\end{align}
		\item There exist constants $C>1$ and $c_0 >0$ such that for all $z = E+ \ii \eta \in \caD_{0}$ satisfying $\kappa + \eta \leq c_0$,
		\begin{align} \label{eq:stab4}
			C^{-1} \leq \left|\int \frac{\d \nu(v)}{(\lambda v - z - \mfc(z))^3} \right| \leq C,
		\end{align}
		furthermore, there is $C>1$, such that for all $z \in \caD_{0}$
		\begin{align}
			\left| \int \frac{\d \nu(v)}{(\lambda v - z - \mfc(z))^3 } \right| \leq C.
		\end{align}
	\end{enumerate}
	Moreover, all constants in (1)-(5) can be chosen uniformly in $\lambda \in \Theta_{\varpi}$. 
	
	Assume now that \eqref{eq:stabref} holds for $\wt{\nu}_N$ and $\bv{\nu}$. The statements (1)-(5) hold when replacing $\mfc$ with $\breve{m}_{\fc}$ and $L_{\pm}$ with $\breve{L}_{\pm}$, where in this case the constants can be chosen uniformly in $\Theta_{\varpi}$, the normalized fourth cumulant $s = Nq^{2} \caC_4$, and $N$.  
	Furthermore, on $\Xi_N (\mathfrak{a}_0)$, the following hold for sufficiently large $N$:
	The statements (1)-(5) when replacing $\mfc$ with $\wh{m}_{\fc}$ (respectively, $\wt{m}_{\fc}$), and the other quantities analogously, where all constants can be chosen uniformly in $\lambda \in \Theta_{\varpi}$ and $N$ (also the normalized fourth cumulant in case of $\wt{m}_{\fc}$). Finally, there exist constants $C, c >0$ such that for any $z \in \caD_{0}$, 
	\begin{align}
		|\wh{m}_{\fc}(z) -\mfc(z) | \leq N^{-\frac{c \mathfrak{a}_0}{2} }, &\quad |\wh{L}_{\pm} - L_{\pm} | \leq N^{- c \fra_0},\\ 
		\quad |\wt{m}_{\fc}(z) - \breve{m}_{\fc}(z) |  \leq N^{-\frac{c \mathfrak{a}_0 }{2}}, &\quad |\wt{L}_{\pm} - \bv{L}_{\pm} | \leq N^{- c \fra_0},\label{eq:eqeq}
	\end{align}
	for sufficiently large $N$, for all $\lambda \in \Theta_{\varpi}$ and also independent on the fourth normalized cumulant.
	
\end{lemma}
\begin{proof}
	The statements for $m_{\fc}$ and $\wh{m}_{\fc}$ were proved in \cite{LS, LSSY}. Notice that we have assumed that the stability condition for $\wt{\nu}^{\lambda} $ (and $\bv{\nu}^{\lambda}$) holds. (Refer to \eqref{eq:stabref} of Proposition \ref{prop:refv} (1). We note that Proposition \ref{prop:refv} (1) is proved without using the current lemma.) Following the proof of Lemma A.1 in \cite{LSSY}, we can prove that $\wt{\rho}_{\fc}$ (and $\bv{\rho}_{\fc}$) exhibits square-root decay at the spectral edges, uniformly on $N$, $\lambda \in \Theta_{\varpi}$, and the fourth normalized cumulant $s$. Furthermore, analogous statements of (1)-(5) follows from it. The bound for $|\wt{m}_{\fc} - \breve{m}_{\fc}| $ follows from bootstrapping argument, verbatim to the bound on $|\wh{m}_{\fc} - \mfc|$. Details are presented in Appendix \ref{appendix1}.
\end{proof}

We are ready to prove Proposition \ref{prop:refv}. 
\begin{proof}[Proof of Proposition \ref{prop:refv}] Proof to assertion (1) is technical, and it is postponed to Appendix \ref{appendix1}. The form \eqref{eq:reftrace} is then obtained by Proposition \ref{thm:fc}. With assertion (1) in hand, we have the stability bound in Lemma \ref{lem:stabbound}. Gathering several known properties of the free convolution with the semicircle law, namely Lemma \ref{lem:singlesupport} and \cite[Theorem 2.2]{LS14}, we obtain the statement (2). Analogous treatment applies for $\bv{\rho}_{\fc}$. 
\end{proof}

\subsection{Resolvent, minors, and Resolvent identities}

Let $A$ be an $N\times N$ real symmetric matrix, and denote its resolvent as $G^A(z) \deq (A-z)^{-1}$, $z \in \bbC^+$. Below we often drop the superscript $A$ to ease the notation. 

Let $\bbT \subset \llbracket 1, N \rrbracket$. Then we define $A^{(\bbT)}$ as the $(N -| \bbT|) \times (N - |\bbT|) $ minor of $A$ obtained by removing all columns and rows of $A$ indexed by $i \in \bbT$. We do not change the names of the indices of $A$ when defining $A^{(\bbT)}$. More specifically, we define an operation $\pi_i $, $i \in \llbracket 1, N \rrbracket$ on the probability space by
\begin{align}
	(\pi_i (A))_{kl} \deq \mathbbm{1} ( k \not = i) \mathbbm{1} ( l \not = i) A_{kl}.
\end{align}
Then, for $\bbT \in \llbracket 1, N \rrbracket$, we set $\pi_{\bbT} \deq \Pi_{i \in \bbT} \pi_i$ and define
\begin{align}
	A^{(\bbT)} \deq ((\pi_{\bbT}(A))_{ij})_{i, j \not \in \bbT}.
\end{align}
The resolvents $G^{(\bbT)} $, are defined in an obvious way using $A^{(\bbT)}$. Moreover, we use the shorthand notations
\begin{align}
	\sum_{i}^{(\bbT)} \deq \sum_{\substack{i=1\\ i \not \in \bbT} }^{N}, \quad \sum_{i \not = j}^{(\bbT)} \deq \sum_{\substack{i=1, j=1\\i\not =j, i, j \not \in \bbT}}^N, 
\end{align}
abbreviate $(i) = (\{ i\})$, $(\bbT i) = (\bbT \cup \{i\})$. In resolvent entries $G_{ij}^{(\bbT)}$ we refer to $\{ i, j\}$ as \textit{lower indices} and to $\bbT$ as \textit{upper indices}.

\begin{lemma}
	Let $A = (A_{ij})$ be a Hermitian $N \times N$ matrix. Consider the resolvent $G(z) \deq (A-z)^{-1}$, $z \in \bbC^+$. Then, for $i, j, k, l \in \llbracket 1, N \rrbracket$, the following identities hold:
	\begin{enumerate}
		\item For $i, j \not = k$, 
		\begin{align}
			G_{ij} = G_{ij}^{(k)} + \frac{G_{ik} G_{kj}}{G_{kk}}.
		\end{align}
		\item For $i \not = j$,
		\begin{align}
			G_{ij} = - G_{ii} \sum_{k}^{(i)} A_{ik} G_{kj}^{(i)} = - G_{jj} \sum_{k}^{(j)} G_{ik}^{(j)} A_{kj}, \quad 
			G_{ij} = - G_{ii} G_{jj}^{(i)} \left(A_{ij} - \sum_{kl}^{(ij)} A_{ik} G_{kl}^{(ij)} A_{lj} \right).
		\end{align}
		\item For deterministic $z \in \mathbb{C} \setminus \mathbb{R}$, we have the differential rule
		\begin{align}
			\frac{\pd G_{ij} }{\pd A_{kl}} = - \frac{1}{1+ \delta_{kl}} (G_{ik} G_{jl} + G_{il} G_{kj}).
		\end{align}
		\item Ward identity: For any $i$,
		\begin{align}
			\sum_{n=1}^N |G_{in} |^2 = \frac{\Im G_{ii} }{\Im z}.
		\end{align}
	\end{enumerate}
\end{lemma}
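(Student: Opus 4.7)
Each of the six identities is a standard consequence of either block matrix inversion, the defining relation $(A-z)G = I$, or spectral calculus. My plan is to prove (1), use it as the central tool from which (2)--(4) follow algebraically, and then treat (5) and (6) separately.

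For (1), I would write $A - z$ in block form with the $i$-th row/column singled out, and apply the standard Schur complement formula for the $(i,i)$-entry of the inverse: if $A-z = \begin{pmatrix} a & b^* \\ b & D \end{pmatrix}$ with $a = A_{ii} - z$ and $D = A^{(i)} - z$, then $((A-z)^{-1})_{ii} = (a - b^* D^{-1} b)^{-1}$, which is precisely the displayed formula once one recognizes $b^* D^{-1} b = \sum_{k,l}^{(i)} A_{ik} G^{(i)}_{kl} A_{lj}$. For (2), the plan is to expand both sides using the resolvent identity $G - G^{(k)} = G(A^{(k)} - A + z P_k) G^{(k)}$ where $P_k$ projects onto the removed row/column, and then identify $(G)_{ik}(G)_{kj}/G_{kk}$ as the rank-one correction; equivalently, one can derive (2) directly by solving a $2\times 2$ block Schur system. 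For (3), I would apply (1) combined with the matrix identity $(A-z)G = I$: the off-diagonal relation $\sum_k (A_{ik} - z\delta_{ik}) G_{kj} = \delta_{ij}$ rearranges, after isolating $G_{ii} A_{ii} G_{ij}$-type terms via (1), into the claimed sum. Item (4) is then obtained by applying (2) with $k = i$ (or $k=j$) to the expression in (3), expressing $G^{(i)}_{kj}$ in terms of $G^{(ij)}$ plus a correction.

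For (5), the plan is to differentiate the identity $(A - z)G = I$ with respect to the independent parameter $A_{kl}$, keeping symmetry $A_{lk} = A_{kl}$ in mind. Differentiating yields $\partial G / \partial A_{kl} = -G (\partial A/\partial A_{kl}) G$, and since $\partial A/\partial A_{kl}$ equals $E_{kl} + E_{lk}$ for $k \neq l$ and $E_{kk}$ for $k=l$, the stated formula with the factor $1/(1+\delta_{kl})$ follows by a direct index computation. For (6) (the Ward identity), I would use the spectral decomposition $A = \sum_\alpha \mu_\alpha u_\alpha u_\alpha^*$, write $G_{in} = \sum_\alpha u_\alpha(i) \overline{u_\alpha(n)}/(\mu_\alpha - z)$, and compute $\sum_n |G_{in}|^2 = \sum_\alpha |u_\alpha(i)|^2 / |\mu_\alpha - z|^2$. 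Since $\Im(1/(\mu_\alpha - z)) = \eta / |\mu_\alpha - z|^2$, the right-hand side equals $\eta^{-1} \sum_\alpha |u_\alpha(i)|^2 \Im(1/(\mu_\alpha - z)) = \eta^{-1} \Im G_{ii}$, which is precisely the claim.

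No step here is genuinely hard; the entire lemma is a compendium of standard resolvent identities. The mildest care is needed in (4), where the two successive applications of (2) must correctly bookkeep the $(ij)$-minor versus the $(i)$- and $(j)$-minors, and in (5), where the symmetry of $A$ doubles the derivative for $k\neq l$ and must be absorbed into the $1/(1+\delta_{kl})$ factor. Since the paper uses these identities later as black-box tools, the exposition can afford to be brief once each calculation is reduced to its one-line algebraic core.
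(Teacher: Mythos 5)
Your proposal is correct and follows the standard derivation (block/Schur-complement inversion for (1)--(4), differentiating $(A-z)G=\mathrm{I}$ under the symmetry constraint for (5), and spectral decomposition for (6)); the paper itself states these identities without proof as standard facts, so there is no alternative in-paper route to compare against. The only points deserving a moment of care in a written version are the ones you already flag—the minor bookkeeping between the $(i)$-, $(j)$-, and $(ij)$-minors in (4), and the use of the symmetry $G_{jl}=G_{lj}$ (i.e.\ the real symmetric setting actually used in the paper) when matching the index placement and the $1/(1+\delta_{kl})$ factor in (5).
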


In our model, to deal with $\lambda V$, for any vector $\mbf{b} = (b_1, \dotsc, b_N) \in \bbC^N$ with $\norm \mbf{b} \norm = O(1)$, we define an auxiliary quantity $\G_{ij} = G_{ij} b_j$, for each $i, j \in \llbracket  1, N \rrbracket$. The inequalities and bounds that follow from the Ward identity can be applied for $\G_{ij}$, combined with the Cauchy--Schwarz inequality, since we have assumed $\lVert \mbf{b} \rVert= O(1)$. Refer to \cite[Section 3.1]{HKR} for details and proof, which also outlines the treatment of Ward identity to establish the weak local law. Once the weak local law and eigenvector delocalization is proved, the estimate can be improved further, promoting $G_{ii}$ into its average. Refer to Lemma \ref{lem:wardd} for details.

\subsection{Outline of proof} \label{outline}
Our proof of the local laws consists of (1) a \emph{stochastic step} establishing a self-consistent equation of $G$ with a random error term, and (2) a \emph{deterministic step} analyzing the stability of the self-consistent equation \cite{HKR}. The first step depends on the model, where the strength of the self-consistent equation determines the strength of the local law. In this section, we outline the proof of each step. 

The cumulant expansion is the main ingredient of the first step. We refer to \cite{HKR} for the proof of the following lemma.
\begin{lemma}[Cumulant expansion, generalized Stein's Lemma] 
	\label{lem:genStein}
	Fix $\ell \in \mathbb{N}$, and let $F \in C^{\ell +1} (\mathbb{R}; \mathbb{C}^{+})$. Let $X$ be a centered random variable with finite moments to order $\ell +2$. Then, 
	\begin{align}
		\E [ X F(X)] = \sum_{r=1}^{\ell} \frac{\kappa^{(r+1)}(X)}{r!} \E \left[ F^{(r)} (X) \right] + \E \left[ \caR_{\ell} (X F(X)) \right], \label{eq:genStein}
	\end{align}
	where $\E$ denotes the expectation with respect to $X$, $\kappa^{(r+1)} (X)$ denotes the $(r+1)$-st cumulant of $X$ and $F^{(r)}$ denotes the $r$-th derivative of the function $F$. The error term $\caR_{\ell}(XF(X))$ in \eqref{eq:genStein}, depending on $F$ and $X$, satisfies
	\begin{align}\label{eq:Remainder}
		\left| \E \left[ \caR_{\ell} (X F(X)) \right] \right| \leq& O(1) \cdot \left(\E \sup_{|x| \leq |X|} \left| F^{(\ell +1)} (x) \right|^2 \cdot \E \left|X^{2\ell +4} \mathbbm{1} (|X|>t ) \right| \right)^{\frac{1}{2}} \\&+ O(1)\cdot \E \left[|X|^{\ell +2} \right] \cdot \sup_{ |x| \leq t }\left|F^{(\ell+1)} (x) \right|,
	\end{align}
	where $t \geq 0$ is an arbitrary fixed cutoff.
\end{lemma}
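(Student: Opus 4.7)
The plan is to reduce the identity to a purely algebraic statement about moments and cumulants, and then extend to general $F \in C^{\ell+1}$ by a Taylor expansion with remainder. The formula is exact for polynomials of degree $\leq \ell+1$, so the only real work is in bounding the remainder produced by the Taylor truncation, split according to the two regimes $|X| \leq t$ and $|X| > t$.

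For the first step, I would Taylor-expand $F$ about the origin to order $\ell$, and simultaneously expand each $F^{(r)}$, $1 \leq r \leq \ell$, to order $\ell - r$:
$$F(x) = \sum_{r=0}^{\ell} \frac{F^{(r)}(0)}{r!}\, x^{r} + \rho_\ell(x), \qquad F^{(r)}(x) = \sum_{j=0}^{\ell-r} \frac{F^{(r+j)}(0)}{j!}\, x^{j} + \rho_{\ell-r}^{(r)}(x),$$
where $\rho_k(x) = \frac{1}{k!}\int_0^x (x-s)^k F^{(k+1)}(s)\, ds$. Substituting into \eqref{eq:genStein} and collecting the coefficient of each $F^{(k)}(0)$, the polynomial parts agree if and only if
$$m_{k+1} = \sum_{r=1}^{k} \binom{k}{r}\, \kappa^{(r+1)}(X)\, m_{k-r} \qquad (1 \leq k \leq \ell),$$
where $m_j \deq \E X^j$. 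This is the standard moment--cumulant recursion for centered $X$ (obtained by differentiating $\phi_X(t) = \exp\!\bigl(\sum_{r\geq 1} \kappa^{(r)}(X)(\ii t)^r/r!\bigr)$ in $t$ and matching Taylor coefficients, with $\kappa^{(1)}(X) = 0$ eliminating the $r=0$ term). This disposes of the polynomial identity.

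After this cancellation, $\caR_\ell(XF(X))$ reduces to $X \rho_\ell(X)$ plus a linear combination of the lower-order remainders $\rho^{(r)}_{\ell-r}(X)$ weighted by $\kappa^{(r+1)}(X)/r!$. The Taylor-remainder bound $|\rho_k(x)| \leq \frac{|x|^{k+1}}{(k+1)!}\sup_{|y|\leq |x|}|F^{(k+1)}(y)|$ shows that the leading term is $|X\rho_\ell(X)| \leq \frac{1}{(\ell+1)!}|X|^{\ell+2}\sup_{|y|\leq |X|}|F^{(\ell+1)}(y)|$, while the remaining pieces are of strictly lower degree in $|X|$ and, after using Lyapunov's inequality $|\kappa^{(r+1)}(X)| \leq C(\E|X|^{\ell+2})^{(r+1)/(\ell+2)}$ together with H\"older, are absorbed into a constant multiple of the same quantity. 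To reach the stated form, I would split $\E = \E\mathbbm{1}_{|X|\leq t} + \E\mathbbm{1}_{|X|>t}$: on $\{|X|\leq t\}$ we replace $\sup_{|y|\leq|X|}$ by $\sup_{|y|\leq t}$, yielding the second term in \eqref{eq:Remainder}; on $\{|X|>t\}$ we apply Cauchy--Schwarz,
$$\E\!\left[|X|^{\ell+2}\sup_{|y|\leq |X|}|F^{(\ell+1)}(y)|\,\mathbbm{1}_{|X|>t}\right] \leq \bigl(\E|X|^{2\ell+4}\mathbbm{1}_{|X|>t}\bigr)^{1/2}\bigl(\E\sup_{|y|\leq |X|}|F^{(\ell+1)}(y)|^2\bigr)^{1/2},$$
which is precisely the first term in \eqref{eq:Remainder}.

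The main obstacle is organizational rather than analytical: one must verify carefully that the cross terms produced by expanding both $F(x)$ and every $F^{(r)}(x)$ collapse exactly to the moment--cumulant recursion after rearrangement, so that the leftover is a \emph{single} Taylor tail controlled by $F^{(\ell+1)}$ rather than a cascade of higher-derivative contributions. Once this bookkeeping is settled and the Lyapunov/H\"older absorption of the lower-order remainders is verified, the rest of the argument is Taylor's theorem combined with Cauchy--Schwarz.
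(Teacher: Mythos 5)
The paper itself gives no proof of this lemma (it cites \cite{HKR}), but your Taylor-expansion strategy is the natural one and, I believe, essentially the route taken there. The algebraic bookkeeping is correct: expanding $F$ and each $F^{(r)}$ about the origin, the double sum over $r\geq 1$, $j\geq 0$, $r+j\leq\ell$ collapses via $k=r+j$ exactly to $\sum_{k=1}^{\ell}\frac{F^{(k)}(0)}{k!}\sum_{r=1}^k\binom{k}{r}\kappa^{(r+1)}m_{k-r}=\sum_{k=1}^\ell\frac{F^{(k)}(0)}{k!}m_{k+1}$, the stray $F(0)m_1$ vanishes by centering, and the remainder reduces to $\E[X\rho_\ell(X)]-\sum_{r=1}^\ell\frac{\kappa^{(r+1)}}{r!}\E[\rho^{(r)}_{\ell-r}(X)]$. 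One small slip: the expansion is remainder-free only for polynomials of degree $\leq\ell$; for $F(x)=x^{\ell+1}$ the remainder equals $\kappa^{(\ell+2)}(X)$, which is of course still covered by the bound.

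The genuine gap is the claim that the lower-order tails are ``absorbed into a constant multiple of the same quantity'' using Lyapunov plus H\"older alone — this is precisely the step you downplay as ``organizational.'' Writing $h(X)\deq\sup_{|y|\leq|X|}|F^{(\ell+1)}(y)|$, you need $|\kappa^{(r+1)}|\,\E\!\left[|X|^{\ell-r+1}h(X)\right]\leq C\,\E\!\left[|X|^{\ell+2}h(X)\right]$. Lyapunov gives $|\kappa^{(r+1)}|\leq C(\E|X|^{\ell+2})^{(r+1)/(\ell+2)}$, and H\"older gives $\E[|X|^{\ell-r+1}h(X)]\leq\E[|X|^{\ell+2}h(X)]^{(\ell-r+1)/(\ell+2)}\,\E[h(X)]^{(r+1)/(\ell+2)}$; multiplying, you are still left to verify $\E[|X|^{\ell+2}]\,\E[h(X)]\leq\E[|X|^{\ell+2}h(X)]$, which is the Chebyshev correlation inequality for two nondecreasing functions of $|X|$ — true, but a separate ingredient that must be invoked. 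Alternatively you can dispense with it by splitting $\mathbbm{1}_{|X|\leq t}+\mathbbm{1}_{|X|>t}$ \emph{first}, term by term: on $\{|X|\leq t\}$ the sup becomes deterministic and Lyapunov alone closes the moment count $|\kappa^{(r+1)}|\E|X|^{\ell-r+1}\leq C\,\E|X|^{\ell+2}$; on $\{|X|>t\}$, after Cauchy--Schwarz, one needs $|\kappa^{(r+1)}|\big(\E[|X|^{2(\ell-r+1)}\mathbbm{1}_{|X|>t}]\big)^{1/2}\leq C\big(\E[|X|^{2\ell+4}\mathbbm{1}_{|X|>t}]\big)^{1/2}$, which follows from the elementary estimate $(\E|X|^{\ell+2})^2\,\P(|X|>t)\leq 4\,\E[|X|^{2\ell+4}\mathbbm{1}_{|X|>t}]$ (split $\E|X|^{\ell+2}$ over the two regimes and use $t^{2\ell+4}\P(|X|>t)\leq\E[|X|^{2\ell+4}\mathbbm{1}_{|X|>t}]$ together with Cauchy--Schwarz on the tail part). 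Either route works, but neither is a consequence of H\"older alone, and the proof is incomplete without one of them.
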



We now give a heuristic explanation for why our refinement naturally takes the form given in Proposition \ref{prop:refv}. In the spirit of the cumulant expansion method, the deterministic approximation of the resolvent is expected to satisfy the system of equations
\begin{align}\label{eq:ref0}
	\frac{1}{\wt{M}_i (z) } \stackrel{?}{=} \lambda V_i - z - \pair{\wt{M}(z)} - \frac{s}{q^2} \pair{\wt{M}(z) \odot \wt{M} (z)} {\wt{M}}_i (z), \quad i \in \llbracket 1, N \rrbracket.
\end{align}
It turns out that we cannot replace $\pair{\wt{M} \odot \wt{M}} $ by $\pair{\wt{M}}^2$, due to the anisotropy of $G_{ii}$ and $\wt{M}_i$. The system is significantly complicated, and it poses several technical challenges to demonstrate that the solution to the given system maps the upper half-plane to itself. Since the refinement is aimed at the spectral edges, we expect that $\pair{\wt{M} \odot \wt{M}}$ is close to $1$; refer to \cite{LS}. By replacing $\pair{\wt{M} \odot \wt{M}}$ by $1$ and some minor adjustments, we retrieve the system \eqref{eq:wtdef}.

To establish a self-consistent equation of $G$ with a suitable error term, we must construct a function on $G$ on which we will perform the cumulant expansion. We define auxiliary $N\times N$ (random) matrices $P=(P_{ij})$ and $\mathsf{P} = (\mathsf{P}_{ij})$ through
\begin{align}
	P_{ij} \equiv P_{ij}(z; G) \deq& (WG)_{ij} + \pair{G} G_{ij} = \delta_{ij} + z G_{ij} + \pair{G}G_{ij} - \lambda V_i G_{ij}\\ 
	\mathsf{P}_{ij} \equiv \mathsf{P}_{ij}(z, G) \deq& (WG)_{ij}+ \pair{G} G_{ij} + \frac{s}{q^2}\pair{G \odot G} G_{ii}G_{ij} \\
	=&\delta_{ij} + z G_{ij} + \pair{G} G_{ij} - \lambda V_i G_{ij} + \frac{s}{q^2}\pair{G \odot G} G_{ii}G_{ij} ,
\end{align}
for $i, j \in \llbracket 1, N \rrbracket$. To further cope with the anisotropy, we shall consider $P_{ij}$ and $\sfP_{ij}$ after multiplying them with an arbitrary deterministic diagonal matrix $B=(B_{ij})$, that satisfies $\norm{B}\norm =O(1)$. We further define $\caP (B, G) \deq \pair{B\mathsf{P}}$. The entries of $B$ will later be replaced by $(\lambda V - z - \pair{G})^{-1}$ or its variants, where they are of order $O(1)$ with high probability. This method was first introduced in \cite{HKR}. See also Lemma \ref{lem:Lem4.6} for details.

Our first recursive estimate concerns estimates on $ \max_{ij} |B_i P_{ij}|$, required to establish the weak local law. We defer the details to Section~\ref{sec:weak} as they follow standard arguments. Building on the weak local law, we obtain self-consistent estimates for the entrywise quantities $ B_i \mathsf{P}_{ij} $ and their average $ \caP(B, G) $, which lead to the optimal entrywise and refined averaged local laws. These are summarized in the following propositions.

\begin{proposition} \label{prop:weakrme} Recall the assumption on $H=W+\lambda V$ given in Theorem \ref{thm:refine}. Fix appropriate $\tau$. Assume that $\pair{G} - \pair{\wt{\MM}} = O_{\prec} (\vt)$ on $\caD_{\tau}$, for some deterministic $\vt \in [N^{-1}, N^{\tau/10}]$. Then, on the event $\Xi$, we have  
	\begin{align}
		\max_{ij} |B_i P_{ij} | \prec \frac{1}{q} + \sqrt{\frac{\Im \pair{{\wt{\MM} }}+\vt  }{N \eta }} ,
	\end{align}uniformly on $z = E+\ii \eta \in \caD_{\tau}$, for any arbitrary deterministic diagonal matrix $B$ with $\norm B \norm = O(1)$.\end{proposition}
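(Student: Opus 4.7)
The plan is to establish a high-moment bound on $B_i P_{ij}$ for each fixed $(i,j)$ via the cumulant expansion (Lemma~\ref{lem:genStein}) applied to each $W_{ik}$, then promote it by Markov's inequality and a union bound, in the spirit of \cite{HKR,LS18}. Fix indices $i,j$, a large even integer $2D$, and set $X \deq B_i P_{ij}$. Using $P_{ij} = (WG)_{ij} + \pair{G}G_{ij}$, I would begin from
\begin{align*}
\E|X|^{2D} = B_i\sum_{k\ne i}\E\bigl[W_{ik}G_{kj}\cdot X^{D-1}\overline{X}^D\bigr] + B_i\pair{G}\,\E\bigl[G_{ij}\cdot X^{D-1}\overline{X}^D\bigr],
\end{align*}
and expand the first term via Lemma~\ref{lem:genStein} to a truncation order $\ell$ large enough that the remainder is negligible thanks to \eqref{eq:Remainder} and the moment bounds on $W_{ik}$. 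The second-cumulant ($r=1$) contribution uses $\partial_{W_{ik}}G_{kj} = -(G_{ki}G_{jk}+G_{kk}G_{ij})$; summed against $\kappa^{(2)}(W_{ik})=N^{-1}$, the piece $G_{kk}G_{ij}$ produces exactly $-B_i\pair{G}G_{ij}\,\E[X^{D-1}\overline{X}^D]$ (modulo an $N^{-1}$ diagonal correction), cancelling the second term above. What survives is $-B_iN^{-1}(G^2)_{ij}\,\E[X^{D-1}\overline{X}^D]$, and by Ward's identity $|(G^2)_{ij}|\le \eta^{-1}\sqrt{\Im G_{ii}\,\Im G_{jj}}$ together with the a priori hypothesis on $\pair{G}$ and the bound $|G_{ii}|=O(1)$ on $\Xi$ (established via the Schur complement and Assumption~\ref{assm:stableint}, which keep $\lambda V_i-z-\pair{G}$ bounded away from zero), this Ward residual is controlled by $\sqrt{(\Im\pair{\wt\MM}+\vt)/(N\eta)}$, matching the second summand of the target bound.

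Two further families of terms remain. First, the $r=1$ derivatives falling on $X^{D-1}\overline{X}^D$ produce schematic expressions of the form $\E[\mathcal{W}\cdot X^{2D-1}]$ with $\mathcal{W}$ a Ward-type factor; Young's inequality splits these into an arbitrarily small multiple of $\E|X|^{2D}$ (absorbed into the left-hand side) and a remainder whose contribution to $\E|X|^{2D}$ is again of order $\bigl(\sqrt{(\Im\pair{\wt\MM}+\vt)/(N\eta)}\bigr)^{2D}$. Second, the higher cumulants $r\ge 2$ appear with weights $|\kappa^{(r+1)}(W_{ik})|\lesssim(Nq^{r-1})^{-1}$; the $r$-th derivative of $G_{kj}$ is a sum of $(r+1)$-fold products of resolvent entries, and after summing over $k$ through Ward one gains at most a factor $N$, producing contributions of size $q^{-(r-1)}$ per $X^{D-1}\overline{X}^D$ factor. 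Applying Young again, the dominant new contribution is $r=2$, giving $(1/q)^{2D}$ in $\E|X|^{2D}$; all $r\ge 3$ are strictly smaller and subsumed. Combining all three pieces yields
\begin{align*}
\E\bigl|B_iP_{ij}\bigr|^{2D} \le \biggl(C_D\biggl(\tfrac{1}{q} + \sqrt{\tfrac{\Im\pair{\wt\MM}+\vt}{N\eta}}\biggr)\biggr)^{2D},
\end{align*}
uniformly in $(i,j)$ and in $z\in\caD_\tau$, from which Markov's inequality and a union bound over $(i,j)\in\llbracket 1,N\rrbracket^2$ deliver the asserted stochastic domination. Note that the $1/(N\eta)$ term one would naively include is absorbed into the Ward summand since $\vt\ge N^{-1}$ implies $\sqrt{\vt/(N\eta)}\ge 1/(N\eta)$ on $\caD_\tau$.

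The main obstacle, and the reason for carrying the weighted quantity $B_iP_{ij}$ rather than $P_{ij}$, is the anisotropy introduced by $\lambda V$: diagonal sums such as $\sum_k G_{kk}^2$ do not collapse to $\pair{G}^2$ (as they would in the non-deformed sparse case studied in \cite{LS18}), and the natural test functions needed to close the entrywise self-consistent equation involve $V_i$-dependent weights of the form $(\lambda V_i-z-\pair{G})^{-1}$. Carrying an arbitrary bounded deterministic diagonal $B$ throughout the cumulant expansion costs essentially nothing, since $B$ is deterministic and so behaves as a spectator under $\partial_{W_{ik}}$, while it provides the flexibility to specialize $B_i$ to the required $V$-dependent test function when feeding this estimate into the entrywise law in Section~\ref{sec:locallaw}. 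The subtler bookkeeping then concerns the $r\ge 2$ cumulant terms, where multiple resolvent entries appearing in higher derivatives must each be controlled by Ward and by the entrywise a priori $|G_{ii}|=O(1)$, without inadvertently producing factors of $\pair{G\odot G}$ that could disturb the $\pair{G}$-cancellation identified in the first paragraph.
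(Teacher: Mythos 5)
Your cumulant-expansion skeleton (the cancellation of $\pair{G}G_{ij}$ against the second-cumulant term $-\tfrac1N\sum_k G_{kk}G_{ij}$, the Ward residual $\tfrac1N(G^2)_{ij}$, and the $r\ge 2$ cumulants contributing $q^{-(r-1)}$) is exactly the mechanism behind Lemma~\ref{lem:cumulantweak}, so the route is the right one in spirit. However, the paper does not re-run this expansion under the proposition's hypothesis: it deduces the proposition from Lemma~\ref{lem:cumulantweak} \emph{plus} the weak local law (Lemma~\ref{thm:weak}) \emph{plus} Lemma~\ref{lem:wardd}, and your proposal is missing precisely those two inputs. First, your expansion needs an entrywise a priori bound $\max_{ij}|G_{ij}|\prec 1$ (to control the resolvent entries in the $r\ge 2$ terms, the derivatives hitting $X^{D-1}\overline{X}^D$ via the a priori estimate \eqref{eq:theapriori}, and $\Im G_{ii}$), but the stated hypothesis only controls $\pair{G}-\pair{\wt{\MM}}$. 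Your claim that $|G_{ii}|=O(1)$ follows ``via the Schur complement and Assumption~\ref{assm:stableint}'' is circular: bounding the quadratic form $\sum_{k,l}W_{ik}G^{(i)}_{kl}W_{li}$ requires large-deviation/concentration estimates that themselves presuppose entrywise a priori bounds on $G^{(i)}$, i.e.\ the weak local law, which in the paper is established separately by a self-consistent estimate combined with a continuity bootstrap in $\eta$ (Lemma~\ref{thm:weak} via Lemma~\ref{lem:cumulantweak}). Either cite that lemma or incorporate the bootstrap; as written the step fails.

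Second, your removal of the $1/(N\eta)$ term is incorrect. The inequality $\sqrt{\vt/(N\eta)}\ge 1/(N\eta)$ is equivalent to $\vt\ge 1/(N\eta)$, which fails on most of $\caD_\tau$ (e.g.\ $\vt=N^{-1}$, $\eta=N^{-1+\tau}$ gives $1/(N\eta)=N^{-\tau}\gg N^{-1/2}\gtrsim\sqrt{\vt/(N\eta)}$). With only $\Im G_{ii}=O(1)$, the Ward estimates produce a genuine $1/(N\eta)$ contribution that is not dominated by $q^{-1}+\sqrt{(\Im\pair{\wt{\MM}}+\vt)/(N\eta)}$ near the lower edge of the $\eta$-range, particularly outside the spectrum where $\Im\pair{\wt{\MM}}\sim\eta/\sqrt{\vk_E+\eta}$ is tiny. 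The paper avoids this by upgrading $\Im G_{ii}\prec\Im\pair{G}\prec\Im\pair{\wt{\MM}}+\vt$ through Lemma~\ref{lem:wardd}, which rests on complete delocalization (Corollary~\ref{cor:deloc}) and hence again on the weak local law. So the two places where you try to shortcut are exactly the two ingredients the paper's (short) proof invokes; with them restored, the rest of your argument is a legitimate, if redundant, re-derivation of Lemma~\ref{lem:cumulantweak}.
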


\begin{proposition}\label{prop:m1} Recall the assumption on $H=W+\lambda V$ given in Theorem \ref{thm:refine}. Fix appropriate $\tau$.
	Assume that $\pair{G} - \pair{\wt{\MM}} = O_{\prec} (\vt)$ on $\caD_{\tau}$, for some deterministic $\vt \in [N^{-1}, N^{\tau/10}]$. Then, on the event $\Xi$, we have 
	\begin{align}
		|\pair{B\mathsf{P}}| \prec \left(\frac{1}{q} + \sqrt{\frac{\Im \pair{{\wt{\MM} }}+\vt  }{N \eta }} \right)^2,
	\end{align}
	uniformly on $z = E+\ii \eta \in \caD_{\tau}$, for an arbitrary deterministic diagonal matrix $B$ with $\norm B \norm = O(1)$.
\end{proposition}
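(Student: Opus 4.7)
The plan is to establish the bound by a recursive moment estimate: I would compute $\E|\caP(B,G)|^{2D}$ for arbitrary fixed $D\in\N$ through the cumulant expansion of Lemma~\ref{lem:genStein} applied to one factor of $\caP$, and then conclude by Markov's inequality in the usual way. Using the resolvent identity $(W+\lambda V-z)G=I$, one writes
\begin{equation*}
\sfP_{ii}=(WG)_{ii}+\pair{G}G_{ii}+\frac{s}{q^2}\pair{G\odot G}G_{ii}^{2},
\end{equation*}
so that $\caP=\pair{B\sfP}=N^{-1}\sum_i B_i\sfP_{ii}$. Splitting $\E|\caP|^{2D}=\E[\caP\cdot\caP^{D-1}\bar\caP^{D}]$, the piece $N^{-1}\sum_{ij}B_i\E[W_{ij}G_{ji}\,\caP^{D-1}\bar\caP^{D}]$ coming from $(WG)_{ii}$ is expanded in $W_{ij}$.

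The decisive structural input is a pair of cancellations. The term $r=1$ (weight $\mc{C}_2=N^{-1}$) produces $-G_{ii}G_{jj}-G_{ij}^{2}$ from $\partial_{W_{ij}}G_{ji}$; summing over $j$ reconstructs $-\pair{G}G_{ii}$ plus a Ward-controlled residual, and the leading piece \emph{exactly cancels the $\pair{G}G_{ii}$ term of $\sfP_{ii}$}. Similarly, the term $r=3$ (weight $\mc{C}_4=s/(Nq^{2})$) produces at leading order $-\frac{s}{q^{2}}\pair{G\odot G}G_{ii}^{2}$, cancelling the third term of $\sfP_{ii}$. This is precisely why the refinement $\pair{G\odot G}$, rather than $\pair{G}^{2}$, had to appear in $\sfP$: the anisotropy of $G$ relative to $\wt{\MM}$ obstructs the naive simplification. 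What remains falls into three classes: (i) Ward-type off-diagonal contributions from $r=1,3$; (ii) contributions from $r=2$ and $r\geq 4$ weighted by $\mc{C}_{r+1}\sim(Nq^{r-1})^{-1}$, together with the tail remainder of Lemma~\ref{lem:genStein} controlled by crude high-moment bounds on $W_{ij}$; and (iii) terms in which some derivative falls on the factor $\caP^{D-1}\bar\caP^{D}$, producing new factors of the form $B_a\sfP_{ab}$ (with a different diagonal weight $B$) multiplied by derivatives of resolvent entries.

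With the a priori input $\pair{G}-\pair{\wt\MM}=O_{\prec}(\vt)$, the entrywise bound $|B_iP_{ij}|\prec\Psi\deq q^{-1}+\sqrt{(\Im\pair{\wt\MM}+\vt)/(N\eta)}$ from Proposition~\ref{prop:weakrme}, the Ward identity applied to $\G_{ij}=G_{ij}b_j$ as in \cite[Section 3.1]{HKR}, and the stability bounds of Lemma~\ref{lem:stabbound}, one estimates each residual either directly by $\Psi^{4D}$ (up to $N^{\varepsilon}$) or by an expression of the form $\Psi^{2k}\,\E|\caP|^{2D-k}$ for some $k\geq 1$. Young's inequality absorbs the latter into the left-hand side, giving $\E|\caP|^{2D}\leq C_D N^{\varepsilon}\Psi^{4D}$ for any $\varepsilon>0$; Markov's inequality and the arbitrariness of $D$ and $\varepsilon$ then yield $|\caP|\prec\Psi^{2}$. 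The main obstacle, as foreshadowed in Section~\ref{outline} and Remark~\ref{rem:import}, lies in class (iii): a $W_{ij}$-derivative striking $\caP^{D-1}\bar\caP^{D}$ generates a new $\sfP$-type factor with a \emph{different} anisotropic weight tied to the derivative of $G_{ab}$, and tracking the ensuing chain of such factors through several cumulant orders without losing the square of $\Psi$ is the delicate bookkeeping that both blocks a direct transplant of the sparse-matrix argument of \cite{LS18} and is responsible for our weaker estimate in the vicinity of $\lambda\lesssim N^{-1/2}$.
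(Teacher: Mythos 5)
Your overall architecture — cumulant expansion of the $(WG)_{ii}$ term in $\E|\caP|^{2D}$, identification of the exact cancellations at orders $r=1$ and $r=3$ against $\pair{G}G_{ii}$ and $\frac{s}{q^2}\pair{G\odot G}G_{ii}^2$ respectively, followed by Young's inequality and Markov — is the same as the paper's, and the cancellation claims match Lemma~\ref{lem:cancel}. However, there is a genuine gap in the way you propose to handle what you call class~(iii): you assert that terms where a derivative strikes $\caP^{D-1}\bar\caP^D$ are "bounded by $\Psi^{2k}\E|\caP|^{2D-k}$" without producing any mechanism for this, and in fact this is where the whole technical weight of the argument lies. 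The paper's mechanism is the auxiliary quantity $Q_{ik}$ from Section~\ref{sec:Q}, which enters precisely through the identity $(1+\delta_{ik})\pd_{ik}\caP = N^{-1}\G_{ki}+N^{-1}\G_{ik}-Q_{ki}-Q_{ik}$. The bound $Q=O_\prec(\Psi_1^3)$ of Lemma~\ref{lem:QHKR} (itself proved by a separate cumulant-expansion loop) is what converts each stray derivative of $\caP$ into the factor $\Psi^{2}$ that keeps the recursion gaining a full square per unit drop in the power of $\caP$. You never introduce $Q$ or anything playing its role, so the recursive step you describe does not close as written.

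A second, more localized inaccuracy: you lump $r=2$ together with $r\geq 4$ as terms handled "by crude high-moment bounds." In the paper $r\geq 4$ (and a few other index pairs) are indeed handled in one stroke (Lemma~\ref{lem:high}), but $I_{2,0}$, $I_{2,1}$, and $I_{3,2}$ each need a \emph{further} cumulant expansion to extract an extra factor of $q^{-1}$ (Lemmas~\ref{lem:I20}--\ref{lem:I32}); a single Ward-type estimate gives only $q^{-1}\Psi_2^2$ there, a full factor $q$ worse than needed. You do flag that the bookkeeping is "delicate," but that is an acknowledgment of the gap rather than a substitute for filling it: without $Q$ and the second round of expansions for $r=2$, the estimate $\E|\caP|^{2D}\prec\sum_a\Psi_1^{2a}\E|\caP|^{2D-a}+q^{-8D}$ that drives the conclusion is not established.
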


\begin{proposition}\label{prop:main}
	Recall the assumption on $H=W+\lambda V$ given in Theorem \ref{thm:refine}. Fix appropriate $\tau$.
	Assume that $\pair{G} - \pair{\wt{\MM}} = O_{\prec} (\vt)$ on $\caD_{\tau}$, for some deterministic $\vt \in [N^{-1}, N^{\tau/10}]$. Then, on the event $\Xi$, we have 
	\begin{align}
		|\pair{B\mathsf{P}}| \prec \left(\frac{1}{q^{3/2}} + \frac{1}{N^{1/4} q^{1/2}}+ \sqrt{\frac{\Im \pair{{\wt{\MM} }}+\vt  }{N \eta }} \right)^2 + \frac{1}{q^2}|1- \pair{\wt{\MM} \odot {\wt{\MM}}}|,
	\end{align}
	uniformly on $z = E+\ii \eta \in \caD_{\tau}$, for an arbitrary deterministic diagonal matrix $B$ with $\norm B \norm = O(1)$.
\end{proposition}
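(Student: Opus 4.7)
The plan is a recursive high-moment estimate: we control $\E|\caP(B,G)|^{2D}$ for arbitrary large fixed $D$, convert the bound to a stochastic-domination statement via Markov, and use as a priori input Proposition \ref{prop:m1}, the entrywise Theorem \ref{thm:strong}, and the hypothesis $\pair{G}-\pair{\wt{\MM}}=O_\prec(\vt)$. The starting point is to rewrite $N\caP=\sum_i B_i\sfP_{ii}$ by using $\delta_{ij}+zG_{ij}=(WG)_{ij}+\lambda V_iG_{ij}$, which yields
\begin{align*}
N\caP=\sum_{ij}B_iW_{ij}G_{ji}+\pair{G}\sum_i B_iG_{ii}+\frac{s}{q^2}\pair{G\odot G}\sum_i B_iG_{ii}^2.
\end{align*}
Writing $\E|\caP|^{2D}=\E[\caP\cdot|\caP|^{2D-2}\bar{\caP}]$, I would expand the first factor as above and apply Lemma \ref{lem:genStein} to each $W_{ij}$ in the first sum, truncated at some large fixed order $\ell$; the truncation remainder is negligible using the moment bounds in Definition \ref{def:sparse} with cutoff $t\sim N^{-1/2+\epsilon}$.

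The definition of $\sfP$ is engineered so that the principal terms of the expansion cancel the two correction terms above. The second cumulant $\caC_2=N^{-1}$, with its single derivative landing on $G_{ji}$ via $\pd_{W_{ij}}G_{ji}=-G_{ii}G_{jj}-G_{ij}^2$, reproduces $-\pair{G}\sum_i B_iG_{ii}$ up to a Ward-controllable $G_{ij}^2$ residual. The fourth cumulant $\caC_4=s/(Nq^2)$, with three derivatives on $G_{ji}$, extracts from the leading $-6G_{ii}^2G_{jj}^2$ piece of $\pd^3_{W_{ij}}G_{ji}$ precisely $-\tfrac{s}{q^2}\pair{G\odot G}\sum_i B_iG_{ii}^2$. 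What survives is (i) the subleading $G_{ij}^2$, $G_{ij}^4$ pieces of $\pd^3 G_{ji}$, (ii) the discrepancy between $\pair{G\odot G}$ (appearing in $\sfP$) and the constant implicit in the defining equation \eqref{eq:wtdef} for $\wt{\MM}$, and (iii) the remaining cumulants and cross-derivative terms. Item (ii), after Theorem \ref{thm:strong} allows $\pair{G\odot G}$ to be replaced by $\pair{\wt{\MM}\odot\wt{\MM}}$, is precisely what produces the explicit $\tfrac{1}{q^2}|1-\pair{\wt{\MM}\odot\wt{\MM}}|$ term in the claim.

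The remaining contributions split into three classes: (a) the odd $\caC_3\sim(Nq)^{-1}$ and the higher cumulants $\caC_{r+1}$ with $r\geq 4$, when all derivatives fall on $G_{ji}$; (b) the ``crossed'' terms where some derivatives fall on the surviving factor $|\caP|^{2D-2}\bar{\caP}$; and (c) the Stein cutoff remainder (already handled). For (a) I would use the entrywise local law to reduce $G_{ii}$ to $\wt{\MM}_i$ and invoke the Ward identity $\sum_j|G_{ij}|^2=\Im G_{ii}/\eta$ to average the off-diagonals; pairing $\caC_3$ with Ward produces a factor of order $N^{-1/2}q^{-1}((\Im\pair{\wt{\MM}}+\vt)/\eta)^{1/2}$, which yields the $N^{-1/4}q^{-1/2}$ contribution after squaring and Young's inequality, while the higher cumulants generate the $q^{-3/2}$ contribution. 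For (b), differentiating $|\caP|^{2D-2}\bar{\caP}$ with respect to $W_{ij}$ generates the auxiliary object $Q$ analyzed in Section \ref{sec:Q}; inserting the bounds proved there closes those contributions at the same leading order. Assembling everything by Young's inequality yields a recursive inequality that iterates to $\E|\caP|^{2D}\leq(C\Phi)^{2D}$, where $\Phi$ is the right-hand side of the claim; Markov and the arbitrariness of $D$ then give the stated stochastic domination.

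The hard part will be class (b) together with the $r=2$ ``$I_{21}$'' terms of class (a) flagged in Section \ref{sec:I21}: a direct application of the Ward identity there produces only a $q^{-1}$ gain, short of the target $q^{-3/2}$. Recovering the missing $q^{-1/2}$ forces the second cumulant expansion on $Q$ in Section \ref{sec:Q} (itself a moment estimate relying on Ward averaging), and these are precisely the terms responsible for the bound being weaker than \eqref{eq:sparsels} of Proposition \ref{prop:LS} in the undeformed regime $\lambda\lesssim N^{-1/2}$.
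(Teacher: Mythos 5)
Your skeleton coincides with the paper's (high–moment cumulant expansion of $\caP$, exact cancellation of the $\pair{G}$ and $\frac{s}{q^2}\pair{G\odot G}$ pieces against the first and third derivative terms, Ward identity for off–diagonal sums, the auxiliary $Q$ for derivatives of $\caP$, Young plus Markov), but there is a genuine gap exactly where Proposition \ref{prop:main} improves on Proposition \ref{prop:m1}. Your claim that class (b) is closed by "inserting the bounds proved" in Section \ref{sec:Q} fails for the second–cumulant crossed term $I_{1,1}\approx\frac{1}{N^2}\sum_{ik}\G_{ki}\,Q_{ik}\,\caP^{2D-2}$: with the best available bound $Q=O_{\prec}(N^{-1}q^{-1}+q^{-1}\Psi_2^2+\Psi_2^3)$ from Lemma \ref{lem:optQ}, Ward averaging only gives $|I_{1,1}|\prec \Psi_2\,(N^{-1}q^{-1}+q^{-1}\Psi_2^2+\Psi_2^3)\,\E|\caP|^{2D-2}$, and the piece $q^{-1}\Psi_2^3$ is \emph{not} dominated by $\Psi_{3/2}^4+q^{-4}|1-\pair{\wt{\MM}\odot\wt{\MM}}|^2$ throughout $\caD_\tau$ (e.g.\ at $q=N^{1/3}$, in the regime where $\Psi_2\sim N^{-1/4}q^{-1/2}$, one has $q^{-1}\Psi_2^3\sim N^{-19/12}$ while $\Psi_{3/2}^4\sim N^{-5/3}$, and near the edge $|1-\pair{\wt{\MM}\odot\wt{\MM}}|\sim\sqrt{\vk_E+\eta}$ can be far smaller than $1$). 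The paper closes this term only by a \emph{further} cumulant expansion of $\sum_{ik}\G_{ki}Q_{ik}$, in which the factor $1+\pair{G\odot G}$ is inserted so that the $\pair{G\odot G}$–part of $\pd_{kj}Q_{ik}$ cancels, and is afterwards removed by splitting into $(1+\pair{G\odot G})$, $(1-\pair{\wt{\MM}\odot\wt{\MM}})$ and $(\pair{G\odot G}-\pair{\wt{\MM}\odot\wt{\MM}})$ pieces, the last controlled by Theorem \ref{thm:strong} (Lemma \ref{lem:I11}); the net gain is the extra small factor $\Psi_1+|1-\pair{\wt{\MM}\odot\wt{\MM}}|$ multiplying the dangerous term. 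This step is absent from your plan.

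Relatedly, your item (ii) misattributes the origin of the $\frac{1}{q^2}|1-\pair{\wt{\MM}\odot\wt{\MM}}|$ term: in bounding $\pair{B\sfP}$ the defining equation \eqref{eq:wtdef} never enters, because the third–derivative (fourth–cumulant) term of the expansion produces exactly $-\frac{s}{q^2}\pair{G\odot G}\pair{\G\odot G}$, which cancels the corresponding part of $\sfP$ identically up to Ward–controllable errors (Lemma \ref{lem:cancel}); the mismatch between $\pair{G\odot G}$ and the constant used in \eqref{eq:wtdef} only appears later, in Section \ref{sec:locallaw}, when $\sfP$ is converted into $\wt{\sfP}$. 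The extra term in the statement is precisely the price of the $I_{1,1}$ maneuver described above, not of any $\sfP$–versus–$\wt{\MM}$ discrepancy. (A minor further point: the Stein cutoff $t\sim N^{-1/2+\epsilon}$ is ill–suited to sparse entries, which are of typical size $q^{-1}\gg N^{-1/2}$; the paper takes $\frh=q^{-1}N^{\tau/5}$ and controls $\sup_{|x|\le |X|}|F^{(\ell+1)}|$ via resolvent expansions in the $(ik)$ entry, Appendix \ref{sec:remainder} — but this is cosmetic compared to the $I_{1,1}$ gap.)
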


Proposition \ref{prop:weakrme} will be proved in Section \ref{sec:weak}. Proposition \ref{prop:m1} and Proposition \ref{prop:main} will be proved along each other in Section \ref{sec:rme}. Notice that in the statement of Proposition \ref{prop:main}, we have assumed Theorem \ref{thm:strong}, which can be proved using Proposition \ref{prop:m1}. The assumption is used to bound $|\pair{G \odot G} - \pair{\wt{\MM}\odot \wt{\MM}}|$, which appears in the first order cumulant expansion; see Lemma \ref{lem:I11}. We believe that the assumption can be removed by suitably adapting the methodologies developed for local laws of deformed Wigner matrices \cite{CEHK}. 

Using these self-consistent equations, we establish the bound for $|\pair{G} - \pair{\wt{\MM}}|$ by analyzing the stability. First, by formalizing the heuristic outlined in \eqref{eq:ref0}, we find that $\wt{\MM}$ is a good approximation for $G$, solving $\mathsf{P}(z, G) \approx 0$. Then, we obtain the desired local laws by applying a standard continuity arguments established in \cite{EKYY1, JL}.

We also face a similar technical issue in this step, where an estimate of $|\pair{G \odot G} - \pair{\wt{\MM} \odot \wt{\MM}}|$ is required to obtain a refined fluctuation. We again bypass the issue by utilizing Theorem \ref{thm:strong}. However, this approach also contributes to a weaker refined local law, being controlled by a factor $q^{-3/2}$. We believe that this issue can also be reduced as mentioned previously. Yet, noticing that the same restriction is already given in the stochastic step, we do not attempt to eliminate this technical issue.



To compress the notations, we introduce the control parameters to be used throughout the paper. Assume that $\theta, \vartheta \in [N^{-1}, N^{\tau/10}]$ are parameters that also satisfy $\max_{ij} |G_{ij} - \delta_{ij}\wh{M}_i|\prec \theta$ and $|\pair{G} - \pair{\wt{\MM}}|\prec \vartheta$. Then, define $\psi(z), \Psi_{\frb}(z)$ through
\begin{align}
	\psi \equiv \psi(z) \deq \frac{1}{q} + \sqrt{\frac{\Im \norm \wh{M} \norm + \theta  }{N\eta}}, \quad \Psi_{\frb} \equiv \Psi_{\frb}(z) \deq \frac{1}{q^{\frb}} + \frac{1}{N^{1/4} q^{1/2}} + \sqrt{\frac{\Im \langle \wt{\MM} \rangle + \vartheta }{N \eta}},
\end{align}
where $\frb$ is a positive real number that we typically choose to be greater or equal to $1$. Note that $\Psi_{\frb_1} \prec \Psi_{\frb_2}$ for $\frb_1 \geq \frb_2$. Also, when $\frb = 2$, $q^{- \frb} \ll N^{-1/4} q^{-1/2}$ holds if and only if $q \gg N^{1/6} $, and when $\frb  = 3/2$, the previous relation holds if and only if $q \gg N^{1/4}$.

We remark that the assumption $N^{\tau} < N^{\frac{20\delta}{9}} \leq q^{20/9}$ ensures 
\begin{align}
	\psi \leq \check{\psi} \deq  (1+ \theta)^3  \bigO{\frac{1}{q} + \sqrt{\frac{1 + \theta}{N \eta}} } <  \bigO{N^{-3 \tau/20}},
\end{align}
for all $z \in \caD_{\tau}$. Hence, the control parameters satisfy $\check{\psi} \leq 1$ and $\psi \geq 1/q$ uniformly on the domain.

Finally, we recall a lemma that effectively improves the high probability bounds iteratively, also called ``self improving high probability bounds". For proof, we refer to \cite[Lemma 2.6]{HKR}.
\begin{lemma}\label{lem:self}
	Consider a constant $C>0$, $X \geq 0$ and $Y \in [N^{-C}, N^{C}]$. Suppose that there exists a constant $c\in [0, 1)$ such that for any $Z \in [Y, N^{C}]$, we have the implication
	\begin{align}
		X \prec Z \quad  \Longrightarrow \quad X \prec Z^{c} Y^{1-{c}},
	\end{align}
	Then we have $X \prec Y$ provided that $X \prec N^{C}$.
\end{lemma}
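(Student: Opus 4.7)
The plan is to iterate the given implication starting from the a priori bound $X \prec N^C$, producing a sequence of progressively sharper dominations $X \prec Z_k$ where $Z_k \to Y$ geometrically (in logarithm). The key observation is that the map $Z \mapsto Z^c Y^{1-c}$ is a contraction on the log-scale interval $[\log Y, \log N^C]$ with contraction factor $c < 1$, and its unique fixed point is $Y$ itself.

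More precisely, I would define $Z_0 \deq N^C$ and inductively set $Z_{k+1} \deq Z_k^c Y^{1-c}$. A direct induction gives the closed form
\begin{equation*}
Z_k = N^{C c^k} Y^{1 - c^k}.
\end{equation*}
First I would verify that $Z_k \in [Y, N^C]$ for every $k \geq 0$, so that the hypothesis of the implication is applicable at each step: the upper bound $Z_k \leq N^C$ follows from $Y \leq N^C$ and convexity (or by a direct computation using $Z_k = N^C (Y/N^C)^{1-c^k}$), and the lower bound $Z_k \geq Y$ follows from $N^C \geq Y$ and the same expression. Then, starting from $X \prec N^C = Z_0$, the hypothesis gives $X \prec Z_1$, and iterating gives $X \prec Z_k$ for every $k \in \mathbb{N}$.

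Next I would pass to the limit. Using $Y \geq N^{-C}$, one estimates
\begin{equation*}
\frac{Z_k}{Y} = \left( \frac{N^C}{Y} \right)^{c^k} \leq N^{2C c^k},
\end{equation*}
and since $c \in [0,1)$ this ratio tends to $1$. Fix arbitrary $\epsilon > 0$ and $D > 0$. Choose $k = k(\epsilon)$ large enough that $2C c^k < \epsilon/2$, so $Z_k \leq N^{\epsilon/2} Y$. Since $X \prec Z_k$, the definition of stochastic domination applied with parameters $\epsilon/2$ and $D$ yields
\begin{equation*}
\mathbb{P}\!\left( |X| > N^{\epsilon} Y \right) \leq \mathbb{P}\!\left( |X| > N^{\epsilon/2} Z_k \right) \leq N^{-D},
\end{equation*}
for all $N$ sufficiently large (depending on $\epsilon, D$, and $k(\epsilon)$, hence only on $\epsilon$ and $D$). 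As $\epsilon, D$ were arbitrary, this is precisely $X \prec Y$.

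There is no serious obstacle; the content of the lemma is bookkeeping rather than analysis. The only mild subtlety is verifying that $Z_k$ stays inside the admissible range $[Y, N^C]$ so that the hypothesis may be reapplied at every step, and ensuring that the choice of $k$ (and hence of $N_0$) depends only on $\epsilon, D$ and the fixed constants $C, c$, not on $N$. Both are immediate from the explicit formula for $Z_k$ and $c \in [0,1)$.
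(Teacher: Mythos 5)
Your proof is correct, and it follows essentially the same route as the argument the paper defers to (\cite[Lemma 2.6]{HKR}): iterate the implication a fixed, $N$-independent number of times starting from $X \prec N^{C}$, note that $Z_k = N^{Cc^k}Y^{1-c^k}$ stays in $[Y, N^{C}]$ and satisfies $Z_k \leq N^{\epsilon/2}Y$ once $2Cc^{k} < \epsilon/2$, and then conclude from the definition of stochastic domination. Nothing is missing; the bookkeeping on the admissible range and on the $(\epsilon, D)$-dependence of $N_0$ is handled as in the cited proof.
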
 

\section{Primary estimates} \label{sec:primary}

\subsection{Weak estimates}\label{sec:weak} In this section, we prove the weak local law, following a standard method. See also \cite{HKR,He1,He2} for the proofs to weak local law by obtaining the self-consistent equation through cumulant expansion or integration by parts formula.
\begin{lemma}[Weak local law] \label{thm:weak}
	Recall the assumption on $H= W +\lambda V$ in Theorem \ref{thm:refine}. Fixing appropriate $\tau$, on the event $\Xi$, we have
	\begin{align}
		\max_{ij} |G_{ij} (z) - \delta_{ij} \wh{M}_i(z)| \prec \frac{1}{q^{1/3}} + \frac{1}{(N\eta)^{1/6}}, 
	\end{align}
	uniformly for $z \in \mc{D}_{\tau}$.
\end{lemma}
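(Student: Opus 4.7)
The proof follows the standard strategy of deriving an approximate self-consistent equation for $G$ via cumulant expansion and then bootstrapping a pointwise estimate down in $\eta$.

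First I would secure an a priori bound $\max_{ij}|G_{ij}| = O_{\prec}(1)$ on $\caD_\tau$, starting from $\eta = \tau^{-1}$ (where it is trivial since $\|G\| \leq \eta^{-1}$) and propagating downward via Lipschitz continuity of $G$ in $\eta$ together with a net argument. The resolvent identity $(H-z)G = I$ rewrites as
\begin{equation*}
    G_{ij} - \delta_{ij}\wh{M}_i \;=\; -\wh{M}_i\, P_{ij} \;+\; \wh{M}_i\,(\pair{G} - \wh{m}_{\fc})\, G_{ij},
\end{equation*}
with $P_{ij} \deq (WG)_{ij} + \pair{G}\, G_{ij}$, so the core task is to control $P_{ij}$ entrywise. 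Let $\Lambda \deq \max_{ij}|G_{ij} - \delta_{ij}\wh{M}_i|$ denote the ``current'' control parameter. I would estimate high moments $\E[|B_i P_{ij}|^{2p}]$ with $B$ a deterministic diagonal matrix of unit norm by applying the cumulant expansion (Lemma \ref{lem:genStein}) to the $W_{ij}$ factor in $(WG)_{ij}$. The second-cumulant term cancels $\pair{G} G_{ij}$ up to a Ward-identity fluctuation of size $\sqrt{(\Im \wh{m}_{\fc} + \Lambda)/(N\eta)}$, while the higher cumulants, suppressed by $q^{-(k-2)}$, contribute a $q^{-1}$-type term; the remainder in \eqref{eq:Remainder} is handled with a cutoff $t \sim q^{-1}N^{\epsilon}$. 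After Markov's inequality this yields $\max_{ij}|B_i P_{ij}| \prec q^{-1} + \sqrt{(\Im \wh{m}_{\fc} + \Lambda)/(N\eta)}$.

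Then I would feed this back through the stability estimates of Lemma \ref{lem:stabbound} for $\wh{m}_{\fc}$. Taking $B_i = \wh{M}_i$ (admissible since $\wh{M}_i = O(1)$ uniformly on $\Xi$) and averaging over $i=j$, the lower bound \eqref{eq:stab3} on the linear response of the functional equation converts the bound on $\pair{BP}$ into a self-consistent inequality of the schematic form
\begin{equation*}
    \Lambda^2 \;\prec\; \Lambda\Bigl(\tfrac{1}{q} + \tfrac{1}{\sqrt{N\eta}}\Bigr) \;+\; \Bigl(\tfrac{1}{q^2} + \tfrac{1}{N\eta}\Bigr)
\end{equation*}
near the spectral edge (where the square-root stability is weakest). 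Combined with the entrywise identity above, this fixed-point relation self-improves from $\Lambda \prec 1$ to $\Lambda \prec Z^{a}(q^{-1/3}+(N\eta)^{-1/6})^{1-a}$ for some $a \in [0,1)$, and Lemma \ref{lem:self} collapses the bootstrap to the claimed $\Lambda \prec q^{-1/3} + (N\eta)^{-1/6}$. Uniformity in $z \in \caD_\tau$ is obtained from a polynomial net and Lipschitz continuity in $z$.

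The principal obstacle is the anisotropy introduced by $\lambda V$: unlike the pure sparse case of \cite{LS18,EKYY1}, one cannot replace $N^{-1}\sum_i G_{ii}^n$ by $\pair{G}^n$ up to negligible error, since the spread of $\{\wh{M}_i\}$ is dictated by $\lambda V_i$. This is exactly why the cumulant expansion must be executed for the weighted quantity $B_i P_{ij}$ in the spirit of \cite{HKR}, with $B$ taken as a deterministic proxy for $(\lambda V - z - \pair{G})^{-1}$; the uniform-in-$\lambda$ stability from Lemma \ref{lem:stabbound}, valid on $\Xi$, is what keeps $\|B\| = O(1)$ and makes this weighting admissible. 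A secondary subtlety is that the moments of $W_{ij}$ grow polynomially after factoring out $q^{-(k-2)}$, so the cumulant expansion must be truncated at a sufficiently high but finite order before the remainder becomes tractable through \eqref{eq:Remainder}.
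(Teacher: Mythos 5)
Your overall route coincides with the paper's: an entrywise self-consistent estimate for $B_iP_{ij}$ via cumulant expansion in the spirit of \cite{HKR} (with the higher cumulants absorbing powers of $q^{-1}$ and the remainder handled by a cutoff of order $q^{-1}N^{\epsilon}$, as in Appendix \ref{sec:remainder}), followed by stability of the functional equation and a continuity/bootstrap in $\eta$; this is exactly Lemma \ref{lem:cumulantweak} plus the standard continuity argument of \cite{BEKYY,KY,HKR}. Your identity $G_{ij}-\delta_{ij}\wh{M}_i=-\wh{M}_iP_{ij}+\wh{M}_i(\pair{G}-\wh{m}_{\fc})G_{ij}$ is correct and is the right vehicle for transferring the bound on $P$ to $\Lambda$.

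There is, however, a concrete gap in the conversion step. Your cumulant expansion yields only the entrywise, first-power bound $\max_{ij}|B_iP_{ij}|\prec\psi$ with $\psi=q^{-1}+\sqrt{(\Im\wh{m}_{\fc}+\Lambda)/(N\eta)}$, so averaging over the index gives $|\pair{BP}|\prec\psi$, \emph{not} $\psi^2$. The schematic inequality you then write, $\Lambda^2\prec\Lambda\bigl(q^{-1}+(N\eta)^{-1/2}\bigr)+\bigl(q^{-2}+(N\eta)^{-1}\bigr)$, implicitly uses the fluctuation-averaged bound $|\pair{B\mathsf{P}}|\prec\psi^2$, which in this paper is Proposition \ref{prop:m1}: it is proved only later, through the recursive moment estimates with the auxiliary quantity $Q$, and that machinery takes the weak local law as input (delocalization, Lemma \ref{lem:wardd}, the $Q$-bounds of Section \ref{sec:Q}). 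Invoking it at the weak-law stage is therefore circular. Relatedly, your stated inequality would collapse under Lemma \ref{lem:self} to $\Lambda\prec q^{-1}+(N\eta)^{-1/2}$, i.e.\ the strong-law rate, so the exponents $q^{-1/3}+(N\eta)^{-1/6}$ in your conclusion are asserted rather than derived; the quantitatively nontrivial content of this lemma, namely how the cruder exponents emerge from the dichotomy/continuity argument when the averaged error is only of first order in $\psi$ (and carries the $(1+\theta)^2$ prefactor and the $\theta$-dependence inside $\psi$), is missing. The repair is to run the edge stability/dichotomy with the first-power error: in the unstable regime one only gets $[\mathrm{v}]\prec\sqrt{\psi}$, whence $\Lambda\prec\psi+\sqrt{\psi}\prec q^{-1/2}+\bigl((1+\Lambda)/(N\eta)\bigr)^{1/4}$ via your entrywise identity, which (after the bootstrap in $\eta$, where the a priori $\Lambda\prec 1$ and the continuity gap must be checked) is still strong enough to imply the stated $q^{-1/3}+(N\eta)^{-1/6}$.
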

The proof follows by applying a standard continuity argument, for example \cite{BEKYY, KY, HKR}, to the following self-consistent estimate.
\begin{lemma} \label{lem:cumulantweak}
	Assume that $
		\max_{ij} |G_{ij} - \delta_{ij} \wh{M}_i  | \prec \theta$ for some deterministic $\theta \in [N^{-1}, N^{\tau/10}]$. Consider an arbitrary diagonal matrix $B_i$, where each entries are of order $O(1)$. Then, on the event $\Xi$, we have
	\begin{align}
		\max_{ij} |B_i P_{ij}| \prec (1+ \theta)^2 \psi,
	\end{align}
	uniformly for $z \in \mc{D}_{\tau}$.
\end{lemma}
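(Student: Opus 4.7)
The plan is to establish the bound $|B_i P_{ij}| \prec (1+\theta)^2 \psi$ via the moment method: for any fixed $p \in \N$, I will show that
\begin{align*}
\E |B_i P_{ij}(z)|^{2p} \leq N^{\epsilon} \big((1+\theta)^2 \psi(z)\big)^{2p}
\end{align*}
uniformly on the event $\Xi$ and on $z \in \caD_\tau$, then apply Markov's inequality together with a polynomial-grid union bound (the Lipschitz dependence of $G$ on $z$ away from the real axis bridges the grid). All randomness lies in $W$, so I would condition on a realization of $V$ compatible with $\Xi$, in particular with the deterministic bounds on $\wh{m}_{\fc}$ and $\wh{M}_i$ from Lemma~\ref{lem:stabbound}. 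The key manipulation is to write $B_i P_{ij} = \sum_k B_i W_{ik} G_{kj} + B_i \pair{G} G_{ij}$ and to apply the generalized Stein identity of Lemma~\ref{lem:genStein} to the pair consisting of $W_{ik}$ and the remaining $(2p-1)$ resolvent-valued factors of $(B_i P_{ij})^{p-1} \overline{(B_i P_{ij})}^p$.

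The decisive algebraic point is a leading-order cancellation. The second cumulant $\mathcal{C}_2 = 1/N$, combined with the differential rule $\pd_{W_{ik}} G_{kj} = -(G_{ki}G_{kj} + G_{kk}G_{ij})$ (for $k\neq i$; the $k=i$ term vanishes because the diagonal of $W$ has been removed), contributes
\begin{align*}
-\frac{1}{N}\sum_k B_i \big(G_{ki}G_{kj} + G_{kk}G_{ij}\big) = -\frac{B_i (G^2)_{ij}}{N} - B_i \pair{G} G_{ij},
\end{align*}
and the second summand \emph{exactly cancels} the $B_i \pair{G} G_{ij}$ piece present in $B_i P_{ij}$. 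The Ward-type leftover $B_i (G^2)_{ij}/N = B_i \Im G_{ij}/(N\eta)$ is controlled via Cauchy--Schwarz and the Ward identity by $\sqrt{(\Im \wh{M}_i + \theta)/(N\eta)}$, which matches the second piece of $\psi$. The factor $B_i$ does no harm because $\norm B \norm = O(1)$ allows the auxiliary entries $B_i G_{ij}$ to inherit the same Ward-type bounds, as in \cite[Sec.~3.1]{HKR}.

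Higher-order cumulants $\mathcal{C}_{r+1}$ with $r \geq 2$ carry the prefactor $|\mathcal{C}_{r+1}| \leq (2C(r+1))^{2(c+1)(r+1)}/(Nq^{r-1})$ and produce $r$ derivatives acting on $G_{kj}$ and on the remaining $(2p-1)$ factors. Each derivative spawns two new resolvent entries and no new $W$, so the expansion terminates after finitely many iterations. Summing over the internal index $k$ contributes at most one factor of $N$, which is absorbed by the $1/N$ in the cumulant, leaving each monomial bounded by $(1+\theta)^{r+1}/q^{r-1}$. The dominant $r=2$ (fourth-cumulant) term gives $(1+\theta)^3/q$, matching the $(1+\theta)^2 \cdot q^{-1}$ piece of the target $(1+\theta)^2 \psi$. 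Cumulants of order $\geq \ell = \ell(\epsilon, D)$ are truncated via the remainder estimate \eqref{eq:Remainder} with cutoff $t = N^{-\tau/4}$, and the moment-growth bound $|\mathcal{C}_k| \leq (Ck)^{ck}/(Nq^{k-2})$ makes the tail $o(N^{-D})$. The main obstacle I anticipate is the graph-theoretic bookkeeping: after differentiating the remaining $(2p-1)$ factors, one must verify inductively that every internal index summation is compensated by a $1/N$ (or $1/(Nq^{r-1})$) cumulant prefactor so that no uncontrolled $N$-power survives. Once this is in place, assembling contributions level by level yields the $2p$-th moment bound, whence Markov's inequality and the union bound conclude the proof.
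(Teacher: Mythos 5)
Your overall route is essentially the paper's: the paper proves this lemma by running the cumulant-expansion proof of the self-consistent equation from \cite{HKR} with the sparse cumulants $|\mathcal{C}_{r+1}|\lesssim 1/(Nq^{r-1})$, and your key mechanism (the second-cumulant term exactly cancelling $\pair{G}G_{ij}$, the Ward control of the leftover $(G^2)_{ij}/N$, higher cumulants compensated by powers of $q^{-1}$, truncated remainder) is that proof. Two of your quantitative claims, however, do not hold as stated. The identity $B_i(G^2)_{ij}/N=B_i\Im G_{ij}/(N\eta)$ is false (only $GG^*=\Im G/\eta$, not $G^2$), though the Cauchy--Schwarz/Ward bound $|(G^2)_{ij}|\le\sqrt{\Im G_{ii}\,\Im G_{jj}}/\eta$ gives what you need, so this is cosmetic. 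The substantive issue is your accounting of the higher-cumulant terms: the per-monomial bound $(1+\theta)^{r+1}/q^{r-1}$ does \emph{not} match the target. For $r=2$ (the third cumulant, not the fourth) it gives $(1+\theta)^3/q$, and since $\theta$ may be as large as $N^{\tau/10}$ while $q\psi$ can be of order $N^{\tau/20}$ (take $\eta\sim 1$, $q\le N^{1/2}$), one has $(1+\theta)^3/q\not\le(1+\theta)^2\psi$ in general. What rescues the estimate is a point absent from your write-up: every monomial of $\pd_{ik}^{r}G_{kj}$ with $r$ even carries an odd number of $k$-slots, hence at least one entry $G_{kx}$, $x\in\{i,j\}$, off-diagonal in the summed index, and the $k$-summation then yields the Ward factor $N^{-1}\sum_k|G_{kx}|\le\sqrt{\Im G_{xx}/(N\eta)}\lesssim\psi$, so the $r=2$ contribution is $\lesssim(1+\theta)^2\psi/q$; for $r=3$ the fully diagonal monomials are handled by $(1+\theta)^4/q^2\le(1+\theta)^2\psi\cdot(1+\theta)^2/q$ together with $(1+\theta)^2\le N^{\tau/5}<q$. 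Relatedly, when derivatives hit the remaining $P$-factors you must either rewrite $\pd_{ik}P_{ab}$ through the identity $P_{ab}=\delta_{ab}+zG_{ab}+\pair{G}G_{ab}-\lambda V_aG_{ab}$, or invoke the a priori bound $\max_{ij}|P_{ij}|\prec q(1+\theta)^2\psi$ with the extra $q$ eaten by the cumulant prefactor -- the latter being exactly the one point the paper singles out as where the sparse case differs from \cite{HKR}, and it does not appear in your bookkeeping.

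A second gap is the remainder. Truncating at order $\ell$ with cutoff $t=N^{-\tau/4}$ is not justified by moment decay alone: the term $\E|W_{ik}|^{\ell+2}\cdot\sup_{|x|\le t}|F^{(\ell+1)}(x)|$ decays like $N^{-1}q^{-\ell}$ times your bound on the derivatives of $F$ along the interpolation, and with only the trivial bound $|G|\le\eta^{-1}\le N^{1-\tau}$ on the interpolated resolvent this product does not vanish when $q=N^{\delta}$ with $\delta$ small, no matter how large you take $\ell$. One first needs the decomposition $H=H^{(ik)}+S^{[ik]}$ and the finite resolvent expansion of the paper's Appendix B to show $\sup_{|x|\le t}\max_{nm}|G_{nm}|\prec 1+\theta$; only with that input does your truncation close. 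Both repairs are standard, but as written these two steps would fail for $\theta$ near $N^{\tau/10}$ and for small $\delta$, respectively.
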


Proof of Lemma \ref{lem:cumulantweak} follows from the cumulant expansion methods in the proof of \cite[Theorem 1.5]{HKR} for $q \sim \sqrt{N}$. In fact, it suffices to check the a priori bound, being the only case where a bound stronger than the Wigner case is required. Using the stability bound \eqref{eq:stab1}, we have
\begin{align}\label{eq:theapriori}
	\max_{ij} |P_{ij}| = \max_{ij} |(WG)_{ij} +\pair{G} G_{ij} | = \max_{ij} |\delta_{ij} + zG_{ij} -\lambda V_i G_{ij} + \pair{G}G_{ij} | \prec  q(1+\theta)^2 \psi.
\end{align}
The factor of $q$ can be compensated by the powers of $q^{-1}$ in the cumulants of the entries of $W$. Then, by simply adjusting the moment conditions from the proof for the Wigner case, we retrieve the lemma. A slightly more involved expansion is necessary to control the remainder; refer to Appendix \ref{sec:remainder}.

The complete delocalization of eigenvectors, Corollary \ref{cor:deloc} follows immediately from Lemma \ref{thm:weak}. Refer to Theorem 2.16 of \cite{EKYY1} for proof. Using Corollary \ref{cor:deloc}, we can improve the estimates from Ward identity, thanks to the following lemma from \cite{HLY20}.
\begin{lemma}\label{lem:wardd} On the event $\Xi$, uniformly on the domain $z \in \caD_{\tau}$, we have
	\begin{align}
		\max_{i} \Im [G_{ii}(z)] \prec \Im [\pair{G(z)}].
	\end{align}
\end{lemma}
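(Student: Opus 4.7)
The plan is to deduce the lemma directly from the spectral representation of the resolvent together with the complete eigenvector delocalization established in Corollary \ref{cor:deloc}. This is essentially the standard way to convert an $\ell^\infty$ bound on eigenvectors into a pointwise-versus-average comparison of diagonal resolvent entries.

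Concretely, let $\mu_1 \leq \cdots \leq \mu_N$ denote the eigenvalues of $H$ with $\ell^2$-normalized eigenvectors $\mbf{u}_1^H,\ldots,\mbf{u}_N^H$. For $z = E + \ii \eta \in \caD_{\tau}$ the spectral theorem gives
\begin{align*}
    G_{ii}(z) = \sum_{\alpha=1}^{N} \frac{|\mbf{u}_\alpha^H(i)|^2}{\mu_\alpha - z}, \qquad \pair{G(z)} = \frac{1}{N}\sum_{\alpha=1}^{N} \frac{1}{\mu_\alpha - z}.
\end{align*}
Taking imaginary parts,
\begin{align*}
    \Im G_{ii}(z) = \eta \sum_{\alpha=1}^{N} \frac{|\mbf{u}_\alpha^H(i)|^2}{(\mu_\alpha - E)^2 + \eta^2}, \qquad \Im \pair{G(z)} = \frac{\eta}{N}\sum_{\alpha=1}^{N} \frac{1}{(\mu_\alpha - E)^2 + \eta^2}.
\end{align*}
Corollary \ref{cor:deloc} asserts that, on $\Xi$, $\max_{\alpha} \lVert \mbf{u}_\alpha^H \rVert_\infty \prec N^{-1/2}$, so $|\mbf{u}_\alpha^H(i)|^2 \prec N^{-1}$ uniformly in $(i,\alpha)$. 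Substituting this bound into the sum representing $\Im G_{ii}$ and comparing with $\Im \pair{G}$ yields $\Im G_{ii}(z) \prec \Im \pair{G(z)}$ for each fixed $z$ and $i$.

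To promote this to a uniform bound in $i \in \llbracket 1, N \rrbracket$ and $z \in \caD_{\tau}$, I would combine a union bound over $i$ (which costs only a factor $N$, harmlessly absorbed into the $N^\epsilon$ factor of stochastic domination) with a standard $N$-net argument on $\caD_{\tau}$: one applies the pointwise estimate on a grid of mesh size $N^{-3}$ and interpolates using the deterministic Lipschitz bound $\lvert \partial_z G_{ii}(z)\rvert \leq \eta^{-2} \leq N^{2-2\tau}$, which is also valid for $\pair G$. The main observation is that Corollary \ref{cor:deloc} itself already carries uniformity in $\alpha$, so nothing new is needed there.

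There is essentially no hard step: the only subtlety is making sure that the factor $N^{\epsilon}$ implicit in stochastic domination is applied in the right place, i.e.\ we obtain $|\mbf{u}_\alpha^H(i)|^2 \leq N^{-1+\epsilon}$ on an event of probability $1 - N^{-D}$ uniformly in $(i,\alpha)$, and then the same event delivers the desired inequality uniformly on the grid. The uniformity over the continuous domain follows from the grid-plus-Lipschitz argument without further input.
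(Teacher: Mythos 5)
Your proof is correct and is essentially the argument cited in the paper (which attributes the lemma to \cite{HLY20}): spectral decomposition of $\Im G_{ii}$ plus the uniform delocalization bound $|\mbf{u}_\alpha^H(i)|^2 \prec N^{-1}$ from Corollary~\ref{cor:deloc}. One small simplification you could make: since the delocalization event does not depend on $z$, the pointwise inequality $\Im G_{ii}(z)\le N^\epsilon \Im\pair{G(z)}$ holds simultaneously for all $z\in\caD_\tau$ on that single high-probability event, so the $N^{-3}$-net and Lipschitz interpolation step is unnecessary.
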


Finally, we prove Proposition \ref{prop:weakrme}.

\begin{proof}[Proof of Proposition \ref{prop:weakrme}] The result follows immediately from Lemma \ref{thm:weak}, Lemma \ref{lem:cumulantweak}, and two observations. Firstly, the parameters $ (1+\theta)$ and $ \psi$ in Lemma \ref{lem:cumulantweak} can be replaced with $1$ and $\Psi_1$, respectively, by the weak local law and Lemma \ref{lem:wardd}. Secondly, we have
	\begin{align}
		|P_{ij} - \sfP_{ij}| = \frac{s}{q^2} |\pair{G \odot G } G_{ii} G_{ij} | \prec q^{-2},
	\end{align}
	uniformly on $z \in \caD_{\tau}$, and for any $i, j \in \llbracket 1, N \rrbracket$.
\end{proof}

\subsection{Definition and bounds on $Q$} \label{sec:Q}
Throughout this section, we assume the event $\Xi$ and all statements are on the event $\Xi$ even when it is not stated explicitly.

Consider the partial derivative $\pd_{ik} \caP = \pd_{ik} \pair{B \sfP}$, whose expansion yields
\begin{align}
	\begin{split}
		(1+ \delta_{ik})\pd_{ik} \caP =& \frac{1}{N} \G_{ki} + \frac{1}{N} \G_{ik} - \frac{1}{N} (\G WG)_{ki} -\frac{1}{N} (\G WG)_{ik} \\
		&- \frac{1}{N}\pair{G} ((\G G)_{ki} + (\G G)_{ik}) 
		- \frac{1}{N}\pair{\G} ((G^2)_{ki} + (G^2)_{ik}) \\&
		-\frac{2s}{q^2} \frac{1}{N} \sum_{j=1}^{N} \left[ \pair{G \odot G} \G_{jj} (G_{kj}G_{ji} + G_{ij}G_{jk} )+ \pair{\G \odot G} G_{jj} (G_{kj}G_{ji} + G_{ij} G_{jk}) \right],
	\end{split}
\end{align}
where we define
\begin{align}
	\begin{split}
		Q_{ik} = &\frac{1}{N} (\G W G)_{ik} + \frac{1}{N} \pair{G} (\G G)_{ik} + \frac{1}{N} \pair{\G} (G^2)_{ik} \\&+ \frac{2s}{q^2} \pair{G \odot G} \frac{1}{N} \sum_{j=1}^{N} \G_{jj} G_{ij} G_{jk} + \frac{2s}{q^2} \pair{\G \odot G} \frac{1}{N} \sum_{j=1}^{N} G_{jj}G_{ij}G_{jk}.
	\end{split}
\end{align}
Then, we have the relation
\begin{align}
	(1+ \delta_{ik})\pd_{ik} \mc{P} = \frac{1}{N} \G_{ki} + \frac{1}{N} \G_{ik} - Q_{ki} - Q_{ik}. 
\end{align}
To this end, we can consider $Q$ as an auxiliary $N \times N$ matrix whose entries are defined above.

The quantity $Q$ was first introduced in \cite{HKR}, where it was used to prove local law for deformed Wigner matrices. Since $\mc{P}$ in our paper deals with higher order expansions, $Q$ contains terms inherited from them. We also remark that while we have only considered diagonal $B$ and diagonal $V$, the proceeding literature considered $B$ and $V$ in a more general setting where their operator norm being $O(1)$. We believe that our results can be extend to their generality, but do not pursue in that direction.

We now turn to obtaining estimates on $Q$. The following lemma is the sparse version of \cite[Lemma 3.6(ii)]{HKR}, assuming the weak local law.
\begin{lemma}\label{lem:QHKR} Fix (small) $\tau>0$. For any $x, y \in \llbracket 1, N \rrbracket$ and any $l \geq 1$, we have
	\begin{align} \label{eq:QHKR}
		Q = O_{\prec} (\Psi_1^3), \quad \pd_{xy}^{l} Q = O_{\prec} (\Psi_1^2),
	\end{align}
	for all $z \in \caD_{\tau}$, for $N$ sufficiently large. 
\end{lemma}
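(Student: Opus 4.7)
The argument follows the blueprint of \cite[Lemma 3.6(ii)]{HKR}, extended to accommodate the additional sparse fourth-cumulant contribution. The crucial observation is that among the five summands of $Q_{ik}$, only $T_1 \deq \frac{1}{N}(\G W G)_{ik}$ contains $W$ explicitly, and the remaining summands $T_2 \deq \frac{1}{N}\pair{G}(\G G)_{ik}$, $T_3 \deq \frac{1}{N}\pair{\G}(G^2)_{ik}$, and $T_4, T_5$ have been constructed precisely so as to cancel the leading contributions produced when the $W$-factor in $T_1$ is cumulant-expanded. To prove $Q_{ik} = O_{\prec}(\Psi_1^3)$, I would use the moment method: for fixed $p \geq 1$, expand $\E[T_1 \cdot \overline{Q_{ik}}^{p}\, Q_{ik}^{p-1}]$ by applying Lemma \ref{lem:genStein} to each factor $W_{jl}$, match the resulting cumulant-expansion terms against $\E[(T_2+\cdots+T_5) \cdot \overline{Q_{ik}}^{p}\, Q_{ik}^{p-1}]$, bound the residuals by $\Psi_1^{3}$ times a power of $\Psi_1$ using the weak local law (Lemma \ref{thm:weak}), Lemma \ref{lem:wardd}, and Ward's identity, and thus conclude $\E|Q_{ik}|^{2p}\prec \Psi_1^{6p}$ before invoking Markov's inequality.

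\textbf{Cancellation structure.} At the second-order cumulant level ($\caC_2 = 1/N$), a direct computation of $\partial_{jl}(\G_{ij}G_{lk}) = -b_j(G_{ij}G_{lj}+G_{il}G_{jj})G_{lk} - \G_{ij}(G_{lj}G_{lk}+G_{ll}G_{jk})$ (for $j\neq l$) followed by the $N^{-1}\sum_{j,l}$ averaging yields precisely
\[
-\tfrac{1}{N}\pair{G}(\G G)_{ik} -\tfrac{1}{N}\pair{\G}(G^2)_{ik} -\tfrac{2}{N^2}(\G G^2)_{ik},
\]
the first two pieces cancelling $T_2$ and $T_3$ exactly, while the off-diagonal residual is of size $\Psi_1^3$ by Ward's identity and Lemma \ref{lem:wardd}; the diagonal $j=l$ corrections are lower-order. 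At the fourth-order cumulant level ($\caC_4 = s/(Nq^2)$), the third derivative $\partial_{jl}^3(\G_{ij}G_{lk})$ produces a large sum of three-fold resolvent contractions, but the only contractions that contribute at leading order under the $N^{-1}\sum_{j,l}$ averaging are those generating a single diagonal factor $\G_{jj}$ or $G_{jj}$ together with two off-diagonal entries $G_{ij}G_{jk}$, yielding exactly the structures that define $T_4$ and $T_5$. The remaining contractions, together with the third- and higher-order cumulant contributions (whose prefactors obey $|\caC_{r+1}|\leq (Cr)^{Cr}/(Nq^{r-1})$ as in Definition \ref{def:sparse}), are estimated term by term at size $\Psi_1^3$ via Ward's identity, Lemma \ref{thm:weak}, and Lemma \ref{lem:wardd}. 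The remainder $\caR_{\ell}$ of Lemma \ref{lem:genStein} is controlled by truncating at a sufficiently large $\ell$ and using the a priori bound $\norm H\norm\leq 3+\lambda$ with the finite moment assumption on the entries of $W$.

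\textbf{Derivative bound and main obstacle.} For $\partial_{xy}^{l} Q = O_{\prec}(\Psi_1^2)$, one differentiates the identity $Q = T_1+\cdots+T_5$ and reruns the same cumulant expansion on $\partial_{xy}^{l}T_1$ matched against $\partial_{xy}^{l}(T_2+\cdots+T_5)$. Each derivative of a resolvent entry produces a pair of resolvent entries at a cost of at most one factor of $\Psi_1$ (via Ward and Lemma \ref{lem:wardd}), while the Leibniz rule preserves the algebraic cancellation of the leading terms. Iterating yields a bound $O_{\prec}(\Psi_1^{3-l})$, which is absorbed into $O_{\prec}(\Psi_1^2)$ for $l\geq 1$ since $\Psi_1 \leq 1$ on $\caD_\tau$. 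The principal obstacle is the combinatorial bookkeeping at the fourth-cumulant step: the third derivative $\partial_{jl}^3(\G_{ij}G_{lk})$ generates many distinct three-fold contractions, and one must carefully verify that exactly the diagonal contractions align with $T_4$ and $T_5$ while every other contraction produces an off-diagonal two-point function controlled by $\Psi_1^3$ through Ward bounds, even though the a priori resolvent control provided by Lemma \ref{thm:weak} is substantially weaker than the target. A subtler but important point is that the diagonal factors $\pair{G\odot G}$ and $\pair{\G\odot G}$ must emerge naturally from the combinatorics of the third derivative; the anisotropy of $G_{ii}$ prevents any simplification such as $\pair{G\odot G}\to \pair{G}^2$, which is precisely why the $\pair{\wt{\MM}\odot \wt{\MM}}$-type corrections persist in Proposition \ref{prop:main}.
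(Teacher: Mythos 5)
Your skeleton---moment method on $\E|Q_{ik}|^{2p}$ with cumulant expansion of $T_1 = \tfrac{1}{N}(\G WG)_{ik}$, with $T_2,T_3$ cancelling the second-cumulant ($r=1$) term---is the right framework and matches \cite[Lemma 3.6(ii)]{HKR} and the refined Lemmas~\ref{lem:pdQ}--\ref{lem:Q} to which the paper defers. However, two of your local claims are wrong. First, $T_4,T_5$ were \emph{not} engineered to cancel the fourth-cumulant ($r=3$) contraction: they appear because $Q$ is (minus) the nontrivial part of $(1+\delta_{ik})\partial_{ik}\caP$, and $\caP=\pair{B\mathsf{P}}$ carries the $\tfrac{s}{q^2}\pair{G\odot G}G_{ii}G_{ij}$ correction whose derivative produces them. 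At the $\Psi_1^3$ level they are simply small, $T_4+T_5\prec q^{-2}\Psi_2^2\leq\Psi_1^4$, and the paper bounds them directly at the start of the proof of Lemma~\ref{lem:Q}; there is no matching with $Y_{3,0}$, and looking for one would be wasted effort. Second, the derivative estimate ``$O_{\prec}(\Psi_1^{3-l})$, absorbed into $O_{\prec}(\Psi_1^2)$'' runs the inequality the wrong way: since $\Psi_1 \leq 1$ on $\caD_{\tau}$, $\Psi_1^{3-l}\geq\Psi_1^2$ for $l\geq 2$ (and is unbounded for $l\geq 3$). The correct mechanism, as in Lemma~\ref{lem:pdQ} and needing no cumulant expansion, is the structural identity~\eqref{eq:pd1}: $\partial_{xy}Q_{ik}$ is a sum of $\tfrac{1}{N}\G_{ix}G_{ky}$-type and $G_{xi}Q_{yk}$-type pieces, which by Leibniz and induction gives a bound $\prec N^{-1} + \max_{jk}|Q_{jk}| \prec \Psi_1^2$ that is \emph{uniform} in $l$, not geometrically degrading.

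The more fundamental gap is that a single pass of the moment method does not reach $\Psi_1^3$. After the $r=1$ cancellation, the Leibniz contribution $Y_{1,1}$---one derivative landing on $\ol{Q_{ik}}^{p}Q_{ik}^{p-1}$---produces a factor $\Psi_2^3\,\xi$, where $\xi$ is the a priori bound on $Q$, via terms such as $\tfrac{1}{N^2}\sum_{j_1j_2}G_{j_2k}\G_{ij_1}G_{j_1i}Q_{j_2k}\,Q_{ik}^{2p-2}$ obtained after expanding $\partial_{j_1j_2}Q_{ik}$ by \eqref{eq:pd1}. With the only available a priori input $\xi \approx \Psi_1^2$ (from Proposition~\ref{prop:weakrme} and the Ward identity), one application of Young's inequality yields $\E|Q_{ik}|^{2p}\prec(\Psi_2^3\xi)^{p}$, hence $Q\prec\sqrt{\Psi_2^3\xi}\approx\Psi_1^{5/2}$, which is still short of $\Psi_1^3$. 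Closing the argument requires the self-improving bootstrap of Lemma~\ref{lem:self}, exactly as executed in the proof of Lemma~\ref{lem:Q}, iterating $\xi\mapsto\sqrt{(\Psi_2^3+\cdots)\,\xi}$ to the fixed point. Your proposal omits this iteration, so the concluding claim $\E|Q_{ik}|^{2p}\prec \Psi_1^{6p}$ does not follow from the steps that precede it.
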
 

The lemma can be easily obtained by altering the moment conditions throughout the proof of \cite[Lemma 3.6(ii)]{HKR}. Furthermore, its proof is roughly contained in the contents presented below, with more general and refined results. While Lemma \ref{lem:QHKR} is sufficient to prove an optimal local law without refinement to the measure, and the entrywise local law, it is not sufficient for our purposes. Through a more careful analysis, we have the following lemma.

\begin{lemma} Fix (small) $\tau$. For any $x, y \in \llbracket 1, N \rrbracket$ and integer $l \geq 1$, we have
	\begin{align*}
		Q &= O_{\prec} (N^{-1}q^{-1} + q^{-1}\Psi_2^2 +\Psi_2^3), \\
		\pd_{xy}^{l} Q &= O_{\prec } (N^{-1} + q^{-1}\Psi_2^2 +\Psi_2^3),
	\end{align*}\label{lem:optQ}
	for all $z \in \caD_{\tau}$.
\end{lemma}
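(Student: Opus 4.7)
The plan is to use a cumulant expansion on the stochastic term $\frac{1}{N}(\G W G)_{ik}$ appearing in $Q_{ik}$, and to exhibit a delicate cancellation between (i) the second-cumulant contribution and the two ``mean-field'' terms $\frac{1}{N}\pair{G}(\G G)_{ik}$ and $\frac{1}{N}\pair{\G}(G^2)_{ik}$, and (ii) the fourth-cumulant contribution and the two sparse terms of the form $\frac{2s}{q^2}\pair{G\odot G}\frac{1}{N}\sum_j \G_{jj}G_{ij}G_{jk}$. What remains after these cancellations should be bounded by $q^{-1}\Psi_2^2 + \Psi_2^3$, with the extra $N^{-1}$ (respectively, the absence of it in the derivative bound) coming from the explicit $1/N$ prefactor of $(\G W G)_{ik}$. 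The high-probability bound then follows from the standard $\E|Q_{ik}|^{2p}$ Markov argument as in the proof of the analogous statement in \cite{HKR}.

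Concretely, writing $\frac{1}{N}(\G W G)_{ik} = \frac{1}{N}\sum_{a,b} G_{ia}b_a W_{ab}G_{bk}$ and applying Lemma \ref{lem:genStein} to each $W_{ab}$, the $r=1$ term carries $\caC_2 = N^{-1}(1+O(\delta_{ab}))$; using $\pd_{ab}G_{xy} = -\frac{1}{1+\delta_{ab}}(G_{xa}G_{by}+G_{xb}G_{ay})$ and distributing, the resulting four sums produce precisely $-\frac{1}{N}\pair{G}(\G G)_{ik} - \frac{1}{N}\pair{\G}(G^2)_{ik}$ plus diagonal remainders of order $N^{-1}\Psi_1^2$, cancelling the second and third summands of $Q_{ik}$. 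For the $r=3$ term the prefactor is $\caC_4/3! \sim s/(Nq^2)$; the third derivative produces products of four $G$-entries, and the only non-negligible contractions involve an average $\pair{G \odot G}$ multiplied by single-index sums of three off-diagonal entries, which reproduce (up to deterministic errors) the two $\frac{s}{q^2}$ sparse terms of $Q_{ik}$. The leftover from this second cancellation is estimated by replacing $G$ with $\wt{\MM}$ using the entrywise weak local law (Lemma \ref{thm:weak}) together with Lemma \ref{lem:wardd}, yielding an error of order $q^{-1}\Psi_2^2$ on the event $\Xi$.

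The remaining $r=2$ and $r\geq 4$ cumulants carry a prefactor $N^{-1}q^{-(r-1)}$ and involve sums of products of $r+1$ resolvent entries. For $r=2$, each non-vanishing monomial contains at least two off-diagonal factors; applying Cauchy--Schwarz and the Ward identity to extract $\Psi_1^2$ and combining with the $q^{-1}$ from $\caC_3$ produces a contribution of order $q^{-1}\Psi_2^2$ once off-diagonal entries are estimated by $\Psi_1 \leq \Psi_2$ and diagonal entries by their $\wt{\MM}_i$-approximation via Theorem \ref{thm:strong}. For $r\geq 4$, the naive bound $q^{-3}$ is already absorbed into $q^{-1}\Psi_2^2+\Psi_2^3$. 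The remainder $\caR_\ell$ is handled by truncating $|W_{ab}|\leq N^{-1/2+\epsilon}$, choosing $\ell$ large, and using the deterministic bound $\|G\|\leq \eta^{-1}$, so that \eqref{eq:Remainder} gives a contribution $O(N^{-D})$ for any $D$. The derivative bound $\pd^l_{xy}Q = O_\prec(N^{-1} + q^{-1}\Psi_2^2 + \Psi_2^3)$ is obtained by differentiating $Q_{ik}$ term-by-term before running the same expansion; since each derivative converts a resolvent entry into a sum of two resolvent entries, the cancellation structure is preserved and the only change in size comes from the loss of the overall $1/N$ prefactor when differentiating a sum (but not an average) of $G$-entries.

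The main obstacle will be the bookkeeping of the cancellations, in particular verifying that the fourth-cumulant expansion really reproduces, to leading order, both sparse terms in $Q_{ik}$ with the correct coefficient $\frac{2s}{q^2}$. This requires identifying, among the many terms generated by $\pd^3_{ab}(G_{ia}b_a G_{bk})$, those of the form $G_{aa}G_{bb}$-type (or $\G_{aa}G_{bb}$-type) diagonal self-contractions and separating them from genuine off-diagonal pieces; the off-diagonal ones contribute only to the error $\Psi_2^3$. The remaining cancellation with $\pair{G\odot G}$ terms then follows by restoring the full summation over $j$ via the local law replacement $\frac{1}{N}\sum_j G_{jj}G_{ij}G_{jk} \approx \pair{\wt{\MM}} \cdot \text{(off-diag error)}$, with the $\wt{\MM}$-deviation bounded by the hypothesis on $\vt$ and $\theta$.
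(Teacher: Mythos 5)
Your overall frame (a high-moment cumulant expansion of $\E|Q_{ik}|^{2D}$ in which the second-cumulant term cancels against $\tfrac1N\pair{G}(\G G)_{ik}+\tfrac1N\pair{\G}(G^2)_{ik}$) is the same as the paper's, but as written the argument has a genuine gap: it is presented as a single expansion pass, and a single pass cannot reach the bound $N^{-1}q^{-1}+q^{-1}\Psi_2^2+\Psi_2^3$. The terms in which the derivative hits the factor $Q_{ik}^{D-1}\overline{Q_{ik}^D}$ produce $\pd_{j_1j_2}Q_{ik}$, and by \eqref{eq:pd1} this derivative contains pieces such as $G_{j_1i}Q_{j_2k}$, so it can only be estimated in terms of whatever a priori bound $\xi$ on $Q$ is currently available. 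One pass therefore yields only the geometric-mean improvement $Q\prec[(N^{-1}q^{-1}+q^{-(b+1)}\Psi_2+q^{-1}\Psi_2^2+\Psi_2^3)\,\xi]^{1/2}$, which must be iterated (the paper's Lemma \ref{lem:pdQ} and Lemma \ref{lem:Q} together with the self-improving mechanism of Lemma \ref{lem:self}); moreover each full round only upgrades the linear term $q^{-b}\Psi_2$ to $q^{-(b+1)}\Psi_2$, and the paper needs two rounds before it can absorb $q^{-3}\Psi_2\le q^{-1}\Psi_2^2$ via $q^{-2}\le\Psi_2$ (note $q^{-2}\Psi_2\le q^{-1}\Psi_2^2$ is equivalent to $q^{-1}\le\Psi_2$ and may fail). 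Your proposal contains no trace of this interdependence between $Q$, its derivatives, and the moment bound; in particular the derivative estimate cannot be an afterthought obtained "by differentiating term-by-term", since it is simultaneously an input (for the $Y_{1,1}$-type terms) and an output (via \eqref{eq:pd1}) of the bootstrap. Relatedly, your explanation of the $N^{-1}q^{-1}$ versus $N^{-1}$ discrepancy is not correct: the $N^{-1}$ in the derivative bound comes from the explicit terms $\tfrac1N\G_{ix}G_{ky}+\tfrac1N\G_{iy}G_{kx}$ in \eqref{eq:pd1}, while the remaining pieces of $\pd^l_{xy}Q$ simply inherit the bound already proved for $Q$.

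Two further points. First, the fourth-cumulant/sparse-term cancellation you plan to verify — and which you identify as the main obstacle — is unnecessary: both the terms $\tfrac{2s}{q^2}\pair{G\odot G}\tfrac1N\sum_j\G_{jj}G_{ij}G_{jk}$ in $Q_{ik}$ and the $r=3$ cumulant contribution are $O_\prec(q^{-2}\Psi_2^2)$ by Cauchy--Schwarz, the Ward identity and Lemma \ref{lem:wardd}, hence already below the target $q^{-1}\Psi_2^2$; the paper bounds them crudely and exploits only the $r=1$ cancellation. Second, you invoke Theorem \ref{thm:strong} to replace diagonal resolvent entries by $\wt{\MM}_i$; this is circular, because Theorem \ref{thm:strong} is deduced from Proposition \ref{prop:m1}, whose proof (Lemmas \ref{lem:I11}--\ref{lem:I32}) uses precisely the bounds of Lemma \ref{lem:optQ} that you are trying to establish. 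At this stage only the weak local law (Lemma \ref{thm:weak}), Lemma \ref{lem:wardd}, Proposition \ref{prop:weakrme} and the assumed bound $\pair{G}-\pair{\wt{\MM}}=O_\prec(\vt)$ are available — and they suffice, since diagonal entries only ever need the bound $O_\prec(1)$ in this part of the argument.
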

The rest of the section is devoted to the proof of the lemma. Before heading into the proofs, we outline the strategy of proof. As in \cite{HKR}, we first obtain a priori estimates using the large deviation estimate. We obtain stronger bounds on $Q$, using cumulant expansion methods, and so the estimate on the derivatives of $Q$ play heavy role in the improvements. 

To estimate the higher order derivatives of $Q$, we observe that the first derivatives on $Q$ can be organized as follows. For any $x, y \in \llbracket 1, N \rrbracket$, we have
\begin{align}
	\begin{split}\label{eq:pd1}
		(1+\delta_{xy})\pd_{xy} Q_{ik} =& \frac{1}{N} \G_{ix} G_{ky} + \frac{1}{N} \G_{iy} G_{kx} - G_{xi} Q_{yk} - G_{xk} Q_{iy}  - G_{yi}Q_{kx} - G_{yk} Q_{ix} \\
		& - \frac{2}{N^2} (G^2)_{xy} (\G G)_{ik} - \frac{1}{N^2} [(\G G)_{xy} + (\G G)_{yx}] (G^2)_{ik}\\
		& - \frac{2s}{q^2} \left[ (\pd_{xy} \pair{G \odot G}) \frac{1}{N} \sum_{j=1}^{N} \G_{jj}G_{ij} G_{jk} + \pair{G \odot G} \frac{1}{N} \sum_{j=1}^{N} (\pd_{xy}\G_{jj}) G_{ij}G_{jk} \right]\\
		& - \frac{2s}{q^2} \left[ (\pd_{xy} \pair{\G \odot G}) \frac{1}{N} \sum_{j=1}^{N} G_{jj}G_{ij} G_{jk} + \pair{\G \odot G} \frac{1}{N} \sum_{j=1}^{N} (\pd_{xy} G_{jj})  G_{ij} G_{jk} \right],
	\end{split}
\end{align}
where we will abbreviate the last three lines by $E_{ik}^{xy}$. We can easily find \begin{align}
	\pd_{xy}^{m} E_{ik}^{xy} =O_{\prec} (\Psi_2^3),\label{eq:3line}
\end{align}
for any integer $m\geq 0$. The terms in the first line are important. Using induction, we realize that establishing bounds on $Q$ improves the bounds of their derivatives. Such interdependence allows the bounds on $Q$ and its derivatives to iteratively strengthen each other. 

To this extent, the key task is determining the boundaries of the improvement. Comparing the results in Lemma \ref{lem:QHKR} and Lemma \ref{lem:optQ}, we observe that either result contains $q^{-1} \Psi^2 + \Psi^3$ on the right hand side, where these terms could not be improved further in our exposition; refer to the treatments of the lower order expansions. On the other hand, the term $q^{-1} \Psi$ can gain extra factor of $q^{-1}$ up to a certain point. The effects of such refinement to the local law will be illustrated in the following sections.

We start by obtaining the preliminary bounds on $Q_{ik}$.
\begin{lemma}\label{lem:pdQ} For any fixed integer $l \geq 0$, for any $x, y \in \llbracket 1, N \rrbracket$, for all $z \in \caD_{\tau}$, we have
	\begin{align*}
		\pd_{xy}^{l} Q = O_{\prec} (N^{-1} +  q^{-1}\Psi_2 + \Psi_{2}^{2}).
	\end{align*}
	Furthermore, assume that $Q = O_{\prec} (N^{-1}q^{-1} + q^{-b} \Psi_2 + q^{-1} \Psi_2^2 + \Psi_2^3)$ (for $b \in [1, 3])$, then we have
	\begin{align*}
		\pd_{xy}^{l} Q = O_{\prec} (N^{-1} +  q^{-b}\Psi_2 + q^{-1}\Psi_{2}^{2} + \Psi_2^3 ),
	\end{align*}
	for all $z \in \caD_{\tau}$.
\end{lemma}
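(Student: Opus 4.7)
The plan is to prove both parts by induction on $l$, with the base case $l=0$ amounting to an estimate on $Q_{ik}$ itself, obtained by bounding each of its five defining terms separately.

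For the summands $\frac{1}{N}\pair{G}(\G G)_{ik}$ and $\frac{1}{N}\pair{\G}(G^2)_{ik}$, I would apply Cauchy--Schwarz together with the Ward identity $\sum_n |G_{in}|^2 = \eta^{-1}\Im G_{ii}$ and Lemma \ref{lem:wardd}, obtaining bounds of order $(\Im\pair{\wt{\MM}}+\vartheta)/(N\eta) \leq \Psi_2^2$. The two sparsity terms carry an explicit $s/q^2$ prefactor multiplying an average of the same shape, and are hence of size $q^{-2}\Psi_2^2 \leq \Psi_2^2$. The genuinely delicate contribution is $\frac{1}{N}(\G W G)_{ik}$, which I would handle by rewriting
\begin{align*}
\frac{1}{N}(\G W G)_{ik} = \frac{1}{N}\sum_j \G_{ij}\, P_{jk} \;-\; \frac{1}{N}\pair{G}(\G G)_{ik},
\end{align*}
using the defining identity $(WG)_{jk} = P_{jk} - \pair{G}G_{jk}$. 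Cauchy--Schwarz on the first sum, the entrywise bound $|P_{jk}| \prec \psi$ from Lemma \ref{lem:cumulantweak} (equivalently Proposition \ref{prop:weakrme}), and Ward together yield a contribution of order $\psi\sqrt{(\Im\pair{\wt{\MM}}+\vartheta)/(N\eta)} \prec q^{-1}\Psi_2 + \Psi_2^2$, after splitting $\psi = q^{-1}+\sqrt{(\Im\pair{\wh{M}}+\theta)/(N\eta)}$ and absorbing the second piece into $\Psi_2$ (the closeness of $\wh{M}$ and $\wt{\MM}$ from Lemma \ref{lem:stabbound} being used here). Collecting all five terms gives $Q \prec N^{-1} + q^{-1}\Psi_2 + \Psi_2^2$.

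For $l\geq 1$, I would induct on the derivative order via the structural identity \eqref{eq:pd1}. In its first line, $\frac{1}{N}\G_{ix}G_{ky}$ and $\frac{1}{N}\G_{iy}G_{kx}$ contribute $O_\prec(N^{-1})$ thanks to the boundedness of individual resolvent entries, while each of the four terms $G_{xi}Q_{yk},\ldots,G_{yk}Q_{ix}$ carries a bounded resolvent factor and inherits the induction bound on $Q$. The remaining pieces, aggregated into $E^{xy}_{ik}$, satisfy \eqref{eq:3line} and are $O_\prec(\Psi_2^3)$. Each additional $\pd_{xy}$ either hits a resolvent entry (preserving an $O(1)$ factor) or a $Q$-factor (where the hypothesis is applied at the lower order). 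This produces $\pd_{xy}^l Q \prec N^{-1} + (\text{current bound on } Q) + \Psi_2^3$, which combined with the base case yields the first claimed inequality.

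The second statement is then an immediate bootstrap: under the sharper hypothesis $Q \prec N^{-1}q^{-1} + q^{-b}\Psi_2 + q^{-1}\Psi_2^2 + \Psi_2^3$, the same induction gives $\pd_{xy}^l Q \prec N^{-1} + q^{-b}\Psi_2 + q^{-1}\Psi_2^2 + \Psi_2^3$, the leading $\frac{1}{N}\G_{ix}G_{ky}$ pieces producing $N^{-1}$ which subsumes $N^{-1}q^{-1}$. The main obstacle I expect is controlling $(\G W G)_{ik}$ cleanly: a direct cumulant or large-deviation expansion of $\sum_{jl}\G_{ij}W_{jl}G_{lk}$ would require careful bookkeeping against the sparsity-driven cumulant sizes $q^{-(k-2)}/N$, whereas the rewriting in terms of $P$ reduces the entire question to Lemma \ref{lem:cumulantweak}. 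A secondary care point is the uniform absorption $\psi\Psi_2 \prec q^{-1}\Psi_2 + \Psi_2^2$ on $\caD_\tau$, which uses the standing assumption $N^\tau \leq q^{20/9}$ to keep $\theta$ and $\vartheta$ subleading.
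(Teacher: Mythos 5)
Your proposal is correct and follows essentially the same route as the paper: for $l=0$ the paper likewise absorbs $(\G WG)_{ik}$ and $\pair{G}(\G G)_{ik}$ into $\frac{1}{N}\sum_j \G_{ij}P_{jk}$, bounds it via the entrywise estimate on $P$ (Proposition \ref{prop:weakrme}) together with the Ward identity to get $\Psi_1\Psi_2+\Psi_2^2 \prec q^{-1}\Psi_2+\Psi_2^2$, and for $l\geq 1$ runs the same induction through \eqref{eq:pd1}, Leibniz, and \eqref{eq:3line}, with the $N^{-1}$ term coming from $\frac{1}{N}\G_{ix}G_{ky}$ exactly as you note. The bootstrap for the second claim under the sharper hypothesis on $Q$ is also handled in the paper precisely as you describe.
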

\begin{proof}
	For the first claim, we proceed by induction. For $l=0$, we have 
	\begin{align*}
		|Q_{ik}| &\leq  \left|\frac{1}{N}\sum_{j} \G_{ij}[(WG)_{jk} +\pair{G} G_{jk}]  \right| + \left| \frac{1}{N} \pair{\G} (G^2)_{ik} \right|
        + \left| \frac{2s}{Nq^2}  \sum_{j=1}^{N}(G_{ij}G_{jk})\left(\pair{G \odot G} \G_{jj}  + \pair{\G \odot G}  G_{jj} \right) \right|\\
		& \prec \Psi_1 \Psi_2+ \Psi_2^2 ,
	\end{align*}
	where we have used Ward identity and Proposition \ref{prop:weakrme}.
	
	Next, assume that the claim is true for any set of indices for all derivatives of order $0 \leq r \leq l-1$. From \eqref{eq:pd1}, we have
	\begin{align*}
		\pd_{xy}^{l} Q_{ik} =  \frac{1}{1+ \delta_{xy}} \pd_{xy}^{l-1} \left( \frac{1}{N} \G_{ix} G_{ky} +\frac{1}{N} \G_{iy} G_{kx}  - G_{xi} Q_{yk} - G_{xk} Q_{iy}  - G_{yi}Q_{kx} - G_{yk} Q_{ix}\right) + O_{\prec}(\Psii^3),
	\end{align*}
	where with the weak local law, the first two terms are of order $O_{\prec} (N^{-1})$. The succeeding four terms are of the same form, and so we only illustrate the treatment of the first. We have
	\begin{align*}
		\pd_{xy}^{l-1} (G_{xi}Q_{yk}) = \sum_{m=0}^{l-1} \binom{l-1}{m} (\pd_{xy}^{l-m} G_{xi}) (\pd_{xy}^{m} Q_{yk}) = O_{\prec} (N^{-1} + q^{-1}\Psi_2 + \Psi_2^2),
	\end{align*}
	by the induction hypothesis, and the claim is proved. 
	
	For the second claim, note that we have placed $N^{-1}$ in the control parameter due to the first two terms in the expansion of the first derivative. Proceeding using induction as in the previous claim, we obtain the desired result.
\end{proof}

Now we turn to cumulant expansion, improving the previous bounds using non-trivial cancellations. We start by writing
\begin{align*}
	\E |Q_{ik}|^{2D} =& \E \left[\frac{1}{N} \sum_{j_1 j_2} W_{j_1 j_2} G_{j_2 k} \G_{i j_1} Q_{ik}^{D-1} \overline{Q_{ik}^D} \right] + \E \left[ \Bigg(\frac{1}{N} \pair{G} (\G G)_{ik} + \frac{1}{N} \pair{\G} (G^2)_{ik} \Bigg) Q_{ik}^{D-1} \overline{Q_{ik}^D} \right]\\
	&+ \E \left[ \frac{2s}{q^2} \Bigg( \pair{G \odot G} \frac{1}{N} \sum_{j=1}^{N} \G_{jj} G_{ij} G_{jk}  + \pair{\G \odot G} \frac{1}{N} \sum_{j=1}^{N} G_{jj}G_{ij}G_{jk} \Bigg) Q_{ik}^{D-1} \overline{Q_{ik}^D} \right],
\end{align*}
for any (large) positive integer $D$. The terms in the last line are easily bounded by $O_{\prec} (q^{-2}\Psi_2^2  \E|Q_{ik}|^{2D-1})$. Applying \eqref{eq:genStein} to the first term, we obtain
\begin{align*}
	\E \left[\frac{1}{N} \sum_{j_1 j_2} W_{j_1 j_2} G_{j_2 k} \G_{i j_1} Q_{ik}^{D-1} \overline{Q_{ik}^D} \right] &= \sum_{r=1}^{\ell} \frac{\mc{C}_{r+1}}{r!} \E \left[\frac{1}{N} \sum_{j_1 j_2} \pd_{j_1 j_2}^r \left(  G_{j_2 k} \G_{i j_1} Q_{ik}^{D-1} \overline{Q_{ik}^D} \right)  \right] + \E \caR_{\ell}(Y),
    \\
	\caR_{\ell}(Y) &\deq \caR_{\ell} \left( \frac{1}{N}\sum_{j_1 j_2} W_{j_1 j_2} G_{j_2 k} \G_{ij_1} Q_{ik}^{D-1} \overline{Q_{ik}^D}\right),
\end{align*}
for any $D>0$, To abbreviate, we set
\begin{align}
	 Y_{r,s }  &\deq  \frac{\caC_{r+1}}{N}  \sum_{j_1 j_2} \pd_{j_1 j_2}^{r-s} \left(  G_{j_2 k} \G_{i j_1} \right) \pd_{j_1 j_2} ^{s} \left( Q_{ik}^{D-1} \overline{Q_{ik}^D} \right)\\ Y_r & \deq \sum_{s=0}^{r} \binom{r}{s} Y_{r,s} =  \frac{\caC_{r+1}}{N} \sum_{j_1 j_2} \pd_{j_1 j_2}^r \left(  G_{j_2 k} \G_{i j_1} Q_{ik}^{D-1} \overline{Q_{ik}^D} \right).
\end{align}
With these notations, we may rewrite the cumulant expansion as
\begin{align}
	\E \left[\frac{1}{N} \sum_{j_1 j_2} W_{j_1 j_2} G_{j_2 k} \G_{i j_1} Q_{ik}^{D-1} \overline{Q_{ik}^D} \right]  &= \sum_{r=1}^{\ell} w_{r,s} \E Y_{r.s} + \E \caR_{\ell} (Y)
\end{align}
where $w_{r,s} \deq \binom{r}{s}/r!$. Now, we present the key lemma that underpins Lemma \ref{lem:optQ}.

\begin{lemma} \label{lem:Q} Assume that $Q = O_{\prec}(N^{-1} + q^{-b} \Psi_2 + \Psi_2^2 )$. Assume further that $\xi$ is a deterministic parameter satisfying $ Q = O_{\prec} (\xi)$ and $\xi \geq N^{-1} q^{-1} + q^{-(b+1)} \Psi_2 + q^{-1} \Psi_2^2 + \Psi_2^3 $. Then the following statements hold for all $z \in \mc{D}_{\tau}$.
	\begin{enumerate}
		\item For $r=1$, we have
		\begin{align}
			\left|\E Y_{1, 0} + \E \left[ \Bigg(\frac{1}{N} \pair{G} (\G G)_{ik} + \frac{1}{N} \pair{\G} (G^2)_{ik} \Bigg) Q_{ik}^{D-1} \overline{Q_{ik}^D} \right] \right| \prec \Psi_2^{3} \E|Q_{ik}|^{2D-1} ,\\
			|\E Y_{1, 1}| \prec  \left( \Psi_2^3 \xi + \Psi_2^6 + q^{-2} \Psi_2^4 \right) \E |Q_{ik}|^{2D-2}.
		\end{align}
		\item For $r \geq 2$, we have
		\begin{align}
			|\E Y_r | \prec \sum_{a=0}^{\min\{r, 2D\}} [(N^{-1} q^{-1} + q^{-(b+1)} \Psi_2 + q^{-1} \Psi_2^3) \xi ]^{a/2} \E |Q_{ik}|^{2D-a}
		\end{align}
		\item For any $D>0$, there exist $\ell \equiv \ell(D)$ such that $ \caR_{\ell}(Y) \prec q^{-\ell}$.
	\end{enumerate}
	Finally, combining (1), (2), (3), we have 
	\begin{align}
		Q= O_{\prec} (N^{-1}q^{-1} + q^{-(b+1)} \Psi_2 + q^{-1}\Psi_2^2 +\Psi_2^3),\label{eq:Qres}
	\end{align}
	for all $z\in \mc{D}_{\tau}$.
\end{lemma}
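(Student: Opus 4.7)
\medskip

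The plan is to estimate $\E|Q_{ik}|^{2D}$ for arbitrary large integer $D$, deduce the bound \eqref{eq:Qres} on $Q$ by Markov's inequality, and derive the estimates in (1)--(3) as intermediate statements. Expanding $|Q_{ik}|^{2D}=Q_{ik}\cdot Q_{ik}^{D-1}\overline{Q_{ik}^D}$ and substituting the definition of $Q_{ik}$, the only piece that is not immediately of a manageable size is the $W$-dependent term $\tfrac{1}{N}\sum_{j_1 j_2}W_{j_1j_2}\G_{ij_1}G_{j_2k}$. To this we apply Lemma~\ref{lem:genStein} in the variable $W_{j_1j_2}$, producing the decomposition $\E[\cdot]=\sum_{r=1}^{\ell}\sum_{s=0}^{r}w_{r,s}\E Y_{r,s}+\E\mathcal{R}_{\ell}(Y)$, together with the non-expanded contributions from $\tfrac{1}{N}\pair{G}(\G G)_{ik}$, $\tfrac{1}{N}\pair{\G}(G^2)_{ik}$, and the two sparsity terms in $Q_{ik}$. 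Each of the two sparsity terms is $O_{\prec}(q^{-2}\Psii^2)\E|Q_{ik}|^{2D-1}\leq O_{\prec}(\Psii^3)\E|Q_{ik}|^{2D-1}$ by the weak local law, Lemma~\ref{lem:wardd}, and Cauchy--Schwarz, so they already fit the bound claimed for $r=1$.

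The heart of the argument is the exact cancellation in $Y_{1,0}$. Using the differential rule for the resolvent,
\[
\pd_{j_1j_2}(\G_{ij_1}G_{j_2k}) = -\tfrac{1}{1+\delta_{j_1j_2}}\bigl\{2\G_{ij_1}G_{j_2j_1}G_{j_2k}+b_{j_1}G_{ij_2}G_{j_1j_1}G_{j_2k}+\G_{ij_1}G_{j_2j_2}G_{j_1k}\bigr\}.
\]
Summing against $\caC_2/N=(1+O(\delta_{j_1j_2}))/N^2$ and using that $\sum_{j_1}b_{j_1}G_{j_1j_1}=N\pair{\G}$ and $\sum_{j_2}G_{j_2j_2}=N\pair{G}$, the two diagonal contributions yield exactly $-\tfrac{1}{N}\pair{\G}(G^2)_{ik}-\tfrac{1}{N}\pair{G}(\G G)_{ik}$, cancelling the two non-expanded terms added to $Y_{1,0}$ in part~(1) of the statement. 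The surviving off-diagonal remainder is $-\tfrac{2}{N^2}(\G G^2)_{ik}$; by Ward identity combined with Lemma~\ref{lem:wardd} and the weak local law, this is $O_\prec(N^{-1}(N\eta)^{-1/2})$ times a bounded factor, which is dominated by $\Psii^3$, giving the first bound of (1). The diagonal correction $O(\delta_{j_1j_2})$ in $\caC_2$ produces only a lower-order contribution of size $N^{-2}$. For $Y_{1,1}$, Leibniz distributes the single derivative over $Q^{D-1}\overline{Q^D}$ and thus yields a factor $\pd_{j_1j_2} Q_{ik}$. The assumption $Q=O_\prec(\xi)$ with $\xi\geq N^{-1}q^{-1}+q^{-(b+1)}\Psii+q^{-1}\Psii^2+\Psii^3$ lets us invoke the second claim of Lemma~\ref{lem:pdQ} with parameter $b+1$, giving $\pd Q=O_\prec(N^{-1}+q^{-(b+1)}\Psii+q^{-1}\Psii^2+\Psii^3)$. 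Averaging $\sum_{j_1j_2}\G_{ij_1}G_{j_2k}/N^2$ against the explicit expansion~\eqref{eq:pd1} of $\pd Q$ and using Ward, one extracts three dominant structures: a single $\xi\cdot\Psii^3$ factor (from the $G_{xi}Q_{yk}$-type pieces), a $\Psii^6$ from double off-diagonal products, and a $q^{-2}\Psii^4$ from the sparsity contribution hidden in $E_{ik}^{xy}$. Combining, $|\E Y_{1,1}|\prec(\Psii^3\xi+\Psii^6+q^{-2}\Psii^4)\E|Q_{ik}|^{2D-2}$.

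For $r\geq 2$ we write $Y_{r,s}$ as a polynomial in resolvent entries of degree $r+1-s$ (from $\pd^{r-s}(\G_{ij_1}G_{j_2k})$) times $s$ derivatives falling on $Q^{D-1}\overline{Q^D}$. The cumulant factor $\caC_{r+1}$ supplies $(Nq^{r-1})^{-1}$, which compensates the loss from extra off-diagonal resolvents; each such off-diagonal factor is bounded by $\theta\prec\Psii$ or, when an index is summed, by a Ward-identity factor. A bookkeeping over which pairs of indices are summed (yielding $\Im\pair{\wt\MM}/(N\eta)\prec\Psii^2$ per pair) and which remain free shows that the index-$s$ slice of $Y_r$ contributes, after invoking Lemma~\ref{lem:pdQ} on the $s$ derivative-$Q$ factors, at most $(N^{-1}q^{-1}+q^{-(b+1)}\Psii+q^{-1}\Psii^3)^{a/2}\xi^{a/2}\E|Q_{ik}|^{2D-a}$ for some $a\leq\min(r,2D)$, which is exactly the bound in~(2). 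For the remainder $\mathcal{R}_{\ell}$ we apply \eqref{eq:Remainder} with cutoff $t=N^{-1/2+\epsilon}$; using Definition~\ref{def:sparse} and trivial bounds $\sup_{|x|\leq t}|\pd^{\ell+1}(\G_{ij_1}G_{j_2k}Q^{D-1}\overline{Q^D})|(x)\prec 1$ obtained from Markov and the weak local law, taking $\ell=\ell(D)$ sufficiently large yields $\mathcal{R}_{\ell}=O_\prec(q^{-\ell})$, harmless for any fixed polynomial target. A detailed sketch of this remainder analysis appears in Appendix~\ref{sec:remainder}.

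Assembling (1)--(3) and applying Young's inequality $\alpha\E|Q|^{2D-a}\leq C_{a,D}\alpha^{2D/a}+\tfrac{1}{2}\E|Q|^{2D}$ with conjugate exponents $(2D/a,2D/(2D-a))$ to each summand absorbs all lower powers of $|Q|$ on the left and leaves
\[
\E|Q_{ik}|^{2D}\prec\bigl(N^{-1}q^{-1}+q^{-(b+1)}\Psii+q^{-1}\Psii^2+\Psii^3\bigr)^{2D}.
\]
Since $D$ is arbitrary, Markov's inequality yields \eqref{eq:Qres}. The main obstacle is the cancellation in $Y_{1,0}$: one must verify that after subtracting the two $\pair{G}$/$\pair{\G}$ matches, the residual off-diagonal product $(\G G^2)_{ik}/N^2$ is really of size $\Psii^3$ (which needs Lemma~\ref{lem:wardd} to convert the naive $\Psii^2/N$ bound into the sharp one), and then to ensure that every step in the higher-$r$ and $Y_{1,1}$ analyses preserves the improved factor $q^{-(b+1)}$ on the $\Psii$ term. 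This delicate propagation of the $q^{-1}$ gain is precisely what enables the iteration via Lemma~\ref{lem:self} to reach the stated bound on $Q$ from the a priori bound in Lemma~\ref{lem:pdQ}.
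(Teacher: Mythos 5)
Your overall route is the paper's: expand $\E|Q_{ik}|^{2D}$ by the cumulant formula, use the exact cancellation of the diagonal contributions in $Y_{1,0}$ against $\frac1N\pair{G}(\G G)_{ik}+\frac1N\pair{\G}(G^2)_{ik}$, treat $Y_{1,1}$ by inserting the explicit expansion \eqref{eq:pd1} of $\pd_{j_1j_2}Q_{ik}$, truncate the remainder as in Appendix~\ref{sec:remainder}, and close with Young, Markov and the self-improvement of Lemma~\ref{lem:self}. Parts (1) and (3) are essentially right; note only that your intermediate claim that the surviving off-diagonal term $\frac{2}{N^2}(\G G^2)_{ik}$ is ``$O_\prec(N^{-1}(N\eta)^{-1/2})$ times a bounded factor'' is not the correct size, but the Ward-plus-Lemma~\ref{lem:wardd} argument you cite does give the bound $\bigl((\Im\pair{\wt\MM}+\vt)/(N\eta)\bigr)^{3/2}\prec\Psi_2^3$, so the conclusion of (1) stands. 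Likewise, a single application of Young only yields $\E|Q_{ik}|^{2D}\prec(\zeta\xi)^{D}$, where $\zeta$ denotes the right-hand side of \eqref{eq:Qres}, hence $Q\prec(\zeta\xi)^{1/2}$ rather than \eqref{eq:Qres} in one shot; the iteration you mention at the very end is genuinely needed, so state the conclusion of one round accordingly.

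The genuine gap is in part (2). You assert the $r\ge2$ bound by ``bookkeeping'' after inserting the derivative bounds of Lemma~\ref{lem:pdQ} for the factors $\pd_{j_1j_2}^{l_m}Q_{ik}$, but this counting fails precisely in the boundary cases $a\in\{r-1,r\}$, where the cumulant gain $q^{-(r-1)}$ cannot absorb all the derivative factors. Concretely, for $(r,s,a)=(2,1,1)$ your route gives at best $q^{-1}\Psi_2^2\bigl(N^{-1}+q^{-b}\Psi_2+\Psi_2^2\bigr)\E|Q_{ik}|^{2D-2}$, and the piece $q^{-1}\Psi_2^4\,\E|Q_{ik}|^{2D-2}$ is \emph{not} dominated by $\bigl(N^{-1}q^{-1}+q^{-(b+1)}\Psi_2+q^{-1}\Psi_2^3\bigr)\xi\,\E|Q_{ik}|^{2D-2}$ when $\Psi_2^2<q^{-1}$ and $\xi$ sits at its minimal admissible value $N^{-1}q^{-1}+q^{-(b+1)}\Psi_2+q^{-1}\Psi_2^2+\Psi_2^3$. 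Relatedly, your slice bound contains $\xi^{a/2}$, yet Lemma~\ref{lem:pdQ} controls derivatives of $Q$ and never produces a factor of $\xi$, so the proposal never explains where $\xi$ enters the $r\ge2$ estimate at all. The paper resolves exactly these cases by expanding one factor $\pd_{j_1j_2}Q_{ik}$ explicitly via \eqref{eq:pd1} and observing that every resulting term carries either an additional off-diagonal resolvent entry (an honest Ward gain in the $j_1,j_2$ sums, distinguishing $a=r$ from $a=r-1$) or an undifferentiated entry of $Q$, which is what gets bounded by $\xi$; the case $(a,s)=(r-1,r)$ with $r=2$ requires one further round of this expansion. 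Without this step the bound in (2), and hence the geometric-mean structure $[\zeta\xi]^{a/2}$ that drives the self-improving iteration to \eqref{eq:Qres}, is not established.
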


We postpone the proofs of (1), (2), (3) to Appendix \ref{appendix2}.
\begin{proof}[Proof of \eqref{eq:Qres}]
	
	From the results of (1), (2), (3), choosing $\ell >10D$ we have
	\begin{align}
		\E |Q_{ik}|^{2D}  \prec & \sum_{a'=1}^{2D} [(N^{-1}q^{-1} + q^{-(b+1)}\Psi_2 +q^{-1} \Psi_2^2 +\Psi_2^3) \xi]^{a'/2} \E |Q_{ik}|^{2D-a'} + q^{-10D} \\
		\prec & \sum_{a'=1}^{2D} [(N^{-1}q^{-1} + q^{-(b+1)}\Psi_2 +q^{-1} \Psi_2^2 +\Psi_2^3) \xi]^{a'/2} (\E |Q_{ik}|^{2D})^{(2D-a')/2D}.\label{eq:last}
	\end{align}
	From Young's inequality, we have $\E |Q_{ik}|^{2D} \prec [(N^{-1}q^{-1} + q^{-(b+1)}\Psi_2 +q^{-1} \Psi_2^2 +\Psi_2^3) \xi]^{2D/2}$ for large $N$, and since $D$ is arbitrary, from Markov's inequality, we deduce the implication
	\begin{align}
		Q = O_{\prec} (\xi) \quad \Longrightarrow \quad Q= O_{\prec} ([(N^{-1}q^{-1} + q^{-(b+1)}\Psi_2 +q^{-1} \Psi_2^2 +\Psi_2^3) \xi]^{1/2}).
	\end{align}
	Note that uniformity in the domain $z\in \caD_{\tau}$ can be obtained from a standard lattice argument. Starting with the a priori $ Q = O_{\prec}(N^{-1} +  q^{-b}\Psi_2 + \Psi_{2}^{2}) $ from assumption, and using the self-improving mechanism introduced in \cite{HKR}, we obtain the desired result.
\end{proof}

Finally, we justify Lemma \ref{lem:optQ}, by iteratively applying Lemma \ref{lem:pdQ} and Lemma \ref{lem:Q}.
\begin{proof}[\textit{Proof to Lemma \ref{lem:optQ}}]
	We start with $Q= O_{\prec} (N^{-1} + q^{-1}\Psi_2 + \Psi_2^2) $ from Lemma \ref{lem:pdQ}. Applying Lemma \ref{lem:Q}, we obtain $Q= O_{\prec} (N^{-1}q^{-1} + q^{-2} \Psi_2 + q^{-1}\Psi_2^2 + \Psi_2^3)$. Since this result satisfies the assumption of Lemma \ref{lem:Q} with $b=2$, its application yields the bound $Q = O_{\prec} (N^{-1}q^{-1} +q^{-3} \Psi_2  +q^{-1} \Psi_2^2 + \Psi_2^3)$. Using $q^{-2} \leq \Psi_2$, the result can be reduced into the desired form. Application of Lemma \ref{lem:pdQ} once more gives the second claim.
\end{proof}

\section{Truncated cumulant expansion of $\mathcal{P}$} \label{sec:rme}
\subsection{Preliminaries and results}
In this section, we prove Proposition \ref{prop:main}. All statements hold on the event $\Xi$, even if it is not mentioned explicitly. Applying the standard cumulant expansion method, we have
\begin{align}
	\E \left[ \frac{1}{N} \sum_{i=1}^{N} \sum_{k}^{(i)} W_{ik} \G_{ki} \mc{P}^{D-1} \overline{\mc{P} ^D}  \right] &= \sum_{r=1}^{\ell} \frac{\mc{C}_{r+1}}{r!} \E \left[\frac{1}{N} \sum_{i=1}^{N} \sum_{k}^{(i)} \pd_{ik}^{r} \left(\G_{ki} \mc{P}^{D-1} \overline{\mc{P}^D}  \right) \right] + \E \caR_{\ell} (I),\label{eq:corexp}\\
	\caR_{\ell}(I) &\deq \caR_{\ell} \left( \frac{1}{N} \sum_{i=1}^{N} \sum_{k}^{(i)} W_{ik} \G_{ki} \mc{P}^{D-1} \overline{\mc{P} ^D} \right),
\end{align}
for $D > 0$. We set
\begin{align}
	I_{r,s} &\deq \mc{C}_{r+1} \frac{1}{N} \sum_{i=1}^{N} \sum_{k}^{(i)} \left(\partial_{ik}^{r-s} \G_{ki} \right) \left(\partial_{ik}^{s}  (\mc{P}^{D-1} \overline{\mc{P}^{D}}) \right)\\ I_r &\deq  \sum_{s=0}^{r} \binom{r}{s} I_{r,s}. =\mc{C}_{r+1} \frac{1}{N} \sum_{i=1}^{N} \sum_{k}^{(i)} \pd_{ik}^{r} \left(\G_{ki} \mc{P}^{D-1} \overline{\mc{P}^D} \right).
\end{align}
With these notations, we may rewrite the cumulant expansion \eqref{eq:corexp} as
\begin{align}
	\E \left[\frac{1}{N} \sum_{i=1}^{N} \sum_{k}^{(i)} W_{ik} \G_{ki} \mc{P}^{D-1} \overline{\mc{P} ^D}  \right] &= \sum_{r=1}^{\ell}\sum_{s=0}^{r} w_{r,s} \E I_{r.s} + \E \caR_{\ell} (I),
\end{align}
where $w_{r,s} \deq \binom{r}{s}/r!$.

First, we tackle $I_{1,0}$ and $I_{3,0}$, where the non negligible terms are exactly canceled out.
\begin{lemma}\label{lem:cancel} We have, for all $z \in \mathcal{D}_{\tau}$,
	\begin{align}
		\left| \E \left[ w_{1, 0} I_{1,0} + \pair{G} \pair{\G} \mc{P}^{D-1} \overline{\mc{P}^{D}} \right] \right|  &\prec \Psi_{2}^2 \E |\mc{P}|^{2D-1},\\
		\bigg|\E \bigg[ w_{3, 0} I_{3,0} + \frac{s}{q^2} \pair{G \odot G} \pair{\G \odot G} \mc{P}^{D-1} \overline{\mc{P}^D} \bigg]\bigg| & \prec \Psi_{2}^2 \E |\mc{P}|^{2D-1}.
	\end{align}
\end{lemma}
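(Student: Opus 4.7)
The plan is to expand $\partial_{ik}^r\G_{ki}$ explicitly for $r=1$ and $r=3$ via the differential rule, isolate the ``diagonal'' contributions built out of products of $G_{ii}G_{kk}$ — which precisely reconstruct the main terms $\pair{G}\pair{\G}$ and $(s/q^2)\pair{G\odot G}\pair{\G\odot G}$ — and then control every remaining term (those carrying off-diagonal factors $G_{ki}$, as well as the $O(N^{-1})$ remainders arising from reinserting the missing $k=i$ summand) at the scale $\Psi_2^2$ using Ward's identity combined with the input $\pair{G}-\pair{\wt\MM}=O_{\prec}(\vartheta)$ and Lemma \ref{lem:wardd}.

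For $r=1$, the differential rule for $i\neq k$ gives $\partial_{ik}\G_{ki}=-b_i(G_{ii}G_{kk}+G_{ki}^2)$. Since $\mc{C}_2=1/N$ and $w_{1,0}=1$, the $G_{ii}G_{kk}$ contribution, after restoring the diagonal $k=i$, produces $-\pair{G}\pair{\G}$ up to an $O(N^{-1})\leq O(\Psi_2^2)$ residue. For the off-diagonal piece I use $|\sum_{k}b_iG_{ki}^2|\leq C\sum_k|G_{ki}|^2=C\Im G_{ii}/\eta$ by Ward, then Lemma \ref{lem:wardd} to replace $\Im G_{ii}$ by $\Im\pair{G}\prec\Im\pair{\wt\MM}+\vartheta$, obtaining a pointwise bound of order $(\Im\pair{\wt\MM}+\vartheta)/(N\eta)\leq \Psi_2^2$. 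Taking expectations proves the first identity.

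For $r=3$, iterating the differential rule three times and carefully collecting signs yields, for $i\neq k$,
\[
\partial_{ik}^3 G_{ki}=-6G_{ki}^4-36\,G_{ki}^2 G_{ii}G_{kk}-6(G_{ii}G_{kk})^2.
\]
With $\mc{C}_4=s/(Nq^2)$ and $w_{3,0}=1/6$, the $(G_{ii}G_{kk})^2$ piece reconstructs $-(s/q^2)\pair{G\odot G}\pair{\G\odot G}$ up to an $O(1/(Nq^2))$ remainder. The mixed term $G_{ki}^2 G_{ii}G_{kk}$ I bound again by Ward's identity, giving $O_{\prec}(\Psi_2^2/q^2)$, and the pure off-diagonal term $G_{ki}^4$ by combining Ward with the entrywise off-diagonal bound $\max_{i\neq k}|G_{ki}|\prec\theta$ from the weak local law, giving $O_{\prec}(\theta^2\Psi_2^2/q^2)$; both are dominated by $\Psi_2^2$. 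Taking expectations concludes the second identity.

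The principal obstacle I anticipate is the algebraic bookkeeping of $\partial_{ik}^3 G_{ki}$: the coefficient $-36$ of the crucial cross term $G_{ki}^2 G_{ii}G_{kk}$ is assembled from four separate contributions and a miscount would spoil the cancellation in the $r=3$ case. A secondary — but conceptually important — point is that the off-diagonal sums must be placed at scale $\Psi_2^2$ rather than at the weaker $\Psi_1^2$ scale: the mechanism that achieves this is to invoke Ward's identity exactly once per off-diagonal factor, trading it for the factor $\Im\pair{G}/(N\eta)$ that is built into the definition of $\Psi_2^2$.
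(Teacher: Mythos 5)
Your proof is correct and follows essentially the same route as the paper: expand $\pd_{ik}^{r}\G_{ki}$ explicitly, observe that the purely diagonal piece exactly cancels the inserted $\pair{G}\pair{\G}$ (resp.\ $(s/q^{2})\pair{G\odot G}\pair{\G\odot G}$) term up to an $O(N^{-1})$ (resp.\ $O(N^{-1}q^{-2})$) diagonal-reinsertion residue that is dominated by $\Psi_{2}^{2}\geq N^{-1/2}q^{-1}\geq N^{-1}$, and bound the remaining terms by one application of the Ward identity together with Lemma~\ref{lem:wardd}. Your explicit expansion $\pd_{ik}^{3}G_{ki}=-6G_{ki}^{4}-36G_{ki}^{2}G_{ii}G_{kk}-6(G_{ii}G_{kk})^{2}$ (with the $-36$ assembled as $-6-6-12-12$) is correct and makes the cancellation and the two-off-diagonal-entry structure of the residual terms fully transparent — this is the same observation the paper's proof invokes, just stated less explicitly there.
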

\begin{proof} The proof follows from direct computations and the Ward identity. Bounding $I_{1, 0}$ is simple,
	\begin{align}
		\left| \E \left[ w_{1, 0} I_{1,0} + \pair{G} \pair{\G} \mc{P}^{D-1} \overline{\mc{P}^{D}} \right] \right| \leq  \frac{1}{N^2} \E \left| \sum_{i}^{N} \sum_{k}^{(i)} \ G_{ik} \G_{ki} \mc{P}^{D-1} \overline{\mc{P}^D} \right | \prec \Psi_{2}^2 \E |\mc{P}|^{2D-1}.
	\end{align}
 Bounding $I_{3, 0}$ requires more calculation, where we have
	\begin{align}
			\bigg|\E &\bigg[ w_{3, 0} I_{3,0} + \frac{s}{q^2} \pair{G \odot G} \pair{\G \odot G} \mc{P}^{D-1} \overline{\mc{P}^D} \bigg]\bigg| \\ 	\begin{split}&\leq w_{3, 0} {\frac{s}{N^2q^2 }} \sum_{ik} \E \Bigg|\big[\pd_{ik} ( 2G_{ik}^2 \G_{ki} ) + \pd_{ik} (\G_{ii} G_{kk}) G_{ik} + \G_{ii} G_{kk} G_{ik}^2  + 3 \pd_{ik} (G_{ii} \G_{kk} ) G_{ik} + 3G_{ii} \G_{kk} G_{ik}^2  \\
			& \phantom{O(N^2 q^2[[[[[[[[]]]]]]]])} + (\pd_{ik} (G_{ii} G_{kk} ) + G_{ii} G_{kk} G_{ik} )(\G_{ik} + \G_{ki}) \big]  \mc{P}^{D-1} \overline{\mc{P}^D} \Bigg|	\end{split} \\
			&\prec \Psi_{2}^2 \E |\mc{P}|^{2D-1},
	\end{align}
	since all the terms inside the square bracket contain at least two off-diagonal entries containing both $i$ and $k$ in the indices.
\end{proof}

Next, we consider the higher order terms, which can be managed with a single cumulant expansion, with the aid of the estimates obtained in Section \ref{sec:Q}. 
\begin{lemma}\label{lem:high}
	Given any pair of integers $(r, s)$ with $r \geq 4, 0\leq s \leq r$, or $(r,s) = (3, 1), (3, 3),(2, 2)$, the following holds
	\begin{align}
		\E I_{r,s} = O_{\prec} \left(\sum_{a=0}^{s \wedge 2D-1} \Psi_2^{2(a+1)} \E |\mc{P}|^{2D-1-a} \right),
	\end{align}
	uniformly for all $z \in \mathcal{D}_{\tau}$. Moreover, for any $D>0$, there exist $\ell \equiv \ell(D)$ such that $ \caR_{\ell}(I) \prec q^{-\ell}$.
\end{lemma}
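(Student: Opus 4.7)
The plan is to estimate each $I_{r,s}$ by combining the cumulant bound $|\mathcal{C}_{r+1}|\leq CN^{-1}q^{-(r-1)}$ with two further sources of smallness: each derivative $\partial_{ik}^t\mathcal{P}$ appearing in the Leibniz expansion of $\partial_{ik}^s(\mathcal{P}^{D-1}\overline{\mathcal{P}^D})$ will contribute a factor $\Psi_2^2$, and the summation $\frac{1}{N^2}\sum_{ik}$ applied to the monomials inside $\partial_{ik}^{r-s}\G_{ki}$ will yield further factors of $\Psi_2$ via Ward identities on off-diagonal resolvent entries. The first step is to differentiate the identity $(1+\delta_{ik})\partial_{ik}\mathcal{P}=\frac{1}{N}(\G_{ki}+\G_{ik})-Q_{ki}-Q_{ik}$ from Section~\ref{sec:Q} and to invoke Lemma~\ref{lem:optQ} together with $|\G_{ki}|/N\prec N^{-1}\leq\Psi_2^2$, which yields $\partial_{ik}^t\mathcal{P}=O_{\prec}(\Psi_2^2)$ uniformly in $t\geq 1$ and in $i,k$. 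Consequently, a Leibniz sub-term in which exactly $a$ of the $2D-1$ factors receive at least one derivative is $O_{\prec}(\Psi_2^{2a}|\mathcal{P}|^{2D-1-a})$, producing the $|\mathcal{P}|^{2D-1-a}$ powers on the right-hand side of the statement.

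Second, I would expand $\partial_{ik}^{r-s}\G_{ki}$ via the differential rule into a finite sum of monomials in resolvent entries with lower indices in $\{i,k\}$; by parity, the number of off-diagonal factors per monomial has the same parity as $r-s+1$. Bounding diagonal entries by $O_{\prec}(1)$ and applying the Ward estimate $\frac{1}{N^2}\sum_{ik}|G_{ik}|^2\prec\Psi_2^2$, or, if needed, the Cauchy--Schwarz estimate $\frac{1}{N^2}\sum_{ik}|G_{ik}|\prec\Psi_2$, the normalized sum gains a factor $\Psi_2^{\min(\#\mathrm{off},2)}$ per monomial. The arithmetic check $q^{-(r-1)}\Psi_2^{\min(\#\mathrm{off},2)}\leq\Psi_2^2$ is immediate for $r\geq 5$ from $\Psi_2^2\geq q^{-4}$, and for $r=4$ the parity constraint combined with $\Psi_2\geq q^{-2}$ suffices in every sub-case. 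The hard part will be the exceptional pairs $(r,s)=(3,1),(3,3),(2,2)$, where $|\mathcal{C}_{r+1}|$ supplies only $q^{-1}$ or $q^{-2}$: the strategy there is to substitute the explicit expansion of $\partial_{ik}\mathcal{P}$ into the Leibniz sum so that the $\G_{ki}/N$ contributions get paired with compatible off-diagonal factors in $\partial_{ik}^{r-s}\G_{ki}$, forming additional Ward pairs, while the $Q$-contributions are absorbed using the Lemma~\ref{lem:optQ} bound $Q=O_{\prec}(q^{-1}\Psi_2^2+\Psi_2^3)$. This bookkeeping, carried out monomial by monomial, is where the main technical effort lies.

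Finally, for the remainder I would apply Lemma~\ref{lem:genStein} with cutoff $t=N^{-1/2+\epsilon}q^{-1}$. The moment bound $\E|W_{ik}|^{\ell+2}\leq (C\ell)^{C\ell}N^{-1}q^{-\ell}$ together with the crude a priori estimate $|\partial_{ik}^{\ell+1}(\G_{ki}\mathcal{P}^{D-1}\overline{\mathcal{P}^D})|\prec q^{O(\ell)}$ derived from~\eqref{eq:theapriori} show that both summands of~\eqref{eq:Remainder} are $O_{\prec}(q^{-\ell})$ once $\ell$ is chosen sufficiently large depending on $D$. A standard lattice argument then transfers the pointwise estimate to the uniform statement over $\mathcal{D}_{\tau}$.
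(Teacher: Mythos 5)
Your overall scheme is the same as the paper's: Leibniz expansion of $\pd_{ik}^{s}(\caP^{D-1}\overline{\caP^{D}})$, control of $\pd_{ik}^{t}\caP$ through the identity $(1+\delta_{ik})\pd_{ik}\caP=\tfrac1N(\G_{ki}+\G_{ik})-Q_{ki}-Q_{ik}$ and Lemma~\ref{lem:optQ}, a Ward gain tied to the parity of $r-s$, and a truncated cumulant expansion for the remainder. However, your arithmetic for $r=4$ has a genuine gap. For $s\in\{1,3\}$ the number of off-diagonal factors in a monomial of $\pd_{ik}^{r-s}\G_{ki}$ is even and can be zero: fully diagonal monomials of the type $G_{ii}^{2}G_{kk}^{2}\,b_i$ really do occur in $\pd_{ik}^{3}\G_{ki}$ (differentiate $G_{ik}G_{kk}\G_{ii}$ once more in its off-diagonal factor). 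For such monomials your scheme gains no Ward factor, and with the crude input $\pd_{ik}^{t}\caP=O_{\prec}(\Psi_2^{2})$ the contribution is only $O_{\prec}\big(q^{-3}\Psi_2^{2a}\,\E|\caP|^{2D-1-a}\big)$ with $a\ge 1$; matching the claimed bound $\Psi_2^{2(a+1)}\E|\caP|^{2D-1-a}$ would require $q^{-3}\prec\Psi_2^{2}$. Since $\Psi_2^{2}$ can be as small as $q^{-4}+N^{-1/2}q^{-1}$ up to an $O(1/(N\eta))$ term, this fails whenever $q\ll N^{1/4}$ (e.g.\ $q=N^{1/10}$, $\eta\sim 1$: $q^{-3}=N^{-3/10}$ versus $\Psi_2^{2}\sim N^{-2/5}$). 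So "parity plus $\Psi_2\ge q^{-2}$" does not settle every sub-case of $r=4$, contrary to your claim.

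The repair is exactly the device you reserve for $(3,1),(3,3),(2,2)$, and it should be used for all $(r,s)$: keep the structured bound $\pd_{ik}^{t}\caP=O_{\prec}(N^{-1}+q^{-1}\Psi_2^{2}+\Psi_2^{3})$ from Lemma~\ref{lem:optQ} rather than collapsing it to $\Psi_2^{2}$. Relative to $\Psi_2^{2}$, each summand carries an extra $q^{-1}$ or an extra $\Psi_2$, or is the genuinely smaller $N^{-1}$, and this closes the deficient case: for instance at $r=4$, $a=1$ one checks $q^{-3}N^{-1}\le N^{-1}q^{-2}\le\Psi_2^{4}$, $q^{-4}\Psi_2^{2}\le\Psi_2^{4}$ and $q^{-3}\Psi_2^{3}\le\Psi_2^{4}$. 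This is precisely what the paper's power-index criterion (Lemma~\ref{lem:neglect}) bookkeeps uniformly in $(r,s)$, so no separate treatment of $r=4$ versus the exceptional pairs is needed. A secondary remark on the remainder: the "crude a priori estimate" is not enough on its own, because the deterministic bound $\eta^{-1}\le N^{1-\tau}$ per resolvent entry yields a factor $N^{C\ell}$ that is not beaten by $q^{-\ell}\le N^{-\delta\ell}$ when $\delta$ is small; one needs the resolvent-expansion comparison with the matrix whose $(i,k)$ entry is removed or perturbed, uniformly for $|H_{ik}|\le q^{-1}N^{\tau/5}$, as in Appendix~\ref{sec:remainder}, to obtain an $O_{\prec}(1)$-type bound on the relevant suprema before choosing $\ell$ large.
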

The cases that have not yet been covered are $I_{1,1}$, $I_{2, 0}$, $I_{2,1}$, and $I_{3,2}$. Analysis of these terms are more intricate, where $I_{1, 1}$, $I_{2, 0}$, and $I_{2,1}$ require successive cumulant expansions to obtain a finer estimate. The term $I_{2,1}$ is especially challenging; an estimate using the control parameter $\Psi_{3/2}$ was proved after several successive cumulant expansions. The term $I_{3,2}$ is also controlled using $\Psi_{3/2}$, with a simpler proof without further cumulant expansion. The results are stated below.

\begin{lemma}\label{lem:I11} 
	We have, uniformly for all $z \in \caD_{\tau}$,
	\begin{align}
		|\E I_{1, 1}| \prec \Psi_1^4 \E |\caP|^{2D-2}.
	\end{align}
	Assume further that Theorem \ref{thm:strong} is true. Then, for all $z \in \caD_{\tau}$, we have
	\begin{align}
		|\E I_{1, 1} | \prec  \sum_{r=1}^{\ell'} \sum_{a=2}^{r} \Psi_2^{2a} \E |\mc{P}|^{2D-a }  + \frac{|1- \pair{\wt{\MM} \odot {\wt{\MM}}}|}{q}\Psi_2^3 \E |\caP|^{2D-2} + q^{-\ell'},
	\end{align}
	for some positive integer $\ell'$.
\end{lemma}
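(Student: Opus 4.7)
My plan for the first (unconditional) bound is to start with Leibniz,
\[
\pd_{ik}(\caP^{D-1}\overline{\caP^D}) = (D-1)\caP^{D-2}\overline{\caP^D}\,\pd_{ik}\caP + D\caP^{D-1}\overline{\caP^{D-1}}\,\overline{\pd_{ik}\caP},
\]
and then insert the identity $(1+\delta_{ik})\pd_{ik}\caP = N^{-1}(\G_{ki}+\G_{ik}) - Q_{ki} - Q_{ik}$ derived in Section~\ref{sec:Q}. This writes $I_{1,1}$ as a finite sum of terms of the shape $\frac{1}{N^{2}}\sum_{i,k}^{(i)}\G_{ki}\,A_{ik}\,\caP^{D-2}\overline{\caP^{D}}$ and its conjugate partner, with $A_{ik}\in\{N^{-1}\G_{ki},\,N^{-1}\G_{ik},\,Q_{ki},\,Q_{ik}\}$. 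I then apply Cauchy--Schwarz in the pair $(i,k)$, controlling $\sum_{i,k}|\G_{ki}|^{2}\prec N^{2}\Psi_{2}^{2}$ by the Ward identity (since $\G_{ij}=G_{ij}b_{j}$ with $\|\mathbf{b}\|_{\infty}=O(1)$) and controlling $\sum_{i,k}|Q_{ik}|^{2}\prec N^{2}\Psi_{1}^{6}$ by the a priori $Q=O_{\prec}(\Psi_{1}^{3})$ from Lemma~\ref{lem:QHKR}. Each resulting term is then at most $\Psi_{1}\cdot\Psi_{1}^{3}=\Psi_{1}^{4}$, while the contributions with $A_{ik}=\G/N$ yield $\Psi_{2}^{2}/N$, which is dominated by $\Psi_{1}^{4}$ using $\Psi_{1}^{2}\geq (N^{1/2}q)^{-1}\geq 1/N$ (this uses $q\leq N^{1/2}$). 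Summing over the terms and taking expectation gives the first inequality.

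For the second, refined bound the plan is to re-run the above argument with sharper inputs. Theorem~\ref{thm:strong} upgrades the off-diagonal bound on $G$ (the $1/(N^{1/4}q^{1/2})$ in $\Psi_{1}$ is suppressed), and Lemma~\ref{lem:optQ} gives the improved estimate $Q=O_{\prec}(N^{-1}q^{-1}+q^{-1}\Psi_{2}^{2}+\Psi_{2}^{3})$. A direct Cauchy--Schwarz now yields
\[
\Bigl|\tfrac{1}{N^{2}}\sum_{i,k}^{(i)}\G_{ki}Q_{ik}\Bigr|\prec \Psi_{2}\bigl(N^{-1}q^{-1}+q^{-1}\Psi_{2}^{2}+\Psi_{2}^{3}\bigr)=N^{-1}q^{-1}\Psi_{2}+q^{-1}\Psi_{2}^{3}+\Psi_{2}^{4}.
\]
The $\Psi_{2}^{4}$ term and (using $q\leq N^{1/2}$) also $N^{-1}q^{-1}\Psi_{2}$ fit into the sum $\sum_{a=2}^{r}\Psi_{2}^{2a}\E|\caP|^{2D-a}$ at $a=2$. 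The delicate term is $q^{-1}\Psi_{2}^{3}$. My plan is to trace its origin: the $q^{-1}\Psi_{2}^{2}$ piece of the $Q$ bound is produced by the two $(s/q^{2})\pair{G\odot G}$ and $(s/q^{2})\pair{\G\odot G}$ blocks in the definition of $Q_{ik}$. Using the entrywise local law, I replace $\pair{G\odot G}$ by $\pair{\wt{\MM}\odot\wt{\MM}}$ at negligible cost, and then exploit the refinement identity for $\wt{\MM}$ (in which the analogous coefficient is normalized to $1$ rather than $\pair{\wt{\MM}\odot\wt{\MM}}$). The resulting cancellation converts the bare $1/q$ into $|1-\pair{\wt{\MM}\odot\wt{\MM}}|/q$, which is the correction factor appearing in the statement. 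The iteration up to level $r\leq\ell'$ is finally obtained by applying successive cumulant expansions to the residual $\sum\G Q$ sums, each round trading one factor of $|\caP|$ for an additional $\Psi_{2}^{2}$; the truncation at order $\ell'$ contributes the remainder $q^{-\ell'}$ via the bound in Lemma~\ref{lem:genStein}, together with the standard large-moment/cutoff estimate.

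The main obstacle I anticipate is this last step, namely the isolation of the factor $|1-\pair{\wt{\MM}\odot\wt{\MM}}|$. The naive Cauchy--Schwarz loses this refinement and only delivers $q^{-1}\Psi_{2}^{3}$, which does not fit the target form. To recover the sharper estimate one has to simultaneously (i) track which contributions to $Q_{ik}$ carry the $s/q^{2}$ factor versus which carry $W$, (ii) perform an extra cumulant expansion on the $(\G W G)_{ik}$ component of $Q_{ik}$ to generate a matching $s/q^{2}$ term whose leading coefficient is exactly $\pair{\wt{\MM}\odot\wt{\MM}}$, and (iii) compare with the defining equation \eqref{eq:wtdef} of $\wt{\MM}$ to see that only the deviation $|1-\pair{\wt{\MM}\odot\wt{\MM}}|$ survives. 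All the other bookkeeping (controlling derivatives of $Q$, bounding the remainder via the truncation lemma, handling cross-terms via Young's inequality) is routine given the machinery of Section~\ref{sec:Q}.
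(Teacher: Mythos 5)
Your first bound is fine and follows the paper's route: insert $(1+\delta_{ik})\pd_{ik}\caP=N^{-1}(\G_{ki}+\G_{ik})-Q_{ki}-Q_{ik}$, use the Ward identity on $\G_{ki}$ and the a priori bound on $Q$, and absorb the $N^{-1}\G\G$ terms using $q\le N^{1/2}$; this indeed gives $\Psi_2\Psi_1^3\le\Psi_1^4$.

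For the refined bound there is a genuine gap, and it starts with a misdiagnosis. The $q^{-1}\Psi_2^2$ part of the estimate $Q=O_\prec(N^{-1}q^{-1}+q^{-1}\Psi_2^2+\Psi_2^3)$ does \emph{not} come from the two $(s/q^2)\pair{G\odot G}$, $(s/q^2)\pair{\G\odot G}$ blocks in the definition of $Q_{ik}$: those carry $s/q^2$ together with $\frac1N\sum_j G_{ij}G_{jk}=O_\prec(\Psi_2^2)$ and hence contribute only $O_\prec(q^{-2}\Psi_2^2)$. The obstinate $q^{-1}\Psi_2^2$ is generated by the \emph{third-cumulant} ($\caC_3\sim N^{-1}q^{-1}$) terms in the expansion of the $(\G WG)_{ik}$ part of $Q$, which have no counterpart in the $s/q^2$ structure of $\wt{\MM}$ (that structure encodes the fourth cumulant). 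Consequently your steps (i)–(iii) — replacing $\pair{G\odot G}$ by $\pair{\wt\MM\odot\wt\MM}$ inside the $s/q^2$ blocks and cancelling against the defining equation \eqref{eq:wtdef} — cannot remove the problematic contribution $q^{-1}\Psi_2^3$ you correctly isolated; they only act on a piece that was already of size $q^{-2}\Psi_2^3$.

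The paper's actual mechanism is different in structure: one does not try to improve or internally cancel $Q_{ik}$ at all. Instead, the average $\frac1{N^2}\sum_{ik}\G_{ki}Q_{ik}\caP^{2D-2}$ is pre-multiplied by the order-one factor $(1+\pair{G\odot G})$, the \emph{external} factor $\G_{ki}$ is expanded via the resolvent identity $\G_{ki}=\sum_j^{(k)}\G^{(k)}_{ji}W_{kj}G_{kk}$, and a cumulant expansion in $W_{kj}$ is performed. When $\pd_{kj}$ hits $Q_{ik}$, the term $-G_{kk}Q_{ij}$ in \eqref{eq:pd1} regenerates, after summation in $k$, exactly $-\pair{G\odot G}\frac1{N^2}\sum\G Q$, which cancels the added factor; all remaining terms are small. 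The coefficient $|1-\pair{\wt\MM\odot\wt\MM}|$ then appears not from any expansion but from the algebraic decomposition $1=\tfrac12(1+\pair{G\odot G})+\tfrac12(1-\pair{\wt\MM\odot\wt\MM})-\tfrac12(\pair{G\odot G}-\pair{\wt\MM\odot\wt\MM})$, the last difference being $O_\prec(\Psi_1)$ by Theorem \ref{thm:strong} (this, rather than an upgrade of the off-diagonal bound, is where the entrywise law enters), so that only the piece carrying $(1-\pair{\wt\MM\odot\wt\MM})$ retains the naive bound $\Psi_2(N^{-1}q^{-1}+q^{-1}\Psi_2^2+\Psi_2^3)$. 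Also, the sum over $r,a$ up to $\ell'$ in the statement arises from the higher-order cumulants of this single additional expansion (derivatives repeatedly hitting $\caP^{2D-2}$), not from successive rounds of expansion of residual $\sum\G Q$ sums. Without the $(1+\pair{G\odot G})$ trick and the induced cancellation, your Cauchy–Schwarz step stalls at $q^{-1}\Psi_2^3\,\E|\caP|^{2D-2}$, which is exactly what the lemma must avoid.
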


\begin{lemma} \label{lem:I20} 
	We have, uniformly for all $z \in \caD_{\tau}$,
	\begin{align}
		|\E I_{2, 0}| {\prec} \sum_{r=1}^{\ell'} \sum_{a=0}^{r} \Psi_2^{2(a+1)} \E |\mc{P}|^{2D-1-a} + q^{-\ell'},
	\end{align}
	for appropriately chosen $\ell'$.
\end{lemma}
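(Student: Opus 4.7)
The starting point is the explicit computation of the second derivative,
\begin{align*}
  \pd_{ik}^2 \G_{ki} = 2 B_i \bigl( G_{ki}^3 + 3\, G_{ii} G_{kk} G_{ki} \bigr), \qquad i \neq k,
\end{align*}
which splits $I_{2,0}$ into an off-diagonal piece
$T_1 \deq \frac{2\mc{C}_3}{N}\sum_{i,k}^{(i)} B_i G_{ki}^3\, \mc{P}^{D-1}\overline{\mc{P}^D}$
and a diagonal-heavy main piece
$T_2 \deq \frac{6\mc{C}_3}{N}\sum_{i,k}^{(i)} B_i G_{ii} G_{kk} G_{ki}\, \mc{P}^{D-1}\overline{\mc{P}^D}$.
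For $T_1$, using $|G_{ki}|^3 \leq \Psi_1 |G_{ki}|^2$ together with the Ward identity and $|\mc{C}_3| \lesssim (Nq)^{-1}$ yields $|T_1| \prec q^{-1}\Psi_1^3 |\mc{P}|^{2D-1}$, and a short case distinction (according to whether $\Psi_1 \asymp 1/q$ or $\Psi_1 \asymp \sqrt{\Im\pair{\wt{\MM}}/(N\eta)}$) shows this is already bounded by $\Psi_2^2 |\mc{P}|^{2D-1}$, matching the $a=0$ slot of the target bound.

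The real work is for $T_2$: a naive Ward estimate only gives $|T_2| \prec q^{-1}\Psi_1 |\mc{P}|^{2D-1}$, which is worse than $\Psi_2^2 |\mc{P}|^{2D-1}$ by roughly a factor of $q^2$. I propose to close this gap by a \emph{successive cumulant expansion}, in the spirit of the treatment of $Q$ in Section~\ref{sec:Q}. Using the resolvent identity $G_{ki} = -G_{kk}\bigl[W_{ki} G_{ii}^{(k)} + \sum_{l}^{(ki)} W_{kl} G_{li}^{(k)}\bigr]$, I re-expose a random weight $W_{\bullet\bullet}$, converting $T_2$ into sub-quantities of the schematic form $\frac{\mc{C}_3}{N}\E \sum W_{\bullet\bullet}\, [\text{resolvent products}]\, \mc{P}^{D-1}\overline{\mc{P}^D}$. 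A second application of Lemma~\ref{lem:genStein} to these $W$-sums, truncated at order $\ell'$, produces contributions of shape
\begin{align*}
  \frac{\mc{C}_3\, \mc{C}_{r+1}}{N^2} \sum \bigl(\pd^{r-s} \text{res.\ prod.}\bigr)\, \bigl(\pd^{s} \mc{P}^{D-1}\overline{\mc{P}^D}\bigr), \quad 1\leq r \leq \ell',\ 0 \leq s \leq r,
\end{align*}
plus a truncation remainder bounded by $q^{-\ell'}$ via \eqref{eq:Remainder}. When $s=0$, each derivative on a resolvent releases an extra off-diagonal $G_{\bullet\bullet}$ that is absorbed via Ward; the leading $\mc{C}_3\mc{C}_2$ contribution is of order $\Psi_1/(Nq) \prec \Psi_2^2$, since $\Psi_2 \geq 1/q^2 \geq 1/(Nq)$. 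When $s \geq 1$, each $\mc{P}$-derivative generates $\pd \mc{P} = O(N^{-1}) + Q$ where $Q = O_\prec(N^{-1}q^{-1} + q^{-1}\Psi_2^2 + \Psi_2^3)$ by Lemma~\ref{lem:optQ}; each differentiated $\mc{P}$-factor therefore contributes $\Psi_2^3$ (rather than $\Psi_1$), and the combinatorial counting produces exactly $\Psi_2^{2(a+1)} \E |\mc{P}|^{2D-1-a}$ with $a=s$.

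The principal obstacle is the bookkeeping of the successive expansion: after the product rule, $\pd^r_{\bullet\bullet}$ distributes over a growing web of resolvent factors, minors $G^{(k)}$ (noting $\pd_{kl} G^{(k)}_{\bullet\bullet} = 0$ for $l \neq k$), and the factor $\mc{P}^{D-1}\overline{\mc{P}^D}$, and one has to verify that every sub-term fits the template $\Psi_2^{2(a+1)} \E |\mc{P}|^{2D-1-a}$ for the correct $a$. The structural reason the iteration terminates cleanly is twofold: first, Lemma~\ref{lem:optQ} supplies $\Psi_2^3$ (not merely $\Psi_1$) per $\mc{P}$-derivative, which is what promotes the exponent from $(a+1)$ to $2(a+1)$; and second, the refined equation \eqref{eq:wtdef} for $\wt{\MM}$ absorbs the leading $\mc{C}_3$-driven non-Gaussian shift through the correction $\frac{s}{q^2}\pair{G \odot G}G_{ii}G_{ij}$ present in $\sfP$, so that no new non-cancelable terms arise along the way. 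The truncation remainder is controlled by a standard argument as in Appendix~\ref{sec:remainder}.
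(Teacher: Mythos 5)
Your proposal follows essentially the same route as the paper's proof: expand $\pd_{ik}^2 \G_{ki}$ explicitly, dispose of the fully off-diagonal term by the Ward identity, and for the diagonal-heavy terms re-expose a factor of $W$ through the resolvent identity, perform a second truncated cumulant expansion, control derivatives of $\mc{P}$ via the $Q$-bounds of Lemma \ref{lem:optQ}, handle the orders $r\geq 2$ by power counting, and bound the truncation remainder by $q^{-\ell'}$. Two small corrections to your commentary: each $\mc{P}$-derivative should be budgeted as $O_{\prec}(N^{-1}+q^{-1}\Psi_2^2+\Psi_2^3)$ rather than $\Psi_2^3$, the deficit being absorbed by the $q^{-1}$ and $q^{-r}$ prefactors exactly as in the paper's power-index check, and the correction $\frac{s}{q^2}\pair{G\odot G}G_{ii}G_{ij}$ in $\sfP$ plays no role here --- it cancels the fourth-cumulant term $I_{3,0}$ (Lemma \ref{lem:cancel}), while $I_{2,0}$ is bounded directly without any such cancellation.
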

\begin{lemma}\label{lem:I21}
	We have, uniformly for all $z \in \caD_{\tau}$,
	\begin{align}
		|\E I_{2, 1}| \prec \sum_{r=0}^{\ell'} \sum_{a'=2}^{r+2} \Psi_{3/2}^{2a'} \E |\mc{P}|^{2D -a'} +\sum_{r=0}^{\ell''} \sum_{a''=2}^{r+3} \Psi_{3/2}^{2} \Psi_2^{2(a''-1)} \E|\mc{P}|^{2D-a''}+ q^{-\ell'} +q^{-\ell''},
	\end{align}
	for $\ell'$ chosen appropriately.
\end{lemma}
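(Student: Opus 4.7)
The plan is to start from the expression
$$I_{2,1} = \mc{C}_3 \cdot \frac{1}{N}\sum_{i=1}^N \sum_{k}^{(i)} (\partial_{ik}\G_{ki})\,\partial_{ik}\bigl(\mc{P}^{D-1}\overline{\mc{P}^D}\bigr),$$
with $\mc{C}_3 = O(q^{-1}/N)$. A direct computation gives $\partial_{ik}\G_{ki} = -b_i(G_{ki}^2+G_{kk}G_{ii})$, and the chain rule together with the identity $(1+\delta_{ik})\partial_{ik}\mc{P} = N^{-1}(\G_{ki}+\G_{ik}) - Q_{ki} - Q_{ik}$ distributes the outer derivative into an ``off-diagonal'' part of size $N^{-1}\G_{ki}$ and a $Q$-part. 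The off-diagonal contributions can be handled directly using the Ward identity together with Lemma~\ref{lem:wardd} and the weak local law (Proposition~\ref{prop:weakrme}), yielding bounds comfortably within the claimed estimate. The nontrivial analysis lies in the $Q$-terms.

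For these I would perform a second-layer cumulant expansion, in the spirit of the proof of Lemma~\ref{lem:Q}. Substituting the definition of $Q_{ki}$ into $I_{2,1}$, the surviving $W$-dependence, located in the factor $N^{-1}(\G W G)_{ki}$, is expanded via Lemma~\ref{lem:genStein}. The first-order term in this expansion cancels against the deterministic components $N^{-1}\pair{G}(\G G)_{ki}$, $N^{-1}\pair{\G}(G^2)_{ki}$, and the two $s/q^2$-weighted summands in $Q_{ki}$, while higher-order contributions are absorbed using the improved $Q$-estimate of Lemma~\ref{lem:optQ}. A single layer of this improvement produces a $\Psi_{3/2}^2$ factor: the $N^{-1/4}q^{-1/2}$ scale in $\Psi_{3/2}$ arises from the interaction between $\mc{C}_3 \sim q^{-1}/N$ and a square-root gain obtained via Ward's identity on the double index sum. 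Iterating this expansion and allowing derivatives to fall on $\mc{P}^{D-1}\overline{\mc{P}^D}$ produces the two declared families of bounds: the ``pure'' contributions $\Psi_{3/2}^{2a'}\E|\mc{P}|^{2D-a'}$ from cascading $Q$-improvements, and the ``mixed'' contributions $\Psi_{3/2}^2\Psi_2^{2(a''-1)}\E|\mc{P}|^{2D-a''}$ in which the initial $\Psi_{3/2}^2$ comes from the $Q$-step and each subsequent derivative of $\mc{P}$ supplies $\Psi_2^2$ as verified in Lemma~\ref{lem:high}. The residuals $q^{-\ell'}$ and $q^{-\ell''}$ arise from truncating the generalized Stein formula at high enough order, via the bound \eqref{eq:Remainder}, exactly as in the treatment of $\caR_\ell(I)$.

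The main obstacle I anticipate is extracting the $\Psi_{3/2}$ rate cleanly rather than merely $\Psi_2$. The factor $N^{-1/4}q^{-1/2}$ is not available from a single direct large-deviation step; it reflects a careful alignment of $\mc{C}_3$ factors across the iterated expansion so that their combination with the index-sum averaging produces an $N^{-1/2}q^{-1}$-scale contribution in expectation, rather than being bounded layer by layer. In addition, each cumulant step generates many bookkeeping-heavy cross terms from the $s/q^2$-weighted $\pair{G\odot G}$ pieces present in both $\mc{P}$ and $Q$, and one must verify that none of them breaks the $\Psi_{3/2}$ scaling. As the authors explicitly flag in Section~\ref{sec:I21} and Remark~\ref{rem:import}, $I_{2,1}$ is the term responsible for the suboptimality of the final averaged local law; the proof plan above therefore aims at the sharpest bound attainable by this cumulant-expansion-based strategy, but not beyond it.
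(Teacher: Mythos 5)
Your skeleton matches the paper's: the only delicate contribution after the first expansion is $\tfrac{1}{N^2q}\E\big[\sum_{ik}Q_{ik}G_{kk}\G_{ii}\mc{P}^{2D-2}\big]$, and you attack it, as the paper does, by inserting the definition of $Q_{ik}$, cumulant-expanding the $W$ hidden in $N^{-1}(\G WG)_{ik}$, and using the $Y_{1,0}$-type cancellation together with Lemma \ref{lem:optQ}. The gap is in the central claim that ``a single layer of this improvement produces a $\Psi_{3/2}^2$ factor.'' First, the cancellation only removes the $\pair{G}(\G G)_{ik}+\pair{\G}(G^2)_{ik}$ parts of $Q$; the $s/q^2$-weighted summands are not cancelled but merely bounded by $O_\prec(q^{-3}\Psi_2^2)$. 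More importantly, the first-order term of the second-layer expansion does not disappear: what survives are terms of the form $\tfrac{1}{N^4q}\sum_{ikj_1j_2}G_{j_1j_2}G_{j_2k}\G_{ij_1}G_{kk}\G_{ii}\,\mc{P}^{2D-2}$, the companion with $Q_{j_1j_2}$ produced when the derivative hits $\mc{P}^{2D-2}$, and the $r=2$, $s=0$ contribution $\tfrac{1}{N^4q^2}\sum_{ikj_1j_2}G_{ij_1}G_{kj_2}G_{j_1j_1}G_{j_2j_2}G_{ii}G_{kk}\,\mc{P}^{2D-2}$. Ward's identity plus Lemma \ref{lem:optQ} give these only bounds like $q^{-1}\Psi_2^3\,\E|\mc{P}|^{2D-2}$ and $q^{-2}\Psi_2^2\,\E|\mc{P}|^{2D-2}$, which are \emph{not} dominated by the claimed right-hand side: e.g.\ at the spectral edge with $\eta\sim N^{-2/3}$ and $q=N^{1/4}$ one has $q^{-1}\Psi_2^3\sim N^{-5/4}$ while $\Psi_{3/2}^4\sim\Psi_{3/2}^2\Psi_2^2\sim N^{-4/3}$, and $q^{-2}\not\le \Psi_{3/2}^2$ for $q\ll N^{1/2}$. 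So the single-layer argument cannot close the lemma.

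The missing ingredient is a further (third) cumulant expansion of precisely these leftover structures (the paper's $J_1,J_2,J_3$): one writes $G_{ij_1}=-G_{ii}\sum_n^{(i)} W_{in}G^{(i)}_{nj_1}$, expands in $W_{in}$ once more, and then performs a case-by-case power count in which the constraint $q\le N^{1/2}$, used in the form $N^{-a}q^{-r}\le (N^{-1/4}q^{-1/2})^{2(a+1)}$ for $r\ge2$, is what actually generates the $N^{-1/4}q^{-1/2}$ and $q^{-3/2}$ scales in $\Psi_{3/2}$ — not a square-root gain from Ward's identity at the first layer, as you suggest. Your closing paragraph honestly flags that extracting $\Psi_{3/2}$ is the crux, but the proposal neither identifies the surviving terms nor supplies the extra expansion layer that handles them, and that layer constitutes the bulk of the actual proof; as written, the plan would stall at a bound strictly weaker than the statement.
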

\begin{lemma}\label{lem:I32}
	We have
	\begin{align}
		|\E I_{3, 2}| {\prec} \Psi_{3/2}^4 \E|\mc{P}|^{2D-2},
	\end{align}
	uniformly for all $z \in \caD_{\tau}$.
\end{lemma}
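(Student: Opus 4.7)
The plan is to estimate $\E I_{3,2}$ by direct expansion using Lemma~\ref{lem:optQ}. Two features of the structure are crucial: the fourth cumulant automatically supplies an effective prefactor of order $q^{-2}$ (since $|\mathcal{C}_4|\le C N^{-1}q^{-2}$, so $\mathcal{C}_4 N^{-1}\sum_{i,k}$ is of order $q^{-2}$ on $N^2$ pairs), and the dominant averaged contribution from $\partial_{ik}^2\mathcal{P}$ is controlled by $Q$ itself, rather than by the coarser $\partial_{ik}Q$.

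Pointwise, the resolvent derivative rule gives $\partial_{ik}\mathcal{G}_{ki} = -\tfrac{1}{1+\delta_{ik}}(G_{ii}G_{kk}+G_{ki}^2)b_i = O_\prec(1)$ by the weak local law (Lemma~\ref{thm:weak}) and $\norm B \norm=O(1)$. Differentiating the identity $(1+\delta_{ik})\partial_{ik}\mathcal{P} = N^{-1}(\mathcal{G}_{ki}+\mathcal{G}_{ik}) - (Q_{ki}+Q_{ik})$ expresses $\partial_{ik}^2\mathcal{P}$ as the sum of an $O(N^{-1})$ piece and $-\partial_{ik}(Q_{ki}+Q_{ik})/(1+\delta_{ik})$. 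Substituting $(x,y)=(i,k)$ into \eqref{eq:pd1} then expands $\partial_{ik}Q_{ki}$ into $O(N^{-1})$ contributions of the form $N^{-1}\mathcal{G} G$, bilinear pieces of the form $G_{\cdot\cdot}Q_{\cdot\cdot}$, and the remainder $E_{ki}^{ik}=O_\prec(\Psi_2^3)$. The key observation is that when the bilinear pieces are multiplied by $\partial_{ik}\mathcal{G}_{ki}\sim -G_{ii}G_{kk}b_i$ and summed over $(i,k)$, they produce to leading order products of the shape $\bigl(\sum_i G_{ii}^2 b_i\bigr)\bigl(\sum_k G_{kk}Q_{kk}\bigr)$, whose size is at most $N\cdot N\cdot O_\prec(\|Q\|_\infty) = O_\prec(N^2 Q)$. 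Dividing by $N^2 q^2$ leaves $q^{-2}Q=O_\prec(\Psi_{3/2}^4)$ after a monomial-by-monomial comparison using Lemma~\ref{lem:optQ} and the definition of $\Psi_{3/2}$.

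The Leibniz expansion of $\partial_{ik}^2(\mathcal{P}^{D-1}\overline{\mathcal{P}^D})$ also produces subleading pieces such as $(\partial\mathcal{P})^2\mathcal{P}^{D-3}\overline{\mathcal{P}^D}$ together with conjugates, contributing $q^{-2}\Psi_{3/2}^4\E|\mathcal{P}|^{2D-3}$. These are absorbed into $\Psi_{3/2}^4\E|\mathcal{P}|^{2D-2}$ by Young's inequality combined with the a priori bound $\mathcal{P}\prec\Psi_1^2$ from Proposition~\ref{prop:m1}, at the cost of negligible $q^{-\ell}$ errors (absorbable by enlarging $D$). Assembling these estimates yields the claimed bound.

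The main obstacle is the monomial comparison $q^{-2}Q\le C\Psi_{3/2}^4$: a naive pointwise bound using only $\partial_{ik}^2\mathcal{P}=O_\prec(\Psi_{3/2}^2)$ (which follows directly from $\partial Q=O_\prec(N^{-1}+q^{-1}\Psi_2^2+\Psi_2^3)$) yields the strictly weaker estimate $q^{-2}\Psi_{3/2}^2\E|\mathcal{P}|^{2D-2}$, which does not pointwise dominate $\Psi_{3/2}^4\E|\mathcal{P}|^{2D-2}$, since $\Psi_{3/2}^2\ge q^{-3}$ forces $q^{-2}\Psi_{3/2}^2\ge q^{-5}>q^{-6}\ge \Psi_{3/2}^4$. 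The improvement arises precisely from using the $q^{-1}$-sharper bound on $Q$, rather than $\partial Q$, in the leading averaged sum; in keeping with the remark preceding the lemma, no further cumulant expansion is required.
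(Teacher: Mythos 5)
Your decomposition and estimates mirror the paper's: expand $\partial_{ik}^2(\mathcal{P}^{D-1}\overline{\mathcal{P}^D})$ via Leibniz, isolate the $\partial_{ik}^2\mathcal{P}$ piece, expand it through \eqref{eq:pd1}, control the surviving all-diagonal terms by $q^{-2}\max_{mn}|Q_{mn}|$ using Lemma~\ref{lem:optQ}, and compare monomials against $\Psi_{3/2}^4$; no further cumulant expansion is needed. Two points in your final paragraph are off, however. (1) The chain ends with ``$q^{-6}\geq\Psi_{3/2}^4$'', but $\Psi_{3/2}\geq q^{-3/2}$ gives $\Psi_{3/2}^4\geq q^{-6}$, so as written the chain establishes nothing; the correct observation is that $q^{-2}\Psi_{3/2}^2\prec\Psi_{3/2}^4$ would require $q^{-1}\prec\Psi_{3/2}$, which fails in the regime $\Psi_{3/2}\sim q^{-3/2}$. (2) Attributing the improvement to ``the bound on $Q$ rather than $\partial Q$'' is a misdiagnosis: the $\partial Q$-bound $N^{-1}+q^{-1}\Psi_2^2+\Psi_2^3$ from Lemma~\ref{lem:optQ} already yields $q^{-2}(N^{-1}+q^{-1}\Psi_2^2+\Psi_2^3)\prec\Psi_{3/2}^4$, since $q^{-2}N^{-1}=(N^{-1/4}q^{-1/2})^4\leq\Psi_{3/2}^4$, $q^{-3}\Psi_2^2\leq q^{-3}\Psi_{3/2}^2\leq\Psi_{3/2}^4$, and $q^{-2}\Psi_2^3\leq\Psi_2^4\leq\Psi_{3/2}^4$. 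The improvement really comes from keeping the explicit polynomial form of the bound rather than compressing it to $\Psi_{3/2}^2$, which introduces a spurious $q^{-3}$ cross-term absent from the actual estimate; this reading is consistent with the paper's own displayed bound $q^{-2}(N^{-1}+q^{-1}\Psi_2^2+\Psi_2^3)$, which uses $N^{-1}$, not $N^{-1}q^{-1}$.
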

We defer the proofs of Lemma \ref{lem:high} and Lemma \ref{lem:I32} to the Appendix, where the bounds are obtained without more successive cumulant expansions. To obtain the bounds on $I_{1, 1}$, $I_{2, 0}$, and $I_{2,1}$, further expansions are required. In Section \ref{sec:I11}, we obtain the estimates on $I_{1, 1}$ (Lemma \ref{lem:I11}), and in Section \ref{sec:I20}, we obtain the estimates on $I_{2,0}$ (Lemma \ref{lem:I20}). Finally, in Section \ref{sec:I21}, we obtain estimates on $I_{2,1}$ (Lemma \ref{lem:I21}).

Culminating these bounds, we are ready to prove Proposition \ref{prop:m1} and \ref{prop:main}.

\begin{proof}[Proof of Propositions \ref{prop:m1} and \ref{prop:main}]
	From Lemmas \ref{lem:cancel}--\ref{lem:I32}, by choosing $\ell, \ell', \ell'' >8D $, we deduce
	\begin{align}
		\E |\mc{P}|^{2D} &\prec \sum_{a=1}^{2D} \Psi_{1}^{2a} \E |\mc{P}|^{2D-a} + q^{-8D}.
	\end{align}
	From Young's inequality, and that $|1-\pair{\wt{\MM} \odot \wt{\MM}} = O_{\prec}(1)$, we obtain  $\E |\mc{P}|^{2D} \prec (\Psi_{1}^2   )^{2D}$ for large $N$. Since $D$ is arbitrary,  from Markov's inequality, for a fixed $z \in \caD_{\tau}$, we obtain $|\mc{P} | {\prec}\Psi_{1}^{2} $. 
	
	Next, assuming Theorem \ref{thm:strong}, the estimate in Lemma \ref{lem:I11} is strengthened, and we have
	\begin{align}
		\E |\mc{P}|^{2D} &\prec \sum_{a=1}^{2D} \Psi_{3/2}^{2a} \E |\mc{P}|^{2D-a} + \frac{|1- \pair{\wt{\MM} \odot {\wt{\MM}}}|}{q}\Psi_2^3 \E |\caP|^{2D-2} + q^{-8D}\\
		&\prec \sum_{a=1}^{2D} \Psi_{3/2}^{2a} (\E |\mc{P}|^{2D})^{(2D-a)/2D} + \left( \frac{|1- \pair{\wt{\MM} \odot {\wt{\MM}}}|^2}{q^2} \right)^2 \E |\caP|^{2D-2} + q^{-8D},
	\end{align}
	From Young's inequality, we obtain $\E |\mc{P}|^{2D} \prec (\Psi_{3/2}^2 +q^{-2}|1- \pair{\wt{\MM} \odot {\wt{\MM}}}|  )^{2D}$ for large $N$. Since $D$ is arbitrary, we have
	\begin{align}
		|\mc{P} | {\prec}\Psi_{3/2}^{2} + \frac{1}{q^2}|1- \pair{\wt{\MM} \odot {\wt{\MM}}}|.
	\end{align}
	Uniformity in both cases can be obtained by a standard ``lattice" argument.
\end{proof}

Finally, we introduce a criterion that efficiently confirms whether the terms satisfy the desired estimates.
\begin{lemma}\label{lem:neglect}
	For positive numbers (most of the times being integers) $n_d, n_m, n_{p}, n_{q}$, the following holds
	\begin{align}
		\frac{1}{q^{n_q}}  (N^{-1} +q^{-1} \Psi_2^2 + \Psi_2^3)^{n_d}  \Psi_2^{n_{p}}\prec \Psi_2^{2n_m} ,
	\end{align}
	if 
	\begin{align}
		{\frf} \equiv \frf(n_q, n_d, n_p, n_m) \deq \min\{n_q + 2n_d, 4n_d, (n_q + 5n_d)/2 \} +n_p -2n_m  \geq 0.
	\end{align} 
	We will refer to $\frf$ as \emph{power index}.
\end{lemma}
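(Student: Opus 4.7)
The strategy is to reduce the estimate to three canonical terms and bound each of them in powers of $\Psi_2$; the three entries in the minimum defining $\frf$ will correspond to these three bounds. Starting from the elementary inequality $(A + B + C)^{n_d} \leq 3^{n_d}(A^{n_d} + B^{n_d} + C^{n_d})$ valid for $A, B, C \geq 0$, it suffices to show that each of
\begin{align*}
T_1 \deq q^{-n_q} N^{-n_d} \Psi_2^{n_p}, \quad T_2 \deq q^{-n_q - n_d} \Psi_2^{2n_d + n_p}, \quad T_3 \deq q^{-n_q} \Psi_2^{3n_d + n_p}
\end{align*}
is $\prec \Psi_2^{2n_m}$ under the condition $\frf \geq 0$.

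From the definition of $\Psi_2$ I have $\Psi_2 \geq q^{-2}$ and $\Psi_2 \geq N^{-1/4} q^{-1/2}$, which furnish two basic substitutions: $q^{-s} \leq \Psi_2^{s/2}$ and $N^{-s} \leq q^{2s} \Psi_2^{4s}$ for every real $s \geq 0$. Applying only the first substitution yields $T_2 \leq \Psi_2^{(n_q + 5n_d)/2 + n_p}$ and $T_3 \leq \Psi_2^{n_q/2 + 3n_d + n_p}$. Since $n_q/2 + 3n_d \geq (n_q + 5n_d)/2$ when $n_d \geq 0$, the bound $\Psi_2^{(n_q + 5n_d)/2 + n_p}$ dominates both; accordingly, the third entry $(n_q + 5n_d)/2$ in the minimum takes care of $T_2$ and $T_3$.

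The main step is the bound on $T_1$, where I split into two regimes according to the relative size of $n_q$ and $2n_d$. When $n_q \geq 2n_d$, applying the second substitution $n_d$ times gives $N^{-n_d} \leq q^{2n_d} \Psi_2^{4n_d}$, while $q^{2n_d - n_q} \leq 1$ since $q \geq 1$; this produces $T_1 \leq \Psi_2^{4n_d + n_p}$, controlled by the second entry $4n_d$ of the minimum. When $n_q < 2n_d$, I perform a mixed decomposition $N^{-n_d} = N^{-n_q/2} \cdot N^{-(n_d - n_q/2)}$ and apply $N^{-1} \leq q^{2} \Psi_2^{4}$ to the first factor and $N^{-1} \leq \Psi_2^{2}$ (which follows from $\Psi_2 \geq N^{-1/4} q^{-1/2}$ combined with the standing bound $q \leq N^{1/2}$ of Definition \ref{def:sparse}) to the second. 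This yields $N^{-n_d} \leq q^{n_q} \Psi_2^{n_q + 2n_d}$, and the factor $q^{n_q}$ cancels the prefactor $q^{-n_q}$ exactly, leaving $T_1 \leq \Psi_2^{n_q + 2n_d + n_p}$, which is handled by the first entry $n_q + 2n_d$ of the minimum.

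Combining the three bounds, the total is bounded by $3^{n_d} \Psi_2^{(\min\{n_q + 2n_d,\, 4n_d,\, (n_q + 5n_d)/2\}) + n_p}$, and the condition $\frf \geq 0$ then gives the claimed $\prec \Psi_2^{2n_m}$ after absorbing the $n_d$-dependent constant into the stochastic domination. I do not expect any substantial obstacle; the only minor point is the use of the non-integer exponents $n_q/2$ and $n_d - n_q/2$ in the mixed decomposition, which is justified because the two basic substitutions remain valid for every real $s \geq 0$.
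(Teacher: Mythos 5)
Your argument is correct and is exactly the straightforward computation the paper leaves implicit (its proof is a one-line appeal to properties of stochastic domination): expand the trinomial, use $\Psi_2\ge q^{-2}$, $\Psi_2\ge N^{-1/4}q^{-1/2}$ and $q\le N^{1/2}$, and observe that the three entries of the minimum correspond to your bounds on $T_1,T_2,T_3$. The only ingredient you use tacitly is that $\Psi_2\lesssim 1$ on $\caD_\tau$ (needed to dominate higher powers of $\Psi_2$ by lower ones), which holds by the choice of $\tau$ and the boundedness of $\Im \pair{\wt{\MM}}$.
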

\begin{proof}
	The proof is a straightforward application of the properties of stochastic domination.
\end{proof}

\subsection{Estimates on $I_{1, 1}$} \label{sec:I11}
\begin{proof}[Proof of Lemma \ref{lem:I11}] We begin by expanding
	\begin{align}
    \begin{split}
		\E I_{1,1} &= \bigO{\frac{1}{N^2}} \E \left[\sum_{i} \sum_{k}^{(i)} \G_{ki} (\pd_{ik} \mc{P}) \mc{P}^{2D-2} \right] \\&= \bigO{\frac{1}{N^2}} \E \left[ \sum_{i} \sum_{k} \G_{ki} \left(\frac{1}{N} \G_{ki} + \frac{1}{N} \G_{ik} - Q_{ki} - Q_{ik} \right) \mc{P}^{2D-2} \right]\\
		&=  \bigO{\frac{1}{N^2}} \E \left[ \sum_{i} \sum_{k} \G_{ki} \left( - Q_{ki} - Q_{ik} \right) \mc{P}^{2D-2} \right] + O_{\prec} (N^{-1} \Psi^2 \E |\mc{P}|^{2D-2}),\label{eq:I11first}
        \end{split}
	\end{align}
	where one can easily find the first claim by using Lemma \ref{lem:optQ}.
	
	Now, let us assume that Theorem \ref{thm:strong} holds. We will improve the estimate by replacing $\G_{ki}$ in \eqref{eq:I11first} by resolvent identities to conduct further cumulant expansion. Replacing $\G_{ki}$ with the resolvent identity and conducting the cumulant expansion, we have
	\begin{align}\label{eq:tempmm}
		&\frac{1}{N^2} \E \left[ \sum_{i} \sum_{k} \G_{ki}  Q_{ik} (1+ \pair{G \odot G}) \mc{P}^{2D-2} \right] \\
		=  &{\frac{1}{N^2}} \E \left[ \sum_{i} \sum_{k} \sum_{j}^{(k)}  \G_{ji}^{(k)} W_{kj} G_{kk}  Q_{ik} \mc{P}^{2D-2} \right] +\frac{1}{N^2} \E \left[ \sum_{i} \sum_{k}^{(i)} \G_{ki}  Q_{ik} \pair{G \odot G} \mc{P}^{2D-2} \right]  \\
		\begin{split}
			=& \frac{1}{N^3} \E \Bigg[\sum_{ijk} \Bigg( \G_{ji} - \frac{G_{jk} \G_{ki}}{G_{kk}} \Bigg) \pd_{kj} \big(G_{kk} Q_{ik} \mc{P}^{2D-2}\big) \Bigg] +\frac{1}{N^2} \E \left[ \sum_{i} \sum_{k}^{(i)} \G_{ki}  Q_{ik} \pair{G \odot G} \mc{P}^{2D-2} \right]\\
			&+\sum_{r=2}^{\ell'} \frac{\mc{C}_{r+1}}{r!} \frac{1}{N^2} \E \Bigg[\sum_{ijk} \Bigg( \G_{ji} - \frac{G_{jk} \G_{ki}}{G_{kk}} \Bigg) \pd_{kj}^r \big(G_{kk} Q_{ik} \mc{P}^{2D-2}\big) \Bigg] + O_{\prec} (q^{-\ell'}).\label{eq:c11}
		\end{split}
	\end{align}
	The order one extra factor $(1+\pair{G \odot G})$, which will be removed later, was multiplied to precisely cancel out an emergent term in the first line of \eqref{eq:c11}. That is, when $\pd_{kj}$ hits $Q_{ik}$ we have
	\begin{align*}
		& \left| \frac{1}{N^3} \E \Bigg[\sum_{ijk} \Bigg( \G_{ji} - \frac{G_{jk} \G_{ki}}{G_{kk}} \Bigg) G_{kk} (\pd_{kj}   Q_{ik} ) \mc{P}^{2D-2} \Bigg] +\frac{1}{N^2} \E \left[ \sum_{i} \sum_{k}^{(i)} \G_{ki}  Q_{ik} \pair{G \odot G} \mc{P}^{2D-2} \right] \right| \\
		\prec &  \Bigg| \frac{1}{N^3} \E \Bigg[\sum_{ijk} \G_{ji} G_{kk} \Bigg(\frac{1}{N} \G_{ik} G_{kj} + \frac{1}{N} \G_{ij} G_{kk} - G_{ki} Q_{jk} - G_{kk} Q_{ij}  - G_{ji}Q_{kk} - G_{jk} Q_{ik}  \Bigg)  \mc{P}^{2D-2}\Bigg]  \\
		& + \frac{1}{N^2} \E \Bigg[ \sum_{i} \sum_{k} \G_{ki}  Q_{ik} \pair{G \odot G} \mc{P}^{2D-2} \Bigg] \Bigg| + N^{-1} (N^{-1} +q^{-2} \Psi_2^2 +\Psi_2^3) \E|\mc{P}|^{2D-2} + \Psi_2^4 \E |\mc{P}|^{2D-2} \\
		\prec & (N^{-1} \Psi_2^2  + \Psi_2^2 (N^{-1} +q^{-1} \Psi_2^2 + \Psi_2^3) +  \Psi_2^4  ) \E |\mc{P}|^{2D-2}.
	\end{align*}
	The two factors added in the first step compensates the case of $j=k$ in $\pd_{jk}$ and \eqref{eq:3line} respectively. Notice that the term with only one off-diagonal entry of $G$ has been canceled out with the term containing $\pair{G \odot G}$. When $\pd_{kj}$ hits $G_{kk}$,
	\begin{align*}
		&\Bigg| \frac{1}{N^3} \E \Bigg[\sum_{ijk} \Bigg( \G_{ji} - \frac{G_{jk} \G_{ki}}{G_{kk}} \Bigg) (\pd_{kj} G_{kk} ) Q_{ik}  \mc{P}^{2D-2} \Bigg] \Bigg| \prec \Psi_2^2 (N^{-1} + q^{-1} \Psi_2^2 + \Psi_2^3) \E |\mc{P}|^{2D-2} ,
	\end{align*}
	where the previous control parameters can be absorbed into $\Psi_2^4 \E |{\mc{P}}|^{2D-2}$. Finally, when $\pd_{kj}$ hits $\mc{P}^{2D-2}$,
	\begin{align*}
		&\Bigg| \frac{1}{N^3} \E \Bigg[\sum_{ijk} \Bigg( \G_{ji} - \frac{G_{jk} \G_{ki}}{G_{kk}} \Bigg)  G_{kk}  Q_{ik}  (\pd_{kj} \mc{P}^{2D-2}) \Bigg] \Bigg| \\
		= & \bigO{\frac{1}{N^3} } \Bigg| \E \Bigg[\sum_{ijk} \Bigg( \G_{ji} - \frac{G_{jk} \G_{ki}}{G_{kk}} \Bigg)  G_{kk}  Q_{ik} \Bigg(\frac{1}{N} \G_{kj} + \frac{1}{N} \G_{jk} - Q_{kj} - Q_{jk} \Bigg) \mc{P}^{2D-3} \Bigg] \Bigg| \\
		\prec & \Psi_2 ((Nq)^{-1} +q^{-1} \Psi_2^2 + \Psi_2^3 ) \cdot (N^{-1} \Psi_2) \E |\mc{P}|^{2D-3}  +  \Psi_2 ((Nq)^{-1} +q^{-1} \Psi_2^2 + \Psi_2^3 )^2 \E |\mc{P}|^{2D-3} \prec \Psi_2^6 \E |\mc{P}|^{2D-3},
	\end{align*}
	which follows directly from the lemmas obtained beforehand. Turning to the higher order terms, for fixed $r \geq 2$, consider
	\begin{align*}
		 \Bigg|\frac{\mc{C}_{r+1}}{r!} \frac{1}{N^2} \E \Bigg[\sum_{ijk} \Bigg( \G_{ji} - \frac{G_{jk} \G_{ki}}{G_{kk}} \Bigg) \pd_{kj}^r \big(G_{kk} Q_{ik} \mc{P}^{2D-2}\big) \Bigg] \Bigg| \prec & \sum_{a=0}^{r}\frac{1}{q^{r-1}}  (N^{-1} + q^{-1} \Psi_2^2 + \Psi_2^3)^{a+1} \Psi_2 \E | \mc{P}|^{2D-2-a} \\
		\prec & \sum_{a=0}^{r} \Psi_2^{2(a+2)} \E |\mc{P}|^{2D-2-a}, 
	\end{align*}
	The first step is due to the following observations. Firstly, whenever the derivative hits $\mc{P}$, a factor of $(N^{-1} + q^{-1} \Psi_2^2 + \Psi_2^3)$ is gained. Secondly, we always have an extra (derivative of) $Q_{ik}$ and an off-diagonal resolvent entry. To verify the second line, we compute the power index,
	\begin{align*}
		\frf(n_q = r-1, n_d = a+1, n_p = 1 , n_m = a+2) = \min \left\{ r-2, 2a+1,  {(r+a -2)}/{2} \right\},
	\end{align*}which is clearly non-negative when $r \geq 2$. To remove the factor $1+\pair{G \odot G}$, we write
	\begin{align*}
		&\left|\frac{1}{N^2} \E \left[ \sum_{i} \sum_{k}  \G_{ki}  Q_{ik} \mc{P}^{2D-2} \right] \right|\\ &\leq   \frac{1}{2} \left|\frac{1}{N^2}   \E \left[ \sum_{i} \sum_{k}  (1+ \pair{G \odot G} )\G_{ki}  Q_{ik} \mc{P}^{2D-2} \right] \right| +\frac{1}{2} \left|\frac{1}{N^2}   \E \left[ \sum_{i} \sum_{k}  (1- \pair{\wt{\MM} \odot \wt{\MM}} )\G_{ki}  Q_{ik} \mc{P}^{2D-2} \right] \right| 
		\\
		&\phantom{\leq}+\frac{1}{2} \left|\frac{1}{N^2}   \E \left[ \sum_{i} \sum_{k}  (\pair{G \odot G} - \pair{\wt{\MM} \odot \wt{\MM}} )\G_{ki}  Q_{ik} \mc{P}^{2D-2} \right] \right| \\
		& \prec  (\Psi_1 + |1 -\pair{\wt{\MM}\odot \wt{\MM}}|)\Psi_2 (N^{-1} q^{-1} + q^{-1} \Psi_2^2 + \Psi_2^3 ) \E |\caP|^{2D-2} + \sum_{r=1}^{\ell'} \sum_{a=1}^{r} \Psi_2^{2a} \E |\mc{P}|^{2D-a }  + q^{-\ell'} ,
	\end{align*}
	where the entrywise local law was used in the last step. Absorbing the emergent terms and returning to \eqref{eq:I11first}, we conclude
	\begin{align}
		|\E I_{1, 1} | \prec \frac{|1 -\pair{\wt{\MM}\odot \wt{\MM}}|}{q} \Psi_2^3 +   \sum_{r=1}^{\ell'} \sum_{a=1}^{r} \Psi_2^{2a} \E |\mc{P}|^{2D-a }  + q^{-\ell'},
	\end{align}
	for $\ell'$ chosen appropriately.
\end{proof}

\subsection{Estimates on $I_{2, 0}$} \label{sec:I20}
In this section, we prove the estimates on $I_{2, 0}$. 
\begin{proof}[Proof of Lemma \ref{lem:I20}]
	We begin by explicitly expanding
	\begin{align}
		\E I_{2,0} &= N \mc{C}_{3} \E \Bigg[\frac{1}{N^2}\sum_{i} \sum_{k}^{(i)} \pd_{ik}^2 \G_{ki} \mc{P}^{D-1} \overline{\mc{P}^D} \Bigg] \\
		&= N \mc{C}_3 \E \Bigg[\frac{1}{N^2} \sum_{i} \sum_{k}^{(i)} \bigg(2 G_{ik}^2 \G_{ki} + G_{ik} G_{kk} \G_{ii} + 2 G_{ii} G_{kk} \G_{ki} + 3 G_{ik} G_{ii} \G_{kk} \bigg)\mc{P}^{D-1} \overline{\mc{P}^D} \Bigg],\label{eq:I20}
	\end{align}
	where the term without any diagonal entries are easily bounded by $\Psi_2^2 \E |\mc{P}|^{2D-1}$. On the other hand, the terms with diagonal entries require extra care. Conducting an extra cumulant expansion on one of those terms, we deduce
	\begin{align*}
		N \mc{C}_3 \E \Bigg[ \frac{1}{N^2} \sum_{i \not= k} G_{ki} G_{kk} \G_{ii} \mc{P}^{D-1} \overline{\mc{P}^D
		} \Bigg] &=N \mc{C}_3 \E \Bigg[ \frac{1}{N^2} \sum_{i \not = k} \sum_{j} W_{kj} G_{ji}^{(k)} G_{kk}^2 \G_{ii} \mc{P}^{D-1} \overline{\mc{P}^D
		} \Bigg]\\
		&= N \mc{C}_3 \sum_{r=1}^{\ell'} \frac{\mc{C}_{r+1} }{r!}\frac{1}{N^2} \E \left[\sum_{ijk}  G_{ji}^{(k)} \pd_{kj}^{r} (G_{kk}^2 G_{ii} \mc{P}^{D-1} \overline{\mc{P}^{D}} )\right]  + O_{\prec} (q^{-\ell'}) \\
		& =  \frac{1}{N^3q } \E \left[\sum_{ijk}  \left(G_{ji} - \frac{G_{jk} G_{ki}}{G_{kk}} \right)\pd_{kj}(G_{kk}^2 G_{ii} ) \mc{P}^{D-1} \overline{\mc{P}^D}  \right] \\
		&\phantom{=}+ \frac{1}{N^3q } \E \left[\sum_{ijk}  \left(G_{ji} - \frac{G_{jk} G_{ki}}{G_{kk}} \right) G_{kk}^2 G_{ii}  \pd_{kj} (\mc{P}^{D-1} \overline{\mc{P}^D}) \right] \\
		& \phantom{=} + \sum_{r=2}^{\ell'} \bigO{\frac{1}{N^{3} q^{r} }} \E \left[ \sum_{ijk} \left(G_{ji} - \frac{G_{jk} G_{ki}}{G_{kk}} \right) \pd_{kj}^{r} (G_{kk}^2 G_{ii} \mc{P}^{D-1} \overline{\mc{P}^{D}} )\right]\\&\phantom{=} +O_{\prec} (q^{-\ell'} )
		\\
		& = O_{\prec} \left(\sum_{r=1}^{\ell'} \sum_{a=0}^{r} \Psi_2^{2(a+1)} \E |\mc{P}|^{2D-1-a} \right) + O_{\prec} (q^{-\ell'}),
	\end{align*}
	where the last step follows from the following observations. For the lower order terms, we have
	\begin{align*}
		\left| \frac{1}{N^3q } \E \left[ \sum_{ijk}  \left(G_{ji} - \frac{G_{jk} G_{ki}}{G_{kk}} \right)\pd_{kj}(G_{kk}^2 G_{ii} ) \mc{P}^{D-1} \overline{\mc{P}^D}  \right] \right| &\prec q^{-1} \Psi_2^2 \E|\mc{P}|^{2D-1}\\
		\left|\frac{1}{N^3q } \E \left[\sum_{ijk}  \left(G_{ji} - \frac{G_{jk} G_{ki}}{G_{kk}} \right) G_{kk}^2 G_{ii}  \pd_{kj} (\mc{P}^{D-1} \overline{\mc{P}^D}) \right] \right| 
		&\prec (q^{-1} \Psi_2 (N^{-1} + q^{-1} \Psi_2^2 + \Psi_2^3) \E |\mc{P}|^{2D-2}\\
        &\prec \Psi_{2}^{4} \E |\mc{P}|^{2D-2}.
	\end{align*}
	For the higher order terms $r \geq 2$, analogous argument for fixed $r$ yields
	\begin{align*}
		\Bigg|\bigO{\frac{1}{N^{3} q^{r} }}\E \Bigg[ \sum_{ijk}  \left(G_{ji} - \frac{G_{jk} G_{ki}}{G_{kk}} \right) \pd_{kj}^{r} (G_{kk}^2 G_{ii} \mc{P}^{D-1} \overline{\mc{P}^{D}} )\Bigg] \Bigg|& \\ \prec \sum_{a=0}^{r} \frac{1}{q^r}  \Bigg(\frac{1}{N} + \frac{1}{q} \Psi_2^2 + \Psi_2^3 \Bigg)^a\Psi_2 \E |\mc{P}|^{2D-1-a} &\prec \sum_{a=0}^{r} \Psi_2^{2(a+1)} \E |\mc{P}|^{2D-1-a},
	\end{align*}
	where the last step is verified by computing the power index
	\begin{align}
		\frf(n_q = r, n_d = a , n_p =1, n_m = a+1) = \min \left\{r-1, 2a-1, \frac{r+a-2}{2} \right\},
	\end{align}
	which is non-negative for $r \geq 2$, unless $a = 0$. When $a=0$, the last step clearly holds given $r \geq 2$.
	
	The other terms in \eqref{eq:I20} are handled identically with trivial modifications. In conclusion, we have
	\begin{align}
		\E I_{2, 0} = O_{\prec} \left(\sum_{r=1}^{\ell'} \sum_{a=0}^{r} \Psi_2^{2(a+1)} \E |\mc{P}|^{2D-1-a} \right) + O_{\prec} (q^{-\ell'}),
	\end{align}
	for appropriately chosen $\ell'$ for all $z \in \mc{D}_{\tau}$.
\end{proof}

\subsection{Estimates on $I_{2, 1}$} \label{sec:I21}

In this section, we prove the estimates on $I_{2, 1}$. 
\begin{proof}[Proof of Lemma \ref{lem:I21}] 
	We begin by expanding the derivatives
	\begin{align}
		\E I_{2,1} &= N\mc{C}_3 \E \Bigg[ \frac{1}{N^2} \sum_{i} \sum_{k}^{(i)}  (\pd_{ik} \G_{ki}) (\pd_{ik} \mc{P}^{2D-1})  \Bigg] \\
		&= N \mc{C}_3 \E \left[ \frac{1}{N^2} \sum_{i} \sum_{k}^{(i)} (G_{ik}\G_{ki} + G_{kk} \G_{ii}) \left(\frac{1}{N} \G_{ki} + \frac{1}{N} \G_{ik} - Q_{ki} - Q_{ik} \right)  \mc{P}^{2D-2} \right],
	\end{align}
	where the terms with at least one off-diagonal resolvent entries are handled easily, being dominated by $ ((Nq)^{-1} \Psi_2 + q^{-1}\Psi_2 (N^{-1} q^{-1} + q^{-1} \Psi_2^2 + \Psi_2^3)) \E |\mc{P}|^{2D-2} \prec \Psi_2^4 \E|\mc{P}|^{2D-2}$. To handle the remaining term, more extensive cumulant expansions are required to extract extra factor of $q^{-1}$. Consider
	\begin{align*}
		&\frac{1}{N^2 q} \E \Bigg[ \sum_{i} \sum_{k}^{(i)} Q_{ik}  G_{kk} \G_{ii}  \mc{P}^{2D-2} \Bigg] \\ 
		=&\frac{1}{N^3 q} \E \Bigg[ \sum_{i k j_1 j_2} W_{j_1 j_2} G_{j_2 k} \G_{ij_1}  G_{kk} \G_{ii}  \mc{P}^{2D-2} \Bigg] +  \frac{1}{N^3 q} \E \Bigg[  \sum_{ik}\Bigg( \pair{G} (\G G)_{ik} +  \pair{\G} (G^2)_{ik} \Bigg) G_{kk} \G_{ii} \mc{P}^{2D-2 } \Bigg]\\
		&+ \frac{1}{N^2q} \E \Bigg[ \sum_{ik} \Bigg( \frac{2s}{q^2 }\pair{G \odot G} \frac{1}{N} \sum_{j=1}^{N} \G_{jj} G_{ij} G_{jk}  + \pair{\G \odot G} \frac{1}{N} \sum_{j=1}^{N} G_{jj}G_{ij}G_{jk} \Bigg) G_{kk} \G_{ii} \mc{P}^{2D-2} \Bigg],
	\end{align*}
	where the last line is of order $O_{\prec} (q^{-3} \Psi_2^2 \E |\mc{P}|^{2D-2})$, which can be absorbed into $O_{\prec} (\Psi_{3/2}^2 \E|\mc{P}|^{2D-2})$. To deal with the first line, we expand further, which yields
	\begin{align}
		\begin{split}
			&\frac{1}{N^4 q} \E\Bigg[ \sum_{i k j_1 j_2} (\pd_{j_1 j_2} G_{j_2 k} \G_{ij_1} G_{kk} \G_{ii} ) \mc{P}^{2D-2} \Bigg] + \frac{1}{N^3 q} \E \Bigg[  \sum_{ik}\Bigg( \pair{G} (\G G)_{ik} +  \pair{\G} (G^2)_{ik} \Bigg) G_{kk} \G_{ii} \mc{P}^{2D-2 } \Bigg]\\
			+& \frac{1}{N^4 q} \E \Bigg[\sum_{i k j_1 j_2}  G_{j_2 k} \G_{ij_1} G_{kk} \G_{ii}  ( \pd_{j_1 j_2}\mc{P}^{2D-2}) \Bigg] + \sum_{r=2}^{\ell'} \frac{\mc{C}_{r+1}}{r!} \frac{1}{N^3 q}  \E \Bigg[\sum_{i k j_1 j_2}  \pd_{j_1 j_2}^r (G_{j_2 k} \G_{ij_1} G_{kk} \G_{ii}  \mc{P}^{2D-2}) \Bigg] \\
			+& O_{\prec} (q^{-\ell'}),\label{eq:long}
		\end{split}
	\end{align}
	where we deal with each terms separately. For the terms in the first line, we observe that if $\pd_{j_1 j_2}$ hits the diagonal entries one or more times, we gain two off-diagonal entries and become negligible. Aided with cancellation that was observed for $I_{1, 0}$ and $Y_{1, 0}$, the terms that have not yet been bounded are
	\begin{align}
		\frac{1}{N^4 q} \E\Bigg[ \sum_{i k j_1 j_2} (G_{j_1 j_2 } G_{j_2 k} \G_{ij_1} G_{kk} \G_{ii}  + G_{j_2k} \G_{j_2 j_1} G_{j_1 i} G_{kk} \G_{ii}  ) \mc{P}^{2D-2} \Bigg].\label{eq:llast10}
	\end{align}
	Note that the two terms are intrinsically of the same form, and we will obtain estimate for
	\begin{align}
		J_1 \deq \frac{1}{N^4 q} \E\Bigg[ \sum_{i k j_1 j_2} G_{j_1 j_2 } G_{j_2 k} G_{ij_1} G_{kk} G_{ii}  \mc{P}^{2D-2} \Bigg]. \label{eq:last10}
	\end{align}
	For the first term in the second line of \eqref{eq:long}, it is enough to consider
	\begin{align}
		J_2 \deq \frac{1}{N^4 q} \E \Bigg[\sum_{i k j_1 j_2} G_{j_2k} G_{ij_1} G_{kk} \G_{ii} Q_{j_1j_2} \mc{P}^{2D-3} \Bigg].\label{eq:last11}
	\end{align}
	We remark that from this point onward in the proof, we do not distinguish between $G$ and $\G$ as this assumption can be dropped without any consequences. We postpone the treatment of $J_{1}$ and $J_{2}$ to the end of the section.
	
	For the higher order terms, noticing that $\pd_{j_1j_2}^{r-s} (G_{j_2 k} \G_{ij_1} G_{kk} \G_{ii})$ contain at least two off-diagonal entries, for each fixed $r$ we have
	\begin{align}
		&\Bigg| \frac{1}{N^4 q^r} \E \Bigg[ \sum_{ik j_1j_2} \pd_{j_1j_2}^r (G_{j_2k } \G_{ij_1} G_{kk} \G_{ii} \mc{P}^{2D-2}) \Bigg] \Bigg| \\
		=& \Bigg| \sum_{s=0}^{r} \binom{r}{s} \frac{1}{N^4 q^r} \E \Bigg[ \sum_{ik j_1j_2} \pd_{j_1j_2}^{r-s} (G_{j_2k } \G_{ij_1} G_{kk} \G_{ii}) (\pd_{j_1j_2}^{s} \mc{P}^{2D-2}) \Bigg] \Bigg|\\
		\prec& \sum_{a =0}^{r} \frac{1}{q^{r}} \Psi_2^2 (N^{-1} +q^{-1} \Psi_2^2 + \Psi_2^3)^a \E|\mc{P}|^{2D-2 -a} \prec  \sum_{a'=2}^{r+2} \Psi_{3/2}^{2a'} \E |\mc{P}|^{2D -a'},\label{eq:Nq}
	\end{align}
	when $r \geq 3 $ or $r=2$ with $s\not=0$, where we have used the following facts.
	\begin{itemize}
		\item For $r \geq 2$, since $q^{a+1-r} \leq N^{(a-1)/2}$, $N^{-a}q^{-r} \leq (N^{-1/4} q^{-1/2})^{2(a+1)}$.
		\item We have $q^{-a -r} \Psi_{2a+2} \prec \Psi_{3/2}^{2a+4}$ when $a+r \geq 3$. That is, it is always true for $r \geq 3$ and true for $r=2 $ with $a \not=0$. The latter condition is equivalent to $r =2$ with $s \not =0$.
		\item We have $q^{-r} \Psi_2^{3a+2} \prec \Psi_{2}^{2a+2} \Psi_{3/2}^{2}$ for $r\geq 3$, immediately. For $r=2$, the statement also holds when $a \geq 1$, which is equivalent to $s \not =0$.
	\end{itemize}
	It remains to consider when $r=2$ and $s = 0$, $J_1$, and $J_2$ cases, where another cumulant expansion process is required to bound
	\begin{align} \label{eq:4}
		\frac{1}{N^4 q^2} \E \Bigg[\sum_{ik j_1j_2} \pd_{j_1 j_2}^2 (G_{j_2 k } \G_{ij_1} G_{kk} \G_{ii}) \mc{P}^{2D-2} \Bigg].
	\end{align}
	Before proceeding further, we study the property of $\pd_{j_1 j_2}^{2} G_{j_2k} \G_{ij_1} G_{kk} \G_{ii}  $. While the term is consisted of sum of product of six resolvent entries, those terms that contain at least three off-diagonal entries become negligible upon normalized summation, being of order $O_{\prec} (q^{-2} \Psi_2^3 \E |\mc{P}|^{2D-2})$. The non-negligible terms in the derivative, which contain only two off-diagonal resolvent entry, then must attain the form
	\begin{align}
		G_{ii} G_{kk} G_{i j_x } G_{k j_y} G_{j_x j_x} G_{j_y j_y},\label{eq:form}
	\end{align}
	where $(x,y) =(1, 2)$ or $(x, y) =(2, 1)$. Without loss of generality, it is enough to consider \eqref{eq:4} after replacing the derivative by \eqref{eq:form} with $(x, y) = (1, 2)$. That is, the only non-negligible term in \eqref{eq:4} is of the form
	\begin{align}
		J_3 \deq \frac{1}{N^4 q^2} \E \Bigg[\sum_{ik j_1j_2} G_{ij_1} G_{k j_2} G_{j_1 j_1} G_{j_2 j_2 } G_{ii} G_{kk} \mc{P}^{2D-2} \Bigg] \label{eq:last20},
	\end{align}
	up to trivial modifications. 
	
	In order to save work and space, we denote
	\begin{align}\label{eq:J}
		J_u \deq   \frac{1}{N^4 q} \E \Bigg[\sum_{ik j_1 j_2} G_{ij_1} G_{kj_2 } G_{ii}G_{kk} T_{j_1 j_2}^{(u)} \mc{P} ^{2D- t^{(u)}} \Bigg],
	\end{align}
	where $T_{j_1 j_2}^{(u)}$ and $t^{(u)}$ corresponds to 
	\begin{align*}
		T_{j_1 j_2}^{(1)} = G_{j_1 j_2}, \quad t^{(1)} =2, \quad T_{j_1 j_2}^{(2)} = Q_{j_1 j_2}, \quad t^{(2)} = 3, \quad T_{j_1 j_2}^{(3)} = q^{-1} G_{j_1 j_1} G_{j_2 j_2}, \quad t^{(3)} =2,
	\end{align*}
	and we will often abbreviate $J \equiv J_{u} $, $T_{j_1 j_2} \equiv T_{j_1 j_2}^{(u)}$, and $t \equiv t^{(u)}$ unless necessary. 
	
	Applying the resolvent identity and standard cumulant expansion method to \eqref{eq:J}, we have
	\begin{align}
		J= &\frac{1}{N^4 q} \E \Bigg[-\sum_{ik j_1 j_2} G_{ii} \sum_{n}^{(i)} W_{i n} G_{nj_1}^{(i)} G_{kj_2 } G_{ii}G_{kk} T_{j_1 j_2} \mc{P} ^{2D- t} \Bigg] \\
		\begin{split}
			=&-\frac{1}{N^5 q} \E \Bigg[ \sum_{\substack{ k j_1 j_2 \\ n \not =i}} \Bigg(G_{n j_1} - \frac{G_{n i} G_{ij_1}}{G_{ii}} \Bigg) \pd_{in} \big(G_{kj_2} T_{j_1 j_2} G_{ii}^{2} G_{kk} \mc{P}^{2D-t} \big) \Bigg] \\
			&-\sum_{r=2}^{\ell''} \frac{\mc{C}_{r+1}}{r!}  \frac{1}{N^4q} \E \Bigg[ \sum_{\substack{ k j_1 j_2 \\ n \not =i}}  \Bigg(G_{n j_1} - \frac{G_{n i} G_{ij_1}}{G_{ii}} \Bigg) \pd_{in}^r \big(G_{kj_2} T_{j_1 j_2} G_{ii}^{2} G_{kk} \mc{P}^{2D-t} \big) \Bigg] + O_{\prec} (q^{-\ell'}),\label{eq:ll}
		\end{split}
	\end{align}
	where we consider the first line and the second line of \eqref{eq:ll} separately. The first line is consisted of the following two terms, which are
	\begin{align}
		\frac{1}{N^5 q} \E \Bigg[ -\sum_{ik j_1 j_2} \sum_{n}^{(i)} \Bigg(G_{n j_1} - \frac{G_{n i} G_{ij_1}}{G_{ii}} \Bigg) \pd_{in}\big( G_{kj_2} T_{j_1 j_2} G_{ii}^{2} G_{kk}\big) \mc{P}^{2D-t}  \Bigg],\label{eq:ll1}\\
		\frac{1}{N^5 q} \E \Bigg[ -\sum_{ik j_1 j_2} \sum_{n}^{(i)} \Bigg(G_{n j_1} - \frac{G_{n i} G_{ij_1}}{G_{ii}} \Bigg)  G_{kj_2} T_{j_1 j_2} G_{ii}^{2} G_{kk} \big(\pd_{in} \mc{P}^{2D-t} ) \Bigg]. \label{eq:ll2}
	\end{align}
	For \eqref{eq:ll1}, for each cases we argue as follows. 
	\begin{itemize}
		\item $J_1$: Since the term with derivative $\pd_{in}$ always contain at least three off-diagonal entries, there are at least four off-diagonal entries, leading to the bound $O_{\prec} (q^{-1} \Psi_2^4 \E |\mc{P}|^{2D-2})$, which is negligible.
		\item $J_2$: If $\pd_{in}$ hits entries other than $T_{j_1 j_2}$ at least one additional off-diagonal entries are obtained, leading to the bound $O_{\prec} (q^{-1} \Psi_2^3 (N^{-1} + q^{-1} \Psi_2^2 + \Psi_2^3)\E |\mc{P}|^{2D-3})$, which can be absorbed into the bound $O_{\prec} (\Psi_2^6 \E|\mc{P}|^{2D-3})$. The case becomes subtle when $\pd_{in}$ hits $T_{j_1j_2}$, and an explicit expansion is required. To this end, we have
		\begin{align*}
			\eqref{eq:ll1} =& \frac{1}{N^5 q} \E \Bigg[ -\sum_{ik j_1 j_2} \sum_{n}^{(i)} \Bigg(G_{n j_1} - \frac{G_{n i} G_{ij_1}}{G_{ii}} \Bigg)  G_{kj_2} G_{ii}^{2} G_{kk}  \Bigg( \frac{1}{N} \G_{j_1i} G_{j_2n} + \frac{1}{N} \G_{j_1n} G_{j_2i} \\
			&\phantom{\sum \sum \sum \sum \sum \sum \sum}- G_{ij_1} Q_{nj_2} - G_{ij_2} Q_{j_1n}  - G_{nj_1}Q_{j_2i} - G_{nj_2} Q_{j_1i} \Bigg)\mc{P}^{2D-3}  \Bigg],\\
			& + O_{\prec} (q^{-1}\Psi_2^2 ( q^{-2} \Psi_2^2 + \Psi^4)  \E|\mc{P}|^{2D-3}),
		\end{align*}
		where we have used \eqref{eq:3line}. We deal with the first and the second line separately. The terms in the first line have four off-diagonal entry and from the extra factors we obtain the bound $O_{\prec}(N^{-1}q^{-1} \Psi_2^4 \E|\mc{P}|^{2D-3}) $, which is negligible. For the terms in the second line, there are three off-diagonal entries and from the bound on $Q$, they attain the bound $O_{\prec} (q^{-1} \Psi_2^3 (N^{-1} + q^{-1} \Psi_2^2 + \Psi_2^3) \E |\mc{P}|^{2D-3} )$, which is also absorbed into the desired bound.
		\item $J_3$: The term with derivative $\pd_{in}$ always contain at least two off-diagonal entries, and with the aid of an extra factor $q^{-1}$ in $T_{j_1 j_2}$, the bound $O_{\prec} (q^{-2} \Psi_2^3 \E|\mc{P}|^{2D-2})$, which is negligible. In conclusion, it is bounded by $O_{\prec} (\Psi_{3/2}^{3} \E |\caP|^{2D-3})$.
	\end{itemize}
	The term \eqref{eq:ll2} can be considered along with the second line in \eqref{eq:ll}. Let us assume $\pd_{xy}^{l} T_{j_1 j_2} = O_{\prec} (\varsigma)$, for any $x, y \in \llbracket 1, N \rrbracket$ and any non-negative integer $l$. Noticing that the minimum number of off-diagonal entry of $(\pd_{in} G_{kj_2} T_{j_1 j_2} G_{ii}^2 G_{kk})$ is $2$ for $J_1$ while $1$ for $J_2$ and $J_3$, with a slight abuse of notation, we simply pass a factor $\Psi_2$ to $\varsigma$ for $J_1$. Then, we write the bound
	\begin{align}
    \begin{split}
		&\frac{\mc{C}_{r+1}}{r!}  \frac{1}{N^4q} \E \Bigg[ -\sum_{ik j_1 j_2} \sum_{n}^{(i)} \Bigg(G_{n j_1} - \frac{G_{n i} G_{ij_1}}{G_{ii}} \Bigg) \pd_{in}^r \big(G_{kj_2} T_{j_1 j_2} G_{ii}^{2} G_{kk} \mc{P}^{2D-t} \big) \Bigg] \\
		\prec & \sum_{a=0}^{r} \frac{1}{q^r} \Psi_2^2 \varsigma (N^{-1} + q^{-1} \Psi_2^2 + \Psi_2^3)^a \E |\mc{P}|^{2D- t-a} \prec \sum_{a=0}^{r} \Psi_{3/2}^{2} \Psi_2^{2(a+t-1)} \E|\mc{P}|^{2D-t-a},\label{eq:153}
        \end{split}
	\end{align}
	for any fixed $r$ satisfying $r \geq 2$ or $r=1$ assuming $a \not =0$. The latter condition is equivalent to $r=1$ with the derivative acting on $\mc{P}^{2D-t}$. We have used the following facts in the last step.
	
	\begin{itemize}
		\item  $J_1$: In this case, $\vs = \Psi_2 $.  \\
		For $r \geq 2$, since $q^{a+1-r} \leq N^{(a-1)/2}$, $N^{-a}q^{-r} \Psi_2 \varsigma \leq (N^{-1/4} q^{-1/2})^{2(a+1)} \Psi_2^2$.\\
		For the other terms, we use that $q^{-a -r} \Psi_2^{2a+2} \varsigma \prec \Psi_2^{2a+4} $ when $a +r \geq 2$. Also, $q^{-r} \Psi_2^{3a+2} \varsigma \prec \Psi_2^{2a+4}$ when $r \geq 2$ or $r=1$ with $a \not =0$.\\
		For $(r,a)=(1,1)$, a separate treatment is required; the derivative of $\mc{P}$ exactly attains the bound for an entry of $Q = O_{\prec} (N^{-1} q^{-1} + q^{-1} \Psi_2^2 + \Psi_2^3)$, the improved factor $(Nq)^{-1}$ resolves the issue. 
		\item $J_2$: In this case, $\vs = (N^{-1} + q^{-1} \Psi_2^2 +\Psi_2^3)$, and so we replace the power of the term behind $\vs$ into $a+1$, and simply remove $\vs$. \\
		For $r \geq 2$, since $q^{a+2-r} \leq N^{a/2}$, $N^{-a-1}q^{-r} \Psi_2^2\leq (N^{-1/4} q^{-1/2})^{2(a+2)} \Psi_2^2$. \\
		For the other terms, we use that $q^{-a -r-1} \Psi_2^{2a+4}  \prec \Psi_{3/2}^{2} \Psi_2^{2a+4} $ when $a +r +1 \geq 3$. Also, $q^{-r} \Psi_2^{3a+5} \prec \Psi_{3/2} \Psi_2^{2a+5}$ when $r \geq 2$, or $r=1$ with $a \not =0$\\
		For $r=1$ and $a=1$, treatment analogous to the case of $J_1$ settles the problem. 
		\item $J_3$: In this case, $\vs = q^{-1}$, and so it is equivalent to replacing the factor $1/q^r$ by $1/q^{r+1}$ and simply removing $\vs$. \\
		For $r \geq 2$, since $q^{a+1-r} \leq N^{(a-1)/2}$, $N^{-a}q^{-r-1} \Psi_2^2\leq (N^{-1/4} q^{-1/2})^{2a+2} \Psi_2^2$.\\
		For the other terms, we use that $q^{-a -r -1} \Psi_2^{2a}\prec \Psi_{3/2}^{2} \Psi_2^{2a} $ when $a +r +1 \geq 3$. Also, $q^{-r-1} \Psi_2^{3a+2} \prec \Psi_{3/2}^2 \Psi_2^{2a+2}$ when $r+1 \geq 3$, or $r=1$ with $a \not =0$.\\
		For $r=1$ and $a=1$, treatment analogous to the case of $J_1$ settles the problem. 
	\end{itemize}
	
	In conclusion, we have
	\begin{align*}
		\Bigg|\frac{1}{N^2 q} \E \Bigg[ \sum_{i \not = k} Q_{ik}  G_{kk} \G_{ii}  \mc{P}^{2D-2} \Bigg] \Bigg| \prec &  \sum_{r=0}^{\ell'} \sum_{a'=2}^{r+2} \Psi_{3/2}^{2a'} \E |\mc{P}|^{2D -a'} +\sum_{r=0}^{\ell''} \sum_{a''=2}^{r+3} \Psi_{3/2}^{2} \Psi_2^{2(a''-1)} \E|\mc{P}|^{2D-a''}+ q^{-\ell'} +q^{-\ell''},
	\end{align*}
	uniformly on $z \in \caD_{\tau}$.
\end{proof}

\begin{remark}\label{rem:import}
	The main difference in the recursive moment estimate for the deformed model is in the bound on the derivative of $\caP$. For sparse random matrices, $\caP$ is a polynomial on $z, g$, where $|\caP'| $ is close to $\Im \wt{m}$, serving as an effective factor at the spectral edges. 
	
	Let us compare  Lemma \ref{lem:I21} to the known results for sparse random matrices. The main task is bounding the term
	\begin{align}
		-N \caC_{3}\E \left[\frac{2}{N^2} \sum_{ik} \pd_{ik} (G_{ki}) \pd_{ij} (\caP^{D-1} \ol{\caP^{D}}) \right],\label{eq:problem}
	\end{align}
	where different types of $\caP$ have been used. In \cite{LS18}, a fourth order (deterministic) polynomial was considered in place of $\caP$. By conducting another cumulant expansion using one of off-diagonal entry generated from $\pd_{ik} \caP(g)$, an extra factor $q^{-1}$ was gained, settling the issue. 
	
	Although we have also considered the deterministic shift up to order of $q^{-2}$, the extra $q^{-1}$ factor from cumulant expansion was not sufficient to prove the local law up to the bound given for sparse random matrices. It is due to the weaker estimate on $Q $ in our case, where the bound $q^{-1}\Psi_2^2$ could not be improved further, at least in our exposition.
	
	While it is natural to conjecture that deterministic shift at the edge is sufficient to recover edge universality for $q\gg N^{1/6}$, improving the estimates on the current trail looks to be at an impasse. In this context, it is worth considering the random fluctuation terms. In \cite{HK, HLY20, HY, Le} the random polynomial $\caP_r (z, g)$ defined by
	\begin{align}
		\caP_r (z, g) &=  \caP_{0}(z, g) + \caZ g^2,\\
		\caP_0(z, g) &=1 + z g + g^2  + \frac{1}{q^2} ( a_2 g^4 + a_3 g^6 + \cdots),\quad \caZ = \frac{1}{N}\sum_{ij} \left(W_{ij}^2 - \frac{1}{N} \right),
	\end{align}
	was used in place of $\caP$, to exploit the eigenvalue behaviors of sparse random matrices up to $q>N^{\delta}$ for any $\delta >0$. It is known that $\sqrt{N}q \caZ$ is asymptotically Gaussian with mean $0$ and variance $Nq^2 \caC_{4}$. By applying cumulant expansion to the $\caZ$ portion, we encounter
	\begin{align}
		N\caC_3 \E \left[ \frac{2}{N} \sum_{ik} \pd_{ik}(W_{ik}) \pd_{ik} (g^2 \caP_r^{D-1} \ol{\caP_r^{D}}) \right],
	\end{align}
	which cancels out \eqref{eq:problem} up to negligible terms, using the mechanism replacing the diagonal resolvent entries with its normalized trace. See also \cite[Proposition 4.2]{Le}. The intricate terms in $I_{3,2} $ could be coped using the same idea, for sparse random matrices. Then, it is natural to expect similar cancellations by introducing the random fluctuation terms to the deformed model.
	
	However, due to the anisotropic nature of the entrywise local laws, the random fluctuation term would not be represented in a simple polynomial in our model. That is, preparing an appropriate $\caP$ with a suitable corresponding equilibrium measure is a complex problem itself.
\end{remark}

\section{Proof of local laws}\label{sec:locallaw}
\subsection{Preliminaries}
A first step in proving the local law is to transform Proposition \ref{prop:m1} and Proposition \ref{prop:main} into a strong self-consistent equations. 

In this section again, we work on $\Xi$, and hence the entries of $V$ are fixed. 

To prove the local law with respect to the refined measure $\wt{\MM}$, we obtain a bound on $\wt{\mathsf{P}} = (\wt{\mathsf{P}}_{ij})$, a slightly modified version of $\mathsf{P}$, defined through
\begin{align}
	\wt{\mathsf{P}}_{ij} \equiv \wt{\mathsf{P}}_{ij} (z) \deq \delta_{ij} + z G_{ij} + \pair{G} G_{ij} - \lambda V_i G_{ij} + \frac{s}{q^2} \frac{G_{ij}}{\lambda V_i - z - \pair{G}},  
\end{align}
which resembles the structure of $\wt{\MM}$. Furthermore, for some arbitrary (deterministic) matrix $B \in \bbC^{N \times N}$, we denote $\wt{\caP} \deq \pair{B\wt{\mathsf{P}}}$. It is also convenient to introduce $R\equiv R(\cdot, z) $ and $\wt{R}\equiv \wt{R}(\cdot , z )$, defined through
\begin{align}
	R(u) &\equiv R(u, z) = (R(u , z)_{ij}) = \delta_{ij}(\lambda V_i - z - u(z) )^{-1}, \\
	\wt{R}(u)&\equiv \wt{R}(u, z) =(\wt{R}(u, z)_{ij})  = \delta_{ij} \left(\lambda V_i - z - u(z) -\frac{s}{q^2} \frac{1} {\lambda V_i - z -u(z) }\right)^{-1},
\end{align}
and note that $R(\pair{\wh{M}}) = \wh{M}$ and  $\wt{R}(\pair{\wt{\MM}}) = \wt{\MM} $.

The following estimates are modifications of Proposition \ref{prop:m1} and Proposition \ref{prop:main}.
\begin{lemma}\label{lem:61}
	On the event $\Xi$, we have
	\begin{align}
		\max_{ij} |\wt{\mathsf{P}}_{ij}| \prec \Psi_1, \quad |\wt{\caP} | \prec \Psi_1^2 , 
	\end{align}
	uniformly for all $z \in \caD_{\tau} $.
\end{lemma}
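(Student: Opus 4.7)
The plan is to derive Lemma \ref{lem:61} from the already-established Propositions \ref{prop:weakrme} and \ref{prop:m1} by writing $\wt{\mathsf{P}}_{ij}$ as $\mathsf{P}_{ij}$ plus an explicit correction that can be estimated pointwise. The two quantities differ only in how they encode the quartic sparsity term, so a direct subtraction gives
\begin{equation*}
\wt{\mathsf{P}}_{ij} - \mathsf{P}_{ij} \;=\; \frac{s}{q^2}\, G_{ij}\left(\frac{1}{\lambda V_i - z - \pair{G}} - \pair{G \odot G}\, G_{ii}\right),
\end{equation*}
and the whole proof reduces to controlling this error.

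First, I would show that the factor in parentheses is $O_{\prec}(1)$ uniformly in $i$ on $\caD_\tau$. By the weak local law (Lemma \ref{thm:weak}), $|G_{ii}| \prec 1$, and $\pair{G}$ lies within $O_\prec(\Psi_1)$ of $\wh{m}_{\fc}$; together with the stability bound \eqref{eq:stab2}, this yields $|\lambda V_i - z - \pair{G}|^{-1} = O_\prec(1)$, while $|\pair{G \odot G}| \leq N^{-1}\sum_i |G_{ii}|^2 = O_\prec(1)$. Hence $|\wt{\mathsf{P}}_{ij} - \mathsf{P}_{ij}| \prec q^{-2}|G_{ij}|$, which is $\prec q^{-2}$ on the diagonal and $\prec q^{-2}\Psi_1$ off-diagonal. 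Combined with the analogous elementary identity $P_{ij} - \mathsf{P}_{ij} = -\tfrac{s}{q^2}\pair{G \odot G}\, G_{ii}G_{ij}$, bounded in the same way, and the inequality $q^{-2} \leq q^{-1} \leq \Psi_1$, Proposition \ref{prop:weakrme} gives $\max_{ij}|B_i \wt{\mathsf{P}}_{ij}| \prec \Psi_1$ for any diagonal $B$ with $\|B\| = O(1)$.

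Next, averaging the same identity produces $\wt{\caP} - \caP = \pair{B(\wt{\mathsf{P}} - \mathsf{P})}$, which is a trace average of diagonal terms of size $O_\prec(q^{-2})$, so $|\wt{\caP} - \caP| \prec q^{-2} \leq \Psi_1^2$ (again using $\Psi_1 \geq q^{-1}$). Invoking Proposition \ref{prop:m1} for $|\caP| \prec \Psi_1^2$, the triangle inequality then yields $|\wt{\caP}| \prec \Psi_1^2$.

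The argument is essentially bookkeeping, and the only point that requires care is ensuring that the a priori parameter $\vartheta$ controlling $\pair{G} - \pair{\wt{\MM}}$ in the definition of $\Psi_1$ is handled consistently with the hypotheses of the cited propositions. This is resolved by supplying the weak local law itself as the initial a priori input and, if necessary, bootstrapping via Lemma \ref{lem:self} to reach the sharp form uniformly across the full spectral domain $\caD_\tau$.
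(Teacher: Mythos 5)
Your proposal is correct and follows essentially the same route as the paper: the paper likewise writes $\wt{\mathsf{P}}_{ij}-\mathsf{P}_{ij}$ as the explicit $\tfrac{s}{q^2}$ correction, shows the relevant factors (in particular $(\lambda V_i - z - \pair{G})^{-1}$, $G_{ii}$, $\pair{G\odot G}$) are $O_{\prec}(1)$ via the weak local law and the stability bounds on $\Xi$, concludes $\max_{ij}|\mathsf{P}_{ij}-\wt{\mathsf{P}}_{ij}|\prec q^{-2}$, and then transfers the bounds from Propositions \ref{prop:weakrme} and \ref{prop:m1} exactly as you do. Your closing remark on handling the a priori parameter $\vartheta$ via the weak local law and bootstrapping is consistent with how the paper's machinery is set up, so no gap remains.
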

\begin{lemma}\label{lem:62}
	On the event $\Xi$, assume further that Theorem \ref{thm:refine} holds. We have 
	\begin{align}
		|\wt{\caP}| \prec \Psi_{3/2}^2  + q^{-2} |1 - \pair{\wt{\MM} \odot \wt{\MM}}|,
	\end{align}
	uniformly for all $z \in \caD_{\tau}$.
\end{lemma}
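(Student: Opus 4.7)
The plan is to reduce the lemma to Proposition~\ref{prop:main}, which already supplies the required bound for $\caP = \pair{B\mathsf{P}}$, and to separately estimate the difference $\wt{\caP} - \caP$. Since $B$ is diagonal and the definitions of $\wt{\mathsf{P}}_{ij}$ and $\mathsf{P}_{ij}$ differ only in their final term, a direct computation gives
\[
\wt{\caP} - \caP = \frac{s}{q^{2} N}\sum_{i}B_{ii}\,G_{ii}\,\Phi_i, \qquad \Phi_i \deq \frac{1}{\lambda V_i - z - \pair{G}} - \pair{G\odot G}\,G_{ii}.
\]
The goal then reduces to showing $|\wt{\caP} - \caP| \prec \Psi_{3/2}^{2} + q^{-2}|1 - \pair{\wt{\MM}\odot\wt{\MM}}|$.

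I would analyse $\Phi_i$ by inserting the refined deterministic approximation. Setting $\wt{K}_i \deq (\lambda V_i - z - \wt{m}_{\fc})^{-1}$, the defining equation \eqref{eq:wtdef} gives $\wt{\MM}_i^{-1} - \wt{K}_i^{-1} = -(s/q^{2})\wt{K}_i$, whence $\wt{K}_i - \wt{\MM}_i = O(q^{-2})$. Combined with the stability estimate of Lemma~\ref{lem:stabbound}(2), which controls $(\lambda V_i - z - \pair{G})^{-1} - \wt{K}_i$ by $|\pair{G} - \wt{m}_{\fc}|$, and the entrywise local law (Theorem~\ref{thm:strong}), yielding $\max_i |G_{ii} - \wt{\MM}_i| \prec \Psi_1$ and hence $|\pair{G\odot G} - \pair{\wt{\MM}\odot\wt{\MM}}| \prec \Psi_1$, one obtains the decomposition
\[
\Phi_i = \bigl(1 - \pair{\wt{\MM}\odot\wt{\MM}}\bigr)\,\wt{\MM}_i + O(q^{-2}) + O\bigl(|\pair{G}-\wt{m}_{\fc}|\bigr) + O_{\prec}(\Psi_1).
\]
Multiplying by $s/q^{2}$ and averaging against $B_{ii}G_{ii} = O(1)$ produces the main contribution of the asserted form $O(q^{-2}|1 - \pair{\wt{\MM}\odot\wt{\MM}}|)$, leaving the task of showing $q^{-2}\cdot\{q^{-2} + |\pair{G}-\wt{m}_{\fc}| + \Psi_1\} = O_{\prec}(\Psi_{3/2}^{2})$.

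Each of these three errors would be checked term-by-term. The piece $q^{-4}$ is at most $q^{-3} \leq \Psi_{3/2}^{2}$. For $q^{-2}\Psi_1$, the only delicate part is $q^{-2}\sqrt{(\Im\pair{\wt{\MM}}+\vartheta)/(N\eta)}$, which by AM--GM is bounded by $q^{-4} + (\Im\pair{\wt{\MM}}+\vartheta)/(N\eta) \lesssim \Psi_{3/2}^{2}$; the remaining pieces $q^{-3}$ and $q^{-5/2}N^{-1/4}$ are both $\leq q^{-3} \leq \Psi_{3/2}^{2}$, using $q \leq N^{1/2}$ for the latter. For $q^{-2}|\pair{G}-\wt{m}_{\fc}|$, the hypothesis that Theorem~\ref{thm:refine} holds yields $|\pair{G} - \wt{m}_{\fc}| \prec q^{-3/2} + N^{-1/4}q^{-1/2} + 1/(N\eta)$, and multiplying by $q^{-2}$ produces three pieces each absorbed into $\Psi_{3/2}^{2} \geq q^{-3} + N^{-1/2}q^{-1}$. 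Combined with Proposition~\ref{prop:main} applied to $|\caP|$, this finishes the proof.

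The only genuine obstacle is the bookkeeping in the last step: every $q^{-2}$-weighted error piece must be routed to the correct component of $\Psi_{3/2}^{2}$, and the AM--GM step on the Ward-identity contribution is the one place requiring some care. No further cumulant expansion, and no input beyond the entrywise local law and the refined averaged local law, is needed.
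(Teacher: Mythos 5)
Your proposal follows the same overall route as the paper: estimate $|\wt{\caP} - \caP|$ entrywise and pipe the remainder through Proposition~\ref{prop:main}. The mechanism for controlling the bracket $\Phi_i$ differs, though. The paper observes that $G_{ii}(\lambda V_i - z - \pair{G}) = 1 - P_{ii}$, so that
\[
\pair{G\odot G}\,G_{ii}(\lambda V_i - z - \pair{G}) - 1
= -\pair{G\odot G}\,\mathsf{P}_{ii} + \bigl(\pair{G\odot G} - \pair{\wt{\MM}\odot\wt{\MM}}\bigr) - \bigl(1 - \pair{\wt{\MM}\odot\wt{\MM}}\bigr) + O(q^{-2}),
\]
and then bounds $|\mathsf{P}_{ii}|\prec\Psi_1$ directly from Proposition~\ref{prop:weakrme}. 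You instead insert $\wt{K}_i$ and $\wt{\MM}_i$ and track $O(q^{-2})$, $O(|\pair{G}-\wt{m}_{\fc}|)$ and $O_\prec(\Psi_1)$ errors term by term. Both decompositions produce errors of the same magnitude and the bookkeeping you give is essentially right (the AM--GM step on the Ward piece is the one real check, and you do it correctly).

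Two remarks. First, you invoke the averaged law Theorem~\ref{thm:refine} to get $|\pair{G}-\wt{m}_{\fc}|\prec q^{-3/2}+\cdots$, but the entrywise law Theorem~\ref{thm:strong} already yields $|\pair{G}-\wt{m}_{\fc}|\prec q^{-1}+\cdots$, and $q^{-2}\cdot q^{-1}=q^{-3}\leq\Psi_{3/2}^2$ is enough, so the stronger input is not needed; this also avoids any worry about circularity, since Lemma~\ref{lem:62} feeds into Lemma~\ref{lem:self2} which is used to prove Theorem~\ref{thm:refine}. Second, both you and the paper quietly drop the $(N\eta)^{-1}$ tail from Theorem~\ref{thm:strong}; the resulting $q^{-2}(N\eta)^{-1}$ term is not obviously dominated by $\Psi_{3/2}^2$ uniformly on $\caD_\tau$, and merits a word — but this is an imprecision in the paper as well, not a defect unique to your argument.
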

\begin{proof}[Proof of Lemma \ref{lem:61} and \ref{lem:62}]
	
	First, we extend stability results to the terms with randomness. Observe that
	\begin{align}
		{R}(\pair{G}) = {R}(\pair{\wt{\MM}}) + {R} (\pair{G}) {R} (\pair{\wt{\MM}})  \left( \pair{G} - \pair{\wt{\MM}}  \right),
	\end{align}
	and from the local laws, we have $R(\pair{G}) = O_{\prec}(1)$. Using a similar formalism, we deduce that $\wt{R}(\pair{G}) = O_{\prec}(1)$. Then we have, for arbitrary $i, j \in \llbracket 1, N \rrbracket$
	\begin{align}
		|\sfP_{ij} - \wt{\sfP}_{ij}| &= \frac{s}{q^2} \left| \frac{G_{ij} }{\lambda V_i -z  - \pair{G}} \right|  \left| \pair{G \odot G} G_{ii} (\lambda V_i - z-\pair{G}) -1  \right| \\
		& \prec s{q^{-2}} \left||\pair{G \odot G}||\mathsf{P}_{ii}| + |\pair{G \odot G} - \pair{\wt{\MM} \odot \wt{\MM}}| + |1- \pair{\wt{\MM} \odot \wt{\MM}}| \right| + O(q^{-4}),
	\end{align}
	where we end up in a trivial bound $\max_{ij} |\sfP_{ij} - \wt{\sfP}_{ij}| \prec q^{-2}$, and we have Lemma \ref{lem:61}. Assuming the strong entrywise local law further, we have the bound
	\begin{align}
		\max_{ij} |\sfP_{ij} - \wt{\sfP}_{ij}|\prec   \frac{1}{q^2} \left(\frac{1}{q} + \sqrt{\frac{\Im \pair{\wt{\MM}}}{N\eta}} + |1-\pair{\wt{\MM} \odot \wt{\MM}}| \right) \prec \Psi_{3/2}^2 + q^{-2}  |1- \pair{\wt{\MM} \odot \wt{\MM}}|,
	\end{align}
	and the claim of Lemma \ref{lem:62} follows.
\end{proof}

While we want to insert the matrix $\wt{R}(G)$ into $B$ in Lemma \ref{lem:61} and Lemma \ref{lem:62}, we need to deal with the fact that the matrix $\wt{R}(G)$ is random. Hence, we introduce the following lemma, adapted from \cite[Lemma 4.6]{HKR}, whose proof is based on a lattice argument. We defer the proof to the Appendix.
\begin{lemma} \label{lem:Lem4.6}
	Suppose that $\norm{\wt{R}(\pair{G})} \norm = O(1)$ with high probability (on $\Xi$). Then, the statements of Lemma \ref{lem:61} and Lemma \ref{lem:62} hold with $\mathsf{P}$ replaced by $\wt{R}(\pair{G}) \mathsf{P} $.
\end{lemma}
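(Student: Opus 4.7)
My plan is a standard net reduction, exploiting that, on $\Xi$ (with $V$ frozen) and for fixed $z$, the matrix $\wt{R}(u)$ depends on only a single complex scalar parameter $u$. First, observe that the entrywise statement is immediate from the hypothesis: since $\wt{R}(\pair{G})$ is diagonal,
\[
\max_{ij} |(\wt{R}(\pair{G}) \sfP)_{ij}| \leq \|\wt{R}(\pair{G})\|\cdot \max_{ij}|\sfP_{ij}| \prec \Psi_1
\]
(respectively $\prec \Psi_{3/2}$) by Lemma \ref{lem:61}. Thus only the averaged statement requires real work.

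I would fix a $N^{-C}$-net $\caN \subset \bbC$ (with $C$ large, to be chosen) covering a fixed disc $\Omega$ that contains $\pair{G(z)}$ with high probability, uniformly in $z \in \caD_{\tau}$; such a disc of size $O(1)$ exists since $\wt{m}_{\fc}(z) = O(1)$ on $\caD_{\tau}$ and the weak local law yields $\pair{G(z)} = \wt{m}_{\fc}(z) + O_\prec(1)$. Then $|\caN| \leq N^{O(1)}$. For each $u_0 \in \caN$ satisfying $\|\wt{R}(u_0)\| \leq 2K$ (where $K$ denotes the $O(1)$ bound from the hypothesis), the diagonal matrix $B' \deq B\,\wt{R}(u_0)$ is deterministic with $\|B'\| = O(1)$, so Lemma \ref{lem:61} (resp.\ Lemma \ref{lem:62}) applied with $B'$ in place of $B$ gives
\[
|\pair{B\,\wt{R}(u_0)\,\wt{\sfP}}| \prec \Psi_1^2, \qquad \bigl(\text{resp.}\ \prec \Psi_{3/2}^2 + q^{-2}|1 - \pair{\wt{\MM} \odot \wt{\MM}}|\bigr).
\]
A union bound over $\caN$ (of polynomial size in $N$, and therefore absorbed by stochastic domination) promotes these estimates to hold simultaneously for all admissible $u_0$.

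To pass back to the random argument $u = \pair{G}$, I would choose $u_0 \in \caN$ closest to $\pair{G}$, so $|u_0 - \pair{G}| \leq N^{-C}$. On the high-probability event $\{\|\wt{R}(\pair{G})\| \leq K\}$, the scalar map $u \mapsto \wt{R}(u)_i = \bigl(\lambda V_i - z - u - (s/q^2)/(\lambda V_i - z - u)\bigr)^{-1}$ is analytic and uniformly bounded in an $O(1)$-neighbourhood of $\pair{G}$, hence $\|\wt{R}(\pair{G}) - \wt{R}(u_0)\| \leq C_L N^{-C}$ (and in particular $\|\wt{R}(u_0)\| \leq 2K$, justifying the selection in the previous step). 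Combined with the crude deterministic a priori bound $\|\wt{\sfP}\| \leq N^{O(1)}$ valid on $\caD_{\tau}$, this gives
\[
\bigl|\pair{B(\wt{R}(\pair{G}) - \wt{R}(u_0))\,\wt{\sfP}}\bigr| \leq \|B\|\cdot \|\wt{R}(\pair{G}) - \wt{R}(u_0)\|\cdot \|\wt{\sfP}\| \leq N^{-C+O(1)},
\]
which is negligible once $C$ is chosen large enough. The triangle inequality then closes the proof.

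The main obstacle lies in the continuity step: one must ensure that the poles of $u \mapsto \wt{R}(u)_i$ stay a uniform distance from $\pair{G}$ for all $i$, so that the local Lipschitz constant of $\wt{R}$ is $O(1)$ with explicit quantitative control. This is furnished precisely by the hypothesis $\|\wt{R}(\pair{G})\| = O(1)$, which forces $\lambda V_i - z - \pair{G}$ to be bounded away from the singular locus uniformly in $i$; by analyticity of the scalar reciprocal, the uniform bound then persists on an $O(1)$-neighbourhood, yielding the Lipschitz estimate used above.
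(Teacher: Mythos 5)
Your net argument coincides with the paper's: both cover the range of $u=\pair{G}$ by a polynomial-size net, apply the deterministic-$B$ estimates (Lemmas~\ref{lem:61}, \ref{lem:62}) with $B$ replaced by $B\,\wt{R}(u_0)$ at each net point $u_0$, union-bound, and pass back to the random argument by a continuity estimate combined with a crude a priori bound on $\norm \mathsf{P}\norm$. Your entrywise shortcut---that the diagonal matrix $\wt{R}(\pair{G})$ has uniformly bounded entries, so the entrywise bound transfers immediately---is a nice simplification the paper does not use; it runs the net for the entrywise statement as well.

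The one inaccuracy is the claim of an $O(1)$ local Lipschitz bound for $u\mapsto\wt R(u)$, and the assertion that $\wt R$ is uniformly bounded on an $O(1)$-neighbourhood of $\pair G$. Writing $a_i \deq \lambda V_i - z - \pair{G}$, the hypothesis $\norm\wt R(\pair G)\norm = O(1)$ does keep each $a_i$ a constant distance from the poles $\pm\sqrt{s}/q$ of $\wt R_i$, but it is also compatible with $|a_i|\sim q^{-2}$, in which regime
\begin{align*}
|\wt R_i'(\pair G)| = \frac{|a_i^2+sq^{-2}|}{|a_i^2-sq^{-2}|^2} \sim q^2,
\end{align*}
and moving $u$ by an $O(1)$ amount can hit the poles. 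So the local Lipschitz constant is in general only $O(q^2)$, not $O(1)$. This does not break your argument: since the Lipschitz constant is at worst polynomial in $N$, your $N^{-C}$ mesh for $C$ arbitrarily large absorbs it, and the same reasoning (with the polynomial Lipschitz bound) still yields $\norm\wt R(u_0)\norm\le 2K$ for the selected $u_0$. Your $N^{-C}$ spacing is actually slightly more robust than the paper's fixed $N^{-3}$ mesh, which implicitly treats the corresponding continuity prefactor as $O(1)$; but the ``$O(1)$-Lipschitz'' justification should be softened to a polynomial-in-$N$ one.
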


We will denote
\begin{align}
	\wt{\mathsf{R}}^{(n)}(z) \equiv \wt{\mathsf{R}}^{(n)} \deq  \int \frac{1}{ \left(\lambda v - z - \wt{m}_{\fc}(z) \right)^{n}} \d \wt{\nu}_N(v) , 
\end{align}
for each $n \in \bbN$. Comparison of $\wt{\mathsf{R}}^{(2)}$ with $\pair{\wt{\MM} \odot \wt{\MM}}$ and $\wt{\mathsf{R}}^{(3)}$ with $\pair{\wt{\MM} \odot \wt{\MM} \odot \wt{\MM}}$ are important in particular. We have 
\begin{align}\label{eq:r2mm}
	\left|\wt{\mathsf{R}}^{(2)} -  \pair{ \wt{\MM} \odot \wt{\MM}} \right| = \frac{s}{q^2} \left| \int \frac{1}{( \left( \lambda v - z - \wt{m}_{\fc} \right)^2 - sq^{-2} )^{2}} \d \wh{\nu}(v) \right| = O(q^{-2}),\\
\label{eq:r3mmm}
	\left|\wt{\mathsf{R}}^{(3)} -  \pair{ \wt{\MM} \odot \wt{\MM} \odot \wt{\MM}} \right| = \frac{3s}{q^2} \left|\int \frac{\lambda v - z - \wt{m}_{\fc}}{\left((\lambda v - z - \wt{m}_{\fc})^2 - sq^{-2} \right)^3} \d \wh{\nu} (v)  \right| = O(q^{-2}),
\end{align}
by utilizing the stability bounds.

Now, we derive the self-consistent equations. For convenience, we denote $[\mathrm{v}] \deq \pair{G} - \pair{\wt{\MM}} $ and $\Lambda\equiv \Lambda(z) \deq |\pair{G(z)} - \pair{\wt{\MM}(z)}|$.

\begin{lemma}[Strong self-consistent equation] \label{lem:self1} Fix arbitrary $\epsilon \in (0, \frac{\tau}{100})$. Assume that Lemma \ref{thm:weak} and Lemma \ref{lem:61} holds. Then, there exist a constant $C>0$ such that 
	\begin{align}
		\left|(1 - \wt{\mathsf{R}}^{(2)} ) [\mathrm{v}]  +  \wt{\mathsf{R}}^{(3)} [\mathrm{v}]^2 \right| \leq C (q^{-2} \Lambda + o(1)\Lambda^2 ) + N^{\frac{\epsilon}{5}} \left( \frac{1}{q^2} + \frac{1}{N^{1/2} q} +  \frac{|1 - \wt{\mathsf{R}}^{(2)}|}{q^2} + \frac{\Im \pair{\wt{\MM}} + \vartheta }{N\eta}\right),
	\end{align}
	uniformly on $z \in \caD_{\tau}$, for $N \geq N_0 (\epsilon)$ large enough, with high probability.
\end{lemma}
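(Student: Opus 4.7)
The central identity is an exact algebraic rewriting of the definition of $\wt{\sfP}_{ii}$: rearranging
\begin{equation*}
\wt{\sfP}_{ii} = 1 - G_{ii}\Bigl(\lambda V_i - z - g - \tfrac{s}{q^2}(\lambda V_i - z - g)^{-1}\Bigr)
\end{equation*}
gives $G_{ii} = \wt{R}(g)_{ii}(1 - \wt{\sfP}_{ii})$, and taking the normalized trace yields
\begin{equation*}
g - \langle \wt{R}(g)\rangle = -\langle \wt{R}(g)\,\wt{\sfP}\rangle.
\end{equation*}
I would Taylor expand $\langle\wt{R}(g)\rangle$ in $\v \deq g - \wt{m}_{\fc}$ to convert this into a quadratic stability equation for $\v$, and bound the right-hand side via Lemma \ref{lem:Lem4.6} combined with Lemma \ref{lem:62}.

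The Taylor expansion turns out to be surprisingly clean. Using the partial fraction $\tfrac{y}{y^2 - s/q^2} = \tfrac{1}{2}\bigl(\tfrac{1}{y-\sqrt{s}/q} + \tfrac{1}{y+\sqrt{s}/q}\bigr)$ with $y = \lambda v - z - u$ and the defining shift structure of $\wt{\nu}_N$, one obtains the exact identity $\langle\wt{R}(u)\rangle = \int (\lambda v - z - u)^{-1}\, d\wt{\nu}_N(v)$. Expanding in geometric series under the stability $|\lambda v - z - \wt{m}_{\fc}| \geq C^{-1}$ from Lemma \ref{lem:stabbound}, I obtain
\begin{equation*}
\langle\wt{R}(g)\rangle = \wt{m}_{\fc} + \RRR{2}\,\v + \RRR{3}\,\v^2 + O(\Lambda^3),
\end{equation*}
with coefficients identified exactly as $\RRR{2}$ and $\RRR{3}$ (with no $q^{-2}$ correction, precisely because $\wt{\nu}_N$ is the shifted average of $\wh{\nu}_N$ by $\pm\sqrt{s}/(q\lambda)$). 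Substituting into the identity above produces $(1 - \RRR{2})\v - \RRR{3}\v^2 = -\langle\wt{R}(g)\wt{\sfP}\rangle + O(\Lambda^3)$, and the cubic remainder is absorbed into $o(1)\Lambda^2$ by the weak local law $\Lambda = o(1)$ on $\Xi$.

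To control the right-hand side I would apply Lemma \ref{lem:Lem4.6} with $B = \wt{R}(g)$. The operator norm bound $\norm\wt{R}(g)\norm = O(1)$ with high probability follows from \eqref{eq:stab2} together with the first-resolvent identity $\wt{R}(g) - \wt{R}(\wt{m}_{\fc}) = \v\,\wt{R}(g)\wt{R}(\wt{m}_{\fc})\bigl(1 + \tfrac{s}{q^2}R(g)R(\wt{m}_{\fc})\bigr)$ combined with $\v = o(1)$, so Lemma \ref{lem:62} yields $|\langle\wt{R}(g)\wt{\sfP}\rangle| \prec \Psi_{3/2}^2 + q^{-2}|1 - \langle\wt{\MM}\odot\wt{\MM}\rangle|$. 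Invoking \eqref{eq:r2mm} to replace $\langle\wt{\MM}\odot\wt{\MM}\rangle$ with $\RRR{2}$ up to an $O(q^{-2})$ error, expanding $\Psi_{3/2}^2 \lesssim q^{-2} + N^{-1/2}q^{-1} + (\Im\langle\wt{\MM}\rangle + \vartheta)/(N\eta)$, and converting stochastic domination into the $N^{\epsilon/5}$ high-probability factor produces all the terms on the right-hand side of the claimed inequality; the cross product of $q^{-2}$ with residual linear-in-$\v$ contributions from the identification $\langle\wt{\MM}\odot\wt{\MM}\rangle = \RRR{2} + O(q^{-2})$ furnishes the $Cq^{-2}\Lambda$ term.

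The principal technical obstacle will be securing $\norm\wt{R}(g)\norm = O(1)$ uniformly over $z \in \caD_{\tau}$ with high probability, since Lemma \ref{lem:Lem4.6} requires this in order to exchange the deterministic-$B$ bound for the random choice $B = \wt{R}(g)$. Beyond this, the remaining work is the careful bookkeeping of the $O(q^{-2})$ corrections so that they sort into the precise structure $C(q^{-2}\Lambda + o(1)\Lambda^2)$ rather than polluting the leading coefficients $1 - \RRR{2}$ and $\RRR{3}$ of the stability equation driving the subsequent continuity argument.
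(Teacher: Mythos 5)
Your overall architecture --- the exact identity $g - \pair{\wt{R}(g)} = -\pair{\wt{R}(g)\wt{\sfP}}$ obtained by rearranging $\wt{\sfP}_{ii}$, a second-order Taylor expansion in $[\mathrm{v}] = g - \wt{m}_{\fc}$, and Lemma \ref{lem:Lem4.6} to license the random choice $B=\wt{R}(g)$ --- is essentially the paper's route. Your observation that the partial-fraction identity makes the \emph{averaged} Taylor coefficients exactly $\wt{\mathsf{R}}^{(2)}$ and $\wt{\mathsf{R}}^{(3)}$, rather than $\pair{\wt{\MM}\odot\wt{\MM}}$ and $\pair{\wt{\MM}\odot\wt{\MM}\odot\wt{\MM}}$ up to the $O(q^{-2})$ corrections of \eqref{eq:r2mm} and \eqref{eq:r3mmm} as in the paper, is a genuine (if minor) simplification. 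The norm bound $\lVert \wt{R}(g)\rVert = O(1)$ that you single out as the main obstacle is not one: it follows from the weak local law together with the stability bound \eqref{eq:stab2}, exactly as in the opening lines of the proof of Lemma \ref{lem:61}, after which Lemma \ref{lem:Lem4.6} applies verbatim.

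The genuine flaw is your invocation of Lemma \ref{lem:62}. Lemma \ref{lem:self1} assumes only the weak local law (Lemma \ref{thm:weak}) and Lemma \ref{lem:61}, and it is the input to the dichotomy Lemma \ref{lem:det} by which Theorem \ref{thm:strong} is proved; Lemma \ref{lem:62}, in turn, presupposes that strong entrywise local law (its stated hypothesis even cites Theorem \ref{thm:refine}). Using Lemma \ref{lem:62} inside the proof of Lemma \ref{lem:self1} is therefore circular within the paper's logical architecture, besides exceeding the hypotheses the lemma allows. The repair is immediate and is what the paper does: bound $|\pair{\wt{R}(g)\wt{\sfP}}| \prec \Psi_1^2$ via Lemma \ref{lem:61} through Lemma \ref{lem:Lem4.6}; since $\Psi_1^2 \lesssim q^{-2} + N^{-1/2}q^{-1} + (\Im\pair{\wt{\MM}} + \vartheta)/(N\eta)$, converting stochastic domination into the $N^{\epsilon/5}$ factor gives exactly the right-hand side of Lemma \ref{lem:self1}, with the terms $Cq^{-2}\Lambda$, $o(1)\Lambda^2$ and $q^{-2}|1-\wt{\mathsf{R}}^{(2)}|$ serving only as extra allowance (in your exact-coefficient version the cubic Taylor remainder alone feeds $o(1)\Lambda^2$, and no $q^{-2}\Lambda$ cross term actually arises, contrary to your closing remark). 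The stronger bound $\Psi_{3/2}^2 + q^{-2}|1-\pair{\wt{\MM}\odot\wt{\MM}}|$ you import is the content of Lemma \ref{lem:self2}, which only becomes available after Theorem \ref{thm:strong} has been established.
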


\begin{lemma}[Strong self-consistent equation] \label{lem:self2}
	Fix arbitrary $\epsilon \in (0, \frac{\tau}{100})$. Assume that Lemma \ref{thm:weak}, Lemma \ref{lem:61}, and Theorem \ref{thm:strong}, Lemma \ref{lem:62}  holds. Then, there exist a constant $C>0$ such that
	\begin{align} \label{eq:scone}
		\left|(1 - \wt{\mathsf{R}}^{(2)} ) [\mathrm{v}]  +  \wt{\mathsf{R}}^{(3)} [\mathrm{v}]^2 \right| \leq C (q^{-2} \Lambda + o(1) \Lambda^2 ) + N^{\frac{\epsilon}{5}} \left( \frac{1}{q^3} + \frac{1}{N^{1/2} q} +  \frac{|1 - \wt{\mathsf{R}}^{(2)}|}{q^2} + \frac{\Im \pair{\wt{\MM}} + \vartheta }{N\eta}\right),
	\end{align}
	uniformly on $z \in \caD_{\tau}$, for $N \geq N_0 (\epsilon)$ large enough, with high probability.
\end{lemma}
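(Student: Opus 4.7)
The plan is to combine an exact algebraic identity for $\pair{G}$ with a second-order Taylor expansion of the functional equation and the averaged bound from Lemma~\ref{lem:62}. Unpacking the definition of $\wt{\sfP}$ gives, for each $i$,
\begin{align*}
\wt{\sfP}_{ii} = 1 - G_{ii}\big/\wt{R}(\pair{G})_i, \qquad \text{so} \qquad G_{ii} = \wt{R}(\pair{G})_i\,(1 - \wt{\sfP}_{ii}).
\end{align*}
Taking the normalized trace and introducing $\Phi(u) \deq \pair{\wt{R}(u)} = \int \wt{R}(u)(v)\,\mathrm{d}\wh{\nu}_N(v)$, which by \eqref{eq:reftrace} satisfies the fixed-point relation $\Phi(\wt{m}_{\fc}) = \wt{m}_{\fc}$, one obtains the exact identity
\begin{align*}
\pair{G} - \Phi(\pair{G}) = -\pair{\wt{R}(\pair{G})\wt{\sfP}}.
\end{align*}

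Next, I Taylor-expand $\Phi$ about $\wt{m}_{\fc}$. Using the partial-fraction identity $(a^{2}+b^{2})/(a^{2}-b^{2})^{2} = \tfrac{1}{2}\bigl[(a-b)^{-2}+(a+b)^{-2}\bigr]$ with $b=\sqrt{s}/q$, which rewrites integrals against $\wh{\nu}_N$ as integrals against the refined measure $\wt{\nu}_N^{\lambda}$, one identifies $\Phi'(\wt{m}_{\fc}) = \wt{\mathsf{R}}^{(2)}$ and $\Phi''(\wt{m}_{\fc}) = 2\,\wt{\mathsf{R}}^{(3)}$. By the square-root stability of Lemma~\ref{lem:stabbound}, $|\Phi'''(\xi)| = O(1)$ for $\xi$ near $\wt{m}_{\fc}$, and the weak local law places $\pair{G}$ in such a neighborhood, so the cubic Taylor remainder is $O([\mathrm{v}]^{3}) = o(1)\Lambda^{2}$. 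With $[\mathrm{v}] = \pair{G} - \wt{m}_{\fc}$ this yields the algebraic self-consistent equation
\begin{align*}
(1 - \wt{\mathsf{R}}^{(2)})[\mathrm{v}] + \wt{\mathsf{R}}^{(3)}[\mathrm{v}]^{2} = -\pair{\wt{R}(\pair{G})\wt{\sfP}} + o(1)\Lambda^{2},
\end{align*}
where the sign ambiguity in the $\wt{\mathsf{R}}^{(3)}$ coefficient is absorbed into the $o(1)\Lambda^{2}$ error via $|\wt{\mathsf{R}}^{(3)}|[\mathrm{v}]^{2} = O(\Lambda^{2})$ together with the weak local law.

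The residual is then bounded via Lemma~\ref{lem:62} with $B = \wt{R}(\pair{G})$, invoking Lemma~\ref{lem:Lem4.6} to handle the random choice of $B$. The hypothesis $\norm B \norm = O(1)$ with high probability on $\Xi$ follows from Lemma~\ref{lem:stabbound} combined with the weak local law $\pair{G} = \wt{m}_{\fc} + o(1)$. This produces the estimate $|\pair{\wt{R}(\pair{G})\wt{\sfP}}| \prec \Psi_{3/2}^{2} + q^{-2}\,|1 - \pair{\wt{\MM}\odot\wt{\MM}}|$. Invoking \eqref{eq:r2mm} replaces $|1 - \pair{\wt{\MM}\odot\wt{\MM}}|$ by $|1 - \wt{\mathsf{R}}^{(2)}| + O(q^{-2})$, and expanding $\Psi_{3/2}^{2} \lesssim q^{-3} + N^{-1/2}q^{-1} + (\Im\pair{\wt{\MM}} + \vartheta)/(N\eta)$ reproduces the deterministic part of the RHS of \eqref{eq:scone}; the $q^{-2}\Lambda$ term arises from the replacement $\wt{R}(\pair{G}) \mapsto \wt{\MM}$ on the residual side, which contributes a factor $|[\mathrm{v}]| = \Lambda$ multiplied by the $q^{-2}$ prefactor inherent to the sparsity correction in $\wt{R}$. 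The translation from $\prec$ to a high-probability inequality with $N^{\epsilon/5}$ slack is standard.

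The main technical hurdle is verifying $\norm \wt{R}(\pair{G}) \norm = O(1)$ with high probability on $\Xi$ uniformly over $\caD_{\tau}$, which requires combining the weak local law with the square-root stability bounds of Lemma~\ref{lem:stabbound} applied to the random argument $\pair{G}$; beyond that, the proof is essentially a bookkeeping exercise to consolidate all error terms ($O([\mathrm{v}]^{3})$ from the Taylor remainder, $O(q^{-2}\Lambda)$ from replacing $\wt{R}(\pair{G})$ by $\wt{\MM}$, and $O(q^{-4})$ from \eqref{eq:r2mm}) into the stated form $C(q^{-2}\Lambda + o(1)\Lambda^{2})$ plus the deterministic RHS. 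The improvement from $q^{-2}$ (in Lemma~\ref{lem:self1}) to $q^{-3}$ in the present lemma is inherited entirely from the sharper bound on $\wt{\caP}$ furnished by Lemma~\ref{lem:62}, which itself relies on the entrywise local law, Theorem~\ref{thm:strong}.
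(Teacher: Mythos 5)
Your overall strategy matches the paper's proof: express $\pair{G}$ algebraically in terms of $\wt{R}(\pair{G})$ and $\wt{\sfP}$, Taylor-expand the averaged map $\Phi(u) = \pair{\wt{R}(u)}$ about the fixed point $\wt{m}_{\fc}$ (this is the same third-order Taylor expansion the paper records in \eqref{eq:sum2}), and then bound the residual $\pair{\wt{R}(\pair{G})\wt{\sfP}}$ via Lemma~\ref{lem:62} extended to the random choice of $B$ through Lemma~\ref{lem:Lem4.6}. Your partial-fraction observation that $\Phi'(\wt{m}_{\fc}) = \wt{\mathsf{R}}^{(2)}$ and $\Phi''(\wt{m}_{\fc}) = 2\wt{\mathsf{R}}^{(3)}$ \emph{exactly} is correct and is in fact a small improvement over the paper's exposition, which passes the Taylor coefficients through $\pair{\wt{\MM}\odot\wt{\MM}}$ and \eqref{eq:r2mm}--\eqref{eq:r3mmm} and thereby incurs the $O(q^{-2}\Lambda)$ term. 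Under your route that term simply is not needed; keeping it is harmless since it only weakens the conclusion.

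Two minor points need repair, though neither breaks the argument. First, the expansion
\begin{align*}
\pair{G} - \Phi(\pair{G}) = -\pair{\wt{R}(\pair{G})\wt{\sfP}}
\end{align*}
together with $\Phi(\wt{m}_{\fc})=\wt{m}_{\fc}$ yields $(1-\wt{\mathsf{R}}^{(2)})[\mathrm{v}] - \wt{\mathsf{R}}^{(3)}[\mathrm{v}]^2 = -\pair{\wt{R}(\pair{G})\wt{\sfP}} + O([\mathrm{v}]^3)$, with a \emph{minus} sign on $\wt{\mathsf{R}}^{(3)}[\mathrm{v}]^2$. Your sentence claiming the difference between the $\pm$ signs can be absorbed into $o(1)\Lambda^2$ because $|\wt{\mathsf{R}}^{(3)}|[\mathrm{v}]^2 = O(\Lambda^2)$ is incorrect as stated: $O(\Lambda^2)$ is not $o(1)\Lambda^2$, and $\wt{\mathsf{R}}^{(3)}$ is bounded away from $0$ near the edge by \eqref{eq:stab4}. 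This does not actually matter, because the sign you derive is the one used in the stability analysis (Lemma~\ref{lem:det} is stated with the $-\wt{\mathsf{R}}^{(3)}$ sign, and its proof uses only $\bigl|[\mathrm{v}]^2 - (1-\wt{\mathsf{R}}^{(2)})[\mathrm{v}]/\wt{\mathsf{R}}^{(3)}\bigr|$), so the apparent $+$ in the statement of Lemma~\ref{lem:self2} is a sign typo consistent throughout the paper's own Section~\ref{sec:locallaw}. You should simply drop the absorption claim and note the sign matches Lemma~\ref{lem:det}. Second, your attribution of the $q^{-2}\Lambda$ term to ``replacing $\wt{R}(\pair{G})\mapsto\wt{\MM}$ on the residual side'' does not reflect your own argument: you bound the residual directly with $B=\wt{R}(\pair{G})$ via Lemma~\ref{lem:Lem4.6}, so no such replacement occurs, and with your exact Taylor coefficients there is no source of a $q^{-2}\Lambda$ term at all. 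Delete that sentence; the term survives in the statement only because stating a valid upper bound with extra nonnegative slack is permissible.
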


\begin{proof}[Proof of Lemma \ref{lem:self1} and \ref{lem:self2}] From Lemma \ref{lem:Lem4.6} and Lemma \ref{lem:61}, we have \begin{align}\label{eq:ooo}
		\left|\pair{G} - \pair{\wt{R} (\pair{G})} \right| \prec \Psi_1^2,
	\end{align}
	uniformly on $ z \in \caD_{\tau}$.
	Third order Taylor expansion of $\wt{{R}}(G)$ around $\wt{R}(\wt{\MM})$ yields
    \begin{align}
		\begin{split} \label{eq:sum2}
			\Bigg|\wt{{R}}_i (G) - \wt{{R}}_i(\wt{\MM}) &- (\pair{G} - \pair{\wt{\MM}}) \Bigg( 1+ \frac{s}{q^2} \frac{1}{(\lambda V_i - z- \pair{\wt{\MM}})^2} \Bigg)  \wt{{R}}_i(\wt{\MM})^2 \\
			&- (\pair{G} - \pair{\wt{\MM}})^2 \left( 1 + \frac{3s}{q^2} \frac{1}{(\lambda V_i - z - \pair{\wt{\MM}})^2 } \right) \wt{{R}}_i(\wt{\MM})^3 \Bigg | \leq C \Lambda^3,
		\end{split}
	\end{align}
	for some constant $C$. Using the estimates \eqref{eq:r2mm} and \eqref{eq:r3mmm} and noticing that $\wt{{R}}_i (\wt{\MM}) = \wt{\MM}_i$, we obtain Lemma \ref{lem:self1} after summing up \eqref{eq:ooo} and \eqref{eq:sum2}. 
	
	Assuming further Theorem \ref{thm:strong} and Lemma \ref{lem:62}, the right hand side of \eqref{eq:ooo} becomes $\Psi_{3/2}^2 + q^{-2} |1- \wt{\mathsf{R}}^{(2)} |$, and we obtain Lemma \ref{lem:self2}.
\end{proof}

A key input to the proof of the strong local law is the following deterministic lemma, which is an adaptation of \cite[Lemma 4.6]{LS} to the current setting.

\begin{lemma} \label{lem:det} Fix arbitrary $\epsilon \in (0, \frac{\tau}{100})$, and let $\frb \in [1, 3/2]$. Assume that a function $\vartheta(z)$ satisfying
	\begin{align}
		\vartheta(z) \leq N^{\epsilon} \left(\frac{1}{q^{\frb}} + \frac{1}{N^{1/4} q^{1/2}} + \frac{1}{N\eta} \right)^{1-\upsilon},
	\end{align}
	also satisfies $\Lambda(z) \leq \vartheta(z)$, for all $z \in \mathcal{D}_{\tau}$. Furthermore, for all $z \in \caD_{\tau}$, 
	\begin{align}
		|(1- \wt{\mathsf{R}}^{(2)} ) [\mathrm{v}] -  \wt{\mathsf{R}}^{(3)}  [\mathrm{v}]^2 | \leq  C(q^{-2} \Lambda  + o(1) \Lambda^2) +N^{\frac{\epsilon}{5}} \left(\frac{1}{q^{2\frb}} + \frac{1}{N^{1/2} q} +\frac{|1- \wt{\mathsf{R}}^{(2)} |}{q^2}+ \frac{ |1- \wt{\mathsf{R}}^{(2)} | + \vartheta}{N \eta} \right),
	\end{align}
	Finally, assume that $\Lambda \prec 1$ if $\eta \sim 1$. Then, we have
	\begin{align}
		\Lambda \leq N^{\epsilon} \left(\frac{1}{q^{\frb}} + \frac{1}{N^{1/4} q^{1/2}} + \frac{1}{N\eta} \right)^{1 - \upsilon/2},
	\end{align}
	uniformly on the domain $\caD_{\tau}$, for $N$ sufficiently large.
\end{lemma}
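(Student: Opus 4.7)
The plan is to bootstrap the bound $\Lambda \leq \vartheta$ into the sharper bound stated in the lemma by carefully analysing the quadratic self-consistent equation, following the scheme of \cite[Lemma 4.6]{LS} and \cite{EKYY1, JL}. Write $\alpha(z) \deq 1 - \wt{\mathsf{R}}^{(2)}(z)$, $\beta(z) \deq \wt{\mathsf{R}}^{(3)}(z)$, and set $\Psi_\frb^{\mathrm{det}} \deq q^{-\frb} + N^{-1/4}q^{-1/2} + (N\eta)^{-1}$. The goal is to promote $\Lambda \leq N^\epsilon (\Psi_\frb^{\mathrm{det}})^{1-\upsilon}$ into $\Lambda \leq N^\epsilon (\Psi_\frb^{\mathrm{det}})^{1-\upsilon/2}$, uniformly in $z \in \caD_\tau$.

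First, I would absorb the two ``linear-in-$\Lambda$'' nuisance terms on the right of the assumed self-consistent inequality. Since $q^{-2}\Lambda \leq q^{-2}\vartheta \lesssim N^{\epsilon}q^{-2}(\Psi_\frb^{\mathrm{det}})^{1-\upsilon}$ and the $o(1)\Lambda^2$ term can be moved to the left after noting that $|\beta|$ is bounded away from $0$ near the edge by \eqref{eq:stab4} of Lemma \ref{lem:stabbound} (in the form applicable to $\wt{m}_{\fc}$ on $\Xi$), the inequality reduces to
\begin{equation*}
    |\beta(z)\,\mathrm{v}^2 - \alpha(z)\,\mathrm{v}| \;\leq\; E(z),
\end{equation*}
where $E(z) \deq C N^{\epsilon/5}\bigl(q^{-2\frb} + N^{-1/2}q^{-1} + |\alpha|\,q^{-2} + (|\alpha|+\vartheta)(N\eta)^{-1}\bigr)$.

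The main step is a dichotomy in $z$. Applying the quadratic formula to $|\beta \mathrm{v}^2 - \alpha \mathrm{v}| \leq E$ gives two candidate branches, and by the square-root bounds $|\alpha(z)| \sim \sqrt{\kappa_E + \eta}$ and $|\beta(z)| \sim 1$ from Lemma \ref{lem:stabbound}, these split into:
\begin{itemize}
\item[(A)] If $|\alpha|^2 \geq 4|\beta| E$, then either $\Lambda \leq 2E/|\alpha|$ or $\Lambda \geq |\alpha|/(2|\beta|) - 2E/|\alpha|$.
\item[(B)] If $|\alpha|^2 < 4|\beta| E$, then $\Lambda \leq C\sqrt{E/|\beta|} \leq C\sqrt{E}$.
\end{itemize}
In case (B), the bound $\Lambda \leq C\sqrt{E}$ combined with $|\alpha| \leq C\sqrt{E}$ yields $E \leq CN^{\epsilon/5}(q^{-2\frb} + N^{-1/2}q^{-1}) + CN^{\epsilon/5}(\Lambda + \vartheta)(N\eta)^{-1}$, and absorbing the $\Lambda/(N\eta)$ term (using $N\eta \geq N^\tau \gg N^{\epsilon/5}$) gives $\Lambda \leq N^\epsilon (\Psi_\frb^{\mathrm{det}})^{1-\upsilon/2}$ as desired. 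In case (A), a continuity argument selects the small branch: since $\Lambda(z)$ is continuous in $z$, $\Lambda \prec 1 \ll |\alpha|/(2|\beta|)$ holds at $\eta \sim 1$ by assumption, and the two branches of (A) are separated by a gap of order $|\alpha|/|\beta|$. Continuity in $\eta$ and the inclusion of the small-branch bound in the large-branch bound prevent the jump, so $\Lambda \leq 2E/|\alpha|$ throughout $\caD_\tau$; this again yields the claim after using $|\alpha|^{-1} \lesssim (\kappa_E + \eta)^{-1/2}$ and $|\alpha|(N\eta)^{-1}/|\alpha| = (N\eta)^{-1}$.

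The main obstacle I anticipate is justifying the continuity step in case (A): one must verify a grid/dichotomy argument across $\caD_\tau$ showing that if the small branch holds at $z_0 = E_0 + \ii$ (where $\Lambda \prec 1 $ is already known), then it must persist as $\eta$ is decreased in steps of $N^{-10}$, since a jump from the small to the large branch would violate $N^{-10}$-Lipschitz continuity of $\Lambda$. Once the correct branch is fixed, propagating to the final domain $\caD_\tau$ is routine. The improvement from $(1-\upsilon)$ to $(1-\upsilon/2)$ in the exponent arises precisely from the fact that the only place $\vartheta$ enters $E(z)$ is through the $\vartheta/(N\eta)$ term, so one iteration of the self-consistent equation halves the ``gap exponent'', exactly as in \cite[Lemma 4.6]{LS}.
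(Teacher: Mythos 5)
Your proposal is correct and follows essentially the same route as the paper: a deterministic dichotomy in $z$ according to whether the linear coefficient $\alpha = |1-\wt{\mathsf{R}}^{(2)}|$ dominates (handled by the small-branch/large-branch gap plus continuity in $\eta$ anchored at $\eta \sim 1$) or the quadratic term dominates near the edge where $|\wt{\mathsf{R}}^{(3)}|$ is bounded below (handled by solving the quadratic inequality, with the $\vartheta/(N\eta)$ term producing the halved gap exponent). The only difference is cosmetic — you split cases via the discriminant $|\alpha|^2 \gtrless 4|\beta|E$ and invoke the quadratic formula, while the paper splits via a fixed threshold $\alpha_0 \sim N^{3\epsilon/10}(\Psi_\frb^{\mathrm{det}})^{1-\upsilon/2}$ and completes the square — and both hinge on the same stability inputs from Lemma \ref{lem:stabbound}.
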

The proof is based on a dichotomy argument, and we defer the details to the Appendix.
\begin{remark}
	Using stochastic domination, the statement of the lemma can be reduced as follows. On the event that Lemma \ref{lem:self1} or Lemma \ref{lem:self2} holds, and when $\Lambda \prec 1 $ holds if $\eta \sim 1$, we have the implication 
	\begin{align}
		\Lambda \prec \left(\frac{1}{q^{\frb}} + \frac{1}{N^{1/4} q^{1/2}} + \frac{1}{N \eta} \right)^{1 - \upsilon} \Longrightarrow  \Lambda \prec \left(\frac{1}{q^{\frb}}+ \frac{1}{N^{1/4}q^{1/2}} + \frac{1}{N \eta} \right)^{1-{\upsilon}/2 },\label{eq:improve}
	\end{align}
	uniformly on the domain $z \in \caD_{\tau}$. 
\end{remark}

\subsection{Proof of Theorem \ref{thm:refine} and Theorem \ref{thm:strong}}

We prove the entrywise local law by combining Lemma \ref{lem:self1} and Lemma \ref{lem:det}. Then, having established Theorem \ref{thm:strong}, we are free to use Lemma \ref{lem:self2} and using the improved bounds, we prove Theorem \ref{thm:refine}.

\begin{proof}[Proof of Theorem \ref{thm:strong}]
	From the weak local law, we obtain an initial bound, where we have
	\begin{align}\label{eq:weakapriori}
		\Lambda \prec \left(\frac{1}{q^2} + \frac{1}{N^{1/4} q^{1/2}} + \frac{1}{N \eta} \right)^{1-5/6}.
	\end{align}
	Since we have $\Lambda \prec 1$ for $\eta \sim 1$, we can apply Lemma \ref{lem:det} along with Lemma \ref{lem:61}, and from Lemma \ref{lem:self}, we deduce that
	\begin{align}
		\Lambda \prec \frac{1}{q} + \frac{1}{N^{1/4} q^{1/2}} + \frac{1}{N\eta},
	\end{align}
	uniformly for $z \in \caD_{\tau}$. For the entrywise bound,
	\begin{align}
		\wt{R}_i(\pair{G})  -  \wt{R}_i({\pair{\wt{\MM}}})  = (\pair{G} - {\pair{\wt\MM}}) (\wt{R}_i(\pair{G}) \wt{R}_i ({\pair{\wt{\MM}}}) + O(q^{-2}) ),
	\end{align}
	and from Lemma \ref{lem:61} and Lemma \ref{lem:Lem4.6}, the claim is proved.
\end{proof}

\begin{proof}[Proof of Theorem \ref{thm:refine}] Since we have established Theorem \ref{thm:strong}, we are free to use Lemma \ref{lem:self2}. From the weak local law, we have the initial bound in \eqref{eq:weakapriori}, and applying the self-improving high probability bound from the implication \eqref{eq:improve}, we obtain that for any fixed $\epsilon \in (0, \frac{\tau}{100})$, 
	\begin{align} \label{eq:sl0}
		\Lambda \leq  N^{\epsilon} \left( \frac{1}{q^{3/2}} + \frac{1}{N^{1/4}q^{1/2}} +\frac{1}{N\eta} \right),
	\end{align}
	with high probability. It remains to consider the reigme
	\begin{align}
		\frac{1}{q^{3/2}} + \frac{1}{N^{1/4} q^{1/2}} \geq N^{\epsilon} \frac{1}{q^2 } \frac{1}{|1 - \wt{\mathsf{R}}^{(2)} |}, \quad  \frac{1}{q^{3/2}} + \frac{1}{N^{1/4} q^{1/2}} \geq  \frac{1}{N\eta },\label{eq:600}
	\end{align}
	where then
	\begin{align}
		N^{-\epsilon} |1- \wt{\mathsf{R}}^{(2)}| \geq \frac{1}{q^{3/2}} + \frac{1}{N^{1/4} q^{1/2}} \geq  \frac{1}{N\eta }.\label{eq:6000}
	\end{align}
	holds. Feeding back into Lemma \ref{lem:self2} by bounding $\Lambda$ and $\Lambda^2$ using \eqref{eq:600} and \eqref{eq:6000}, and writing $\alpha = |1-\wt{\mathsf{R}}^{(2)}| $ we have
	\begin{align}
		(1- o(1)) \alpha  \Lambda  \leq N^{\frac{\epsilon}{5}} \left( \frac{1}{q^3} + \frac{1}{\sqrt{N} q} + \frac{\alpha}{q^2} + \frac{\Im \pair{\wt{\mathsf{M}}} + \alpha}{N\eta} \right),
	\end{align}
	with high probability. Clearly, we have
	\begin{align}\label{eq:sl1}
		\Lambda \prec  \frac{1}{q^2 \alpha } +\frac{1}{N\eta}.
	\end{align}
	and using a classic lattice argument of the bounds \eqref{eq:sl0} and \eqref{eq:sl1} we obtain the theorem.
\end{proof}

\section{Eigenvalue behaviors}\label{sec:behavior}
\subsection{Extremal eigenvalue bound and density of states} 

In this section, we collect results on extremal eigenvalue bound and density of states on the event $\Xi$, using the refined local law Theorem \ref{thm:refine}. The proofs are based on method in \cite{EY, EKYY1, LS}. The proofs can be found in the Appendix.

\begin{lemma}\label{thm:opnorm}
	On the event $\Xi$, for any (small) $\epsilon>0$ and (large) $D \geq 1$, we have
	\begin{align}
		\bbP \left( \norm H \norm \geq  \max\{\wt{L}_+, |\wt{L}_{-}|\} + N^{\epsilon} \left( \frac{1}{N^{2/3}}  + \frac{1}{q^3 } \right) \right) \leq N^{-D},
	\end{align}
	for any $N \geq N_0 (\epsilon, D)$.
\end{lemma}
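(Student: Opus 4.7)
The strategy follows the standard ``no eigenvalues outside the support'' arguments of \cite{EKYY1,LS,LS14}, adapted to the refined edge $\wt{L}_+$. I would work on the event $\Xi$ throughout and set $\kappa_0 \deq N^{\epsilon}(N^{-2/3} + q^{-3})$. By symmetry it suffices to rule out eigenvalues in $(\wt{L}_+ + \kappa_0, E_0]$ with probability at least $1 - N^{-D}/2$; the analogous statement at the lower edge is obtained by identical reasoning.

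The proof combines three ingredients. First, the refined averaged local law (Theorem \ref{thm:refine}) gives quantitative control of $\pair{G(z)} - \wt{m}_{\fc}(z)$ uniformly on $\caD_\tau$ up to the refined edges. Second, Lemma \ref{lem:stabbound}(1), applied to the refined Stieltjes transform $\wt{m}_{\fc}$, yields the square-root decay outside the support: for $E > \wt{L}_+$ and $\eta > 0$,
\begin{align*}
\Im \wt{m}_{\fc}(E + i\eta) \;\leq\; \frac{C \eta}{\sqrt{\kappa_E + \eta}}, \qquad \kappa_E \deq E - \wt{L}_+.
\end{align*}
Third, Corollary \ref{cor:deloc} (eigenvector delocalization) guarantees the absence of localized spikes in the spectral decomposition of $\pair{G}$.

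With these inputs, I would argue by contradiction. Suppose an eigenvalue $\mu_j$ of $H$ lies in $(\wt{L}_+ + \kappa_0, E_0]$. For a suitably chosen $z^* = E + i\eta$ with $E$ slightly below $\mu_j$ and $\eta$ at a carefully chosen scale, the tautological spectral lower bound
\begin{align*}
\Im \pair{G(z^*)} \;=\; \frac{1}{N}\sum_k \frac{\eta}{(\mu_k - E)^2 + \eta^2} \;\geq\; \frac{1}{2N\eta},
\end{align*}
coming from the $\mu_j$ summand when $|\mu_j - E| \lesssim \eta$, would contradict the upper bound
\begin{align*}
\Im \pair{G(z^*)} \;\prec\; \frac{\eta}{\sqrt{\kappa_E + \eta}} \;+\; \frac{1}{q^2 \sqrt{\kappa_E + \eta}} \;+\; \frac{1}{N\eta}
\end{align*}
derived from Theorem \ref{thm:refine} combined with the square-root decay. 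The two additive components of $\kappa_0$ correspond to the two scales on which the contradiction is driven: on the random-matrix sub-scale $\kappa_0 \sim N^{\epsilon - 2/3}$ the leading term is $\eta/\sqrt{\kappa_0}$, and one takes $\eta \sim \kappa_0^{1/4}/\sqrt{N}$; on the sparsity sub-scale $\kappa_0 \sim N^{\epsilon} q^{-3}$ the leading term is $1/(q^2\sqrt{\kappa_0})$, and one takes $\eta \sim q^{1/2}/N$. In each regime the additional $N^{\epsilon}$ built into $\kappa_0$ creates the gap needed to absorb the $N^{\epsilon'}$ factor hidden in the $\prec$ notation of the local law. A standard union bound over a polynomial-size $N^{-C}$-net of $(\wt{L}_+ + \kappa_0, E_0]$ then promotes the pointwise contradiction to the simultaneous no-eigenvalue statement with probability at least $1 - N^{-D}$.

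The main obstacle is that the $1/(N\eta)$ term appears on both sides of the contradiction inequality: once as the single-eigenvalue contribution in the spectral lower bound, and once as the universal local-law error term. Consequently the contradiction cannot be extracted from the $1/(N\eta)$ terms alone but must be produced by the square-root contributions $\eta/\sqrt{\kappa_0}$ and $1/(q^2\sqrt{\kappa_0})$; this is precisely what forces the two-term structure of $\kappa_0$ and the separate optimization of $\eta$ on each sub-scale. Implementing this optimization uniformly along the dyadic cover, while keeping track of the $N^{\epsilon'}$-budget compatible with the $N^{-D}$ failure probability, constitutes the bulk of the technical work.
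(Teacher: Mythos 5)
You have the right overall shape (work on $\Xi$, compare a spectral lower bound on $\Im\pair{G}$ against an upper bound coming from the local law plus the square-root decay of $\wt{m}_{\fc}$ outside $[\wt{L}_-,\wt{L}_+]$), but there is a genuine gap at the decisive step, and you have in fact diagnosed it yourself without resolving it. The contradiction cannot be extracted from Theorem \ref{thm:refine} alone: its error term is $N^{\epsilon'}/(N\eta)$ (the $N^{\epsilon'}$ hidden in $\prec$), which dominates the spectral lower bound $\geq 1/(2N\eta)$ for \emph{every} choice of $\eta$ and of the threshold $\kappa_0$, since both scale identically in $\eta$. Making the square-root contributions $\eta/\sqrt{\kappa_0}$ and $1/(q^2\sqrt{\kappa_0})$ small does not help, because they sit on the same (upper-bound) side of the inequality; so the "separate optimization of $\eta$ on each sub-scale" never produces a contradiction. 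What is needed is an \emph{improved} bound $\Lambda=|\pair{G}-\wt{m}_{\fc}|\ll 1/(N\eta)$ outside the support, and this is exactly what the paper extracts, not from the final local law, but by returning to the strong self-consistent equation (Lemma \ref{lem:self2}): for $E>\wt{L}_+$ with $\kappa_E\geq\eta$ one has $\alpha\deq|1-\wt{\mathsf{R}}^{(2)}|\sim\sqrt{\kappa_E}$ and $\Im\wt{m}_{\fc}\sim\eta/\sqrt{\kappa_E}$, the quadratic term is absorbed using $\alpha\geq 2|\wt{\mathsf{R}}^{(3)}|\Lambda$, and one gets $\Lambda\lesssim N^{\epsilon'}\bigl(\tfrac{1}{\alpha q^{3}}+\tfrac{1}{\alpha\sqrt{N}q}+\tfrac{1}{q^{2}}+\tfrac{1}{\alpha (N\eta)^{2}}+\tfrac{1}{\alpha N\sqrt{\kappa_E}}\bigr)$. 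At the specific scale $\eta_E= N^{\epsilon'}/(N\sqrt{\kappa_E})$, and for $E\geq \wt{L}_+ + N^{C\epsilon}(q^{-3}+N^{-1/2}q^{-1}+N^{-2/3})$ (the middle term being dominated by the other two, which is why it disappears from the statement), every term above, as well as $\Im\wt{m}_{\fc}$, is $\ll 1/(N\eta)$; only then does $\Im \pair{G}\leq \Im\wt{m}_{\fc}+\Lambda\ll 1/(N\eta)$ contradict the presence of an eigenvalue in $[E-\eta,E+\eta]$. Note also that the error in the self-consistent equation enters as $(\Im\wt{m}_{\fc}+\vartheta)/(N\eta)$, i.e.\ effectively $(N\eta)^{-2}/\alpha$, which is precisely how the $1/(N\eta)$ obstruction is beaten.

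Two smaller omissions: ruling out eigenvalues in $(\wt{L}_+ +\kappa_0, E_0]$ is not sufficient, since $\caD_\tau$ does not cover $E\geq E_0$; the paper handles that regime by $\|H\|\leq\|W\|+\lambda\|V\|$ together with the sparse-matrix norm bound $\bigl|\,\|W\|-2-s/q^{2}\bigr|\prec q^{-4}+N^{-2/3}$ from \cite{LS18}, an ingredient absent from your plan. Also, eigenvector delocalization (Corollary \ref{cor:deloc}) plays no role in this lemma, so listing it as an input is a red herring.
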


For $\mu_{\alpha}$, the eigenvalues of $H=W+ \lambda V$ in the definition of Theorem \ref{thm:strong}, and real numbers $E_1 <E_2$, we define the counting functions 
\begin{align}
	\frn(E_1, E_2) \deq \frac{1}{N} | \{ \alpha : E_1 < \mu_{\alpha} <E_2 \}  |, \quad \wt{n}_{\fc}(E_1, E_2)  \deq \int_{E_1}^{E_2} \wt{\rho}_{\fc} (x) \d x,
\end{align}
where in this section $\wt{\rho}_{\fc}$ denotes the density of the free convolution measure. We also define the integrated density of states and the distribution function of the deformed semicircle density through
\begin{align}
	\frn (E) \deq \frac{1}{N} \{ \alpha : \mu_{\alpha} \leq E \}, \quad \wt{n}_{\fc} \deq \int_{-\infty}^{E} \wt{\rho}_{\fc} (x) \d x.
\end{align}

We have the following results on the local density of states and integrated density of states, where the proof is based on the Helffer-Sj\"{o}strand formula.

\begin{proposition}[Local density of states]\label{prop:loc} Fix any $\epsilon>0$. On the event $\Xi$, for any $E_1, E_2$ satisfying $-E_0 \leq E_1 <E_2 \leq E_0$, we have
	\begin{align}
		|\frn (E_1 , E_2) - \wt{n}_{\fc} (E_1, E_2) | \prec \frac{N^{\epsilon}}{N} +   \min \left\{ (E_2 - E_1)\left(\frac{1}{q^{3/2} } + \frac{1}{N^{1/4}q^{1/2}} \right) ,  \frac{1}{q^2} \frac{E_2 -E_1}{\sqrt{\vk + E_2 - E_1 }} \right \} ,
	\end{align}
	where $\vk \deq \min  \{ \vk_{E_1} , \vk_{E_2} \}$
\end{proposition}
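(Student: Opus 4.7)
The proof proceeds via Helffer--Sjöstrand functional calculus, reducing the estimate on the counting function difference to integrals of $m^{\Delta}(z) \deq g(z) - \wt{m}_{\fc}(z)$, which are then controlled by the averaged local law Theorem~\ref{thm:refine}. Fix $\eta_0 \deq N^{-1+\epsilon/2}$ and construct a smooth cutoff $f \equiv f_{E_1, E_2, \eta_0}$ satisfying $f \equiv 1$ on $[E_1 + \eta_0, E_2 - \eta_0]$, $f \equiv 0$ outside $[E_1 - \eta_0, E_2 + \eta_0]$, with $\|f^{(k)}\|_{\infty} \lesssim \eta_0^{-k}$ and support of $f'$ confined to two intervals of length $O(\eta_0)$ around $E_1, E_2$. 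Letting $\tilde{f}(x + \ii y) = (f(x) + \ii y f'(x))\chi(y)$ be the almost-analytic extension with $\chi$ a smooth cutoff of $|y| \leq 1$, the Helffer--Sjöstrand formula gives
\begin{align}
\frac{1}{N}\Tr f(H) - \int f(x)\, \wt{\rho}_{\fc}(x)\, \d x = \frac{1}{\pi} \int_{\bbC} \pd_{\bar z} \tilde{f}(z)\, m^{\Delta}(z)\, \d^2 z.
\end{align}
By Lemma~\ref{thm:opnorm} together with the bulk regularity of $\wt{\rho}_{\fc}$ (Proposition~\ref{prop:refv}), the difference between $N^{-1} \Tr f(H)$ and $\frn(E_1, E_2)$, and likewise between $\int f\, \wt{\rho}_{\fc}$ and $\wt{n}_{\fc}(E_1, E_2)$, is $O_{\prec}(\eta_0)$, which is absorbed in the $N^{\epsilon}/N$ error term of the statement.

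Next, split the integration into $|y| \leq \eta_0$ and $\eta_0 \leq |y| \leq 1$. On $|y| \leq \eta_0$, the integrand is smooth in $z$, so one writes $m^{\Delta}(x + \ii y) = m^{\Delta}(x + \ii \eta_0) + O(\eta_0 \sup|\pd_y m^{\Delta}|)$ and integrates by parts in $y$ to reduce to the regime $|y| = \eta_0$. On $\eta_0 \leq |y| \leq 1$, Theorem~\ref{thm:refine} yields
\begin{align}
|m^{\Delta}(x + \ii y)| \prec \min\left\{ \frac{1}{q^{3/2}} + \frac{1}{N^{1/4}q^{1/2}},\; \frac{1}{q^2 \sqrt{\vk_x + y}} \right\} + \frac{1}{N y}.
\end{align}
The $1/(Ny)$ contribution, integrated against $|y \chi(y) f''(x)|$ supported on intervals of length $\eta_0$ near $E_1, E_2$, and against $|(f(x) + \ii y f'(x))\chi'(y)|$ on $|y| \sim 1$, yields $O_{\prec}(\log N / N) \leq N^{\epsilon}/N$. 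The first term in the minimum, integrated over $x \in [E_1, E_2]$ (via the $y f''(x)\chi(y)$ piece, integrated once in $y$ producing a factor of $y_{\max} \sim 1$ after careful control), contributes $(E_2 - E_1)(q^{-3/2} + N^{-1/4}q^{-1/2})$. The second term requires estimating $\int_{\eta_0}^{1}\d y \int \d x\, |f''(x)| y \cdot q^{-2}(\vk_x + y)^{-1/2}$; using that $f''$ is supported in $\eta_0$-neighborhoods of $E_1, E_2$ and performing the $y$-integration first, this yields a boundary contribution of order $q^{-2}(\vk_{E_1} + \vk_{E_2})^{-1/2} \cdot(E_2 - E_1)$ up to logarithmic factors, which after combining the near- and bulk-edge regimes gives $q^{-2}(E_2 - E_1)/\sqrt{\vk + E_2 - E_1}$. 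Taking the minimum completes the proof.

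\textbf{Main obstacle.} The principal difficulty lies in extracting the refined edge bound $q^{-2}(E_2 - E_1)/\sqrt{\vk + E_2 - E_1}$, which requires careful case analysis: when $\vk \gg E_2 - E_1$ the factor $\sqrt{\vk_x + y}$ is essentially constant over the integration, whereas when $\vk \lesssim E_2 - E_1$ one must exploit the integrable singularity $\int_0^{E_2 - E_1} \d x /\sqrt{x} \sim \sqrt{E_2 - E_1}$. A secondary subtlety is handling the boundary contributions from $\chi'(y)$ and the transition regions of $f$ without incurring errors worse than $N^{\epsilon}/N$; this is managed by invoking the stability bounds of Lemma~\ref{lem:stabbound} and the extremal eigenvalue control of Lemma~\ref{thm:opnorm} to rule out eigenvalues in the $\eta_0$-thickened complement of the bulk.
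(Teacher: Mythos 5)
Your proposal follows the right general strategy — Helffer--Sjöstrand reduction to $m^{\Delta}$, control via Theorem~\ref{thm:refine}, and a split in $y$ at scale $\eta_0$ — but it contains a genuine gap in the choice of the vertical cutoff function $\chi$, and this gap is fatal.

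You take $\chi$ to be a smooth cutoff of $|y| \le 1$. After integrating the $\ii y f''(x)\chi(y)$ piece by parts (which the paper does, converting it into terms containing $f'(x)\chi(y)$), the dominant contribution from the branch $|m^{\Delta}| \lesssim q^{-3/2} + N^{-1/4}q^{-1/2}$ becomes
\[
\int |f'(x)|\, \d x \cdot \int_{\eta_0}^{1} \Bigl(\tfrac{1}{q^{3/2}} + \tfrac{1}{N^{1/4}q^{1/2}}\Bigr)\, \d y
\;\sim\;
\tfrac{1}{q^{3/2}} + \tfrac{1}{N^{1/4}q^{1/2}},
\]
since $\int |f'| \sim 1$ and the $y$-integral runs all the way up to $1$. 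This is missing the essential factor $E_2 - E_1$, and is far too large when $E_2 - E_1 \ll 1$. The paper's Lemma~\ref{lem:lem7} fixes this by taking $\chi$ supported on $[-2\caE, 2\caE]$ with $\caE \deq \max\{E_2 - E_1, \eta'\}$: then the $y$-integration is truncated at $2\caE$, which is exactly what produces the required prefactor $\caE$ in all the $A_1$, $A_{3,2}$, $A_{3,3}$ terms. Your cutoff scale is not a stylistic choice; it is where the factor $E_2 - E_1$ comes from.

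There is a secondary gap: you only describe integration by parts in $y$ for the regime $|y| \le \eta_0$. For $|y| \ge \eta_0$, you estimate the $y f''\chi$ term directly, but $\int |f''(x)|\,\d x \sim \eta_0^{-1} \sim N^{1-\epsilon/2}$, so a direct estimate of $\int |y f'' \chi| \cdot \tfrac{1}{Ny}$ gives $\eta_0^{-1}/N \sim N^{-\epsilon/2}$, vastly larger than your claimed $O(\log N/N)$ and larger than the target error $N^{\epsilon}/N$. The paper's $A_3$ term is handled by integrating by parts first in $x$ and then in $y$ over the full range $y \ge \eta$, which converts $f''$ into $f'$ before any absolute values are taken; your plan does not reproduce this and so cannot reach the claimed conclusion as written. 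Finally, your assertion that the $(E_2-E_1)$ factor comes from integrating "over $x\in[E_1,E_2]$ via the $y f''(x)\chi(y)$ piece" is inconsistent — $f''$ is supported on two intervals of length $\eta_0$ near the endpoints, not on $[E_1,E_2]$; the $(E_2-E_1)$ factor in the correct argument comes from the $f$ and $f'$ pieces combined with the cutoff scale $\caE$, not from $f''$.
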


\begin{proposition}[Integrated density of states]\label{prop:int}  On the event $\Xi$, for any $E \in [-E_0, E_0]$, for arbitrary fixed $\epsilon>0$ and $D>0$, we have
	\begin{align}
		\sup_{|E| \leq E_0 } |\frn(E) -\wt{n}_{\fc}(E)| \leq  N^{ {\epsilon}} &\left[ \frac{1}{N} + \frac{1}{q^{9/2}}  + \min \left\{ \vk_E \sqrt{\frac{1}{q^{3}} + \frac{1}{N^{2/3}}} , \frac{1}{q^2} \sqrt{\vk_E + \frac{1}{q^{3} } + \frac{1}{N^{2/3}}}  \right\} \right],
	\end{align}
	with probability at least $1-N^{-D}$, for $N \geq N_0 (\epsilon, D)$.
\end{proposition}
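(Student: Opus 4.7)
\medskip

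\noindent\textbf{Proof proposal.} The plan is to derive Proposition \ref{prop:int} from the refined averaged local law (Theorem \ref{thm:refine}) via the Helffer--Sjöstrand formula, with the extremal eigenvalue bound (Lemma \ref{thm:opnorm}) handling contributions from outside the spectrum. Since $\frn(E)$ and $\wt{n}_{\fc}(E)$ both vanish for $E$ sufficiently below $\wt{L}_-$ and equal $1$ above $\wt{L}_+$, I only need to treat $E$ in a bounded neighbourhood of $[\wt{L}_-, \wt{L}_+]$. Fix (small) $\epsilon > 0$; throughout the argument I work on the high-probability event $\Xi$ intersected with the event of Lemma \ref{thm:opnorm}, so that no eigenvalues lie outside $[\wt{L}_- - \upsilon_N, \wt{L}_+ + \upsilon_N]$ with $\upsilon_N \deq N^{\epsilon/2}(q^{-3} + N^{-2/3})$.

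Concretely, I would smooth $\mathbbm{1}_{(-\infty, E]}$ at scale $\eta_* = N^{-1-\epsilon/100}$ to obtain a function $f_E \in C^\infty(\bbR)$, introduce a cutoff $\chi \in C_c^\infty([-1,1])$ with $\chi \equiv 1$ on $[-1/2, 1/2]$, and apply the Helffer--Sjöstrand representation
\begin{align*}
\int f_E(x)(\d\mu^H(x) - \wt{\rho}_{\fc}(x)\d x) = \frac{1}{2\pi}\int_{\bbR^2}\left(\ii y f_E''(x)\chi(y) + \ii(f_E(x)\chi(y) + y f_E'(x)\chi'(y))\right) m^{\Delta}(x+\ii y)\,\d x\,\d y.
\end{align*}
The contribution of $|y| \gtrsim 1$ is $O(N^{-D})$ by the trivial bound $|m^{\Delta}| = O(1)$ and the smoothness of the integrand. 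The part with $\eta_* \leq |y| \lesssim 1$ is split into dyadic windows in $|y|$ and in $|x - E|$, and on each piece I insert whichever of the two bounds in Theorem \ref{thm:refine} is sharper. For $\vk_E$ moderately large, the second bound $|m^{\Delta}| \prec q^{-2}(\vk_E + \eta)^{-1/2} + (N\eta)^{-1}$ integrates to $(1/q^2)\sqrt{\vk_E + q^{-3} + N^{-2/3}}$ after optimising over $\eta$. For $\vk_E$ small (edge regime), the first bound $|m^{\Delta}| \prec q^{-3/2} + N^{-1/4}q^{-1/2} + (N\eta)^{-1}$ combined with the fact that the cancellation in $f_E''$ restricts the effective width of integration in $x$ to $\max(\vk_E, \upsilon_N)$, produces the bound $\vk_E\sqrt{q^{-3} + N^{-2/3}}$ in the exterior direction and the isolated $q^{-9/2}$ term arising from the edge strip of width $\upsilon_N \sim q^{-3}$ (and analogously with $N^{-2/3}$). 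The remaining gap between $\int f_E\,\d\mu^H$ and $\frn(E)$ (and similarly for $\wt{n}_{\fc}$) is absorbed into the $N^{-1+\epsilon}$ term using Proposition \ref{prop:loc} on the window $[E - \eta_*, E + \eta_*]$.

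The main technical point will be extracting the characteristic minimum structure of the bound, which encodes the crossover at $\vk_E \sim q^{-4}(\vk_E + q^{-3} + N^{-2/3})^{-1}$ that appears in the rigidity classification of Theorem \ref{thm:rigidity}; this is done by comparing the two dyadic estimates window-by-window and taking the smaller one. A subsidiary obstacle is the edge contribution: inside the strip of width $\upsilon_N$ outside $[\wt{L}_-, \wt{L}_+]$ the local law loses effectiveness, but Lemma \ref{thm:opnorm} guarantees that $\mu^H$ has no eigenvalues there, so one only needs to control $\wt{n}_{\fc}$ on that strip, which is $O(\upsilon_N^{3/2})$ by the square-root decay in Proposition \ref{prop:refv}, and this is dominated by the advertised bound. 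Finally, uniformity in $E \in [-E_0, E_0]$ follows by a standard lattice argument on an $N^{-2}$-grid, using that both $\frn$ and $\wt{n}_{\fc}$ are monotone with jumps $\leq N^{-1}$ and bounded density $O(1)$, respectively.
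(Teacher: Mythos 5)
Your proposal assembles the right ingredients (refined averaged local law, Helffer--Sj\"{o}strand, extremal eigenvalue bound), and it would ultimately work, but it takes a substantially more circuitous route than the paper. The paper's proof of Proposition~\ref{prop:int} is essentially one paragraph: by Lemma~\ref{thm:opnorm} there are no eigenvalues below $E_1 \deq \wt{L}_- - N^{\epsilon}(q^{-3} + N^{-2/3})$ with high probability, and trivially $\wt{n}_{\fc}(E_1)=0$, so $|\frn(E) - \wt{n}_{\fc}(E)| = |\frn(E_1,E) - \wt{n}_{\fc}(E_1,E)|$, to which Proposition~\ref{prop:loc} applies directly with $E_2=E$. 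The $q^{-9/2}$ term then appears immediately upon substituting the interval length $\vk_E + N^{\epsilon}(q^{-3}+N^{-2/3})$ into the first member of the min in Proposition~\ref{prop:loc} (from $q^{-3/2}\cdot q^{-3}$), and the min structure is inherited verbatim from Proposition~\ref{prop:loc}; no dyadic comparison in $x$ or $y$ is needed. You instead invoke Proposition~\ref{prop:loc} only on the narrow window $[E-\eta_*, E+\eta_*]$ and propose to re-derive the main estimate from scratch via Helffer--Sj\"{o}strand, which is precisely the content of Lemma~\ref{lem:lem7} (the technical ingredient behind Proposition~\ref{prop:loc}), so you would be duplicating that argument.

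If you did pursue the direct Helffer--Sj\"{o}strand route, there is a technical issue you should address: your smoothing scale $\eta_* = N^{-1-\epsilon/100}$ lies below the domain $\eta \geq N^{-1+\tau}$ on which Theorem~\ref{thm:refine} holds, so the contribution from $0 < y < N^{-1+\tau}$ cannot be controlled directly by the local law. The paper's Lemma~\ref{lem:lem7} handles this by smoothing at scale $\eta' = N^{\tau-1}$ and using the monotonicity of $y \mapsto y\,\Im g(x+\ii y)$ (together with the spectral-decomposition bound $|m^{\Delta}(x+\ii y)| \lesssim \eta'/y$ for $y \leq \eta'$) to push the estimate down to $y$ near $0$; you would need the same device. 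Finally, your claim that the strip of width $\upsilon_N$ outside $[\wt{L}_-,\wt{L}_+]$ contributes $O(\upsilon_N^{3/2})$ to $\wt{n}_{\fc}$ by square-root decay is correct, but note that this contribution is already subsumed in the $q^{-9/2} + N^{-1}$ that arises automatically from the interval-length substitution, so it need not be tracked separately.
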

\begin{remark} \label{rem:doscases}
	Proposition \ref{prop:int} can be interpreted conveniently as follows.
	\begin{enumerate}
		\item For any $E$ such that $\vk_E \leq N^{\epsilon} \left(q^{-3} + N^{-2/3} \right)$, we have
		\begin{align}
			\left|\frn(E) -\wt{n}_{\fc}(E) \right| \prec \frac{1}{q^{9/2}} + \frac{1}{N}.
		\end{align}
		\item There exists an unique $E_*$ that depends on $N$, where $\vk_{E_*} = O(q^{-4} (q^{-3} + N^{-2/3})^{-1} + q^{-2})$ such that for any $E$ satisfying  $\vk_E \leq N^{\epsilon} \vk_{E_*}$, we have
		\begin{align}
			\left|\frn(E) -\wt{n}_{\fc}(E) \right| \prec \frac{1}{N} + \frac{1}{q^{9/2} }+ \vk_E \sqrt{\frac{1}{q^{3}} + \frac{1}{N^{2/3}} },
		\end{align}
		while in the complementary case, we have a cruder bound
		\begin{align}
			\left|\frn(E) -\wt{n}_{\fc}(E) \right| \prec \frac{1}{N} + \frac{1}{q^2} \sqrt{\vk_E + \frac{1}{q^3} + \frac{1}{N^{2/3}}}.
		\end{align}
		When $q \lesssim N^{2/9}$, $\vk_{E_*} = O(q^{-1})$, and when $q \gtrsim N^{2/9}$, $\vk_{E_*} = O(N^{2/3} q^{-4} + q^{-2})$.
	\end{enumerate}
\end{remark}

\subsection{Proof of Theorem \ref{thm:rigidity}}

In this section, we prove Theorem \ref{thm:rigidity}, which concerns eigenvalue location and rigidity.
\begin{proof}
	We focus on the eigenvalues $\mu_1, \dotsc, \mu_{N/2} $. Other eigenvalues can be treated in a similar way. Define an event $\Omega_*$ as the intersection of $\Xi$ and the events on which the estimates 
	\begin{align}
		\norm H \norm &<  \max\{\wt{L}_+, |\wt{L}_{-}|\} + N^{\frac{\epsilon}{10}} \left( \frac{1}{N^{2/3}}+ \frac{1}{q^3 } \right), \\
		\sup_{|E| \leq E_0 } |\frn(E) -\wt{n}_{\fc}(E)| &\leq  N^{ \frac{\epsilon}{10}} \left[ \frac{1}{N} + \frac{1}{q^{9/2}}  + \min \left\{ \vk_E \sqrt{\frac{1}{q^{3}} + \frac{1}{N^{2/3}}} , \frac{1}{q^2} \sqrt{\vk_E + \frac{1}{q^{3} } + \frac{1}{N^{2/3}}}  \right\} \right],
	\end{align}
	hold. (See 	
	(see Lemma \ref{thm:opnorm}) and Proposition \ref{prop:int}.) The event $\Omega_*$ is indeed a high probability event, and on $\Omega_*$, we have $\mu_{N/2} \leq K$, for some positive $K \leq \wt{L}_{+}$. Then, consider the dyadic decomposition 
	\begin{align}
		\{1, \dotsc, N/2 \} &= \bigcup_{k=0}^{2 \log N} U_k, \\
		U_0 &\deq \left\{j \leq \frac{N}{2}: |\wt{L}_{-}| + \max \{ \mu_j , \wg_j \} \leq 2 N^{\frac{4\epsilon}{10} }(q^{-3} + N^{-2/3} ) \right\}, \\
		U_k &\deq \left\{j \leq \frac{N}{2}:  |\wt{L}_{-}| + \max \{ \mu_j , \wg_j \}\in \left[2^k N^{\frac{4\epsilon}{10} } (q^{-3}+ N^{-2/3} ), 2^{k+1} N^{\frac{4\epsilon}{10} } (q^{-3}+ N^{-2/3}) \right] \right\},
	\end{align}
	Here, there exists a maximal integer $k_* <2 \log N$ such that for any $k \leq k_*$,    
	\begin{align}
		|\wt{L}_{-}| + \max \{ \mu_j , \wg_j \} \leq N^{\frac{4\epsilon}{10}} \vk_{E_*},
	\end{align} holds for all $j \in U_k$. The threshold $k_*$ will be used to distinguish the latter two cases in the statement of the theorem.
	
	Recall $E_*$ defined in Remark \ref{rem:doscases}. By the definition of $U_0$ and the extremal eigenvalue bounds, for $j \in U_0$, we have 
	\begin{align}
		|\mu_j - \wg_j| \leq N^{\epsilon} \left(  \frac{1}{q^{3}}  + \frac{1}{N^{2/3}} \right),
	\end{align}
	on the event $\Omega_*$, proving case (1).
	
	For $k \in \llbracket 1, k_* \rrbracket$, on the event $\Omega_*$ we have
	\begin{align}\label{eq:71}
		\frac{j}{N} = \wt{n}_{\fc} (\wg_j) = \frn (\mu_j)  = \wt{n}_{\fc}(\mu_j) + N^{\frac{ \epsilon }{10}} O\left( \frac{1}{N} + \frac{1}{q^{9/2}}  + \frac{{\vk_{\mu_j}}}{q^{3/2}} + \frac{{\vk_{\mu_j}}}{N^{1/3}}\right).
	\end{align}
	On $\Omega_*$, for $j \in U_k$, we have the bounds
	\begin{align}
		\frac{{\vk_{\mu_j}}}{q^{3/2}}  \leq  2^{k+1} N^{\frac{4\epsilon}{10}} \left(\frac{1}{q^{9/2}} + \frac{1}{ q^{3/2} N^{2/3}} \right), \quad \frac{{\vk_{\mu_j}}}{N^{1/3}} \leq  2^{k+1} N^{\frac{4\epsilon}{10}} \left(\frac{1}{q^{3} N^{1/3}} + \frac{1}{ N} \right),
	\end{align}
	where we have used $\vk_{\mu_j} \leq |\wt{L}_-| +\mu_j$. Furthermore, since $\wt{n}_{\fc} ( -|\wt{L}_-| + x ) \sim x^{3/2} $, for $x \in [0, |\wt{L}_-| +K]$, we have
	\begin{align}
		\wt{n}_{\fc} (\wg_{j}) + \wt{n}_{\fc}(\mu_j) \geq  c 2^{\frac{3k+2}{2}} N^{\frac{6\epsilon}{10}} \left(\frac{1}{q^{9/2}} + \frac{1}{N}  \right).
	\end{align}
	Thus,
	\begin{align}
		N^{\frac{ \epsilon }{10}} O\left( \frac{1}{N} + \frac{1}{q^{9/2}} + \frac{{\vk_{\mu_j}}}{q^{3/2}}  + \frac{{\vk_{\mu_j}}}{N^{1/3}} \right) \ll  \wt{n}_{\fc} (\wg_{j}) + \wt{n}_{\fc}(\mu_j) ,
	\end{align}
	and feeding the relations to \eqref{eq:71}, we get $\wt{n}_{\fc} (\mu_j) = \wt{n}_{\fc} (\wg_j) ( 1+o(1))$ on $\Omega_*$. 
	
	Meanwhile, for $k > k_*$, on the event $\Omega_*$ we have
	\begin{align}\label{eq:7..1}
		\frac{j}{N} = \wt{n}_{\fc} (\wg_j) = \frn (\mu_j)  = \wt{n}_{\fc}(\mu_j) + N^{\frac{ \epsilon }{10}} O\left( \frac{1}{N} + \frac{1}{q^2} \sqrt{\vk_{\mu_j} + \frac{1}{q^3 } + \frac{1}{N^{2/3}} } \right).
	\end{align}
	Similarly to the previous case, on $\Omega_*$, for $j \in U_k$, we have the bound
	\begin{align}\frac{1}{q^2} \sqrt{\vk_{\mu_j} + \frac{1}{q^3} + \frac{1}{N^{2/3}}}  &\leq  2^{\frac{k+3}{2}} N^{\frac{2\epsilon}{10}} \left(\frac{1}{q^{7/2}} + \frac{1}{ q^{2} N^{1/3}} \right)\leq  3 \cdot 2^{\frac{k+2}{2}} N^{\frac{2\epsilon}{10}} \left(\frac{1}{q^{3}} + \frac{1}{N} \right), \\
		\wt{n}_{\fc} (\wg_{j}) + \wt{n}_{\fc}(\mu_j) &\geq  c N^{\frac{6\epsilon}{10}} \left(   \frac{1}{q^{3/2}} \wedge \left( \frac{N}{q^6 } + \frac{1}{q^{3}} \right)   +  2^{\frac{3k+2}{2}} \left( \frac{1}{q^{9/2}} + \frac{1}{N} \right) \right).\label{eq:casebycase}
	\end{align}
	We obtain lower bound on \eqref{eq:casebycase} by dividing into $q\lesssim N^{2/9}$ and $N^{2/9} \lesssim q$. When $q\lesssim N^{2/9}$, we easily find 
	\begin{align}
		\wt{n}_{\fc} (\wg_{j}) + \wt{n}_{\fc}(\mu_j) &\geq c N^{\frac{6 \epsilon}{10}} \left( \frac{2^{\frac{3k+2}{6} -1} }{q^{5/2}}  +2^{\frac{3k+2}{2}}  \frac{1}{N} \right),
	\end{align}
	by applying Young's inequality. When $N^{2/9} \lesssim q $, we have
	\begin{align*}
		\wt{n}_{\fc} (\wg_{j}) + \wt{n}_{\fc}(\mu_j) &\geq  c N^{\frac{6\epsilon}{10}} \left(     \frac{1}{q^3}   +  (2^{\frac{3k+2}{2}}-1) \left( \frac{1}{q^{9/2}} + \frac{1}{N} \right) \right) \\
        &\geq c N^{\frac{6 \epsilon}{10}} \left( 2^{\frac{3k+2}{6} -1} \left(  \frac{1 }{q^{7/2}}+ \frac{1}{q^{2}N^{1/3}} \right)  +2^{\frac{3k+2}{2}}  \frac{1}{N} \right).
	\end{align*}
	In conclusion, we have
	\begin{align}
		N^{\frac{ \epsilon }{10}} O\left( \frac{1}{q^2} \sqrt{\vk_{\mu_j} + \frac{1}{q^3} + \frac{1}{N^{2/3}}}\right) \ll  \wt{n}_{\fc} (\wg_{j}) + \wt{n}_{\fc}(\mu_j)  ,
	\end{align}
	and feeding the relations to \eqref{eq:7..1}, we get $\wt{n}_{\fc} (\mu_j) = n_{\fc} (\wt{\gamma}_j) ( 1+o(1))$ on $\Omega_*$. Using that $\wt{n}_{\fc}'(x) \sim (\wt{n}_{\fc}(x))^{1/3} \sim (|\wt{L}_-| +x)^{1/2}$, for $x \in [\wt{L}_{-},K ]$, we have $|\wt{L}_{-}|+\wg_j \sim |\wt{L}_{-} | + \mu_j $. Hence, we have $\wt{n}'_{\fc} (x) \sim \wt{n}'_{\fc} (\wt{\gamma}_j)$ for any $x$ between $\wt{\gamma}_j$ and $\mu_j$. 
	
	To prove case (2), recalling that the density $\wt{\rho}_{\fc}$ is continuous, on the event $\Omega_*$, for $j \in U_{k}$ with $k \leq k_*$, we have
	\begin{align}
		|\mu_j - \wg_j| & \leq C \frac{|\wt{n}_{\fc} ({\mu}_{j}) - \wt{n}_{\fc} (\wg_j)|}{\wt{n}'_{\fc} (\wt{\gamma}_j) }\leq  \frac{CN^{\frac{\epsilon}{10}}}{(j/N)^{1/3}} \left( \frac{1}{N} + \frac{1}{q^{9/2}}  + \frac{\vk_{\mu_j}}{q^{3/2}}+ \frac{{\vk_{\mu_j}}}{N^{1/3}}  \right) \\
		&\leq \frac{CN^{\frac{\epsilon}{10}}}{j^{1/3}} \Bigg(\frac{1}{N^{2/3}} + \frac{N^{1/3}}{q^{9/2}} + \frac{j^{2/3}}{N^{1/3}} \left( \frac{1}{q^{3/2}} + \frac{1}{N^{1/3}} \right) + \left( 1+ \frac{N^{1/3}}{q^{3/2}} \right)|\mu_j - \wt{\gamma}_j| \Bigg) ,\label{eq:inter}
	\end{align}
	where the mean value theorem was used in the first step, and $\vk_{\mu_j} \leq \vk_{\wt{{\gamma}}_j} + |\mu_j -\wt{\gamma}_j|$ and $\vk_{\wt{\gamma}_j} \sim (j/N)^{2/3}$ was used in the last step. Next, since $j = N \wt{n}_{\fc} (\wt{\gamma}_j) \sim N(|\wt{L}_-| + \wt{\gamma}_j  )^{3/2} $, for $j \in U_k$ with $k\in \llbracket 1, k_* \rrbracket$, we find
	\begin{align}
		j \geq c N \left( 2^{k} N^{\frac{4\epsilon }{10}} \left( \frac{1}{q^3} + \frac{1}{N^{2/3}} \right) \right)^{3/2} \gg N^{\frac{\epsilon}{2}} \left(\frac{N}{q^{9/2}} +1 \right).
	\end{align}
	Hence, we can absorb the last term on the right hand side of \eqref{eq:inter} into the left hand side and we obtain
	\begin{align}
		|\mu_j - \wg_j| \leq N^{{\epsilon}} \left[ \frac{1}{j^{1/3}} \left(\frac{1}{ N^{2/3}} +\frac{N^{1/3}}{q^{9/2}} \right)+ \frac{j^{1/3}}{N^{1/3}} \left( \frac{1}{q^{3/2}} + \frac{1}{N^{1/3}}  \right) \right],
	\end{align}
	on the event $\Omega_*$. This proves \eqref{eq:each1}.
	
	Likewise, for case (3), on the event $\Omega_*$, for $j \in U_k$ with $k > k_*$, we have
	\begin{align}
		|\mu_j - \wg_j|  \leq C \frac{|\wt{n}_{\fc} ({\mu}_{j}) - \wt{n}_{\fc} (\wg_j)|}{\wt{n}'_{\fc} (\wt{\gamma}_j) }
		&\leq  \frac{CN^{\frac{\epsilon}{10}}}{(j/N)^{1/3}} \left( \frac{1}{N} + \frac{1}{q^2} \sqrt{\vk_E + \frac{1}{q^3} + \frac{1}{N^{2/3}}} \right) \\
		&\leq  \frac{CN^{\frac{\epsilon}{10}}}{j^{1/3}} \left(\frac{1}{N^{2/3}} + \frac{N^{1/3}}{q^{7/2}} + \frac{j^{1/3}}{q^2} + \frac{N^{1/3} \sqrt{|\mu_j - \wg_j |}}{q^2} \right).\label{eq:7100}
	\end{align}
	For $k \geq k_*$, we have $j \gg Nq^{-3/2}$ when $q \lesssim N^{2/9}$, while $j \gg N^{2}q^{-6} + N^{-1} + Nq^{-3} \geq 2\sqrt{2} N^{1/2} q^{-3}$ when $q \gtrsim N^{2/9}$. In either cases, we can absorb the last term on the right side of \eqref{eq:7100} into the left side. Then, we achieve the bound
	\begin{align}
		|\mu_j - \wg_{j} | &\leq N^{\epsilon}  \left[ \frac{1}{j^{1/3}} \left( \frac{1}{N^{2/3}} + \frac{N^{1/3}}{q^{7/2}}  \right) + \frac{1}{q^2} \right],
	\end{align}
	which is \eqref{eq:each2} with minor adjustments.
\end{proof}

\subsection{Proof of Theorem \ref{thm:gauss}}\label{sec:gaussian}

By following the methods in \cite[Lemma 4.8]{LS14}, we prove Gaussian fluctuation of the endpoint of $\wt{L}_+$ in case $V$ is random, with $\lambda \gg q^{-1}$. Note that when $V$ is a random matrix, $\wt{m}_{\fc}$ is a random variable, while when $V$ is a deterministic matrix, $\wt{m}_{\fc}$ is also deterministic.
\begin{lemma} \label{lem:gaussian} If $\lambda \gg q^{-1}$, the rescaled fluctuation $N^{1/2} \lambda^{-1} (\wt{L}_+ - \breve{L}_+)$ converges in distribution to a Gaussian random variable with mean $0$ and variance 
	\begin{align}
		\lim_{N \to \infty} \lambda^{-2} (1 - \mfc(L_+)^2 ),
	\end{align}
	as $N \to \infty$. In particular, if $\beta>0$, the variance is $m^{(2)}(\nu)$.
\end{lemma}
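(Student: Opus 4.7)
The plan is to express $\wt{L}_+$ and $\breve{L}_+$ implicitly via the edge conditions of their defining functional equations, linearize the difference in the empirical measure $\wh{\nu}_N - \nu$, and then invoke the classical CLT for i.i.d.\ variables $(V_i)$. The starting point is that the edge $\wt{L}_+$ is characterized (together with $\wt{m}_+ := \wt{m}_{\fc}(\wt{L}_+)$) as the solution of the pair of equations
\begin{align*}
F_1(x,m;\mu) &:= \int \Big(\lambda v - x - m - \tfrac{s}{q^2}(\lambda v - x - m)^{-1}\Big)^{-1} \d\mu(v) - m = 0, \\
F_2(x,m;\mu) &:= \partial_m F_1(x,m;\mu) = 0,
\end{align*}
with $\mu = \wh{\nu}_N$; the analogue with $\mu = \nu$ yields $(\breve{L}_+, \breve{m}_+)$. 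I would first show, using Lemma~\ref{lem:stabbound} and Proposition~\ref{prop:refv}, that on $\Xi$ the Jacobian of $(F_1,F_2)$ with respect to $(x,m)$ at $(\breve{L}_+,\breve{m}_+)$ is nondegenerate with $\partial_x F_1 = 1 + O(q^{-2})$ and $\partial_m F_1 = O(q^{-2})$ (the latter vanishing up to the $s/q^2$ correction by the edge condition), so the implicit function theorem is applicable.

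Next, I would linearize $\wt{L}_+ - \breve{L}_+$ by writing $\wh{\nu}_N - \nu = \frac{1}{N}\sum_i (\delta_{V_i} - \E\delta_{V_i})$ and observing that the linear response of $F_1$ to this perturbation is exactly $\int (\lambda v - \breve{L}_+ - \breve{m}_+)^{-1}\d(\wh{\nu}_N-\nu)(v)$ up to $O(q^{-2})$ corrections. Inverting the Jacobian gives
\begin{align*}
\wt{L}_+ - \breve{L}_+ = -\frac{1}{N}\sum_{i=1}^N\Bigl(\frac{1}{\lambda V_i - L_+ - m_+} - m_+\Bigr) + O_\prec\Bigl(\frac{1}{N q^2} + \frac{1}{q^4} + \frac{1}{N}\Bigr),
\end{align*}
where I have replaced $(\breve{L}_+,\breve{m}_+)$ by $(L_+, m_+)$ using \eqref{eq:eqeq}, and used that $\E(\lambda V - L_+ - m_+)^{-1} = m_+$. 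Under $\lambda \gg \max\{q^{-1}, N^{-1/6}, \sqrt{N}q^{-3}\}$ the remainder is $o(N^{-1/2}\lambda)$, so it is negligible after scaling by $\sqrt{N}/\lambda$.

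Applying the classical CLT to the i.i.d.\ centered sum yields $\sqrt{N}\lambda^{-1}(\wt{L}_+ - \breve{L}_+) \Rightarrow \mathcal{N}(0,\sigma^2)$ with
\begin{align*}
\sigma^2 = \lim_{N\to\infty} \lambda^{-2}\Var\Bigl(\frac{1}{\lambda V - L_+ - m_+}\Bigr) = \lim_{N\to\infty} \lambda^{-2}\Bigl(\int \frac{\d\nu(v)}{(\lambda v - L_+ - m_+)^2} - m_+^2\Bigr) = \lim_{N\to\infty} \lambda^{-2}(1 - m_+^2),
\end{align*}
where the edge condition $\partial_m F_1 = 0$ gives $\int(\lambda v - L_+ - m_+)^{-2}\d\nu = 1 + O(q^{-2})$. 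For $\beta > 0$, expanding the functional equation and the edge condition to second order in $\lambda$ around the Wigner case (with $L_+ = 2 + \lambda^2 m^{(2)}(\nu) + O(\lambda^4)$ and $m_+ = -1 + \tfrac{1}{2}\lambda^2 m^{(2)}(\nu) + O(\lambda^4)$) gives $1 - m_+^2 = \lambda^2 m^{(2)}(\nu) + O(\lambda^4)$, hence the variance reduces to $m^{(2)}(\nu)$.

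The main technical obstacle will be controlling the sensitivity of the edge system $(F_1, F_2)$ to the refinement terms of order $s/q^2$, since $F_2$ involves $\int (\lambda v - x - m)^{-3}\d\mu$ which becomes large near the edge and could amplify the correction; however, the fourth-order stability bound \eqref{eq:stab4} uniformly in $\lambda \in \Theta_\varpi$ and the normalized cumulant, combined with the assumption $\lambda \gg \sqrt{N}q^{-3}$, keeps the correction under control in the linearization. The analogous statement for the smallest eigenvalue follows by symmetry.
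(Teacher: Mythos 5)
Your route is essentially the paper's: both characterize $\wt{L}_+$ and $\breve{L}_+$ through the same pair of edge equations, use the criticality of the edge (your $F_2=0$) so that the first-order response of the edge location to the perturbation $\wh{\nu}_N-\nu$ is just the Stieltjes-type integral of the measure difference evaluated at the frozen edge, and then apply the CLT to the resulting centered i.i.d.\ sum, identifying the variance from the edge condition and the small-$\lambda$ expansion. The paper performs this linearization by hand in the variable $\zeta=z+m$, writing $\wt{L}_+-\breve{L}_+=-\caX+O_{\prec}(\lambda^2N^{-1})$ with $\caX$ the centered sum built from the refined kernel at $\breve{\zeta}_+$, rather than invoking the implicit function theorem on $(F_1,F_2)$; this is the same computation in different packaging.

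There is, however, one concrete defect in your bookkeeping. Your remainder $O_\prec(N^{-1}q^{-2}+q^{-4}+N^{-1})$, and the ensuing appeal to $\lambda\gg\max\{q^{-1},N^{-1/6},\sqrt{N}q^{-3}\}$, imports the hypothesis of Theorem~\ref{thm:gauss}; Lemma~\ref{lem:gaussian} assumes only $\lambda\gg q^{-1}$, and a standalone deterministic $q^{-4}$ bias would not be $o(N^{-1/2}\lambda)$ in that regime when $q$ is small. In fact no such bias arises if you keep track of where the refinement enters: the $s/q^2$ terms and the $\pm\sqrt{s}q^{-1}$ shifts are common to $\wt{\nu}_N$ and $\breve{\nu}$ and cancel in the difference $\wt{L}_+-\breve{L}_+$, and replacing the refined kernel at $\breve{\zeta}_+$ by $(\lambda v-L_+-\mfc(L_+))^{-1}$ perturbs each centered summand only by $O(\lambda q^{-2})$, hence changes the fluctuation by $O_\prec(\lambda q^{-2}N^{-1/2})$ and the variance by a relative $O(q^{-2})$; together with the genuine second-order error $O_\prec(\lambda^2N^{-1})$ everything is negligible after scaling by $\sqrt{N}\lambda^{-1}$ under $\lambda\gg q^{-1}$ alone, which is also exactly the condition needed for the $q^{-2}$ corrections in the variance to be dominated by $\lambda^2$ so that $\lambda^{-2}\sigma_N^2\to m^{(2)}(\nu)$ (resp.\ $\to\lim\lambda^{-2}(1-\mfc(L_+)^2)$ for $\lambda\sim1$, where boundedness away from the support, i.e.\ \eqref{eq:g}, justifies the replacement uniformly). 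Two small additional remarks: at the solution of your system $\partial_mF_1$ vanishes exactly (that is what $F_2=0$ says), so the "$O(q^{-2})$" there is spurious; and since $\lambda$ and $q$ are $N$-dependent you should invoke the Lindeberg (triangular-array) form of the CLT, which is immediate because the summands are uniformly bounded.
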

\begin{proof}
	Following the proof in \cite{LS}, we find that $\wt{L}_{\pm}, \breve{L}_{\pm}, L_{\pm}$ are the solutions to the equations
	\begin{align}
		\wt{m}_{\fc} (\wt{L}_{\pm}) = \int \frac{\d \wt{\nu}(v)}{\lambda v - \wt{L}_{\pm} - \wt{m}_{\fc}(\wt{L}_{\pm})}, \quad & 1  = \int \frac{\d \wt{\nu}(v)}{(\lambda v - \wt{L}_{\pm} - \wt{m}_{\fc}(\wt{L}_{\pm}))^2 }, \\
		\breve{m}_{\fc} (\breve{L}_{\pm}) = \int \frac{\d \breve{\nu}(v)}{\lambda v - \breve{L}_{\pm} - {m}_{\fc}(\breve{L}_{\pm})}, \quad & 1  = \int \frac{\d \breve{\nu}(v)}{(\lambda v - \breve{L}_{\pm} - {m}_{\fc}(\breve{L}_{\pm}))^2 } \label{eq:int1}\\
		{m}_{\fc} ({L}_{\pm}) = \int \frac{\d {\nu}(v)}{\lambda v - {L}_{\pm} - {m}_{\fc}({L}_{\pm})}, \quad &  1  = \int \frac{\d {\nu}(v)}{(\lambda v - {L}_{\pm} - {m}_{\fc}({L}_{\pm}))^2 }. \label{eq:int11}
	\end{align}
	We focus on the upper edge. Let 
	\begin{align}
		\wt{\zeta}_{+} = \wt{L}_+ + \wt{m}_{\fc} (L_+), \quad   \breve{\zeta}_{+} \deq \breve{L}_{+} + \breve{m}_{\fc} (\breve{L}_+) , \quad \zeta_+ \deq L_+ + \mfc(L_+).
	\end{align}
	Recall that for $V$ random, we are working on $\Xi_N (1/2 -\epsilon_0)$, where then we have the bounds
	\begin{align}
		\int \frac{\d \breve{\nu}(v)}{(\lambda v - \breve{\zeta}_{+})^2 } - \int \frac{\d \wt{\nu}(v)}{(\lambda v- \breve{\zeta}_{+})^2 } = O_{\prec} \left(\frac{\lambda}{\sqrt{N}} \right) ,\quad
		\breve{\zeta}_{+} - \wt{\zeta}_{+} = O_{\prec} (\lambda N^{-1/2} ).
	\end{align}
	For proofs, we refer to the proof of Lemma \ref{lem:stabbound} in the Appendix regarding the remainder, and \cite[Lemma 4.8]{LS14}. Notice that the bounds are stronger than the deterministic case, due to the additional assumption given for random $V$ on the event $\Xi$.
	Finally, we can write 
	\begin{align}
		\wt{\zeta}_{+} -\wt{L}_+  =  \wt{m}_{\fc} (\wt{L}_+)  = \int \frac{\d \wt{\nu}(v)}{\lambda v - \wt{\zeta}_{+}}&  = \int  \frac{ \d\wt{\nu}(v) }{\lambda v - \breve{\zeta}_{+}} + \int  \frac{ (\wt{\zeta}_{+} - \breve{\zeta}_{+}) }{(\lambda v - \breve{\zeta}_{+})^2 }\d\wt{\nu}(v)  + O_{\prec} (\lambda^2 N^{-1}) \\
		&=( \breve{\zeta}_{+} - \breve{L}_+) + \caX + (\wt{\zeta}_{+} - \breve{\zeta}_{+} ) + O_{\prec} (\lambda^2 N^{-1}) ,
	\end{align}
	on $\Xi$, where we define the random variable $\caX$ by
	\begin{align}
		\begin{split}
			\caX \deq \int \frac{\d \wt{\nu}(v)}{\lambda v - \breve{\zeta}_{+}} -  \int \frac{\d \breve{\nu}(v)}{\lambda v - \breve{\zeta}_{+}}= \frac{1}{N}& \sum_{i=1}^{N}\frac{1}{2}  \Bigg[ \frac{1}{\lambda V_i - \breve{\zeta}_{+} - \sqrt{s}q^{-1} }+ \frac{1}{\lambda V_i - \breve{\zeta}_{+} + \sqrt{s}q^{-1} } \Bigg. \\ &  \Bigg.- \E \left(\frac{1}{\lambda V_i - \breve{\zeta}_{+} - \sqrt{s}q^{-1} } +  \frac{1}{\lambda V_i - \breve{\zeta}_{+} + \sqrt{s}q^{-1} } \right)  \Bigg].
		\end{split}
	\end{align}
	By the central limit theorem, $\caX$ converges to a Gaussian random variable with mean $0$ and variance $N^{-1} \sigma_N^2$ with 
	\begin{align}
		\sigma _N^2 &= \int \left[ \frac{1}{2} \left(\frac{1}{\lambda v - \breve{\zeta}_{+} -\sqrt{s}q^{-1} } + \frac{1}{\lambda v - \breve{\zeta}_{+} + \sqrt{s}q^{-1} }  \right) \right] ^2 \d \nu(v) - \left[ \int \frac{1}{\lambda v - \breve{\zeta}_{+}} \d \breve{\nu}(v) \right]^2  \\
		&= \frac{1}{2} \int \frac{1}{(\lambda v - \breve{\zeta}_{+})^2 }\d \breve{\nu}(v) +\frac{1}{2} \int \frac{1}{(\lambda v - \breve{\zeta}_{+})^2 - sq^{-2}} \d {\nu}(v) - \breve{m}_{\fc}(\breve{L}_+)^2  = 1 - \bv{m}_{\fc}(\bv{L}_+)^2 + O( q^{-2}).
	\end{align}
	It remains to prove the given variance coincides with the limit given in the theorem, and we provide the details in the Appendix.
\end{proof}

\begin{proof}[Proof of Theorem \ref{thm:gauss}]
	By combining the results in Lemma \ref{thm:opnorm} and Lemma \ref{lem:gaussian}, when 
	\begin{align}
		\lambda \gg \max\{q^{-1}, N^{-1/6} , \sqrt{N}q^{-3}\} ,
	\end{align} the rescaled fluctuation $N^{1/2}{\lambda^{-1}} (\mu_N - \breve{L}_+)$ converges in distribution to the Gaussian random variable in Lemma \ref{lem:gaussian}.
\end{proof}

\section*{Acknowledgements}
The work of Ji Oon Lee was partially supported by National Research Foundation of Korea under grant number NRF-2019R1A5A1028324 and RS-2023-NR076695.

\appendix

\section{Stability of the refined measures}\label{appendix1}
	We first prove Proposition \ref{prop:refv} \eqref{item:prop1}, which asserts that the refined measure also satisfies stability. We note that Lemma \ref{lem:stabbound} is not invoked in the proof.
	\begin{proof}[Proof of Proposition \ref{prop:refv} \eqref{item:prop1}] In what follows, denote $\wt{m}_{\wh{\nu}_N}$ as the Stieltjes transform of the probability measure $\wt{\nu}_N^{\lambda}$. First, consider the case where $(V_i)$ is deterministic. Let the lower and upper endpoints of $I_{\wh{\nu}_N}$ be $\wh{r}_N^{-}$ and $\wh{r}_N^{+}$, respectively. We recall that $\limsup_{N \to \infty} \max \{|\wh{r}_N^{-}|, |\wh{r}_N^{+}| \} \leq 1$, and that $\wh{\nu}_N$ and $\nu$ are supported on a compact set. Then, $\wt{\nu}_N$ is clearly supported on $ I_{\wt{\nu}_N} = \wt{I}_{\lambda, q} \deq \cup_{j =-1 , 0, 1} ( I_{\wh{\nu}_N} + j \sqrt{s} q^{-1}\lambda^{-1})$, where the endpoints of the interval are $\wh{r}_N^{-} - \sqrt{s}q^{-1} \lambda^{-1}$ and $ \wh{r}_N^{+} + \sqrt{s}q^{-1} \lambda^{-1}$. We partition $N$ into two classes
		\begin{align}
			S^{(1)} &\deq \{ N\in \bbN: (\wh{r}_N^{+} - \wh{r}_N^{-})/2 \geq \sqrt{s}q^{-1} \lambda^{-1} \},\\
			S^{(2)} & \deq \{ N \in \bbN: (\wh{r}_N^{+} - \wh{r}_N^{-})/2 < \sqrt{s}q^{-1} \lambda^{-1}\}.
		\end{align}
		Since $\wh{r}_N^+ - \wh{r}_N^{-} = O(1)  $, if $q^{-1} \lambda^{-1} \ll 1 $, only finitely many $N$ belong to $S^{(2)}$. Therefore, it suffices to consider $N$ that belongs to $S^{(2)}$ when $q^{-1} \lambda^{-1} $ does not converge to $0$. In what follows, $N$ is the index of the increasing sequence belonging to $S^{(1)}$ or $S^{(2)}$.
		
		\noindent \textbullet \, Case 1: $N$ belonging to $S^{(1)}$
		
		We partition $\wt{I}_{\lambda, q}$ into three intervals; $\wt{I}^{-}_{\lambda, q} = [\wh{r}_N^{-} - \sqrt{s}q^{-1} \lambda^{-1}, \lambda \wh{r}_N^{-} + \sqrt{s}q^{-1}\lambda^{-1}]$, $\wt{I}^{0}_{\lambda, q} = [ \wh{r}_N^{-} + \sqrt{s}q^{-1} \lambda^{-1}, \lambda \wh{r}_N^+ - \sqrt{s}q^{-1}\lambda^{-1}]$, and $\wt{I}^{+}_{\lambda, q} = [ \wh{r}_N^+ - \sqrt{s}q^{-1} \lambda^{-1}, \wh{r}_N^+ + \sqrt{s}q^{-1}\lambda^{-1}]$. 
		
		From Assumption \ref{assm:stableint}, we know that there exist $\varpi>0$ such that 
		\begin{align}
			\inf_{x \in I_{\wh{\nu}_N}+\frac{\sqrt{s}}{q\lambda}} \int \frac{\d \nu(v - \sqrt{s} q^{-1} \lambda^{-1} )}{\lambda^2( v - x)^2}  \geq 1+ \varpi, \quad  \inf_{x \in I_{\wh{\nu}_N}-\frac{\sqrt{s}}{q\lambda}} \int \frac{\d \nu(v + \sqrt{s} q^{-1} \lambda^{-1} )}{\lambda^2 ( v - x)^2}  \geq 1+ \varpi,
		\end{align} and so we easily verify
		\begin{align}
			\inf_{x \in \wt{I}^{0}_{\lambda, q}} \int \frac{\d \wt{\nu}_N(v) }{\lambda^2 ( v - x)^2} \geq 1+ \varpi.
		\end{align}
		Turning to the other intervals, we bound the non-trivial portion by
		\begin{align} \label{eq:inf}
			\inf_{x \in I_{\wt{\nu}}  \cap  \wt{I}_{\lambda, q}^+} \int \frac{\d \wh{\nu}_N (v + \sqrt{s} q^{-1} \lambda^{-1} )}{(\lambda v - x)^2} &\geq \frac{1}{\lambda^2} \int \frac{\d \wh{\nu}_N(v)}{(v- \wh{r}_N^+ - 2 \sqrt{s} q^{-1} \lambda ^{-1} )^2 } \\
			&\geq \frac{1}{\lambda^2} \frac{1}{(\int (v - \wh{r}_N^+ - 2 \sqrt{s} q^{-1} \lambda^{-1} ) d \wh{\nu}_N(v))^2  } \\
			& = \frac{1}{\lambda^2} \frac{1}{(\wh{r}_N^+ + 2 \sqrt{s}q^{-1} \lambda^{-1})^{2} },
		\end{align}
		where Jensen's inequality was used in the second line.
		If $\beta >0$, the last line becomes arbitrarily large as $N \to \infty$. If $\beta =0 $, taking $N \to \infty$, the last line converges to $(\lambda r^{+})^{-2} \geq \lambda^{-2} \geq 1$. Finally, if $\lambda \geq 1$, we have that $\sqrt{s}q^{-1} \lambda^{-1}$ converges to $0$ as $N \to \infty$. 
		From the continuity of 
		\begin{align}
			{H}(z) \deq \int \frac{\d \nu(v)}{(z-v)^2}, \quad z \in \bbC^+ \cup (\bbR \setminus \supp \nu), 
		\end{align} we can find $\varepsilon$ (independent of $N$) such that whenever $u \in (0, 2\varepsilon)$, we have
		\begin{align} \label{eq:cont}
			\int \frac{\d \nu(v)}{\lambda^2 (r^{+}+u -v)^2} > 1, \quad \int \frac{\d \nu(v)}{\lambda^2 (r^{-}-u -v)^2} > 1.
		\end{align}
		Then, there exist $N_0$ such that for any $N > N_0$, $\wh{r}_N^{+} + 2\sqrt{s}q^{-1} \lambda^{-1} - r^+ < \varepsilon$. Returning to \eqref{eq:inf}, we obtain
		\begin{align}
			\frac{1}{\lambda^2} \int \frac{\d \wh{\nu}_N(v)}{(v- \wh{r}_N^+ - 2 \sqrt{s} q^{-1} \lambda ^{-1} )^2 } \geq \frac{1}{\lambda^2} \int \frac{\d \wh{\nu}_N(v)}{(r^+ + \varepsilon - v )^2 }.
		\end{align}
		Notice that $v \mapsto \lambda^{-2} (r^{+} + \varepsilon -v)^{-2}$ is continuous and bounded on $\supp \wh{\nu}_N$, and from the definition of weak convergence, we have 
		\begin{align}
			\int \frac{\d \wh{\nu}_N (v) }{\lambda^2 (r^{+} + \varepsilon -v)^2 } \rightarrow \int \frac{\d {\nu} (v) }{\lambda^2 (r^{+} + \varepsilon -v)^2 }, \quad \mathrm{as} \,\, N \to \infty.
		\end{align}
		It immediately follows from \eqref{eq:cont} that for large enough $N$, we have
		\begin{align}
			\frac{1}{\lambda^2} \int \frac{\d \wh{\nu}_N(v)}{(v- \wh{r}_N^+ - 2 \sqrt{s} q^{-1} \lambda ^{-1} )^2 } >1. 
		\end{align}
		In all these cases, we deduce that there exist some ${\varpi}' > 0$ such that 
		\begin{align}
			\inf_{x \in \wt{I}^{+}_{\lambda, q}} \int \frac{\d \wt{\nu}_N(v) }{\lambda^2 ( v - x)^2} =  \inf_{x \in \wt{I}^{+}_{\lambda, q}} \left[ \frac{1}{2} \int \frac{ \d \wh{\nu}_N\left(v - \frac{\sqrt{s}}{q \lambda} \right) }{\lambda^2 ( v - x)^2} +  \frac{1}{2}\int \frac{ \d \wh{\nu}_N\left(v + \frac{\sqrt{s}}{q \lambda} \right)}{\lambda^2 ( v - x)^2} \right] \geq 1+ {\varpi}',
		\end{align}
		for $N$ sufficiently large. 
		
		Analogous statement holds for the lower endpoint of the support.
		
		\noindent \textbullet \, Case 2: $N$ belonging to $S^{(2)}$
		
		In this subsequence, it is only required to consider $\lambda \leq \frac{1}{2} \sqrt{s}  (\wh{r}_N^{+}-\wh{r}_N^{-}) q^{-1}$, which implies that in $N \to \infty$ limit, $\lambda$ must converge to $0$, that is, $\delta >0$. While it is possible to provide a better bound, the following crude bound suffices. Since there are no overlap between the supports $\wh{\nu}_N(v- \sqrt{s} q^{-1} \lambda^{-1})$ and $\wh{\nu}_N (v + \sqrt{s} q^{-1} \lambda^{-1})$, for any $x \in \wt{I}_{N}$, $x$ is not contained in at least one of $\wh{\nu}_N(v - \sqrt{s}q^{-1}\lambda^{-1})$ or $\wh{\nu}_N(v+\sqrt{s}q^{-1} \lambda^{-1})$; without loss of generality let us assume that it is not contained in the former. Then, we have
		\begin{align}
			\int \frac{\d \wt{\nu}_N}{\lambda^2 (v-x)^2} \geq \frac{1}{2} \int \frac{\d\wh{\nu}_N(v - \sqrt{s}q^{-1}\lambda^{-1})}{\lambda^2(v-x)^2} \geq  \frac{1}{2 \lambda^2 (\wh{r}_N^+ - \wh{r}_N^{-} + 2 \sqrt{s}{q}^{-1} \lambda^{-1})^2},
		\end{align}
		where the right hand side diverges in the limit $N \to \infty$, and so for sufficiently large $N$, Assumption \ref{assm:stableint} is satisfied.
		
		The case when $(V_i)$ is random, can be dealt with in the same method, with each bounds true with high probability, and we have the desired result, up to slightly altering the constant $\mathfrak{t}>0$ in Assumption \ref{assm:stableint}. The statement for $\breve{\nu}$ also follows the proof for $\wt{\nu}$.
	\end{proof}
	
	Next, we provide the details in the proof of Lemma \ref{lem:stabbound}. The proof follows the proof of Lemma A.1 in \cite{LSSY}.
	\begin{proof}[Proof of Lemma \ref{lem:stabbound}] 
	 Recall that in Proposition \ref{prop:refv}, we have proved that, on the event $\Xi$, there exist $\varpi' > 0$ such that for all $N$ sufficiently large, 
		\begin{align}
			\inf_{x \in I_{\wt{\nu}^{\lambda}}} \int \frac{\d \wt{\nu} (v) }{(\lambda v - x)^2} \geq 1 + \varpi',
		\end{align}
		uniformly in $\lambda \in \Theta_{\varpi}$ and $s \in \Theta_{s^{(4)}}$. Consider the function
		\begin{align}
			\wt{H}(\zeta) \deq \int \frac{\d \wt{\nu}(v)}{(\lambda v - {\zeta})^2},
		\end{align} 
		which is a decreasing continuous function outside $I_{\wt{\nu}^{\lambda}}$. There are two real solutions $\wt{\zeta}_{\pm}$ to $\wt{H} (\zeta) =1$ outside of the interval $I_{\wt{\nu}}$. Moreover, there exist a constant $\frg >0$, depending only on $\nu$ (thus uniform on $\lambda \in \Theta_{\varpi}$, the choice of $s = Nq^{-2} \caC_4\in \Theta_{s^{(4)}}$ and $N$), such that
		\begin{align}
			\inf_{\lambda \in\Theta_{\varpi}, s \in \Theta_{s^{(4)}}} \mathrm{dist} (\{\wt{\zeta}_{\pm}\} , {I}_{\wt{\nu}_N^{\lambda}}) \geq \frg,\label{eq:g}
		\end{align}
		for $N$ sufficiently large. 
		
		The assertions \eqref{eq:stab1}, \eqref{eq:stab2}, \eqref{eq:stab3}, \eqref{eq:stab4} hold when replacing $\nu$ by $\wt{\nu}$ and $\mfc$ with $\wt{m}_{\fc}$, and other quantities appropriately. The same argument works for $\bv{\nu}$ and $\bv{m}_{\fc}$.  The proofs rely on the relation $\eqref{eq:g}$. For more details we refer to the Appendix of \cite{LS, LSSY}.
		
		It remains to prove \eqref{eq:eqeq}
		Define $\wt{\zeta } \deq z +\wt{m}_{\fc}(z) $ and $\breve{\zeta} \deq z + \breve{m}_{\fc}(z) $. Furthermore, define for $w \in \bbC^+$,
		\begin{align}
			\wt{\FF}(w) &\deq w-\int \frac{\d \wt{\nu}(v)}{\lambda v - w} = w- \int \frac{\d \wh{\nu}(v)}{\lambda v - w - \frac{s}{q^2} \frac{1}{\lambda v - w}} , \\
			\bv{\FF}(w) &\deq w-\int \frac{\d \bv{\nu}(v)}{\lambda v - w} = w- \int \frac{\d {\nu}(v)}{\lambda v - w - \frac{s}{q^2} \frac{1}{\lambda v - w}},
		\end{align}
		where we have $\wt{\FF}(\wt{\zeta}) =\bv{\FF}(\bv{\zeta}) = z$ by definition. Using the stability bounds and \eqref{eq:bound} in the definition of $\Xi_N$, and assuming that $|\wt{\zeta} - \bv{\zeta}| \ll 1 $, we have
		\begin{align}
			[\bv{\FF}(\bv{\zeta}) + O(N^{-\alpha_0}) ] (\wt{\zeta} - \bv{\zeta}) + \frac{\bv{F} (\bv{\zeta})}{2} (\wt{\zeta} - \bv{\zeta})^2 = o(1) (\wt{\zeta} - \bv{\zeta})^2 + O(N^{-\fra_0}), 
		\end{align}
		uniformly in $\lambda \in \Theta_{\varpi}$, and the $Nq^{2} \caC_4 = s^{(4)}$, for all $z \in \caD_{\tau}$. Then, from $\bv{\FF}( \bv{\zeta})  \sim \sqrt{\vk + \eta} $ and $\bv{{\FF}}''(\bv{\zeta}) \leq C$, for all $z \in \caD_{\tau}$. In what follows, we abbreviate $\Gamma \deq |\wt{\zeta} - \bv{\zeta}|$. 
		
		First, we consider $z = E+ \ii \eta \in \caD_{\tau}$, such that ${\vk_{E} + \eta} > N^{-\epsilon}$, for some small $\epsilon >0$, (with $\epsilon < \alpha_0)$. For such $z$, we obtain from the expansion that $\Gamma \leq CN^{\epsilon} (\Gamma^2 + N^{-\fra_0})$. Then, either $\Gamma \leq C_0 N^{\epsilon} N^{-\alpha}$ or $\Gamma \geq CN^{-\epsilon}$, for some constant $C_0$. To extend the bound for all $\eta$, we use the Lipschitz continuity of $\breve{\zeta}$ and $\wt{\zeta}$. Since $\pd_{z} \bv{\zeta} = (\bv{\FF}' (\wt{\zeta}))^{-1}$ and $\pd_{z} \wt{\zeta} = (\wt{\FF}' (\wt{\zeta}))^{-1}$, holds, we infer that the Lipschitz constant of $\bv{\zeta}$ and $\wt{\zeta}$ is $N^{\epsilon/2}$ for $z \in \caD_{\tau}$ satisfying $\vk_{E} + \eta > N^{-\epsilon}$. Then, by bootstrapping, we obtain
		\begin{align}\label{eq:ap1}
			|\wt{\zeta}(z) - \bv{\zeta} (z)| \leq CN^{\epsilon} N^{-\fra_0},
		\end{align}
		on $\Xi$ for $N$ sufficiently large, for all $z$ satisfying $\vk_{E} + \eta >N^{-\epsilon} $. 
		
		To control $\wt{\zeta} - \zeta$ for $z \in \caD_{\tau}$ with $\vk_E + \eta \leq N^{-\epsilon}$, we first estimate
		\begin{align}
			\begin{split}
				\wt{\zeta} - \bv{\zeta} = & ( \wt{\zeta} - \bv{\zeta} ) \int \frac{1 + \frac{s}{q^2} \frac{1}{(\lambda v - \wt{\zeta}) (\lambda v- \bv{\zeta})}}{(\lambda v - \wt{\zeta} - \frac{s}{q^2} \frac{1}{\lambda v- \wt{\zeta}})(\lambda v - \wt{\zeta} - \frac{s}{q^2} \frac{1}{\lambda v- \bv{\zeta}})} \d \wh{\nu}(v)\\
				&+ \left[ \int \frac{\d \wh{\nu}(v) }{\lambda v - \bv{\zeta} - \frac{\sqrt{s}}{q} }  +  \int \frac{\d \wh{\nu}(v) }{\lambda v - \bv{\zeta} +\frac{\sqrt{s}}{q} }  \right] -\left[ \int \frac{\d {\nu}(v) }{\lambda v - \bv{\zeta} - \frac{\sqrt{s}}{q} } +  \int \frac{\d {\nu}(v) }{\lambda v - \bv{\zeta} + \frac{\sqrt{s}}{q} }  \right],
			\end{split}
		\end{align}
		where the last line is of order $O(N^{-\fra_0}) $ from \eqref{eq:bound}. Since $\eta =2$, and $\Im \wt{\zeta} , \Im \bv{\zeta} \geq \eta$, we obtain $\Gamma \leq CN^{-\fra_0}$, for $\eta =2$. Writing $\wt{\zeta}$ and $\bv{\zeta}$ in the neighborhood of $\wt{\zeta}_+$ and $\bv{\zeta}_+$, respectively, we have
		\begin{align}
			\wt{\zeta} - \wt{\zeta}_+ &= \wh{c}_+ \sqrt{z - \wt{\zeta}_+} (1 + O(z - \wt{L}_+)),\\
			\bv{\zeta} - \bv{\zeta}_+ &= {c}_+ \sqrt{z - \wt{\zeta}_+} (1 + O(z - \bv{L}_+)),
		\end{align}
		where notice that the constant in the first equation may depend on $V$. 
		
		\begin{align}
			\int \frac{\d \breve{\nu}(v)}{(\lambda v - \breve{\zeta}_{+})^2 } - \int \frac{\d \wt{\nu}(v)}{(\lambda v- \breve{\zeta}_{+})^2 }   &= \sum_{j=-1, 1} \frac{1}{2} \left[ \int \frac{\d {\nu}(v)}{(\lambda v- \breve{\zeta}_{+} + j\frac{\sqrt{s}}{q})^2} - \int \frac{\d \wh{\nu}(v)}{(\lambda v- \breve{\zeta}_{+} +j  \frac{\sqrt{s}}{q})^2 }\right],
		\end{align}
		which is of order $ O\left( N^{- \fra_0} \right)$, and $O_{\prec} (\lambda N^{-1/2})$ when $V$ is random. We illustrate the proof for $V$ is deterministic, the case when $V$ is random can be dealt with similarly; see also \cite[Lemma 4.8]{LS14}, and the proof of Lemma \ref{lem:gaussian}.
		\begin{align}
			0 = \int \frac{\d \wt{\nu}(v)}{(\lambda v - \wt{\zeta}_{+})^2} -1 &=  \int \frac{\d \wt{\nu}(v)}{(\lambda v - \wt{\zeta}_{+})^2}  - \int \frac{\d \wt{\nu}(v)}{(\lambda v - \breve{\zeta}_{+})^2} + O\left( N^{-\fra_0} \right)\\
			&= \int \frac{(-2 \lambda v +\bv{\zeta}_+ + \wt{\zeta}_{+}) (\zeta_+ - \wt{\zeta}_{+})}{(\lambda v - \bv{\zeta}_+)^2 (\lambda v - \wt{\zeta}_{+})^2 } \d \wt{\nu}(v) +  O\left( N^{-\fra_0} \right),
		\end{align}
		and since $v \in \supp \wt{\nu}$, we have
		\begin{align}
			-2 \lambda v + \bv{\zeta}_+ + \wt{\zeta}_{+} \geq 0.
		\end{align}
		From the definition of $\bv{\zeta}_+, \wt{\zeta}_+$, we easily find that they are bounded above by $1+\lambda$. Along with the bound \eqref{eq:g}, we have
		\begin{align}
			\int  \frac{- 2 \lambda v + \breve{\zeta}_{+} + \wt{\zeta}_{+} }{(\lambda v - \breve{\zeta}_{+})^2 (\lambda v- \wt{\zeta}_{+})^2} \d \wt{\nu}(v)>c ' >0, 
		\end{align}
		for some constant $c' >0$ independent of $N$. Then, we deduce that
		\begin{align}
			|\breve{\zeta}_{+} - \wt{\zeta}_{+}|  \leq C N^{-\fra_0} ,\quad |\bv{\zeta}_+ - \wt{\zeta}_+ | \leq CN^{{-\fra_0}},
		\end{align}
		on $\Xi_N(\fra_0)$, for $N$ sufficiently large. Analogous statement holds for the lower edge.
		
		Finally, 
		\begin{align}
			|\wt{\zeta}(z) - \bv{\zeta}(z)| \leq C \sqrt{\vk_E + \eta}+ CN^{-\fra_0 /2},\label{eq:ap2}
		\end{align}
		where the constant can be chosen uniformly in $\lambda \in$. By choosing, for example, $\epsilon = \fra_0/4$, from \eqref{eq:ap1} and \eqref{eq:ap2} we get the desired result.
	\end{proof}

	\section{Estimates on the remainder term in cumulant expansion}\label{sec:remainder}
	We illustrate the treatment to $F_{ki} = G_{ki} \caP^{D-1} \ol{\caP^{D}}$, proving that $\caR_{\ell} (W_{ik}F_{ki}) \prec q^{-K}$ for large $N$, for any $K$ given. We follow the analysis in \cite{HKR, LS18}, and we do not assume the weak local law in the proof. Other remainder terms are expected to be dealt with analogously.
	
	Define $E^{[ik]}$ the $N \times N$ matrix whose entries are
	\begin{align}
		(E^{[ik]})_{xy} = \frac{1}{1+ \delta_{im}} (\delta_{ix} \delta_{ky} + \delta_{ky} \delta_{ix}).
	\end{align}
	Define $S^{[ik]} = H_{ik} E^{[ik]}$. For each pair of indices $(i, k)$, we define the matrix $H^{(ik)}$ from $H$ through the decomposition
	\begin{align}
		H = H^{(ik)} + S^{[ik]}.
	\end{align}
	Denote $G^{(ik)} =(H^{(ik)} - z \mathrm{I} )^{-1} $. Then we have the following resolvent identities
	\begin{align}
		G^{(ik)} = G + (G S^{[ik]})G + (G S^{[ik]})^2 G &+ \cdots +(G S^{[ik]})^ aG^{(ik)},\label{eq:rsv1} \\
		G = G^{(ik)}- (G^{(ik)} S^{[ik]})G^{(ik)} &+ \cdots +(-1)^{a} (G^{(ik)}S^{[ik]})^a G,\label{eq:rsv2}
	\end{align}
	for any integer $a$. To ensure $(q^{-1} N^{3\tau/10})^a N <1$, we choose $a$ to satisfy $a> 4\delta^{-1}$. Recall the assumptions $q\geq N^{\delta}$ and $N^{\tau} < q^{20/9}$.
	
	We choose $\frh = q^{-1} N^{\tau/5}$. Note that $S^{[ik]}$ only consists of two entries that are nonzero and they are stochastically dominated by $q^{-1}$. Furthermore, we have the trivial bound $\max_{nm} |G^{(ik)}_{nm}| \leq \eta^{-1} \leq N^{1- \tau}$. With $\theta \leq N^{\tau/10} $, substituting these bounds into \eqref{eq:rsv1} gives
	\begin{align}
		\max_{nm} |G^{(ik)}_{nm}| \prec (1+\theta) \sum_{b = 0 }^{a-1} \left(\frac{1+\theta}{q}\right)^b +  \left(\frac{1+ \theta}{q}\right)^a N^{1- \tau},
	\end{align}
	which from our choice of $a$ yields $\max_{nm} |G^{(ik)}_{nm}| \prec 1+ \theta$. With this bound in hand, iterating the same process with the trivial bound $\max_{nm} \sup_{|H_{ik}| \leq \frh}|G_{nm}| \leq \eta^{-1} \leq N^{1- \tau}$ and relation \eqref{eq:rsv2}, we obtain
	\begin{align}
		\max_{nm} \sup_{|H_{ik}| \leq \frh} |G_{nm}| \prec 1+ \theta.
	\end{align}
	Then a simple induction implies
	\begin{align}
		\max_{nm} \sup_{|H_{ik}| \leq \frh} |\pd_{ik}^{l} G_{nm}| \prec (1+ \theta)^{l+1},
	\end{align}
	for any fixed $l \in \mathbb{N}$. 
	To estimate the second term of the right hand side of \eqref{eq:Remainder}, we can easily show that 
	\begin{align}
		\sup_{ |t| \leq \frh }\left|\pd_{ik}^{(\ell+1)} f(H^{(ik)} + t E^{[ik]}) \right| \prec (1 + \theta )^{\ell +1 } \cdot N^{100D},
	\end{align}
	and so we can find $\ell \equiv \ell(K, D) \geq 1$ such that
	\begin{align}
		\E |H_{ik}|^{\ell+2} \cdot  \E \left[ \sup_{ |t| \leq \frh }\left|\pd_{ik}^{(\ell+1)} f(H^{(ik)} + t E^{[ik]}) \right|\right]  \leq  N^\epsilon q^{-K},
	\end{align}
	for $N$ sufficiently large, since $\E |H_{ik}|^{\ell+2}  = \bigO{N^{-1}q^{-\ell}}$ and $q > N^{\phi}$ for some $\phi>0$.
	Finally, we estimate the first term of the right hand side of \eqref{eq:Remainder}. We easily obtain for sufficiently large $N$ that
	\begin{align}
		\E \sup_{|t| \leq |H_{ik}|} \left| \pd_{ik}^{(\ell +1)} f(H^{(ik)} + t E^{[ik]}) \right|^2 \leq N^{\epsilon/2} q^{c_{D, \ell}},
	\end{align}
	where $c_{D, \ell}$ is some positive constant. Refer to the proof of \cite[Lemma 3.4]{HKR} for details. Furthermore, while we have $\max_{ij} |H_{ik}| \prec q^{-1}$, from Cauhcy-Schwarz inequality we have
	\begin{align}
		\E \left|H_{ik}^{2\ell +4} \mathbbm{1} (|H_{ik}|> \frh) \right| \leq N^{\epsilon/2} N^{-1/2} q^{-(2 \ell+3)}  \bbP({|H_{ik}|>\frh}) ,
	\end{align}
	which can be bounded by $N^{\epsilon/2} q^{-2K - c_{D, \ell}}$. Thus there exist $\ell$ that bounds the first term by $N^{\epsilon} q^{-K}$ for large $N$ for any $K$.

	\section{Cumulant expansion of $Q$} \label{appendix2}
	
	In this section, we provide the details of the cumulant expansion of the quantity $Q$, proving Lemma \ref{lem:Q}.
	\begin{proof}[Proof of Lemma \ref{lem:Q} (1)] The first assertion can be proved by explicit expansions. Unless when exact cancellation is required, we may drop the complex conjugates for they play no role in subsequent analysis. First, from explicit expansions, we have
		\begin{align*}
			\E Y_{1, 0} &+ \E \left[ \Bigg(\frac{1}{N} \pair{G} (\G G)_{ik} + \frac{1}{N} \pair{\G} (G^2)_{ik} \Bigg) Q_{ik}^{D-1} \overline{Q_{ik}^D} \right] \\&= \frac{1}{N^2} \E \Bigg[ \sum_{j_1j_2}(-G_{j_2k} G_{j_1i} \G_{j_2j_1} - G_{j_2k} G_{j_2 i} \G_{j_1 j_1} - G_{j_1 j_2} G_{j_2 k} \G_{ij_1} - G_{j_2j_2} G_{j_1k}  \G_{ij_1}) Q_{ik}^{D-1} \overline{Q_{ik}^D} \Bigg]\\
			& \phantom{=}+ \E \left[ \Bigg(\frac{1}{N} \pair{G} (\G G)_{ik} + \frac{1}{N} \pair{\G} (G^2)_{ik} \Bigg) Q_{ik}^{D-1} \overline{Q_{ik}^D} \right]
			\\
			&= \frac{1}{N^2}\E \Bigg[ \sum_{j_1 j_2}(-G_{j_2k} G_{j_1i} \G_{j_2j_1}  - G_{j_1 j_2} G_{j_2 k} \G_{ij_1} ) Q_{ik}^{D-1} \overline{Q_{ik}^D} \Bigg],
		\end{align*}
		and from Lemma \ref{lem:wardd} we deduce 
		\begin{align}
			\left|\E Y_{1, 0} + \E \left[ \Bigg(\frac{1}{N} \pair{G} (\G G)_{ik} + \frac{1}{N} \pair{\G} (G^2)_{ik} \Bigg) Q_{ik}^{D-1} \overline{Q_{ik}^D} \right] \right| \prec \Psi_2^3 \E |Q_{ik}|^{2D-1}.
		\end{align}
		
		$Y_{1,1}$ is also dealt with explicit expansions, where we have
		\begin{align*}
			|\E Y_{1, 1 }| & \leq \frac{1}{N^2 } \E \left| \sum_{j_1 j_2} G_{j_2 k} \G_{i j_1} \pd_{j_1 j_2}  \left( Q_{ik}^{D-1} \overline{Q_{ik}^D} \right) \right| \\
			& = \bigO{\frac{1}{N^2}} \E \left| \sum_{j_1j_2}G_{j_2 k} \G_{ij_1}  Q_{ik}^{2D-2} (\pd_{j_1 j_2} Q_{ik}) \right| \\
			& = \bigO{\frac{1}{N^2 }} \E \Bigg| \sum_{j_1j_2} G_{j_2 k} \G_{ij_1}  Q_{ik}^{2D-2} \Bigg[\frac{1}{N} \G_{ij_1} G_{kj_2} + \frac{1}{N} \G_{ij_2} G_{kj_1}  \\
			&\phantom{=\sum_{j_1 j_2} \E \Bigg|}- G_{j_1i} Q_{j_2k} - G_{j_1k} Q_{ij_2}  - G_{j_2i}Q_{kj_1} - G_{j_2k} Q_{ij_1} \\
			&\phantom{=\sum_{j_1 j_2} \E \Bigg|} - \frac{2}{N^2} (G^2)_{j_1j_2} (\G G)_{ik} - \frac{1}{N^2} [(\G G)_{j_1j_2} + (\G G)_{j_2j_1}] (G^2)_{ik} \\
			& \phantom{=\sum_{j_1 j_2} \E \Bigg|} - \frac{2s}{q^2} \bigg( (\pd_{j_1 j_2} \pair{G \odot G}) \frac{1}{N} \sum_{j=1}^{N} \G_{jj}G_{ij} G_{jk} + \pair{G \odot G} \frac{1}{N} \sum_{j=1}^{N} \pd_{j_1 j_2} (\G_{jj} G_{ij}G_{jk}) \bigg)\\
			& \phantom{=\sum_{j_1 j_2} \E \Bigg|} - \frac{2s}{q^2} \bigg( (\pd_{j_1 j_2} \pair{\G \odot G}) \frac{1}{N} \sum_{j=1}^{N} G_{jj}G_{ij} G_{jk} + \pair{\G \odot G} \frac{1}{N} \sum_{j=1}^{N} \pd_{j_1 j_2} (G_{jj}  G_{ij} G_{jk}) \bigg) \Bigg] \Bigg| \\
			& \prec \left(\Psi_2 ^3 \xi  + \Psi_2^2 \cdot \Psi_2^4 + \frac{1}{q^2} \Psi_2^4 \right) \E |Q_{ik}|^{2D-2},
		\end{align*}
		and we have the desired results.
	\end{proof}
	\begin{proof}[Proof of Lemma \ref{lem:Q} (2)]
		$r \geq 2$. We drop the complex conjugates of $Q_{ik}$ since it plays no role in subsequent analysis, and estimate
		\begin{align}
			\begin{split}
				|\E Y_r| &\leq  \left| \sum_{s=0}^{r} \binom{r}{s} \frac{s^{(r+1)}}{ N^2 q^{r-1}}  \sum_{j_1 j_2}  \E\left[(\pd_{j_1 j_2}^{r-s} G_{j_2 k} G_{ij_1}) (\pd_{j_1j_2}^s Q_{ik}^{D-1} \overline{Q_{ik}^D} ) \right] \right|\\
				&=  \sum_{s=0}^{r} \sum_{a = 0}^{s \wedge 2D-1} \sum_{(l_1, \cdots, l_a) \in \mc{J}_{a}^{s}} \bigO{\frac{1}{ N^2 q^{r-1}}} \sum_{j_1 j_2} \E \left| (\pd_{j_1j_2}^{r-s} G_{j_2k}G_{ij_1}) \left(\prod_{m=1}^{a} (\pd_{j_1j_2}^{l_m} Q_{ik}) \right) Q_{ik}^{2D -1 -a} \right|,
			\end{split}
		\end{align}
		where $\mc{J}_a^{s} $ is the set of all $a$-tuples of positive integers, $(l_1, \cdots, l_a)$, satisfying $l_1 + \cdots + l_a = s$. The first three summands can be suppressed by multiplying a finite constant to the remaining terms. Assuming the variables of the first three summands are fixed, the relevant prevailing term can be estimated
		\begin{align}
			&\bigO{\frac{1}{ N^2 q^{r-1}}} \sum_{j_1 j_2 } \E \left|   (\pd_{j_1 j_2} ^{r-s} G_{j_2 k} G_{ij_1}) \left(\prod_{m=1}^{a} \pd_{j_1 j_2}^{l_m} {Q_{ik}}  \right) Q_{ik}^{2D -1 -a} \right|\label{eq:relQ} \\
			\prec & \bigO{\frac{1}{q^{r-1}}}  \Psi_2^2 (N^{-1} + q^{-b} \Psi_2 + \Psi_2^2)^a \E |Q_{ik}|^{2D-1-a} \\
			\prec &  \bigO{\frac{1}{q^{r-1}}} q^{a+1} ((Nq)^{-1} + q^{-(b+1)} \Psi_2 + q^{-1}\Psi_2^2)^{a+1} \E |Q_{ik}|^{2D-1-a} \\
			\prec & (N^{-1}q^{-1} + q^{-(b+1)} \Psi_2 + q^{-1}\Psi_2^2 +\Psi_2^3 )^{a+1} \E |Q_{ik}|^{2D-1-a} , 
		\end{align}
		for $r-1 \geq a+1$ with $r \geq 2$. The remnant cases are $a > r-2$, and since $a \leq s \leq r$, we only need to consider $(a, s) = (r-1, r-1), (r-1, r), (r, r)$. We divide into the following cases.
		\begin{enumerate}
			\item $(a, s) = (r-1, r-1), (r, r)$ or $(a,s) = (r-1, r)$ with $r \geq 3$
			\item $(a, s) = (r-1, r)$ for $r=2$
		\end{enumerate}
		
		\noindent Case (i): In these cases, there exist at least one $l_i$ that is equal to $l_i =1$. Relabeling the indices of $l_i$ in descending order and expanding $\pd_{j_1j_2} Q_{ik}$, the relevant term can be written
		\begin{align}
			\begin{split}
				\bigO{\frac{1}{N^2 q^{r-1}}} &\sum_{j_1 j_2} \E \Bigg| (\pd_{j_1 j_2}^{r-s} G_{j_2 k} \G_{ij_1} ) \left(\prod_{m'=1}^{a-1} (\pd_{j_1j_2}^{l_{m'}} Q_{ik}) \right) Q_{ik}^{2D -1 -a}\Bigg[\frac{1}{N} \G_{ij_1} G_{kj_2} + \frac{1}{N} \G_{ij_2} G_{kj_1}  \\
				&\phantom{ \E \Bigg|}- G_{j_1i} Q_{j_2k} - G_{j_1k} Q_{ij_2}  - G_{j_2i}Q_{kj_1} - G_{j_2k} Q_{ij_1} \\
				&\phantom{ \E \Bigg|} - \frac{2}{N^2} (G^2)_{j_1j_2} (\G G)_{ik} - \frac{1}{N} [(\G G)_{j_1j_2} + (\G G)_{j_2j_1}] (G^2)_{ik} \\
				& \phantom{ \E \Bigg|} - \frac{2s}{q^2} \bigg( (\pd_{j_1 j_2} \pair{G \odot G}) \frac{1}{N} \sum_{j=1}^{N} \G_{jj}G_{ij} G_{jk} + \pair{G \odot G} \frac{1}{N} \sum_{j=1}^{N} \pd_{j_1 j_2}(\G_{jj} G_{ij})G_{jk} \bigg)\\
				& \phantom{ \E \Bigg|} - \frac{2s}{q^2} \bigg( (\pd_{j_1 j_2} \pair{\G \odot G}) \frac{1}{N} \sum_{j=1}^{N} G_{jj}G_{ij} G_{jk} + \pair{\G \odot G} \frac{1}{N} \sum_{j=1}^{N} \pd_{j_1 j_2} (G_{jj}  G_{ij}) G_{jk} \bigg) \Bigg] \Bigg|,\label{eq:case1}
			\end{split}
		\end{align}
		where we will consider term by term. The last three lines are simple; $N^{-2}\sum_{j_1 j_2} (\pd_{j_1j_2}^{r-s} G_{j_2 k} \G_{ij_1}) = O_{\prec} (\Psi_2^2)$, and the terms in square brackets are dominated by $O_{\prec}(\Psi_2^4 + q^{-2} \Psi_2^2)$. Hence the terms contributing from the last three lines are dominated by 
		\begin{align*}
			&O\left(1/{q^{r-1}}\right) \Psi_2^2 (N^{-1} + q^{-b}\Psi_2 + \Psi_2^2)^{a-1} (\Psi_2^4 + q^{-2} \Psi_2^2) \E |Q_{ik}|^{2D-a-1} \\
			\prec& O\left(1/{q^{r-1}} \right) q^{a-1}(N^{-1}q^{-1} + q^{-(b+1)}\Psi_2 + q^{-1}\Psi_2^2)^{a-1} (\Psi_2^6 + q^{-2} \Psi_2^4) \E |Q_{ik}|^{2D-a-1}\\
			\prec & (N^{-1}q^{-1} + q^{-(b+1)} \Psi_2 + q^{-1} \Psi_2^2 + \Psi_2^3 )^{a+1} \E |Q_{ik}|^{2D-1-a}.
		\end{align*}
		For the terms in the first and second line, the terms in each lines are generically of the same form. Thus, we only illustrate the treatment of the first ones. Moreover, we have to deal with $a=r-1$ and $a=r$ separately. The first term in the first line is
		\begin{align}
			&\bigO{\frac{1}{N^2 q^{r-1}}}  \E \Bigg| \sum_{j_1 j_2}  (\pd_{j_1 j_2}^{r-s} G_{j_2 k} \G_{ij_1} )  \left(\prod_{m'=1}^{a-1} (\pd_{j_1j_2}^{l_{m'}} Q_{ik}) \right) Q_{ik}^{2D -1 -a} \cdot \frac{1}{N} \G_{ij_1} G_{kj_2}  \Bigg| \\
			\prec & \bigO{\frac{1}{q^{r-1}}}  (N^{-1} + q^{-b} \Psi_2 + \Psi_2^2)^{a-1} 
			\frac{1}{N^2}  \sum_{j_1 j_2} \E \Bigg[  \bigg|(\pd_{j_1 j_2}^{r-s} G_{j_2 k} \G_{ij_1} ) \frac{1}{N} \G_{ij_1} G_{kj_2}  \bigg| \cdot Q_{ik}^{2D-1-a} \Bigg], \label{eq:Q11}
		\end{align}
		where if $a=r$, $ N^{-2} |\sum_{j_1 j_2}  (\pd_{j_1 j_2}^{r-s} G_{j_2 k} \G_{ij_1} ) \cdot N^{-1} \G_{ij_1} G_{kj_2}| $ is dominated by $O_{\prec}(N^{-1} \Psi_2^4) $, since $s =r $. However, if $a=r-1$, $s = r-1$ is allowed and the sum can only be dominated by $O_{\prec}(N^{-1} \Psi_2^3) $. Therefore, if $a=r$, we have
		\begin{align}
			\eqref{eq:Q11} \prec& \bigO{1/q^{r-1}} N^{-1} \Psi_2^4 (N^{-1} + q^{-b} \Psi_2 + \Psi_2^2)^{r-1}  \E |Q_{ik}|^{2D-1-r} \\
			\prec & \bigO{1/q^{r-1}}  q^{-2} \Psi_2^4 \cdot  q^{r-1} (N^{-1}q^{-1} + {q^{-(b+1)}}\Psi_2 + q^{-1}\Psi_2^2)^{r-1}  \E |Q_{ik}|^{2D-1-r}\\
			\prec & (N^{-1}q^{-1} + {q^{-(b+1)}}\Psi_2 + q^{-1}\Psi_2^2 + \Psi_2^3 )^{r+1} \E |Q_{ik}|^{2D-1-r}.
		\end{align}
		When $a=r-1$, the summation bound is coarser, but this is offset by a weaker control requirement. In detail, we have
		\begin{align}
			\eqref{eq:Q11} \prec & \bigO{1/{q^{r-1}}} N^{-1}\Psi_2^3 (N^{-1} + q^{-b} \Psi_2 + \Psi_2^2)^{a-1}  \E |Q_{ik}|^{2D-1-a} \\
			\prec & \bigO{1/q^{r-1}} \Psi_2^3 \cdot q^{a} (N^{-1}q^{-1} + {q^{-(b+1)}}\Psi_2 + q^{-1}\Psi_2^2)^{a}  \E |Q_{ik}|^{2D-1-a}\\
			\prec & (N^{-1}q^{-1} + {q^{-(b+1)}}\Psi_2 + q^{-1}\Psi_2^2 + \Psi_2^3 )^{a+1} \E |Q_{ik}|^{2D-1-a}.
		\end{align}
		Next, we turn to the terms in the second line. Assuming $Q = O_{\prec}(\xi)$, the first term in the second line can be estimated
		\begin{align}
			&\bigO{\frac{1}{N^2 q^{r-1}}}  \E \Bigg| \sum_{j_1 j_2}  (\pd_{j_1 j_2}^{r-s} G_{j_2 k} \G_{ij_1} ) \left(\prod_{m'=1}^{r-1} (\pd_{j_1j_2}^{l_{m'}} Q_{ik}) \right) Q_{ik}^{2D -1 -r} \cdot G_{j_1 i} Q_{j_2k} \Bigg|\\
			\prec & \bigO{\frac{1}{q^{r-1}}} (N^{-1} + q^{-b} \Psi_2 + \Psi_2^2)^{r-1}  \frac{1}{N^2}  \sum_{j_1 j_2} \E \left[ |(\pd_{j_1 j_2}^{r-s} G_{j_2 k} \G_{ij_1}) G_{j_1 i} |\cdot |Q_{ik}|^{2D-1-r} \right] \cdot \xi, \label{eq:Q21}
		\end{align}
		For $a=s=r$, the normalized sum over two indices always run through three off-diagonal resolvent entries and we have
		\begin{align}
			\eqref{eq:Q21} \prec & \bigO{1/q^{r-1}} \Psi_2^3 (N^{-1} + q^{-b} \Psi_2 + \Psi_2^2)^{r-1}  \xi \cdot \E |Q_{ik}|^{2D-1-r} \\
			\prec & \bigO{1/q^{r-1}} \Psi_2^3 \cdot q^{r-1} (N^{-1}q^{-1} + q^{-(b+1)} \Psi_2 + q^{-1}\Psi_2^2)^{r-1} \xi \cdot \E |Q_{ik}|^{2D-1-r}\\
			\prec & (N^{-1}q^{-1} + q^{-(b+1)} \Psi_2 + q^{-1}\Psi_2^2 + \Psi_2^3)^{r} \xi \E |Q_{ik}|^{2D-1-r}.
		\end{align} When $a = r-1$, only two off-diagonal entries are guaranteed in the summation, and we have
		\begin{align}
			\eqref{eq:Q21} \prec & \bigO{1/q^{r-1}} \Psi_2^2 (N^{-1} + q^{-b} \Psi_2 + \Psi_2^2)^{a-1} \xi \cdot \E |Q_{ik}|^{2D-1-a} \\
			\prec & \bigO{1/q^{r-1}} q^a (N^{-1}q^{-1} + q^{-(b+1)} \Psi_2 + q^{-1}\Psi_2^2)^{a} \xi \cdot  \E |Q_{ik}|^{2D-1-a}\\
			\prec & (N^{-1}q^{-1} + q^{-(b+1)} \Psi_2 + q^{-1}\Psi_2^2)^{a} \xi \E |Q_{ik}|^{2D-1-a}.
		\end{align}
		Combining these arguments, for the specified cases, where we had $a \geq 1$, we conclude
		\begin{align}
			\eqref{eq:relQ} \prec (N^{-1}q^{-1} + q^{-(b+1)}\Psi_2 +q^{-1} \Psi_2^2 +\Psi_2^3)^{a} \xi \E|Q_{ik}|^{2D-1-a}.
		\end{align}
		
		\noindent Case (ii): In this case, we consider $(a, s) = (r-1, r)$ for $r=2$. Before conducting more explicit expansions, observe that applying extra $\pd_{j_1 j_2}$ to the last three lines in the square bracket of \eqref{eq:case1} conserves the number of fresh indices or extra factors of $q^{-2}$. Thus, these terms attain the same bound to that of $r\geq 3$. Then, the case is reduced to estimating
		\begin{align}
			\begin{split}
				\eqref{eq:relQ} \prec& \bigO{\frac{1}{N^2 q}} \E \Bigg| \sum_{j_1 j_2}  G_{j_2 k} \G_{ij_1} \cdot  \Bigg[ \pd_{j_1j_2} \Bigg(\frac{1}{N} \G_{ij_1} G_{kj_2} + \frac{1}{N} \G_{ij_2} G_{kj_1}  \\
				&\phantom{\bigO{\frac{1}{N^2 q^{r-1}}} \sum_{j_1 j_2} \E \Bigg|}- G_{j_1i} Q_{j_2k} - G_{j_1k} Q_{ij_2}  - G_{j_2i}Q_{kj_1} - G_{j_2k} Q_{ij_1} \Bigg) \Bigg] Q_{ik}^{2D -2} \Bigg|\\&
				+(N^{-1}q^{-1} + q^{-(b+1)} \Psi_2 + q^{-1} \Psi_2^2 + \Psi_2^3 )^{2} \E |Q_{ik}|^{2D-2},
			\end{split}    
		\end{align}
		where again we consider each terms separately. For the terms in the first line in the square bracket, observe that even after acting $\pd_{j_1j_2}$, there is at least one off-diagonal entry containing $j_1$ or $j_2$ as indices. Hence, we have
		\begin{align}
			&\bigO{\frac{1}{N^2q}} \E \left| \sum_{j_1 j_2} G_{j_2 k} \G_{ij_1} \cdot  \Bigg[\pd_{j_1j_2}\Bigg(\frac{1}{N} \G_{ij_1} G_{kj_2} + \frac{1}{N} \G_{ij_2} G_{kj_1} \Bigg)\Bigg] Q_{ik}^{2D -2} \right| \\
			\prec & \frac{1}{Nq} \Psi_2^3 \E |Q_{ik}|^{2D-2} \prec (N^{-1}q^{-1} + q^{-(b+1)} \Psi_2 + q^{-1} \Psi_2^2 + \Psi_2^3 )^{2} \E |Q_{ik}|^{2D-2}.
		\end{align}
		To deal with the terms in the second line, observe that $\pd_{j_1j_2} G_{j_n m}  $ always contain an off diagonal entry containing $j_1$ or $j_2$ for any combinations of $n =1, 2$ and $m=i, k$. Since all these four terms can be bounded via same method, we only illustrate the first one, where we may bound
		\begin{align}
			&\bigO{\frac{1}{N^2q}} \E \left| \sum_{j_1 j_2} G_{j_2 k} \G_{ij_1} (\pd_{j_1j_2} G_{j_1 i} Q_{j_2k} ) Q_{ik}^{2D -2} \right| \\
			=&\bigO{\frac{1}{N^2q}} \sum_{j_1 j_2} \E \left|G_{j_2 k} \G_{ij_1} [(\pd_{j_1j_2} G_{j_1 i}) Q_{j_2k} + G_{j_1 i} (\pd_{j_1j_2} Q_{j_2 k}) ] Q_{ik}^{2D -2} \right|\\
			\prec & q^{-1} \Psi_2^3 (N^{-1} + q^{-b} \Psi_2 + \Psi_2^2)\E |Q_{ik}|^{2D-2} \\ \prec & (N^{-1}q^{-1} + q^{-(b+1)} \Psi_2 + q^{-1} \Psi_2^2 + \Psi_2^3 )^{2} \E |Q_{ik}|^{2D-2}.
		\end{align}
		Combining the previous results, we have
		\begin{align}
			\eqref{eq:relQ} \prec (N^{-1}q^{-1} + q^{-(b+1)}\Psi_2 +q^{-1} \Psi_2^2 +\Psi_2^3)^{a+1}  \E|Q_{ik}|^{2D-1-a}.
		\end{align}
		Therefore, we conclude 
		\begin{align}
			|\E Y_r| \prec &   (N^{-1}q^{-1} + q^{-(b+1)}\Psi_2 +q^{-1} \Psi_2^2 +\Psi_2^3) \E |Q_{ik}|^{2D-1} \\& +\sum_{a'=2}^{2D} (N^{-1}q^{-1} + q^{-(b+1)}\Psi_2 +q^{-1} \Psi_2^2 +\Psi_2^3)^{a'-1} \xi \E |Q_{ik}|^{2D-a'} \\
			\prec & \sum_{a'=1}^{2D} [(N^{-1}q^{-1} + q^{-(b+1)}\Psi_2 +q^{-1} \Psi_2^2 +\Psi_2^3) \xi]^{a'/2} \E |Q_{ik}|^{2D-a'},
		\end{align}
		and the claim is proved.
	\end{proof}
	
	The proof of Lemma \ref{lem:Q} (3) can be obtained in a similar approach that is given in Appendix \ref{sec:remainder}.
	\section{Proofs in Section \ref{sec:rme}}
	In this section, we provide proofs of Lemma \ref{lem:high} and Lemma \ref{lem:I32}. 

	\begin{proof}[Proof of Lemma \ref{lem:high}]
		For these terms, we may drop the complex conjugates, since they play no role in subsequent analysis. Then we have
		\begin{align}
			|\E I_r| &=\left| \sum_{s=0}^{r} \frac{s^{(r+1)}}{N^2 q^{r-1}} \sum_{ik} \E \left[ (\pd_{ik}^{r-s} \G_{ki}) (\pd_{ik}^{s}  \mc{P}^{D-1} \overline{\mc{P} ^D} )   \right] \right| =  \sum_{s=0}^{r} O \left(\frac{1}{ N^2 q^{r-1}} \right)  \E  \left| \sum_{ik} (\pd_{ik}^{r-s} \G_{ki}) (\pd_{ik}^s  \mc{P}^{D-1} \overline{\mc{P} ^D} ) \right| \\
			&=  \sum_{s=0}^{r} \sum_{a = 0}^{s \wedge 2D-1} \sum_{(l_1, \cdots, l_a) \in \mc{J}_{a}^{s}} \bigO{\frac{1}{ N^2 q^{r-1}}} \E \left| \sum_{ik}  (\pd_{ik}^{r-s} \G_{ki}) \left(\prod_{m=1}^{a} \pd_{ik}^{l_m} \mc{P}  \right) \mc{P}^{2D -1 -a} \right|,
		\end{align}
		where $w_{r,s} = \binom{r}{s}/r!$ and $\mc{J}_a^{s} $ is the set of all $a$-tuples of positive integers, $(l_1, \cdots, l_a)$, satisfying $l_1 + \cdots + l_a = s$. To simplify our arguments, we divide into when $r-s$ is odd and $r-s$ is even. If $r-s$ is even, $\pd_{ik}^{r-s} \G_{ki}$ always contain an off-diagonal entry, then by summing over $i$ or $k$, we can get a factor $\Psi_2$ from Ward identity. On the other hand, if $r-s$ is odd, we do not gain any extra factor upon summation.
		
		Let $n_p^{(r-s)} = 0$ if $r-s $ is odd, and $n_p^{(r-s)} = 1$ if $r-s$ is even. Then, we have
		\begin{align*}
			\bigO{\frac{1}{ N^2 q^{r-1}}} \E \left|  \sum_{ik}  (\pd_{ik}^{r-s} \G_{ki}) \left(\prod_{m=1}^{a} \pd_{ik}^{l_m} \mc{P}  \right) \mc{P}^{2D -1 -a} \right|\prec & \bigO{\frac{1}{q^{r-1}}}  (N^{-1} + q^{-1}\Psi_2^2 +\Psi_2^3)^a \Psi_2^{n_p^{(r-s)}} \E |\mc{P}^{2D-1-a}| \\
			\prec &  (\Psi_2^{2})^{a+1} \E |\mc{P}^{2D-1-a}|,
		\end{align*}
		where the condition for the validity of the last line is verified by computing the power index:
		\begin{align}
			\frf(n_q = r-1, n_d =a, n_p =n_p^{(r-s)}, n_m =a+1) 
			&= \min \left\{ r-3 , 2a-2  , \frac{r+a-5 }{2}\right\} +n_p^{(r-s)}.
		\end{align}
		If $r-s $ is odd, the power index is non-negative (thus the last step holds) for any pair of $(r, s)$ such that $r \geq 4$ unless $a \not =0$. If $a=0$, the last step clearly holds when $r \geq 5$. On the other hand, if $r-s$ is even, the criteria holds  for any $r \geq 2$, unless $a \not =0$. In case $a = 0$, the last step holds when $r \geq 3$.
		
		Noticing that the restriction $a \not = 0$ is equivalent to $s \not = 0$, we have proved that the given relation holds for the $(r, s)$ pairs of $r \geq 4, 0\leq s \leq r$, or $(r,s) = (3, 1), (3, 3), (2, 2)$, for all $z \in \caD_{\tau}$.
		
		Refer to Appendix~\ref{sec:remainder} for the cumulant expansion illustration of the remainder term.
	\end{proof}
	
	Next, we prove the bound for $I_{3,2}$.
	\begin{proof}[Proof of Lemma \ref{lem:I32}]
		Explicit expansion yields
		\begin{align}
			|\E I_{3, 2}|
			&= \bigO{\frac{1}{N^2q^2 }}\left| \sum_{ik} \E (\pd_{ik} \G_{ik}) \pd_{ik}^2 ( \mc{P}^{D-1} \overline{\mc{P}^D} ) \right| \\
			\begin{split}
				&\prec \bigO{\frac{1}{N^2q^2}} \E \Bigg | \sum_{ik}  (G_{ii} \G_{kk} + G_{ik} \G_{ki}) \bigg[ \left(\frac{1}{N} \G_{ki} + \frac{1}{N} \G_{ik} - Q_{ki} - Q_{ik} \right)^2 \mc{P}^{2D-3}  
				\\ & \phantom{[[[[[[[]]]]]]]}+ \left(- \frac{1}{N}(G_{ii} \G_{kk} + G_{ik} \G_{ki} + G_{kk} \G_{ii} +G_{ki} \G_{ik} ) - \pd_{ik}Q_{ik} -\pd_{ik}Q_{ki} \right) \mc{P}^{2D-2} \bigg] \Bigg| ,
			\end{split}
		\end{align}
		where we deal with the two terms separately. First consider
		\begin{align}
			&\frac{1}{N^2 q^2}  \E \left| \sum_{ik} (G_{ii} \G_{kk} + G_{ik} \G_{ki}) \left( \frac{1}{N} \G_{ki} + \frac{1}{N} \G_{ik} - Q_{ki} - Q_{ik} \right)^2 \mc{P}^{2D-3} \right|
            \\ \prec & \frac{1}{q^2} (N^{-1} + q^{-1} \Psi_2^2 + \Psi_2^3 )^2 \E |\mc{P}|^{2D-3},
		\end{align}
		which is of $O_{\prec}(\Psi_2^6 \E |\caP|^{2D-3})$ since $N^{-2} q^{-2} \leq (N^{-1/4} q^{-1/2})^6$. For the terms in the second line, terms without $Q_{ik}$ are bounded easily with the aid of an extra factor $N^{-1}$. For the other terms, we expand further as follows.
		\begin{align}
			&\frac{1}{N^2 q^2} \sum_{ik} \E \left| (G_{ii} \G_{kk} + G_{ik} \G_{ki}) \pd_{ik} Q_{ik} \mc{P}^{2D-2}\right| \\
			\begin{split}
				\prec & \frac{1}{N^2 q^2} \sum_{ik} \E \Bigg| (G_{ii} \G_{kk} + G_{ik} \G_{ki}) \Bigg[\frac{1}{N} \G_{ii} G_{kk} + \frac{1}{N} \G_{ik} G_{ki}  \\& \phantom{\frac{1}{N^2 q^2} \sum_{ik} \E \Bigg| ( G_{ik} \G_{ki}) \Bigg[ }- G_{ii} Q_{kk} - G_{ik} Q_{ik}  - G_{ki}Q_{ki} - G_{kk} Q_{ii} \Bigg] \mc{P}^{2D-2}\Bigg| + q^{-2} \Psi_2^3 \E|\mc{P}|^{2D-2},
			\end{split}
		\end{align}
		where we have used \eqref{eq:3line}. Those terms containg an extra factor of $N^{-1}$ or off-diagonal resolvent entries are handled easily. The tricky terms are those only with diagonal entries, where up to the results we have obtained so far, they are of the order $O_{\prec}(q^{-2} (N^{-1} + q^{-1} \Psi_2^{2} + \Psi_2^3 \E |\mc{P}|^{2D-2})$, which is dominated by $O_{\prec} (\Psi_{3/2}^4 \E|\mc{P}|^{2D-2}$).
	\end{proof}
	
	\section{Proofs in Section \ref{sec:locallaw}}
	In this section, we provide the proofs of Lemma \ref{lem:Lem4.6} and \ref{lem:det}.
	\begin{proof}[Proof of Lemma \ref{lem:Lem4.6}]
				Let $\Omega_r$ be an event of high probability such that on $\Omega_r$, $\norm \wt{R}(\pair{G}) \norm = O(1)$. Then, on $\Omega_r$, $\wt{R}(\pair{G}) = O(1)$.
				
				Define the set $\caU \deq \{ g =\pair{G} = \frac{1}{N} \Tr \frac{1}{ H - z \mathrm{I}} :H \in \Omega_r, z \in \caD_{\tau} \} \subset \bbC$. Since $ |g| = |\pair{G}| \leq N$, $\caU$ is contained in the ball of radius $N$ around the origin. Let $\overline{\caU} \subset \caU$ be an $N^{-3}$ net of $\caU$. That is, $|\overline{\caU}| = O(N^8)$ and for each $u \in \caU$, there exist a $\overline{u}$ such that $|u - \overline{u}| \leq N^{-3}$. 
				
				From union bound, and applying the result of Lemma \ref{lem:61} or Lemma \ref{lem:62}, it follows that, uniformly for any $i, j  \in \llbracket1, N \rrbracket$,
				\begin{align}\label{eq:64}
					\sup_{\overline{u} \in \overline{\caU}} |(\wt{R}(\overline{u}) \mathsf{P})_{ij}| \prec \Psi_{1}.
				\end{align}
				Then for each $g \in\caU$, and $\overline{g} \in \overline{\caU} $ such that $|g - \overline{g}|\leq N^{-3}$, we have for any $i, j \in \llbracket 1, N \rrbracket$
				\begin{align}
					|(\wt{R}(g)\mathsf{P})_{ij}| &\leq |(\wt{R}(\overline{g})\mathsf{P})_{ij}| + |((\wt{R}({g}) - \wt{R}(\overline{g} )\mathsf{P}))_{ij}| \\
					&\leq |(\wt{R}(\overline{g})\mathsf{P})_{ij}|  + |g  - \overline{g}|  \norm(1 + {s}q^{-2} R(\overline{g}) R(g) )\wt{R}(\overline{g})  \wt{R}(g)\norm \norm \mathsf{P} \norm \\
					& \prec \Psi_{1} + N^{-1},
				\end{align}
				where \eqref{eq:64} and the trivial bound $\norm \mathsf{P} \norm = O(N^2)$ was used in the last line.
				
				Analogous argument holds for normalized trace. Notice that the corresponding estimate \eqref{eq:64} for normalized trace would be bounded by $\Psi_{1}^2 $ or $\Psi_{3/2}^2 + q^{-2} |1- \pair{\wt{\MM} \odot \wt{\MM}}|$, and the factor $N^{-1}$ in the last estimate can also be absorbed into the bound.
			\end{proof}
		
	\begin{proof}[Proof of Lemma \ref{lem:det}]
		The proof is based on a dichotomy argument. We start by defining
		\begin{align} 
			\alpha_0 (z) \deq N^{\frac{3\epsilon}{10}} \left(\frac{1}{q^{\frb}} + \frac{1}{N^{1/4} q^{1/2}} + \frac{1}{N\eta} \right)^{1-\upsilon/2}.
		\end{align}
		Then, for $N$ sufficiently large, we have
		\begin{align}
			\frac{1}{q^{2\frb}}+  \frac{1}{N^{1/2} q} + \frac{\vartheta(z)}{N \eta} \leq  \alpha_0^2 + \frac{N^{\epsilon}}{N\eta} \left(\frac{1}{q^{\frb}} + \frac{1}{N^{1/4}q^{1/2}} +  \frac{1}{N \eta} \right)^{1- \upsilon}  \leq N^{\frac{15\epsilon}{10}} \left(\frac{1}{q^{\frb}} + \frac{1}{N^{1/4}q^{1/2}} + \frac{1}{N \eta} \right)^{2- \upsilon} .
		\end{align}
		Writing $|1-\wt{\mathsf{R}}_2 (z) | \deq \alpha(z)$, we divide into $\alpha(z) \leq \alpha_0(z)$ and $\alpha(z) > \alpha_0(z)$. In the regime $\alpha \leq \alpha_0$, $\vk \ll 1$, and so $|\wt{\mathsf{R}}_3| > {c}$ for some constant ${c} >0$. Then, writing 
		\begin{align}
			|[\mathrm{v}]^2| \leq \left|[\mathrm{v}]^2 - \frac{(1-\wt{\mathsf{R}}^{(2)}) [\mathrm{v}]}{\wt{\mathsf{R}}^{(3)}} \right| + \frac{\alpha [\mathrm{v}]}{c} ,
		\end{align}
		we obtain
		\begin{align}
			\Lambda^2 \leq \frac{C}{q^2} \Lambda  + N^{\frac{15\epsilon}{10}} \left(\frac{1}{q^{\frb}} + \frac{1}{N^{1/4}q^{1/2}} + \frac{1}{N \eta} \right)^{2- \upsilon}  + N^{\frac{3\epsilon}{10}}\alpha \left(\frac{1}{q^2} + \frac{1}{N\eta}\right)  + \frac{\alpha \Lambda}{c},
		\end{align}
		for $N$ large enough. The relation can be organized into the quadratic form
		\begin{align*}
			\left(\Lambda - \frac{C}{2q^2} - \frac{\alpha}{2c} \right)^2 \leq \frac{1}{4} \left(\frac{C}{q^2} +\frac{\alpha}{c} \right)^2 + N^{\frac{15\epsilon}{10}} \left(\frac{1}{q^{\frb}} + \frac{1}{N^{1/4}q^{1/2}} + \frac{1}{N \eta} \right)^{2- \upsilon}+N^{\frac{3\epsilon}{10}}\alpha \left(\frac{1}{q^2} + \frac{1}{N\eta}\right),
		\end{align*}
		and taking square root, from $\alpha \leq \alpha_0$, we find that
		\begin{align}
			\Lambda \leq  N^{\epsilon} \left(\frac{1}{q^{\frb}} + \frac{1}{N^{1/4}q^{1/2}} + \frac{1}{N \eta} \right)^{1- \upsilon/2}.
		\end{align}
		If $\alpha > \alpha_0$, recall the bound $|\wt{\mathsf{R}}^{(3)}|< {C}_3$ for some constant ${C}_3 >0$. Then, we have
		\begin{align}
			\alpha \Lambda \leq {C}_3 \Lambda^2 + C q^{-2} \Lambda +N^{\frac{2\epsilon}{10}} \left(\frac{1}{q^{2 \frb}} + \frac{1}{N^{1/2} q} +\frac{|1- \wt{\mathsf{R}}^{(2)} |}{q^2}+ \frac{ |1- \wt{\mathsf{R}}^{(2)} | + \vartheta}{N \eta} \right),
		\end{align}
		and assuming $\Lambda \leq \alpha/(2 {C}_3)$, we obtain
		\begin{align}
			\Lambda &\leq C N^{\frac{2\epsilon}{10}} \left(\frac{1}{\alpha q^{2\frb}} + \frac{1}{\alpha N^{1/4} q^{1/2}} +\frac{1}{q^2}+ \frac{ 1}{N \eta} + \frac{\vartheta}{\alpha N\eta }\right) \leq C N^{-\frac{\epsilon}{10}} \left(2\frac{\alpha_0^2}{\alpha} + \alpha_0 \right).
		\end{align}
		Since $\alpha> \alpha_0$, it follows that if $\Lambda \leq \alpha/(2{C}_3)$, we have $\Lambda \ll \alpha/(2 {C}_3)$. This implies the dichotomy $\Lambda > \alpha/(2 {C}_3)$ or $\Lambda \ll \alpha/(2 {C}_3)$. We have assumed $\Lambda (z) \ll 1 $ for $\eta \sim 1$, and in such regimes $O(\alpha(z)) =1$. Since $\Lambda (z)$ is continuous in $\eta = \Im z$, we obtain the claim for $\alpha > \alpha_0$, and the proof is complete.
	\end{proof}
	
	\section{Proofs in Section \ref{sec:behavior}}
	\subsection{Proof of Lemma \ref{thm:opnorm}} 
	\begin{proof}[Proof of Lemma \ref{thm:opnorm}]
		We will only consider the largest eigenvalue $\mu_N$. A  bound on the lowest eigenvalue $\mu_1$ is obtained in a similar way. 
		
		First, we recall an eigenvalue bound for sparse random matrices. It was proved in \cite[Theorem 2.9]{LS18}, that for $W$ satisfying Definition \ref{def:sparse}, we have
		\begin{align}\label{eq:LS}
			\left| \norm W \norm -2 -\frac{s}{q^2} \right| \prec \frac{1}{q^4}   + \frac{1}{N^{2/3}} .
		\end{align}
		The bound will be used to cover the energies $E \geq E_0$.
		
		Fix (small) $\epsilon >0$. Let us define an event $\Omega_0$ where $\Xi$ holds, along with the bounds
		\begin{align}
			\left| \norm W \norm -2 - \frac{s}{q^2} \right| &\leq N^{\epsilon} \left(\frac{1}{q^4} + \frac{1}{N^{2/3} } \right),\\
			\Lambda(z) &\leq N^{\epsilon} \left(\frac{1}{q^{3/2}} +\frac{1}{N^{1/4} q^{1/2}} + \frac{1}{N \eta}\right),\label{eq:076}\\
			|(1- \wt{\mathsf{R}}^{(2)} )[\mathrm{v}] - \wt{\mathsf{R}}^{(3)} [\mathrm{v}]^2| &\leq o(1) \Lambda^2 +{ N^{\frac{\epsilon}{10}} \left( \frac{1}{q^3} + \frac{1}{\sqrt{N}q} + \frac{|1 - \wt{\mathsf{R}}^{(2)}|}{q^2} + \frac{\Im \wt{m}_{\fc} + \vt }{N\eta} \right)}\label{eq:077},
		\end{align}
		where $\vt = q^{-3/2} + N^{-1/4}q^{-1/2} + (N\eta)^{-1}$ and
		the \eqref{eq:076} and \eqref{eq:077} uniform in $z = E+ \ii \eta \in \caD_{\frac{\epsilon}{10}}$.  Clearly, from \eqref{eq:LS}, Theorem \ref{thm:refine}, and \eqref{eq:scone}, for any (large) $D \ge 1 $, $\Omega_0$ holds with probability at least $1-N^{-D}$, for $N \geq N_0(\epsilon, D)$. 
		
		From now on, we work on the event $\Omega_0$. For $E > \wt{L}_+$ and $\vk_E \geq \eta$, we have $\Im \wt{m}_{\fc} \sim \eta/\sqrt{\vk_E}$ and
		\begin{align}
			\alpha \deq |1- \wt{\mathsf{R}}^{(2)} | \sim \sqrt{\vk_E}.
		\end{align}
		Thus, we deduce
		\begin{align}
			|(1- \wt{\mathsf{R}}^{(2)} )[\mathrm{v}] - \wt{\mathsf{R}}^{(3)} [\mathrm{v}]^2| \leq  o(1) \Lambda^2 + N^{\frac{\epsilon}{10}} \left( \frac{1}{q^3} + \frac{1}{\sqrt{N}q} + \frac{\sqrt{\vk_E} }{q^2} + \frac{1}{N \sqrt{\vk_E}} + \frac{1}{(N \eta)^2} \right), 
		\end{align}
		for $N$ sufficiently large. For any $E$ satisfying
		\begin{align}
			\wt{L}_+ + N^{\frac{9\epsilon}{10}} \left( \frac{1}{q^3 } + \frac{1}{N^{1/2} q } +\frac{1}{N^{2/3}} \right)  \leq E \leq E_0,\label{eq:EEE}
		\end{align}
		where then $\vk_E$ satisfies
		\begin{align}\label{eq:minkap}
			\min \{N^{-1/2} \vk_E^{1/4}, N^{-1} q^3 \vk_E^{1/2} , N^{-1/2} q \vk_E^{1/2}  , \vk_E , N^{-1} q^2 \} \geq \frac{N^{\frac{3\epsilon}{10}}}{N \sqrt{\vk_E}},
		\end{align}
		For $E$ satisfying \eqref{eq:EEE}, set
		\begin{align}
			\eta \equiv \eta_E \deq  \frac{N^{\frac{2{\epsilon}}{10}}}{N \sqrt{\vk_E}}.
		\end{align}\label{eq:eat}
		Note that $z = E+ \ii \eta  \in \caD_{\frac{\epsilon}{10}}$. From \eqref{eq:minkap}, we have $\vk_E \geq \eta$. We also have 
		\begin{align} \label{eq:quadddd}
			\Im \wt{m}_{\fc} ( E + \ii \eta) \sim \frac{\eta}{\sqrt{\vk_E}} \ll \frac{1}{N\eta}, \quad \frac{1}{q^3 \sqrt{\vk_E}} \ll \frac{1}{N\eta}, \quad \frac{1}{\sqrt{N}q \sqrt{\vk_E}} \ll \frac{1}{N\eta}, \quad \frac{1}{q^2} \ll \frac{1}{N\eta}.
		\end{align}
		Since $\alpha \geq C^{-1} \sqrt{\vk_E}$ for some constant $C >1$, we have
		\begin{align}
			2 |\wt{\mathsf{R}
			}^{(3)}|\Lambda \leq C N^{\epsilon} \left(\frac{1}{q^{3/2}} + \frac{1}{N^{1/4} q^{1/2}} + \frac{1}{N\eta}  \right) \leq \alpha, \quad N^{\epsilon} \frac{1}{\alpha N \sqrt{\vk_E}} \leq  \frac{C \eta}{\sqrt{\vk_E}} \ll \frac{1}{N\eta}
		\end{align}
		Also, since $\alpha \geq 2|\RRR{3}| \Lambda$, we then have
		\begin{align}
			\Lambda \leq C N^{\epsilon} \left( \frac{1}{\alpha q^3} + \frac{1}{\alpha \sqrt{N}q}+ \frac{1}{q^2}  + \frac{1}{\alpha (N \eta)^2} + \frac{1}{\alpha N \sqrt{\vk_E}} \right) \ll \frac{1}{N\eta}.
		\end{align}
		Therefore, for any energy $E$ satisfying \eqref{eq:EEE} and $\eta$ satisfying \eqref{eq:eat} $z = E+ \ii \eta \in \caD_{\frac{\epsilon}{10}}$, we obtain
		\begin{align}
			\Im g(z) \leq \Im \wt{m}_{\fc} (z) +  \Lambda(z) \ll \frac{1}{N\eta}.
		\end{align}
		On the other hand, if there is an eigenvalue in the interval $[E-\eta, E+\eta]$, we have
		\begin{align}
			\Im g(z) = \frac{1}{N} \sum_{j =1}^{N} \frac{1}{(\mu_j - E)^2 + \eta^2} \geq \frac{C}{N\eta},
		\end{align}
		for some $C > 0$ by the spectral decomposition of $H$. Therefore, on the event $\Omega_0$, there is no eigenvalue in the interval $[E - \eta, E+\eta]$. 
		
		For the energies $E \geq E_0$, from spectral perturbation theory, it follows that $\norm H \norm \leq \norm W \norm + \lambda \norm V \norm$. The result implies that, with high probability, there is no eigenvalue of $H =W + \lambda V$ in this regime.
		
		Therefore, on the high probability event $\Omega_0$, there is no eigenvalue larger than $\wt{L}_+ + N^{\epsilon} ( N^{-2/3} + N^{-1/2} q^{-1} + q^{-3})$, and the claim follows.
	\end{proof}
	
	\subsection{Proof of Proposition \ref{prop:loc} and Proposition \ref{prop:int}}

	First, we prove Proposition \ref{prop:loc} following a standard application to the Helffer-Sj\"{o}strand formula.
	
		We denote the differences
	\begin{align}
		\rho^{\Delta} \deq \rho_N - \wt{\rho}_{\fc}, \quad m^{\Delta} \deq g - \wt{m}_{\fc}.
	\end{align}
	 We have the following intermediate lemma.
	
	\begin{lemma}\label{lem:lem7}
		Choose arbitrary (small) $\epsilon>0$. Fix two energies $E_1<E_2$ in $[-E_0, E_0]$. Set $\eta = N^{-1}$ and ${\eta} '= N^{\tau -1}$, for $\tau>0$ satisfying the assumption given for $\caD_{\tau}$. Define $f(x) \equiv f_{E_1, E_2 , {\eta}'}(x)  $ to be an indicator function of the interval $[E_1, E_2] $, smoothed out on a scale ${\eta}'$, i.e. $f(x) =1$ for $x \in [E_1 , E_2] $, and $f(x) = 0$ for $x \in [E_1 - {\eta}', E_2 + {\eta}']^{c}$; moreover $|f'(x) | \leq C {\eta}'^{-1}$ and $|f''(x) | \leq C {\eta}'^{-2}$. Assume Theorem \ref{thm:refine} on the domain $\caD_{\tau}$. Then we have
		\begin{align}
			\left| \int_{\bbR} f(x) \rho^{\Delta} (x) dx \right| \prec {\eta}'  +  \min \left\{ {\caE}\left( \frac{1}{q^{3/2}} + \frac{1}{N^{1/4}q^{1/2}} \right),\frac{1}{q^2} \frac{\caE}{\sqrt{\vk + \caE }} \right\} ,\label{eq:res7}
		\end{align}
		where we have abbreviated $\vk \deq \min  \{ \vk_{E_1} , \vk_{E_2} \}$ and $\caE \deq \max\{E_2-E_1 , {\eta}'. \}$
	\end{lemma}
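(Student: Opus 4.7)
The plan is to apply the Helffer--Sj\"ostrand representation formula to convert the spectral integral $\int f\,\d\rho^\Delta$ into a two-dimensional integral against $m^\Delta(z)=g(z)-\wt{m}_{\fc}(z)$, and then estimate that integral using the refined averaged local law of Theorem \ref{thm:refine}. Fix a smooth even cutoff $\chi\in C_c^\infty(\bbR)$ with $\chi(y)=1$ for $|y|\leq 1$ and $\chi(y)=0$ for $|y|\geq 2$, and define the quasi-analytic extension $\tilde f(x+\ii y)=(f(x)+\ii y f'(x))\chi(y)$. The Helffer--Sj\"ostrand formula, together with the reality of $\rho^\Delta$, yields
\begin{equation*}
\int_\bbR f(x)\rho^\Delta(x)\,\d x \;=\; \frac{1}{\pi}\Re\int_\bbR\!\int_0^\infty \bigl[\ii yf''(x)\chi(y)+\ii(f(x)+\ii yf'(x))\chi'(y)\bigr]m^\Delta(x+\ii y)\,\d y\,\d x.
\end{equation*}
The $\chi'$--term is supported on $y\in[1,2]$, where Theorem \ref{thm:refine} gives $|m^\Delta|\prec N^{-1}$, hence this term contributes only $O_\prec(N^{-1})$, absorbed by the $\eta'=N^{\tau-1}$ slack in \eqref{eq:res7}.

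For the principal $f''$--term, split the $y$--integration at $y=\eta'$, the lower edge of $\caD_\tau$. In the upper region $y\in[\eta',2]$, one integrates by parts in $x$ to replace $f''$ by $f'$ acting on $\partial_x\Im m^\Delta$, uses the Cauchy--Riemann identity $\partial_x\Im m^\Delta=-\partial_y\Re m^\Delta$, and then integrates by parts in $y$; this reduces the upper-region integral to a boundary contribution at $y=\eta'$ plus a bulk contribution on $[\eta',2]$, both estimable pointwise by Theorem \ref{thm:refine}. Using $\|f'\|_\infty=O(\eta'^{-1})$ and $|\supp f'|=O(\caE)$, and inserting
\begin{equation*}
|m^\Delta(x+\ii y)|\;\prec\;\min\!\left\{\tfrac{1}{q^{3/2}}+\tfrac{1}{N^{1/4}q^{1/2}},\,\tfrac{1}{q^2\sqrt{\vk_x+y}}\right\}+\tfrac{1}{Ny},
\end{equation*}
the bulk estimate $q^{-3/2}+N^{-1/4}q^{-1/2}$ produces a contribution of order $\caE(q^{-3/2}+N^{-1/4}q^{-1/2})$, while the edge estimate $q^{-2}(\vk_x+y)^{-1/2}$ produces $q^{-2}\caE/\sqrt{\vk+\caE}$ after the $y$--integration; the $1/(Ny)$ piece integrates to $O_\prec(\log N\cdot N^{-1})$, absorbed by $\eta'$. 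Taking the minimum of the two upper-region contributions matches the form of the right-hand side of \eqref{eq:res7}.

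The lower region $y\in(0,\eta')$ is handled by the elementary monotonicity
\begin{equation*}
y\mapsto y\,\Im m(x+\ii y)\;=\;\int\frac{y^2}{(x-x')^2+y^2}\,\d\mu(x')
\end{equation*}
being non-decreasing in $y$ for any positive measure $\mu$ (applied separately to $\mu^H$ and $\wt{\rho}_{\fc}$), which bounds the integrand on $(0,\eta')$ by its value at $y=\eta'$; the resulting contribution is dominated by the boundary piece already handled in the upper region. Assembling these pieces yields \eqref{eq:res7}. The main technical obstacle lies in the careful $y$--integration of the edge bound $q^{-2}(\vk_x+y)^{-1/2}$ against $|f''|$: the cases $\caE\leq\vk$ and $\caE>\vk$ must be separated, and in each the interplay between $|\supp f''|$, the smoothing scale $\eta'$, and the distance $\vk_x$ to the spectral edge must be tracked carefully to obtain the clean final form $q^{-2}\caE/\sqrt{\vk+\caE}$ stated in the lemma.
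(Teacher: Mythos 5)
Your overall skeleton (Helffer--Sj\"ostrand representation, splitting the $y$--integration at a small scale, monotonicity of $y\mapsto y\,\Im m(x+\ii y)$, and the Ward-identity-type bound for $y$ below the domain) is the same as the paper's. However, there is a genuine gap in your choice of cutoff: you take $\chi$ at the fixed scale $y\sim 1$, whereas the paper chooses $\chi$ adapted to the interval, i.e.\ $\chi=1$ on $[-\caE,\caE]$, $\chi=0$ outside $[-2\caE,2\caE]$, with $|\chi'|\leq C/\caE$, where $\caE=\max\{E_2-E_1,\eta'\}$. This choice is what produces the prefactor $\caE$ in the final bound. With your unit-scale cutoff, the $\chi'$--term is supported on $y\in[1,2]$, and since $\int(|f|+y|f'|)\,\d x=O(\caE+1)=O(1)$ there, it contributes of order $\sup_{y\sim 1}|m^\Delta|$, which by Theorem \ref{thm:refine} is only $O_\prec\bigl(q^{-2}+N^{-1}\bigr)$ — \emph{not} $O_\prec(N^{-1})$ as you assert; the refined local law at $\eta\sim 1$ still carries the $q$-dependent error $\min\{q^{-3/2}+N^{-1/4}q^{-1/2},\,q^{-2}/\sqrt{\vk_E+\eta}\}$, which does not vanish at order $N^{-1}$ in the sparse regime. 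Even using the correct bound, a contribution of size $q^{-2}$ without the factor $\caE/\sqrt{\vk+\caE}$ is not dominated by the right-hand side of \eqref{eq:res7} when $E_2-E_1\ll 1$ (e.g.\ $E_2-E_1\sim N^{-2/3}$ near the edge), so the estimate does not close.

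The same problem recurs in your ``upper region'': with the cutoff at scale $1$, the $y$--integration of the local-law bound runs up to $O(1)$ rather than $2\caE$, and $\int_{\eta'}^{2}\d y\,\min\{q^{-3/2}+N^{-1/4}q^{-1/2},\,q^{-2}(\vk_x+y)^{-1/2}\}$ again produces a bound of order $\min\{q^{-3/2}+N^{-1/4}q^{-1/2},\,q^{-2}\}$ with no factor $\caE$. In the paper's argument both the $\chi'$--term ($A_1$) and the final piece of the twice-integrated-by-parts $f''$--term ($A_{3,3}$) are confined to $y\lesssim \caE$ precisely because $\supp\chi\subset[-2\caE,2\caE]$, and this is where the factors $\caE(q^{-3/2}+N^{-1/4}q^{-1/2})$ and $q^{-2}\caE/\sqrt{\vk+\caE}$ come from. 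To repair your proof, replace the fixed cutoff by one living at scale $\caE$ and rerun your estimates; the remaining ingredients you list (monotonicity for $y<\eta'$, the bound $|m^\Delta(x+\ii y)|\prec \eta'/y$ obtained from the Ward identity, and the integration by parts in $x$ then $y$) then match the paper's proof.
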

	\begin{proof}
		We use the Helffer-Sj\"{o}strand formula following the methods introduced in \cite{EKYY1}. Choose a cut-off function $\chi(y)$ such that $\chi(y) =1$ on $[-\caE, \caE]$, $\chi(y) = 0$ on $[-2 \caE, 2\caE]^c$, and $|\chi'(y)| \leq C/\caE$. Starting from the Helffer-Sj\"{o}strand formula,
		\begin{align}
			f(w) = \frac{1}{2\pi} \int_{\bbR^2} \frac{\ii y f''(y) \chi(y) + \ii (f(x) + \ii y f'(x) )\chi'(y) }{w - x - \ii y}\d x \d y,
		\end{align}
		we obtain
		\begin{align}
			\begin{split}
				& \left|\int_\bbR f(w) \rho^{\Delta }(w) \d w \right| \leq C \int \d x \int_0^{\infty} \d y (|f(x)| + |y| |f'(x)|) |\chi'(y)| |m^{\Delta} (x+ \ii y)| \\
				& +C\left| \int \d x \int_0^{\eta} \d y f''(x) \chi(y) y \Im m^{\Delta} (x + \ii y)  \right| + C\left| \int \d x \int_{\eta}^{\infty} \d y f''(x) \chi(y) y \Im m^{\Delta} (x + \ii y)  \right|,
			\end{split}
		\end{align}
		where we denote the first, second, and third term in the right hand side by $A_1, A_2, A_3$, respectively. Using that $\chi'$ is supported on $[\caE, 2 \caE]$, we have
		\begin{align}
			A_1 &\prec \caE^{-1} \int \d x \int_{\caE}^{2\caE} \d y (|f(x)| + y |f'(x)|) \left( \min \left \{\frac{1}{q^{3/2}} + \frac{1}{N^{1/4}q^{1/2}},  \frac{1}{q^2 \sqrt{\vk_x + \caE }} \right \} + \frac{1}{N\caE } \right) \\&\prec \caE\min \left \{\frac{1}{q^{3/2}} + \frac{1}{N^{1/4}q^{1/2}},  \frac{1}{q^2 \sqrt{\vk_x + \caE }} \right \}  + \frac{1}{N} .
		\end{align}
		To bound $A_2$ and $A_3$, we first bound the imaginary part of $m^{\Delta}(x + \ii y)$. For $y \geq \eta'$, we can use the bound given in Theorem \ref{thm:strong}. Now assume that $0 < y < \eta'$. Using the spectral decomposition of $H = W+\lambda V$, we easily see that the function $y \mapsto y \Im g(x + \ii y) $ is monotone increasing. Also recall that from Lemma \ref{lem:stabbound}, there exist some $C>0$ such that
		\begin{align}\label{eq:sta77}
			|\Im \wt{m}_{\fc} (x + \ii y)| \leq C \sqrt{\vk_x + y}.
		\end{align} Thus, combining with the local law, we get
		\begin{align}
			y \Im g (x + \ii y) \leq {\eta}' \Im g (x + \ii {\eta}') \prec {\eta}' \left(\sqrt{\vk_{x} + {\eta}'} + \min \left\{ \frac{1}{q^{3/2}} + \frac{1}{N^{1/4} q^{1/2}} , \frac{1}{q^2 \sqrt{\vk_x + \eta'}}\right\} +\frac{1}{N{\eta}'} \right),
		\end{align}
		for $y \leq {\eta}'$ and $|x | \leq E_0$. Recalling \eqref{eq:sta77}, we arrive at
		\begin{align}
			|y \Im m^{\Delta} (x+ iy)| \prec {\eta}' \sqrt{\vk_x + {\eta}'} + \frac{1}{N} \leq C {\eta}',
		\end{align}
		for $y \leq {\eta}'$ and $|x| \leq E_0$. Since by assumption we have $\eta \leq \eta'$, $A_2$ is bounded by
		\begin{align}
			A_2 \prec \frac{1}{N} \int \d x |f''(x)| \int_0^{\eta} \d y \chi(y) \prec \frac{1}{N},
		\end{align}
		where we used that the support of $f''$ has measure $O(\eta)$. To bound $A_3$, we integrate by parts, first in $x$ then in $y$ to find the bound
		\begin{align}
			\begin{split}
				A_3 \leq  &C\left|\int \d x f'(x) \eta \Re m^{\Delta}(x+ \ii y)  \right| + C \left| \int \d x \int_{\eta}^{\infty} \d y f'(x) \chi'(y) y \Re m^{\Delta }(x+ \ii y)  \right|\\
				&+ C \left| \int \d x \int_{\eta}^{\infty} \d y f'(x) \chi(y) \Re m^{\Delta}(x+ \ii y) \right|,
			\end{split}
		\end{align}
		where we denote the first, second, and third term in the right hand side by $A_{3,1}, A_{3,2}, A_{3,3}$, respectively. We easily find that $A_{3,2}$ can be bounded analogously to $A_1$, and we have
		\begin{align}
			A_{3,2} \prec \caE\min \left \{\frac{1}{q^{3/2}} + \frac{1}{N^{1/4}q^{1/2}},  \frac{1}{q^2 \sqrt{\vk_x + \caE }} \right \}  + \frac{1}{N}.
		\end{align}
		To bound $A_{3,1}$ and $A_{3,3}$, we write, for $y \leq \eta'$,
		\begin{align}\label{eq:789}
			|m^{\Delta} (x + \ii y)| \leq |m^{\Delta}(x+ \ii \eta')|+ \int_{y}^{\eta'} \d u \left(|\pd_u g(x+ \ii u)| + |\pd_u \wt{m}_{\fc} (x+ \ii u)  | \right).
		\end{align}
		Also observe that we have from Ward's identity that
		\begin{align}
			|\pd_u m(x + \ii y)| = \frac{1}{N} \left|\Tr G^2(x + \ii u)  \right|\leq \frac{1}{N} \sum_{i j}\left| G_{ij} (x + \ii u) \right|^2 = \frac{1}{u} \Im m(x + \ii u) \leq \frac{1}{u^2} \eta' \Im m(x+ \ii \eta').
		\end{align}
		Similarly, we obtain
		\begin{align}
			|\pd_u \wt{m}_{\fc} (x + \ii u)| \leq \int \frac{\wt{\rho}_{\fc}(t) \d t}{|t - x - \ii u|^2} = \frac{1}{u} \Im \wt{m}_{\fc} (x + \ii u) \leq \frac{1}{u^2} \eta' \Im \wt{m}_{\fc}(x+ \ii \eta').
		\end{align}
		From \eqref{eq:789}, we retrieve
		\begin{align}\label{eq:987}
			|m^{\Delta}( x + \ii y) | \prec \left(1 + \int_{y}^{\eta'} \d u \frac{\eta'}{u^2} \right)\prec \frac{\eta'}{y},
		\end{align}
		for $y \leq \eta'$. Thus we have $A_{3,1} \prec \eta'$. To bound $A_{3,3}$, we split the integration in the $y$ variable into the pieces $[\eta, \eta')$ and $[\eta', \infty)$. Using \eqref{eq:789}, we can bound the first piece by
		\begin{align}
			\int \d x |f'(x)| \int_{\eta}^{\eta'} \d y |m^{\Delta} ( x + \ii y) | \prec \eta'.
		\end{align}
		For the second integration piece, we find
		\begin{align*}
			\int \d x |f'(x) | \int_{\eta'}^{2\caE} \d y |m^{\Delta}(x+ \ii y) | &\prec  \int_{\eta'}^{2\caE} \d y \left(\min  \left\{\frac{1}{q^{3/2}} + \frac{1}{N^{1/4}q^{1/2}}, \frac{1}{q^2 \sqrt{\vk + y}} \right\} + \frac{1}{N y} \right)\\
			&\prec \eta' + \min \left\{\caE \left(\frac{1}{q^{3/2}} + \frac{1}{N^{1/4}q^{1/2}} \right),  \frac{ \caE}{q^2\sqrt{\vk + \caE}} \right\}.
		\end{align*}
		Culminating all the contributions together, we have \eqref{eq:res7}.
	\end{proof}
	
	\begin{proof}[Proof of Proposition \ref{prop:loc}]
		Observe that for $\eta = N^{-1}$, 
		\begin{align}
			|\frn( x+ \eta) - \frn(x - \eta) | \leq C \eta \Im g(x+ \ii \eta) \prec N^{-1},
		\end{align}
		where we have also used the delocalization of eigenvectors. Hence,
		\begin{align}
			\left|\frn(E_1) - \frn(E_2) -\int f(w) \rho_N(w) \d w\right| \leq C\sum_{j=1, 2} (\frn(E_j + \eta) - \frn(E_j - \eta)) \prec N^{-1}.
		\end{align}
		Since $\wt{\rho}_{\fc}$ is a bounded function, we find
		\begin{align}
			\left| \wt{n}_{\fc}(E_1) -  \wt{n}_{\fc}(E_2)  - \int f(w) \wt{\rho}_{\fc} (w) \d w \right| \leq C \eta = \frac{C}N^{-1}.
		\end{align}
		Then, we combine Lemma \ref{lem:lem7} and obtain the desired result.
	\end{proof}
	
	With Lemma \ref{thm:opnorm} and Proposition \ref{prop:loc} in hand, we can prove Proposition \ref{prop:int}.
	
	\begin{proof}[Proof of Proposition \ref{prop:int}] We assume that $|E-\wt{L}_{-}| \leq |E - \wt{L}_+| $; the other case is treated similarly. Fix $\epsilon>0$ and set
		\begin{align}
			{E}_1 \deq \wt{L}_{-} -N^{\epsilon} \left(q^{-3} + N^{-2/3} \right).
		\end{align}
		By Lemma \ref{thm:opnorm}, we have $\wt{n}_{\fc} ({E}_1) = 0$ and $\frn({E}_1) =0$, with high probability. Setting $E_2 =E$ and applying Proposition \ref{prop:loc}, we obtain
		\begin{align}
			|\frn(E) - \wt{n}_{\fc} (E)|\leq N^{\epsilon} \left[\frac{N^{\epsilon}}{N} \ + \min \left\{ \left( \frac{1}{q^{3/2}}+\frac{1}{N^{1/4}q^{1/2}} \right)\left(E-\wt{L}_-\right),   \frac{1}{q^2} \sqrt{\vk_E  +N^{\epsilon}\left( E-\wt{L}_- \right)} \right\} \right],
		\end{align}
		which holds with high probability, and upon applying the Young's inequality, we retrieve
		\begin{align}\label{eq:comp}
			|\frn(E) -\wt{n}_{\fc}(E)| \prec \frac{1}{N} +   \min \left\{ \left(\frac{1}{q^{3/2} } + \frac{1}{N^{1/4}q^{1/2}}  \right)\left( \vk_E + \frac{1}{q^3} + \frac{1}{N^{2/3}} \right), \frac{\sqrt{\vk_E}}{q^2} + \frac{1}{q^{7/2} } + \frac{1}{q^2 N^{1/3}} \right\} .
		\end{align}
		After slight adjustment, we obtain the desired result.
	\end{proof}

\subsection{Proof of Lemma \ref{lem:gaussian}}
In this section, we complete the proof of Lemma \ref{lem:gaussian} by proving that the variance of the Gaussian obtained in the sparse case coincides with the Wigner case.
\begin{proof}
	
	Let us first consider when $\lambda \ll 1$, with $\lambda \gg q^{-1}$. Expansion yields 
	\begin{align}
		1 = \int \frac{\d \bv{\nu} (v)}{(\lambda v - \bv{\zeta}_{+})^2 } &= \int \d \bv{\nu}(v) \left(  \frac{1}{\bv{\zeta}_{+}^2} + \frac{2 \lambda v }{\bv{\zeta}_{+}^3} + \frac{3 \lambda^2 v^2}{\bv{\zeta}_{+}^4}  + O(\lambda^3) \right)
		\\
		&= \frac{1}{\bv{\zeta}_{+}^2 } + \frac{3 (\lambda^2+sq^{-2}) }{\bv{\zeta}_{+}^4}m^{(2)}(\nu) + O(\lambda^3),\label{eq:expansion}
	\end{align}
	referring to the moments of $\bv{\nu}$ calculated in \eqref{eq:moment}. Hence, we have $\bv{\zeta}_{+}^2 = 1  + O(\lambda^2 )$, and putting back into \eqref{eq:expansion}, we have
	\begin{align}
		\bv{\zeta}_{+} = 1 + \frac{3 (\lambda^2 + s q^{-2} ) }{2} m^{(2)} (\nu) + O(\lambda^3).
	\end{align}
	We now have 
	\begin{align}
		\bv{m}_{\fc} (\bv{L}_+)  = \int \frac{\d \nu(v) }{\lambda v - \bv{\zeta}_{+}} &= -\frac{1}{\bv{\zeta}_{+}} - \frac{\lambda^2 + sq^{-2} }{\bv{\zeta}_{+}^3 } m^{(2)}(\nu) + O(\lambda^{3})\\& = -1 + \frac{\lambda^2 + sq^{-2}}{2} m^{(2)}(\nu) + O(\lambda^3) ,
	\end{align}
	and thus 
	\begin{align} \label{eq:range}
		\sigma_N^2 = 1 - (\bv{m}_{\fc}(\bv{L}_+))^2 = (\lambda^2 + sq^{-2}) m^{(2)}(\nu) + O(\lambda^3).
	\end{align}
	Therefore, when $1 \gg\lambda \gg q^{-1}$, we have proved that
	\begin{align}
		\sigma^2 = \lim_{N \to \infty} \lambda^{-2} \sigma_N^2 = m^{(2)}(\nu).
	\end{align}
	Assuming that $\lambda < \varepsilon $ for some sufficiently small constant $\varepsilon>0$, independent of $N$, we can prove that variance of $\caX$ attains a bound similar to \eqref{eq:range}, and prove that $\sigma \sim 1$ in this regime. Finally, for $\lambda \geq \varepsilon$, we have
	\begin{align} 
		1 - (\bv{m}_{\fc} (\bv{L}_+))^2 =  \int \frac{\d \wt{\nu}(v)}{(\lambda v- \bv{\zeta}_+)^2}  -  \left(\int \frac{\d \wt{\nu}(v)}{(\lambda v- \bv{\zeta}_+)} \right)^2 >c >0,
	\end{align}
	for some constant $c >0$, hence $\text{Var} (\caX) \sim N^{-1} \sim N^{-1} \lambda^2$. Thus, we conclude that $\sigma \sim 1$ in any case. See also proof of \cite[Theorem 2.2]{LS14}.
	
	It remains to prove that 
	\begin{align}
		\lim_{N \to \infty} (1 - (\bv{m}_{\fc} (\bv{L}_+))^2  = \lim_{N \to \infty} (1- (m_{\fc} (L_+))^2 , \label{eq:finaleq}
	\end{align}
	for $\lambda \sim 1$. Recall that there exist a constant $\frg >0$, depending only on $\nu$ and uniform on the choices $\lambda \in \Theta_{\varpi}$ and $s \in \Theta_{s^{(4)}}$ such that 
	\begin{align}
		\mathrm{dist} (\{{\zeta}_{\pm}\} , {I}_{{\nu}}^{\lambda}) \geq \frg, \quad \mathrm{dist} (\{\bv{\zeta}_{\pm}\} , {I}_{\bv{\nu}}^{\lambda}) \geq \frg,
	\end{align}
	for $N$ sufficiently large. (See \eqref{eq:g}.) From \eqref{eq:int1} and \eqref{eq:int11}, we have
	\begin{align}
		\int \frac{1}{(\lambda v - \zeta_+)^2}\d \nu(v) = \int \frac{(\lambda v - \bv{\zeta})^2 + sq^{-2}}{\left((\lambda v - \bv{\zeta}_+)^2 - sq^2  \right)^2} \d \nu(v).
	\end{align}
	Thus, we have $\lim_{N \to \infty} |\zeta_+ - \bv{\zeta}_{+}|=0 $. Then it can be easily found that
	\begin{align}
		\lim_{N \to \infty} \left| \int \frac{1}{ \lambda v - \bv{\zeta}_+} \d \bv{\nu} (v) - \int \frac{1}{\lambda v - \zeta_+ } \d \nu(v) \right| = 0,
	\end{align}
	where \eqref{eq:finaleq} follows.
\end{proof}

\end{document}